\DeclareMathOperator{\id}{id}
\DeclareMathOperator{\im}{im}
\DeclareMathOperator{\sgn}{sgn}
\DeclareMathOperator{\tr}{tr}
\DeclareMathOperator{\Exp}{Exp}
\newtheorem{theorem}{Theorem}[section]
\newtheorem{proposition}[theorem]{Proposition}
\newtheorem{corollary}[theorem]{Corollary}
\newtheorem{lemma}[theorem]{Lemma}
\theoremstyle{definition}
\newtheorem{definition}[theorem]{Definition}
\theoremstyle{remark}
\newtheorem{remark}[theorem]{Remark}
\newcommand{\Rmod}{\ensuremath{R\textit{-mod}}}
\newcommand{\Rcoalg}{\ensuremath{R\textit{-coalg}}}
\newcommand{\dip}[1]{^{ [{#1}] }}
\newcommand{\ul}[1]{\underline{#1}}
\newcommand{\R}{{\mathbb R}}
\newcommand{\N}{{\mathbb N}}
\newcommand{\si}{{\mathcal S}}
\newcommand{\dph}{\overline{\mathcal{H}_D}}
\newcommand{\F}{{\mathbb F}}
\newcommand{\dpha}{DP_{HA}}
\newcommand{\hdp}{{H_{EP}}^*}
\newcommand\Item[1][]{%
  \ifx\relax#1\relax  \item \else \item[#1] \fi
  \abovedisplayskip=0pt\abovedisplayshortskip=0pt~\vspace*{-\baselineskip}}
\newcommand{\AComment}[1]{ \color{red} { #1} \color{black}}
\newcommand{\BComment}[1]{ \color{blue} { #1} \color{black}}
\newcommand{\CComment}[1]{ \color{green} { #1} \color{black}}
\begin{document}

\author[L. Guerra]{Lorenzo Guerra }
\address{Universit\`a di Roma Tor Vergata}
\email{guerra@mat.uniroma2.it}
\author[P. Salvatore]{Paolo Salvatore}
\address{Universit\`a di Roma Tor Vergata}
\email{salvator@mat.uniroma2.it}
\author[D. Sinha]{Dev Sinha}
\address{Mathematics Department, 
University of Oregon}
\email{dps@uoregon.edu}

\begin{abstract}
We  calculate mod-$p$ cohomology  of extended powers, and their group completions which are free infinite loop spaces.
We consider the cohomology of 
all extended powers of a space together and identify a Hopf ring structure with divided powers 
 within which cup product structure is more readily
computable than on its own.
We build on our previous calculations of cohomology of symmetric groups, which are the cohomology of extended powers of a point, 
 the well-known calculation of homology, and new results on cohomology of symmetric groups with coefficients in the sign representation.
 We then use this framework to understand cohomology rings of related spaces such as infinite extended powers and free infinite loop spaces.
\end{abstract}
\thanks{The first two authors acknowledge the 
MIUR Excellence Department Project awarded to the Department of Mathematics, University of Rome Tor Vergata, CUP E83C18000100006. 
} 

\subjclass{20J06, 20B30}
\title{Cohomology rings of extended powers and of free infinite loop spaces}
\maketitle


\section{Introduction}

Homotopy orbit spaces  with respect to the symmetric group action on iterated products, known as extended powers, 
play many key roles in algebraic topology.  Steenrod 
introduced them as a central character in the of study cohomology operations \cite{Steenrod:53}.  
Extended powers of spectra play an essential role
in Nishida's proof of his Nilpotence Theorem \cite{Nishida:75}.   By the Barratt-Priddy-Quillen
theorem \cite{BP:72}, after group completion the  infinite extended powers functor provides a 
model for the free infinite loopspace functor $Q$.  May and his collaborators demonstrated that the stable
version of extended powers not only were essential in defining structured ring spectra but also provided access to a wide range
of phenomena which had  previously been ad-hoc \cite{BMMS:86}.  
In addition to necessarily playing a role in derived algebra, modern applications of extended
powers and free infinite loop spaces range widely
from, for example, the calculus of functors \cite{AM:99} to the stable cohomology of mapping class groups \cite{Galatius:04}.

We focus on the cohomology rings of these spaces, as this ring structure is the most basic unstable homotopical structure.  
These rings have long been implicitly understood through
homology and coproduct calculations \cite{May-Cohen}. 
But  calculations with such require application of Adem relations for Kudo-Araki-Dyer-Lashof operations, 
so are difficult to work with.  
Here, as in our work on symmetric groups and alternating groups \cite{Guerra:17, Sinha:12, Giusti-Sinha:17}, we find that
 cohomology ring structure 
is best understood when 
coupled with a transfer (or induction) product, first  defined by Strickland and Turner \cite{Strickland-Turner:97}.  Our techniques
in this setting for example have led to further development of the Curtis-Wellington spectral sequence \cite{Hunter23}.

Our work is thus parallel to as well as building on the well-established homology calculations.  In \cite{May-Cohen}, the homology
of extended powers all together forms an algebra over the Dyer-Lashof algebra, which behaves similarly to and in fact can
be related to the cohomology rings of spaces over the Steenrod Algebra  \cite{May:70}.  
Our presentations capture  cohomology ring structure as part of a larger structure as well, ultimately presenting cohomology as
``universal component super-Hopf rigs with additive divided powers'' which then have small, if any, sets of relations.  
Prior to the elaboration of such Hopf ring structures, 
 calculations of mod-two cohomology rings of classifying spaces of symmetric groups -- the extended powers of a point --
 had cumbersome descriptions \cite{Adem-Milgram, Feshbach}, 
and calculations were not made at all for odd primes.

We develop these needed algebraic notions in Section~\ref{sec:defs}, and we encourage the reader to take a look at the statement of
first main results, namely Theorems~\ref{m1} through \ref{m4} to understand the goal of the algebraic work before going
through the details of that section.  Such statements are remarkably simple, given the complexity for example of individual component
rings.  In the odd primary setting, such brevity is not possible without use of trivial and twisted coefficients together.
 We follow these statements with a development of additive bases in their algebraic frameworks.
After extending well-known results about homology to the twisted coefficient setting, as needed for our cleanest statements,
we prove the theorems about ground rings, namely Theorems~\ref{m1}~and~\ref{m2}, in Section~\ref{sec:symmetric group}.
We then use those to prove our first
main theorems, namely Theorems~\ref{m3}~and~\ref{m4}, in Section~\ref{sec:DXdphr}.  Our strategy is to use the two products and divided
powers structures to produce a set of classes in cohomology which pairs perfectly with the standard  basis in homology.
This perfect pairing both establishes the set of cohomology classes as a basis and validates the algebraic presentation which governs them.


After treating the cohomology of extended powers with disjoint basepoints, we calculate
the cohomology of $D X$, $D_\infty X$, $CX$ (defined below) and $QX$ in Section~\ref{CXQX}.  For $CX$ and $QX$
these reproduce Dung's description of the cohomology ring for $QX$ when $p=2$ and givie new results for odd primes. 

While we capitalize on finding the right algebraic structures and rely on standard techniques from algebraic topology and group
cohomology as well as previous homology calculations, geometry has inspired and guided us.  
We focus on $C(X)$, the free $E_\infty$-space on $X$.  We choose  $ E\si_n $ to 
be the (ordered) configuration space of $ n $ points in $ \mathbb{R}^\infty $
\[
E\si_n = \{ (x_1, \dots, x_n) \in (\mathbb{R}^\infty)^n : x_i \not= x_j \forall i \not= j \},
\]
with an action of $ \si_n $ given by permutation of coordinates.

\begin{definition}
Let $ X $ be a pointed topological space. Let $ \phi_{i,n} \colon E\si_n \to E\si_{n-1} $ the map that forgets the $ i^{th} $ point of the configuration. 
Let $ s_{i,n} \colon X^{n-1} \to X^n $ be the map that adds the basepoint as the $ i^{th} $ coordinate.
Define $ CX$, the {\bf space of configurations in $\R^\infty$ with labels in $X$}, as 
${\bigsqcup_{n \geq 0} E\si_n \times_{\si_n} X^n} / {\sim} $, 
where $ \sim $ is the equivalence relation generated by $ (\phi_{i,n} a,b) \sim (a, s_{i,n}b) $ for all $ a \in E\si_n $ and $ b \in X^{n-1} $.
\end{definition}

We consider the finite-dimensional versions of this model, with a limited number of
points in some $\R^N \subset \R^\infty$, in which case if $X$ is a manifold then so is this quotient.  In that setting, we can use
geometric chains and cochains \cite{FMS22}.  The homology story is well-known through Kudo-Araki-Dyer-Lashof operations,
which are manifestly geometric.  If $f : P \to X$ represents a mod-two geometric homology class $y$ then we
can map $S^i \times_{C_2} (P \times P)$ to $CX$ by sending $v \times p_1 \times p_2$ to the configuration of two points at $v , -v$ with labels
$f(p_1)$ and $f(p_2)$.  The resulting geometric homology class represents the operation $q_i(y)$.  On the cohomology 
side, the analogous geometric cochain would be the submanifold of $CX$ where two points in the underlying configuration share their first coordinate
with labels both in some geometric cochain $W \to X$.   This geometry guided us in formulating our main structures, with cup product
corresponding to intersection as usual, transfer product a type of ``union'', and divided powers a ``repeating''.

We see many possible investigations building on this work.  
Following Remark~\ref{bad-example}, it would be interesting and
probably reflective of deeper structure to fully understand divided powers operations on cohomology, as we only employ them on a limited set of classes.  
The pairing between homology and cohomology is not Kronecker, and is likely to be
useful since the two settings differ in which structures are most readily expressed.  
We have made partial progress on computing Steenrod operations, a natural ``self-serving'' calculation to consider.
Calculating cup product more organically as we have done here could help in extending these calculations to generalized cohomology 
theories, with even understanding whether divided powers operations exist for the transfer product an interesting first question.  We suspect
not, as the transfer product is analogous to the Pontrjagin product on the homology of Eilenberg-MacLane spaces, which has a divided
powers structure \cite{Cartan:55, HHPPST23}, but the homology of other infinite loop spaces instead have Dyer-Lashof operations.
Related to this, there is possibility of further binding these structures through higher algebra enhancements of transfer product, 
in which case all of this intricate structure could
likely be tied back to the homology of $\R P^1$ and $\mathbb{C} P^1$.  

\tableofcontents

\section{Hopf ring with additive divided powers structure on the cohomology of $DX$} \label{sec:defs}

In this section we introduce the algebraic structure of component Hopf ring with additive divided powers.
We show in  Corollary~\ref{divided-powers-representations} that this structure applies to the cohomology of extended powers,
and then give an explicit presentation of this cohomology  in Theorem \ref{m2}. 

\subsection{Preliminaries}



Extended powers are our first main objects of study, and we assume throughout that 
$ X $ is Hausdorff and compactly generated, and that any basepoint has  a neighborhood deformation retract.

\begin{definition}\label{def:DXdphr}
If spaces $E$ and $Y$ are both $ \si_n $-spaces, with action preserving the base point of $Y$, then
$ E \rtimes_{\si_n} Y $ is defined as the quotient $\left( (E)_+ \wedge Y \right)/{ \si_n }$, where $ (E)_+ $ is  $E$ 
with a disjoint basepoint and $ \si_n $ acts diagonally.

 Define the {\bf $ n^{th} $ extended power} of $ X $ by
$ D_n X = E  \si_n  \rtimes_{\si_n} X^{\wedge^n} $.   Let $DX = \bigvee_n (D_n X)$.
Recall  that if $X$ is a $G$-space we let $X_{hG}$ denote the homotopy quotient $EG \times_G X$, so that
${X^{n}}_{h \si_n} = D_n(X_+)$. We will also consider this unbased version of $ D_n $, for which we use the symbol $ \tilde{D} $ 
to differentiate the two. Hence $ \tilde{D}(X) = X_{h \si_n} $.

For $p=2$ let $\hdp(X): = \bigoplus_{n \geq 0} H^*(D_n X; \mathbb{F}_2)  $.  

For odd primes let 
$\hdp(X) := \mathbb{F}_p \oplus \bigoplus_{n \geq 1} H^*(D_n X; \mathbb{F}_p \oplus sgn) $, where $\mathbb{F}_p$ denotes the trivial representation and $sgn$ is
the sign representation.  These are bi- or tri-graded by component, degree, and in the odd prime setting a mod-two grading induced by an obvious
grading on $\mathbb{F}_p \oplus sgn$.
\end{definition}

Observe that $ \tilde{D}(X) \cong D(X_+) \cong C(X_+) $. 

We use both algebraic and geometric variants of the extended powers topological construction, using the algebra 
to provide the main framework for our arguments.
Almost by definition, cohomology of $D_n X$ is the equivariant cohomology of the $n$-fold tensor product
of the cochains of $X$.  But the following basic computation shows one can consider only the cohomology of $X$
(see for example  \cite{May:70}, Lemma 1.1).

\begin{proposition} \label{prop:may}
There are natural isomorphisms $ H^*(D_n X; \mathbb{F}_p) \cong H^* ( \si_{n}; {\widetilde{H}^*(X; \mathbb{F}_p))}^{\otimes{n}} ) $,
and similarly for homology, compatible with standard pairings.
\end{proposition}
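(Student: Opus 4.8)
The plan is to reduce the statement to the Cartan--Leray spectral sequence for the covering $E\si_n \times X^{\wedge n} \to D_n X$, together with the fact that $E\si_n$ is contractible, which forces the spectral sequence to collapse onto a single column. Concretely, I would proceed as follows. First, identify $D_n X = E\si_n \rtimes_{\si_n} X^{\wedge n}$ with the homotopy quotient of the based $\si_n$-space $X^{\wedge n}$, so that reduced cochains of $D_n X$ compute the $\si_n$-equivariant (Borel) cohomology of $X^{\wedge n}$. Since we are over a field $\mathbb{F}_p$, there is a natural isomorphism $\widetilde{H}^*(X^{\wedge n};\mathbb{F}_p)\cong \widetilde{H}^*(X;\mathbb{F}_p)^{\otimes n}$ by the Künneth theorem, and this isomorphism is $\si_n$-equivariant where $\si_n$ permutes the tensor factors (with the usual Koszul signs when $p$ is odd). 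The Borel construction then gives a first-quadrant spectral sequence with $E_2^{s,t} = H^s(\si_n; \widetilde{H}^t(X;\mathbb{F}_p)^{\otimes n})$ converging to $\widetilde{H}^{s+t}(D_n X;\mathbb{F}_p)$.

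Second, I would argue the spectral sequence degenerates. The key input is that $E\si_n$ is contractible and the $\si_n$-action on $X^{\wedge n}$ extends freely to $E\si_n \times X^{\wedge n}$; since $\mathbb{F}_p$ is a field, the coefficient system $\widetilde{H}^t(X;\mathbb{F}_p)^{\otimes n}$ splits the Borel fibration cohomologically in the sense needed — more precisely, one invokes the standard fact (as in \cite{May:70}, Lemma 1.1, cited in the excerpt) that for the universal $\si_n$-bundle the Cartan--Leray spectral sequence of $X^{\wedge n} \to D_n X \to B\si_n$ collapses at $E_2$, because it is the spectral sequence of the bundle with contractible total space upstairs and hence is identified with the Lyndon--Hochschild--Serre-type spectral sequence computing group cohomology with the given coefficients. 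Naturality in $X$ and compatibility with the homology statement follow because the entire construction — Künneth, Borel construction, edge maps — is natural, and the homology/cohomology pairing is induced by the evaluation pairing on $X^{\wedge n}$, which is $\si_n$-equivariant, so descends to the pairing between group homology and cohomology with these coefficients.

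Third, I would handle the two remaining bookkeeping points: the compatibility with "standard pairings," and the sign conventions at odd primes. For the pairing, one observes that the cap/Kronecker pairing between $H_*(D_n X)$ and $H^*(D_n X)$ is computed by the same spectral sequence paired against its homological analogue, and the $E_2$-pairing is exactly the canonical pairing $H_s(\si_n; M) \otimes H^s(\si_n; M^\vee) \to \mathbb{F}_p$; collapse on both sides makes this an isomorphism-level statement. For odd $p$, the equivariant Künneth isomorphism carries the sign of the permutation action on a graded tensor power, so $\widetilde{H}^*(X)^{\otimes n}$ must be understood as a graded $\si_n$-module; this is purely a matter of recording the Koszul sign and does not affect the collapse argument.

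The main obstacle is not the collapse itself — that is classical — but making sure the collapse is stated and used in the form that survives all the later structural work: namely that the isomorphism is natural in $X$, multiplicative, and compatible with the two homology/cohomology pairings simultaneously. In practice one sidesteps subtleties by citing \cite{May:70} for the collapse and then only verifying naturality and pairing-compatibility by inspection of the edge homomorphisms, which are forced to be isomorphisms once the spectral sequence is concentrated in the single column $t \mapsto H^*(\si_n; \widetilde{H}^*(X)^{\otimes n})$. So the bulk of the "proof" is really a careful assembly of known functorial pieces rather than any genuinely new computation.
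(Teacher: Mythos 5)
The paper does not actually prove this proposition; it cites Lemma~1.1 of \cite{May:70}, whose content is a chain-level statement. Your proposal identifies the right objects, but the central step --- degeneration of the spectral sequence --- is justified incorrectly, and that is where the real mathematical content lies. You assert that the Cartan--Leray spectral sequence of $X^{\wedge n}\to D_nX\to B\si_n$ collapses ``because it is the spectral sequence of the bundle with contractible total space upstairs and hence is identified with the Lyndon--Hochschild--Serre-type spectral sequence computing group cohomology.'' Contractibility of $E\si_n$ is what \emph{produces} the spectral sequence and identifies its $E_2$-page with group cohomology; it does not force degeneration. The Cartan--Leray spectral sequence of a free action genuinely has differentials in general: for the antipodal $C_2$-action on $S^n$ the $E_2$-page $H^s(C_2;H^t(S^n;\mathbb{F}_2))$ is unbounded in $s$ while $H^*(\mathbb{R}P^n;\mathbb{F}_2)$ is not, so nontrivial differentials must occur. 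Nothing in your argument distinguishes $X^{\wedge n}$ from an arbitrary $\si_n$-space, for which the conclusion is false.

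The missing idea, which is the actual content of May's Lemma~1.1, is the $\si_n$-equivariant formality of the smash power over a field: choose a chain homotopy equivalence $\widetilde{C}^*(X;\mathbb{F}_p)\simeq \widetilde{H}^*(X;\mathbb{F}_p)$ (possible because $\mathbb{F}_p$ is a field); its $n$-th tensor power is a $\si_n$-equivariant map $\widetilde{C}^*(X)^{\otimes n}\to \widetilde{H}^*(X)^{\otimes n}$ which is a non-equivariant homotopy equivalence, and applying $\hom_{\si_n}(W_*,-)$ for a free resolution $W_*$ carries such maps to quasi-isomorphisms. This identifies the equivariant cochain complex of $X^{\wedge n}$ directly with the complex computing $H^*(\si_n;\widetilde{H}^*(X)^{\otimes n})$, with no filtration or extension issues, and it is this chain-level identification (not an edge-homomorphism argument) that makes the isomorphism visibly natural, compatible with the homological statement and with the Kronecker pairing --- which is what the rest of the paper needs, since all subsequent structure maps are defined on $\hom_{\si_n}(W_*^{\si_n};A_n)$. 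Note that equivariant formality is special to tensor powers: an arbitrary complex of $\mathbb{F}_p[\si_n]$-modules need not be equivariantly equivalent to its homology, since $\mathbb{F}_p[\si_n]$ is not semisimple. Even granting collapse, you would only obtain the associated graded of a filtration, and promoting that to a natural, multiplicative, pairing-compatible isomorphism would require further work that the chain-level argument renders unnecessary.
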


Our main results present the cohomology rings, 
with $ \mathbb{F}_p $ coefficients, of these
three related spaces, namely 
$ DX $, the extended  power of a topological space $ X $, $CX$,  the free $ E_\infty $-space over 
$ X $,  and $ QX $,   the free infinite-loop space over $ X $.  Because the homology of these spaces, as co-algebras, is known and
only depends on the 
homology of $X$ \cite{May-Cohen}, our first step is to develop the algebraic functors which will take as their
input the cohomology ring of $X$ and produce the cohomology rings of these spaces.   

\subsection{Component Hopf rings with additive divided powers} \label{algdef}

We work primarily in a bigraded setting  
in a strong sense, namely products vanish on elements which differ in the first grading.

\begin{definition} \label{def:bigraded component bialgebra}
Let $ (H, \Delta, \cdot) $ be a bialgebra over $ R $. We say that $ H $ is a {\bf bigraded component bialgebra} 
if  $ H = \bigoplus_{n,d \in \mathbb{N} \times \N} H_{n,d} $, where 
\begin{itemize}
\item  the product sends  $ H_{m,i} \otimes H_{m,j}$ to $H_{m, i+j}$ 
\item the product of $x$ and $y$ is zero if  $ x \in H_{m,i}$, $y \in H_{n,j}$ with $ n \not= m $
\item   the coproduct is standardly bigraded, sending $H_{p,k}$ to $\bigoplus_{n+m=p, k=i+j}  H_{n,i} \otimes H_{m,j}$.
\end{itemize}

The first grading of an element is called its {\bf component}, while the second  is called its {\bf dimension}.
For such $ H $, we let $ \overline{H}$ denote the direct sum of $ H_{n,d} $  for $n>0$ (and all $ d \geq 0$).
We  require the connectedness condition $A = \im ( \eta ) \bigoplus \overline{A}$,
where $ \eta \colon R \to A $ is the image of the unit map.

Similarly, {\bf super-component bialgebra} is a bialgebra graded over $ \mathbb{N} \times \frac{\mathbb{Z}}{2\mathbb{Z}} $ satisfying these same
axioms. 
\end{definition}

Recall that a rig
(ring without negatives) satisfies all ring axioms except the existence of additive inverses.

\begin{definition}\label{def:bigraded component Hopf rig}
A {\bf Hopf rig} $H$ is a rig object in the category of coalgebras, so that there are two products, a ``multiplication product'' $\cdot$ and an ``addition
product'' $\odot$, 
both giving bialgebras with one coproduct $\Delta$, 
and a distributivity axiom that $a \cdot (b \odot c) = \sum_{\Delta a = \sum a' \otimes a''} (a' \cdot b) \odot (a'' \cdot c)$.

A {\bf bigraded component Hopf rig} (respectively {\bf super-component Hopf rig})  is one for which $(H, \cdot, \Delta)$ is a 
bigraded component bialgebra (respectively  super-component bialgebra), and $\odot$ preserves both gradings. In this setting, we modify the distributivity axiom above by introducing a coefficient $ (-1)^{|a''||b|} $ corresponding to the usual Koszul sign.

We require the structural morphisms to be graded (co)commutative with respect to the dimension. When this commutativity condition is not necessarily satisfied, we speak of a {\bf non-commutative bigraded component Hopf rig}.
\end{definition}

The Hopf rigs we study have antipodes, and are thus Hopf rings.   But we will not need the antipode structures, and only
mention them in passing.
Hopf rings first appeared in topology in the study of homology of infinite loop spaces which represent cohomology with product structure
\cite{Milgram:72, RaWi80, Wilson:00}.  But following Strickland and Turner \cite{Strickland-Turner:97}, we have found them essential in
describing the cohomology of symmetric groups.  An interesting related case of Hopf rings arises when one introduces an induction or 
symmetrization product on rings of symmetric polynomials.  Readers who would like to see concrete examples can see these treated
in the second section of \cite{Sinha:12}.

We will show that the cohomology of $ DX $ not only is a Hopf rig, but is also endowed with divided powers operations. 
\begin{definition} \label{def:divided powers}
A {\bf divided powers algebra} is a triple $ (A,I,\gamma)$, where $ A $ is an algebra,  
$ I \trianglelefteq A $ is a proper ideal, 
and $ \gamma = \left\{ \gamma_n \right\}_{n \in \mathbb{N}} $ is a family of functions from $I$ to $I$, 
also denoted  $x\dip{n}:=\gamma_n(x)$,  which satisfies the following relations, whenever $x,y \in I$ and $\lambda \in A$:
\begin{enumerate}
\item $ x\dip{0}= 1 $, and $x\dip{1} = x$  \hspace{2.5cm}  ({\bf $0,1$ Cases})
\item $  \left( x+y \right)\dip{n} = \sum_{i=0}^n x\dip{i} y\dip{n-i} $ \hspace{1.6cm}  ({\bf Binomial})   \label{r2} 
\item $ \left( \lambda x \right)\dip{n} = \lambda^n x\dip{n} $   \hspace{3.3 cm}  ({\bf Distributivity over Multiplication})\label{r3} 
\item $ x\dip{n} x\dip{m} = \left( \begin{array}{c} n+m \\ n \end{array} \right) x\dip{n+m} $  \hspace{1.2cm}  ({\bf Law of Exponents})
\item $ (x\dip{m})\dip{n} = \frac{ (nm)! }{ (m!)^n n! } x\dip{nm}$.   \hspace{2cm}  ({\bf Composition}) \label{r5}

\end{enumerate}
\end{definition}

We also  refer to such triples as {\bf divided powers structures}, on the algebra $A$.  Introduced in  \cite{Cartan:55}, these 
have been extensively studied and applied to various contexts, for example in \cite{Roby:65}, from which we borrow  notation, 
and \cite{Berthelot:74, Hazewinkel:78}. In algebraic topology, the homology of Eilenberg-MacLane spaces are divided powers
algebras \cite{EM:54, Thomas:57}.

 These relations imply that the formal series $ f (t) = \sum_{i=0}^{\infty} x\dip{i} t^i \in A[[t]] $ satisfies 
$ f(0) = 1$ and $ f(t+s) = f(t) f(s) .$
The set $ \Exp(A) $ of  $ f \in A[[t]] $ satisfying these conditions  constitutes an 
$ R $-module. 
The left adjoint of $ \Exp$ viewed as a functor from $R$-algebras to $R$-modules is the free divided powers functor $DP$, defined explicitly as follows.

\begin{definition}
For an $ R $-module $ M $, we let $ DP ( M ) $, the {\bf free divided powers algebra} generated by $M$,
be  generated as an algebra by the set
$ \left\{ x\dip{n} \right\}$, where $x \in M$, with relations  (1)-(5) from Definition~\ref{def:divided powers} imposed.
The ideal $I$ is given by the collection of $x\dip{n}$ with $n \geq 1$ and divided powers maps are defined 
through the Binomial, Scalar Multiplication and Composition relations 
 of Definition~\ref{def:divided powers}. \end{definition}

 
If  $ M $ is graded, then naturally so are  $ DP(M)$ and  the universal map.
Over a field, if $M$ is finite-dimensional, there is a standard additive basis of $ DP(M) $ which we can associate to a choice of basis 
$ \left\{ x_1, \dots, x_r \right\} $ for $ M $, namely 
$ \left\{ \prod_{i=1}^r x_{i [n_i]} \right\}_{n_i \in \mathbb{N}}$.

\begin{definition} \label{Defdpbialg}
For a divided powers algebra $ (A,I,\gamma) $, the {\bf tensor product} $ A \otimes A $ has a natural
divided powers structure $ ( A \otimes A, A \otimes I + I \otimes A, \gamma_{\otimes} ) $ defined for  $ x \in I $ and $ y \in A $  by
$ ( x \otimes y )\dip{n} = x\dip{n} \otimes y^n $ and $ (y \otimes x )\dip{n} = y^n \otimes x\dip{n} $. 

A {\bf divided powers  bialgebra} is a divided powers structure $ (H, \overline{H}, \gamma ) $ on a bialgebra $ H $,
with $\overline{H}$ kernel of the counit,  
such that the coproduct $ \Delta \colon H \to H \otimes H $ is a morphism of divided powers structures.
\end{definition}

In the graded setting, we require divided powers operations to be compatible with degrees, in the sense that $ \deg(x\dip{k}) = k\deg(x) $.
In characteristic not equal to two, elements of odd degree have zero squares and thus higher powers,  so in this setting
$ x\dip{k} = 0 $ if $ \deg(x) $ is odd and $ k \geq 2 $.  This is also the case for the odd degree classes in the super-bigraded component setting.
By a classical theorem of Milnor and Moore, a graded Hopf algebra $ H $ that is bicommutative in the graded sense can always be split into an even and odd part 
where the odd factor 
 is an exterior algebra generated by primitive elements in odd degree. 
A graded divided powers structure on $ H $ is thus equivalent to a divided powers structure in the standard sense on the even part, and on the odd part the divided powers with exponent larger than one are zero. 

Admitting a divided powers structure is a very restrictive condition on a Hopf algebra.
Such algebras arise as dual Hopf algebras to free, primitively generated Hopf algebras or more generally 
as duals of enveloping algebras of Lie algebras  \cite{Schoeller:67, Andre:71}. 

Given a counital $R$-coalgebra  $ C$, let $ \varepsilon \colon C \to R $ be 
the counit and  $ \overline{C} = \ker(\varepsilon) $.
The coproduct $ \Delta \colon C \rightarrow C \otimes C $ extends uniquely to
$ DP( \overline{C} ) \to DP( \overline{C} ) \otimes DP( \overline{C} )$, 
which  by abuse of notation we still denote  $ \Delta $.
This is a coassociative and cocommutative coproduct, and defines a bialgebra structure on $ DP( \overline{C} ) $.

\begin{definition}
Denote by $ \dpha$ the {\bf free divided powers Hopf algebra} functor from $R$-coalgebras to Hopf algebras with divided powers over $R$,
with $ \dpha ( C ) = DP ( \overline{C} ) $ as an algebra,  the coproduct induced by that of $ C $, and extended to morphisms
by the universal property. 

\end{definition}

Then  $ \dpha(C) $ is left adjoint to the forgetful functor from divided powers  bialgebras to coalgebras.

Our main structure we use comprises the two structures detailed above.

\begin{definition}\label{def:dphr}
A {\bf bigraded component Hopf ring with additive divided powers} is a septad $ (A,\odot,\cdot,\Delta,\eta,\varepsilon,\{\gamma_n\}) $ such that
\begin{itemize}
\item $ (A,\odot,\Delta,\{\gamma_n\}) $ is a divided powers bigraded component Hopf algebra
\item $ (A,\odot,\cdot,\Delta) $ is a bigraded component Hopf rig
\end{itemize}
We also require that each component algebra $ (A_n,\cdot) $ is unital, and that the unit of $ A_n $ is the $ n^{th} $ divided power of the unit of $ A_1 $.

A {\bf component super-Hopf ring with additive divided powers} is a septad as above where $ A $ has an additional $ \mathbb{Z}/2\mathbb{Z} $-grading $ A = A_{even} \oplus A_{odd} $, preserved by $ \Delta $ and $ \cdot $ and such that $ A_{even} \odot A_{odd} = 0 $, satisfying the following conditions:
\begin{itemize}
	\item $ (A,\odot,\Delta,\{\gamma_n\}) $ is a divided powers Hopf algebra
	\item $ (A, \cdot, \Delta) $ is a component super-algebra
	\item the Hopf ring distributivity axiom holds in $ A $
\end{itemize}
\end{definition}

 We will prove below that the Hopf rings 
 of our interest have divided powers operations 
for the addition product satisfying Definition \ref{def:dphr}. However, they are not compatible with the multiplication product $ \cdot $ in a simple way, as we show in Remark~\ref{bad-example}.

\subsection{The algebraic extended powers functor}

Recall that our first main goal is to describe the algebraic functor which takes the cohomology ring of a space $X$ and produces the cohomology
of its extended powers.  In light of Proposition~\ref{prop:may}, this can be viewed in terms of group cohomology.
For optimal results, at odd primes we take cohomology with coefficients which incorporate  sign representation.

\begin{definition}
Let $V$ be a representation of $\si_n$ over a field.  Define the {\bf cohomology of extended powers of a space $X$ with coefficients in $V$}, 
denoted $H^*(D_n X; V)$, to be the group cohomology
$H^* ( \si_{n}; {\widetilde{H}^*(X)}^{\otimes{n}} \otimes V)$.
\end{definition}

In our application $V$ will be either trivial when $p = 2$ or for odd primes will incorporate the sign representation.
Clearly the notation is consistent in the case of trivial coefficients by Proposition \ref{prop:may}. 

\begin{definition} \label{def:rho}
By abuse, for any odd prime $p$ and any $n$ let $\rho$ be the representation of $\si_n$ given by $\rho =  \mathbb{F}_p \oplus sgn$,
where $ \mathbb{F}_p$ denotes the trivial representation and $sgn$ denotes the sign representation.  

 We grade $\rho$ by having 
the trivial representation in degree zero (even) and the sign representation in degree one (odd).  We use canonical isomorphisms $\mathbb{F}_p 
\otimes V \cong V$ and $sgn \otimes sgn \cong \mathbb{F}_p$ to define a graded  algebra structure on $\rho$.
\end{definition}

In our setting, the coefficients we are using have multiplicative structure which is then reflected in cohomology.

\begin{definition}
A {\bf product series of algebras} over a field $k$ is a collection  $\{A_n\}$ with $A_n$ $k[\si_n]$-algebra, with isomorphisms of $ k[\si_i \times \si_j] $-algebras
$\chi_{i,j} \colon A_{i+j} \to A_i \otimes_k A_j $ that are coherent in the sense that the following two conditions are satisfied:
\begin{enumerate}
\item for all $ i,j,k > 0 $ the following diagram commutes in the category of $ k[\si_i \times \si_j \times \si_k] $-modules:
\begin{center}
	\begin{tikzcd}
		A_{i+j+k} \arrow{r}{\chi_{i,j+k}} \arrow{d}{\chi_{i+j,k}} & A_i \otimes A_{j+k} \arrow{d}{\id \otimes \chi_{j,k}} \\
		A_{i+j} \otimes A_k \arrow{r}{\chi_{i,j} \otimes \id} & A_i \otimes A_j \otimes A_k
	\end{tikzcd}
\end{center}
\item for all $ n,m > 0 $, the following diagram commutes
\begin{center}
\begin{tikzcd}
	A_{n+m} \arrow{r}{\sigma_{n,m}} \arrow{d}{\chi_{n,m}} & A_{n+m} \arrow{d}{\chi_{m,n}} \\
	A_n \otimes A_m \arrow{r}{\tau} & A_m \otimes A_n,
\end{tikzcd}
\end{center}
where $ \tau $ exchanges the two factors and $ \sigma_{n,m} \in \Sigma_{n+m} $ is the permutation given by
\[
\sigma_{n,m}(i) = \left\{ \begin{array}{ll}
m+i & \mbox{if } 1 \leq i \leq n \\
i-n & \mbox{if } n+1 \leq i \leq n+m
\end{array} \right. . 
\]
\end{enumerate}

A {\bf super-product series of algebras} is a collection $ \{ A_n \} $ with $ A_n $  $k[\si_n] $-module with the following additional structure:
\begin{itemize}
	\item a grading $ A_n = A_{n,0} \oplus A_{n,1} $ of each $ A_n $ over $ \mathbb{Z}/2\mathbb{Z} $,
	\item a product $ A_n \otimes A_n \to A_n $ that makes each $ A_n $ a $ k[\si_n] $-super-algebra,
	\item and $ k[\si_i \times \si_j] $-module isomorphism $\chi_{i,j,e} \colon A_{i+j,e} \to A_{i,e} \otimes_k A_{j,e} $ for all $ i,j \in \mathbb{N}$ and $ e \in \mathbb{Z}/2\mathbb{Z}$, that are coherent in the sense that the two conditions above are satisfied and such that $ \chi_{i,j,0} \oplus \chi_{i,j,1} \colon A_{i,j} \to A_i \otimes_k A_j $ are super-algebra morphisms.
\end{itemize}
\end{definition}
We note that a product series of algebras can be regarded as a super-product series of algebras concentrated in $ \mathbb{Z}/2\mathbb{Z}$-degree $ 0 $.

The product series we use are built from a single algebra, ultimately the cohomology ring of a space.

\begin{definition} \label{def:divpower}
Let $A$ be an algebra.  Define $TA$ to be the product series of algebras with ${TA}_n = A^{\otimes{n}}$ with canonical restrictions.  
For odd primes define $T_\rho A$ to be the sequence of modules with ${T_\rho A}_n = \rho \otimes A^{\otimes n}$ and structure maps
given by the identity map on $\rho$ tensored with canonical restrictions.
\end{definition}

%

\begin{definition}\label{main-dphr-structure}
Let $ \{A_n\}$ be a product series of algebras. Let $f \in H^*(\si_n; A_n)$ and $g \in H^*(\si_m; A_m)$,
so $f$ is represented by a homomorphism from a resolution $ W_*^{\si_n} $ of $k$ over $k[\si_n]$  to a suspension 
of $A_n$.
Define the preliminary multiplicative structures on cohomology of symmetric groups with coefficients  in $\{A_n\}$
as follows.
 \begin{itemize}
 \item A coproduct $\tilde{\Delta} \colon H^*(\si_i+j; A_{i+j}) \to H^*(\si_i; A_i) \otimes H^*(\si_j; A_j) $ is induced by the composition
 \begin{center}
 \begin{tikzcd}
 \hom_{\si_{i+j}}(W_*^{\si_{i+j}}; A_{i+j}) \arrow{r}{\rho} & \hom_{\si_i \times \si_j}( W_*^{\si_i} \otimes W_*^{\si_j}; A_{i+j}) \arrow{r}{\chi_{i,j}} & { }\\
 \hom_{\si_i \times \si_j} ( W_*^{\si_i} \otimes W_*^{\si_j}; A_i \otimes A_j) \arrow{r}{\cong} & \hom_{\si_i}(W_*^{\si_i};A_i) \otimes \hom_{\si_j}(W_*^{\si_j};A_j),
 \end{tikzcd}
 \end{center}
where $ \rho $ is the restriction to the resolution of a subgroup;
 \item A transfer product $\tilde{\odot} \colon H^*(\si_n;A_n) \otimes H^*(\si_m; A_m) \to H^*(\si_{n+m}; A_{n+m}) $  is induced by
 \begin{center}
 \begin{tikzcd}
 \hom_{\si_n}(W_*^{\si_n};A_n) \otimes \hom_{\si_m}(W_*^{\si_m};A_m) \arrow{r}{\cong} & \hom_{\si_{n}\times \si_m}(W_*^{\si_n} \otimes W_*^{\si_m}; A_n \otimes A_m) \arrow{r}{\chi_{n,m}^{-1}} & { }\\
 \hom_{\si_n \times \si_m}(W_*^{\si_n} \otimes W_*^{\si_m};A_{n+m}) \arrow{r}{\tr} & \hom_{\si_{n+m}}(W_*^{\si_{n+m}};A_{n+m}),
 \end{tikzcd}
 \end{center}
where $ \tr $ denotes the usual transfer map;
\item a cup product $\cdot$ is defined 
in the standard way on each component, using restriction along the diagonal on a resolution of $k$ over $k[\si_n]$ and the product on $A_n$
\begin{center}
\begin{tikzcd}
\hom_{\si_n}(W_*^{\si_n};A_n) \otimes \hom_{\si_n}(W_*^{\si_n};A_n) \arrow{r}{\cong} & \hom_{\si_n \times \si_n} (W_*^{\si_n} \otimes W_*^{\si_n};A_n \otimes A_n) \arrow{r} & { }\\
\hom_{\si_n}(W_*^{\si_n};A_n \otimes A_n) \arrow{r} & \hom_{\si_n} (W_*^{\si_n}; A_n),
\end{tikzcd}
\end{center}
and
is zero between distinct components;
 \end{itemize}

More generally, if we start with a super-product series of algebras $ \{A_n\} $ with $ A_n = \bigoplus_{e \in \mathbb{Z}/2\mathbb{Z}} A_{n,e} $, we similarly define:
\begin{itemize}
	\item a coproduct $ \tilde{\Delta} $ as the direct sum of the coproducts associated with the addend product series $ A_{*,e} $;
	\item a transfer product that restrict to that defined above on each addend $ A_{*,e} $, and such that $ f \tilde{\odot} g $ is zero if $ f \in H^*(\si_n;A_{n,e}) $ and $ g \in H^*(\si_m;A_{m,e'}) $ with $ e \not= e' $;
	\item if each $ A_n $ is an algebra, a cup product $ \cdot $ as above.
\end{itemize} 
\end{definition}

These structures are preliminary because of some additional signs which we will include below.

\begin{definition}
If $\{A_n\}$ is a product series of algebras or super-product series of algebras, define
$H^*(\si_\bullet; A_\bullet)$ to be the direct sum
 $\mathbb{F}_p \oplus \bigoplus_{n \geq 1} H^*(\si_n; A_n)$, obtained 
  from $ \bigoplus_{n \geq 0} H^*(\si_n; A_n) $  by replacing in component $ 0 $ the representation 
  $ \bigoplus_{e \in \mathbb{Z}/2\mathbb{Z}} A_{0,e} \cong \bigoplus_e \mathbb{F}_p $ via the epimorphism $ \sum_e \colon \bigoplus_e \mathbb{F}_p \to \mathbb{F}_p $, with coproduct $ \tilde{\Delta}$ and product $\tilde{\odot} $ 
  \end{definition}

\begin{theorem}\label{main-section-3-pre}
$H^*(\si_\bullet; A_\bullet)$ forms a non-commutative Hopf algebra,
  with structures defined in Definition~\ref{main-dphr-structure}. 
  Moreover, $\left(H^*(\si_\bullet; A_\bullet), \tilde{\Delta}, \cdot \right) $ is a bigraded component bialgebra.
\end{theorem}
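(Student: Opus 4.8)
The plan is to verify each of the bialgebra and Hopf algebra axioms by working directly with the resolution-level cochain complexes from Definition~\ref{main-dphr-structure}, reducing everything to standard facts about the bar-type resolutions $W_*^{\si_n}$ and the transfer. First I would treat the component bialgebra claim, since that only involves $\tilde\Delta$ and $\cdot$. Coassociativity of $\tilde\Delta$ follows from the first coherence condition on the $\chi_{i,j}$ in the definition of a product series of algebras, combined with associativity of the restriction maps $\rho$ between resolutions of nested subgroups $\si_i \times \si_j \times \si_k \subset \si_i \times \si_{j+k} \subset \si_{i+j+k}$; one writes out the two composites and sees they agree on the nose at the chain level up to the usual chain-homotopy, which is absorbed in passing to cohomology. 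Counitality of $\tilde\Delta$ is immediate from the component-$0$ modification, since $A_0 \cong \mathbb F_p$ after applying $\sum_e$. That the product $\cdot$ is associative, unital on each component, and zero across components is exactly the standard cup-product structure on $H^*(\si_n; A_n)$ for a $k[\si_n]$-algebra $A_n$, inherited from the coalgebra structure of $W_*^{\si_n}$ via the diagonal approximation; nothing here is new. The compatibility of $\tilde\Delta$ and $\cdot$ making it a bialgebra — i.e. that $\tilde\Delta$ is multiplicative — comes from the fact that the diagonal $W^{\si_{i+j}}_* \to W^{\si_{i+j}}_* \otimes W^{\si_{i+j}}_*$ restricted to $\si_i \times \si_j$ is chain-homotopic to the tensor product of the diagonals on $W^{\si_i}_*$ and $W^{\si_j}_*$, together with the hypothesis that $\chi_{i,j}$ is an algebra map; the bigrading conditions (product lands in the same component, coproduct splits the component additively) are visible directly from the domains and codomains of the maps in Definition~\ref{main-dphr-structure}.

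Next I would establish that $(H^*(\si_\bullet; A_\bullet), \tilde\Delta, \tilde\odot)$ is a (non-commutative) bialgebra. Associativity of $\tilde\odot$ uses transitivity of transfers, $\tr_{\si_{i+j+k}}^{\si_i\times\si_j\times\si_k} = \tr_{\si_{i+j+k}}^{\si_{i+j}\times\si_k}\circ(\tr_{\si_{i+j}}^{\si_i\times\si_j}\otimes\id)$ up to the same chain-level identifications, together again with the first coherence square for $\chi$. Unitality is handled by the component-$0$ class. The key bialgebra compatibility is the identity $\tilde\Delta(f\tilde\odot g) = \tilde\Delta f \tilde\odot \tilde\Delta g$ (with Koszul signs in the super case): this is the Mackey-type double-coset formula for restricting a transferred class. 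Concretely, restricting along $\si_a\times\si_b \hookrightarrow \si_{n+m}$ a class transferred from $\si_n\times\si_m$ expands as a sum over double cosets $\si_a\times\si_b \backslash \si_{n+m} / \si_n\times\si_m$, and these double cosets are indexed precisely by the ways of writing $a = i+k$, $b = j+l$ with $n = i+j$, $m = k+l$ — which is exactly the index set appearing in $\tilde\Delta f \tilde\odot \tilde\Delta g$. This Mackey/double-coset computation is the technical heart of the statement and, I expect, the main obstacle: one must carefully match the double-coset representatives with the block-permutation $\sigma_{n,m}$ of the second coherence axiom, track the resulting shuffle signs, and confirm that the conjugation actions on the $A_n$ via $\chi$ are consistent. (The authors presumably have a preferred set of coset representatives adapted to the $\chi_{i,j}$, and the second coherence square is exactly what is needed to make the bookkeeping close up.)

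Finally, I would note what is \emph{not} being claimed here: commutativity and cocommutativity are deliberately omitted — hence ``non-commutative Hopf algebra'' — because the transfer product $\tilde\odot$ fails to be commutative at this preliminary stage (the missing signs alluded to right after Definition~\ref{main-dphr-structure} are what will fix this later). So the antipode claim, if needed, is obtained formally: a connected bigraded bialgebra with the component grading bounded below in each degree admits a unique antipode constructed by the standard inductive convolution-inverse argument, using that the coproduct is conilpotent on $\overline{H}$. Assembling these pieces — coassociativity and multiplicativity of $\tilde\Delta$; associativity and unitality of $\tilde\odot$; the double-coset compatibility; the bigrading conditions read off directly — gives both assertions of the theorem. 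The only step requiring genuine care is the double-coset formula for $\tilde\Delta(f\tilde\odot g)$; everything else is a matter of transporting standard resolution-level identities through the coherence axioms for the product series $\{A_n\}$.
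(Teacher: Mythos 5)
Your proposal is correct and follows essentially the same route as the paper: the paper likewise derives associativity of $\tilde{\Delta}$ and $\tilde{\odot}$ from the first coherence square for the $\chi_{i,j}$ together with transitivity of restriction and transfer, and establishes the bialgebra compatibility of $\tilde{\Delta}$ with $\tilde{\odot}$ via exactly the double-coset decomposition you identify (indexed by $a+b=n$, $c+d=m$), with the $\tilde{\Delta}$--$\cdot$ compatibility coming from the diagonal diagram and the fact that all maps in the definition of $\tilde{\Delta}$ are algebra maps. The only cosmetic difference is that the paper phrases the argument through commutative diagrams of finite coverings of classifying spaces and leans on Strickland--Turner and Giusti--Salvatore--Sinha for the trivial-coefficient case, whereas you work directly at the resolution level; the underlying computations are identical.
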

\begin{proof}
This theorem has been essentially proved  by Strickland and Turner \cite{Strickland-Turner:97} for generalized cohomology theories, and in particular for cohomology with coefficients in the trivial representations. Their proof can be reinterpreted diagrammatically in a group-theoretic setting as explained by Giusti--Salvatore--Sinha \cite{Sinha:12}.
In our case, the modification in component zero is introduced only to ensure that the unit behaves correctly with both the coproduct and the counit. With this exception, those diagrams also prove the statement for coefficients in a general super-product series of algebras, with the exception of those yielding the commutativity of $ \tilde{\odot} $ and the cocommutativity of $ \tilde{\Delta} $, that involve conjugation by elements of the symmetric groups.

The associativity of $ \tilde{\Delta} $ and $ \tilde{\odot} $ is shown using the following commutative diagram of finite coverings:
\begin{center}
	\begin{tikzcd}
		B(\si_n \times \si_m \times \si_l) \arrow{r} \arrow{d} & B(\si_{n+m} \times \si_l) \arrow{d} \\
		B(\si_{n} \times \si_{m+l}) \arrow{r} & B(\si_{n+m+l})
	\end{tikzcd}
\end{center}
When taking cohomology with coefficient in the product series we obtain the following commutative diagram:
\begin{center}
	\adjustbox{scale=0.6}{%
	\begin{tikzcd}
		H^*(\si_{n+m+l};A_{n+m+l}) \arrow{r}{\rho} \arrow{d}{\rho} & H^*(\si_{n+m} \times \si_l; A_{n+m+l}) \arrow{r}{\chi_{n+m,l}} \arrow{d}{\rho} & H^*(\si_{n+m} \times \si_l; A_{n+m} \otimes A_l) \arrow{r}{\cong} \arrow{d}{\rho} & H^*(\si_{n+m};A_{n+m}) \otimes H^*(\si_l; A_l) \arrow{d}{\rho \otimes \id} \\ 
		H^*(\si_n \times \si_{m+l}; A_{n+m+l}) \arrow{r}{\rho} \arrow{d}{\chi_{n,m+l}} & H^*(\si_n \times \si_m \times \si_l; A_{n+m+l}) \arrow{r}{\chi_{n+m,l}} \arrow{d}{\chi_{n,m+l}} & H^*(\si_n \times \si_m \times \si_l; A_{n+m} \otimes A_l) \arrow{r}{\cong} \arrow{d}{\chi_{n,m} \otimes \id} & H^*(\si_n \times \si_m; A_{n+m}) \otimes H^*(\si_l; A_l) \arrow{d}{\chi_{n,m} \otimes \id} \\
		H^*(\si_n \times \si_{m+l}; A_n \otimes A_{m+l}) \arrow{r}{\rho} \arrow{d}{\cong} & H^*(\si_n \times \si_m \times \si_l; A_n \otimes A_{m+l}) \arrow{r}{\id \otimes \chi_{m,l}} \arrow{d}{\cong} & H^*(\si_n \times \si_m \times \si_l; A_n \otimes A_m \otimes A_l) \arrow{r}{\cong} \arrow{d}{\cong} & H^*(\si_n \otimes \si_m; A_n \otimes A_m) \otimes H^*(\si_l; A_l) \arrow{d}{\cong} \\
		H^*(\si_n; A_n) \otimes H^*(\si_{m+l}; A_{m+l}) \arrow{r}{\id \otimes \rho} & H^*(\si_n; A_n) \otimes H^*(\si_m \times \si_l; A_m \otimes A_l) \arrow{r}{\id \otimes \chi_{m,l}} & H^*(\si_n; A_n) \otimes H^*(\si_m \times \si_l; A_m \otimes A_l) \arrow{r}{\cong} & H^*(\si_n; A_n) \otimes H^*(\si_m; A_m) \otimes H^*(\si_l; A_l).
	\end{tikzcd}}
\end{center}
In the diagram above $ \rho $ denotes restriction maps, and the central square commutes by the coherence condition $ 1 $ of the definition of product series of symmetric representation. The outer square looks as follows and encodes the co-commutativity of $ \tilde{\Delta} $.
\begin{center}
\begin{tikzcd}
	H^*(\si_n;A_n) \otimes H^*(\si_m;A_m) \otimes H^*(\si_l;A_l) 
	& \arrow{l}{\tilde{\Delta} \otimes 1} H^*(\si_{n+m} \times \si_l; A_{n+m}) \cong H^*(\si_{n+m}; A_{n+m}) \otimes H^*(\si_l; A_l) \\
	H^*(\si_n;A_n) \otimes H^*(\si_{m+l}; A_{m+l}) 
	\arrow{u}{1 \otimes \tilde{\Delta}} & H^*(\si_{n+m+l}; A_{n+m+l}) \arrow{l}{\tilde{\Delta}} \arrow{u}{\tilde{\Delta}}.
\end{tikzcd}
\end{center}
Moreover, the representation considered in the diagram at the bottom are all canonically identified with the restriction of the $ \si_{n+m+l}$-representation $ A_{n+m+l} $.
Therefore, there is also an induced diagram of transfer maps, obtained by replacing restrictions with transfer maps in the commutative diagrams above:
\begin{center}
	\begin{tikzcd}
		H^*(\si_n;A_n) \otimes H^*(\si_m;A_m) \otimes H^*(\si_l;A_l)) 
		\arrow{r}{\tilde{\odot} \otimes 1}\arrow{d}{1 \otimes \tilde{\odot}} & H^*(\si_{n+m} \times \si_l; A_{n+m}) \cong H^*(\si_{n+m}; A_{n+m}) \otimes H^*(\si_l; A_l) \arrow{d}{\tilde{\odot}}\\
		H^*(\si_n;A_n) \otimes H^*(\si_{m+l}; A_{m+l}) 
		 \arrow{r}{\tilde{\odot}} & H^*(\si_{n+m+l}; A_{n+m+l}).
	\end{tikzcd}
\end{center}

The fact that both $ \tilde{\odot} $ and $ \cdot $ are morphisms of coalgebras with the preliminary coproduct is proved in a similar way using the following commutative diagrams for all $ n,m \in \mathbb{N} $, where $ i $ denotes the standard inclusions, $ \tau \colon \si_b \times \si_c \to \si_c \times \si_b $ the switching map, and $ d $ the diagonal maps:
\begin{center}
	\begin{tikzcd}[column sep=10em]
		\bigsqcup B(\si_a \times \si_b \times \si_c \times \si_d) \arrow{r}{\bigsqcup B(i_{a,b} \times i_{c,d})} \arrow{d}{\bigsqcup B(i_{a,c} \times i_{b,d}) \circ B(1 \times \tau \times 1)} & B(\si_n \times \si_m) \arrow{d}{B(i_{n,m})} \\
		\bigsqcup B(\si_p \times \si_q) \arrow{r}{\bigsqcup B(i_{p,q})} & B(\si_{n+m}) \\
		B(\si_n \times \si_m) \arrow{r}{B(1 \times \tau \times 1) \circ B(d_{\si_n} \times d_{\si_m})} \arrow{d}{i_{n,m}} & B((\si_n \times \si_m)^2) \arrow{d}{i_{n,m}\times i_{n,m}}\\
		B(\si_{n+m}) \arrow{r}{d_{\si_{n+m}}} & B(\si_{n+m}^2) \\
	\end{tikzcd}
\end{center}
where the first unions are over all $p+q=n+m$ and the second union is over all $a+b=n, c+d=m, a+c=p, b+d=q$.

 It is well-known in general that $ \cdot $ is associative. Since, under our hypotheses, all the morphisms involved in the definition of the coproduct are algebra maps with respect to the cup product, so is $ \tilde{\Delta} $.
\end{proof}

In the cases of our interest, the statement of Theorem \ref{main-section-3-pre} can be improved to show the commutativity of a twisted version of the coproduct and the products and Hopf rig distributivity.

%
\begin{definition} \label{def:basic objects}

Given a graded commutative algebra $ A $ over the field $ \mathbb{F}_p $, define the {\bf algebraic extended powers functor }
by $EP(A) =  \mathbb{F}_p \oplus \bigoplus_{n,d \geq 0} H^d(\si_n; {T_\rho A}_n) $ for odd primes.  Replace  $T_\rho A$ by $T A$ 
to obtain the definition when $p=2$.
Given a pointed topological space $ X $, we define $ \hdp(X) = EP(\tilde{H}(X)) $.
\end{definition}

We observe three different gradings on $ EP(A) $.  The first two are the cohomological grading $ d $, and component $ n $, defined before.
We also use the decomposition ${T_\rho A}_n \cong A^{\otimes n} \oplus (sgn \otimes A^{\otimes n})$ to induce a decomposition on $EP(A)$.

\begin{definition}
Over $\mathbb{F}_p$ with $p$ odd, consider the decomposition 
$$EP(A) \cong  \bigoplus_{n,d \geq 0} H^d(\si_n; A^{\otimes n} ) \oplus \bigoplus_{n,d \geq 0} H^d(\si_n; sgn \otimes A^{\otimes n} ).$$ 
We define
{\bf sign degree} $e$ of an element to be zero or even if it is in the first summand of this decomposition of 
 and to be one or odd if it is in the second summand.  Equivalently, we define $EP(A)^{d,n,e}$ to be the summand $H^d(\si_n; A^{\otimes n} )$
 when $e = 0$ or the summand $H^d(\si_n; sgn \otimes A^{\otimes n} )$ when $e= 1$.  
 We define the {\bf total degree} to be $t=ne+d$. As the sign degree, the total degree is only defined modulo $ 2 $.  
\end{definition}


By convention, if $ p = 2 $, then the sign degree is always even/ zero.  In order to ultimately account for signs we need the following.

\begin{definition}
A tri-graded component super-Hopf ring is a super-Hopf ring endowed with the component grading $ n $, the $ \mathbb{Z}/2\mathbb{Z}$-grading $ e $ arising from the ``super-'' structure, and an additional grading $ d $ over $ \mathbb{N} $ where $ \Delta $ preserves all three gradings, $ \odot $ preserves $ n $ and $ d $, $ \cdot $ preserves $ d $ and $ e $ in each component, and the axioms of super-Hopf ring are considered in the graded sense with respect to the total degree $ t = ne+d $.
\end{definition}

\begin{definition}\label{main-dphr-structure-modified}
Let $ EP(A) $ be the extended powers of an algebra, as above. 
Let $ \tilde{\Delta} $ and $ \tilde{\odot} $ be the maps constructed in Definition \ref{main-dphr-structure}.
We define:
 \begin{itemize}
 \item For all $ x $ such that, using Sweedler's notation, $ \tilde{\Delta}(x) = \sum x_{(1)} \otimes x_{(2)} $, with $ x_{(1)} $, $ x_{(2)} $ 
 all tri-homogeneous, the modified coproduct  is given by
 $$ \Delta(x) = \sum (-1)^{d(x_{(2)})n(x_{(1)})e(x_{(1)})} x_{(1)} \otimes x_{(2)}.$$
 \item For all $ x,y $ tri-homogeneous, the modified transfer product $ x \odot y = (-1)^{d(y)n(x)e(x)} x \tilde{\odot} y $.
 \end{itemize}
\end{definition}

We can finally state our main result of this section
\begin{theorem}\label{main:structure}
Let $ A $ be a graded commutative algebra of finite type over $ \mathbb{F}_p $. Then $ EP(A)$, 
with the coproduct $ \Delta $ and the products $ \odot $ and $ \cdot $,
 is a super-component Hopf rig bigraded by the component $ n $ and the total degree $ t $. If $ p = 2 $, it is a component Hopf ring bigraded by the component $ n $ and the cohomological dimension $ d $.
\end{theorem}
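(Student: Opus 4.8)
The plan is to build on Theorem~\ref{main-section-3-pre}, which already furnishes $EP(A)$ — up to the modification in component zero — with a coproduct $\tilde\Delta$, a transfer product $\tilde\odot$, and a cup product $\cdot$ making it a non-commutative bigraded component bialgebra. What remains is exactly three things: (a) graded (co)commutativity of $(\Delta,\odot,\cdot)$ with respect to the total degree $t=ne+d$; (b) the Hopf rig distributivity axiom relating $\cdot$ and $\odot$ through $\Delta$, with the Koszul sign from Definition~\ref{def:bigraded component Hopf rig}; and (c) verifying that the sign modifications of Definition~\ref{main-dphr-structure-modified} are precisely what is needed to pass from the ``preliminary'' structures to genuinely graded-(co)commutative and distributive ones. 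I would organize the proof as: first reduce to the three items above by citing Theorem~\ref{main-section-3-pre}; then handle each sign bookkeeping item in turn; then note the $p=2$ case is the specialization where $e\equiv 0$, so that $t=d$ and all signs are trivial, giving an honest component Hopf ring.

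For the (co)commutativity: cocommutativity of $\tilde\Delta$ was already recorded in the proof of Theorem~\ref{main-section-3-pre} (the ``outer square''), but it is cocommutative only after incorporating signs, since swapping the two factors of a tensor product of cohomology classes and interchanging the roles of $\si_i,\si_j$ (via conjugation by $\sigma_{i,j}$, which has sign $(-1)^{ij}$ acting on $sgn$) introduces both a Koszul sign on dimensions and a sign from the sign representation. I would compute the total sign produced by the switch map composed with conjugation by $\sigma_{n,m}$ on $H^*(\si_{n+m};{T_\rho A}_{n+m})$, observe that it equals $(-1)^{t(x)t(y)}$ modulo the discrepancy $(-1)^{d(x_{(2)})n(x_{(1)})e(x_{(1)})}$, and check that this discrepancy is exactly cancelled by the modified coproduct $\Delta$ of Definition~\ref{main-dphr-structure-modified}. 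The analogous computation with transfer maps in place of restriction maps (using that transfer commutes with conjugation appropriately) gives commutativity of $\odot$ after the modification $x\odot y=(-1)^{d(y)n(x)e(x)}x\tilde\odot y$. Commutativity of $\cdot$ on each component is the ordinary graded-commutativity of cup product on $H^*(\si_n;A^{\otimes n}\otimes\rho)$, using that $A$ and $\rho$ are graded-commutative and that the relevant degree on the $n$-th component for $\cdot$ is $d$ together with the fixed sign degree $e$ — this contributes no new sign since $\cdot$ preserves both $e$ and $n$.

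For distributivity: I would start from the topological/diagrammatic identity underlying the Strickland--Turner argument — the double-coset decomposition of $B(\si_n\times\si_m)\times B(\si_n\times\si_m)$ inside $B(\si_{n+m})$ that is drawn in the proof of Theorem~\ref{main-section-3-pre} — which yields the unsigned distributivity $a\cdot(b\,\tilde\odot\,c)=\sum (a_{(1)}\cdot b)\,\tilde\odot\,(a_{(2)}\cdot c)$ at the level of the preliminary structures, because each map in sight is a cup-product-compatible restriction or transfer. Then I would substitute $\Delta=(\text{signs})\tilde\Delta$, $\odot=(\text{signs})\tilde\odot$ and track how the signs $(-1)^{d(x_{(2)})n(x_{(1)})e(x_{(1)})}$ and $(-1)^{d(c)n(b)e(b)}$ propagate through; the claim is that they assemble into exactly the single Koszul sign $(-1)^{|a''||b|}$ (in total degree) demanded by Definition~\ref{def:bigraded component Hopf rig}. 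This is a finite, if fiddly, algebraic identity in the exponents, which is the kind of check one does by expanding $t=ne+d$ on each homogeneous piece and using $e(x_{(1)})+e(x_{(2)})=e(x)$, $n(x_{(1)})+n(x_{(2)})=n(x)$, $d(x_{(1)})+d(x_{(2)})=d(x)$.

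The main obstacle I anticipate is precisely this sign reconciliation in the distributivity axiom: one must simultaneously juggle the Koszul signs on cohomological dimension $d$, the signs coming from the sign representation $sgn$ (which enter whenever a symmetric group element of odd sign is conjugated past the coefficient module), and the component-grading signs packaged into the definitions of $\Delta$ and $\odot$, and confirm that the modified definitions in Definition~\ref{main-dphr-structure-modified} are not merely a convenient choice but the unique one making all three axioms (cocommutativity, commutativity of $\odot$, distributivity) hold at once. Everything else — associativity of all three products, bialgebra compatibility, the component-grading vanishing conditions, and the unit statement that the unit of $A_n$ is the $n$-th power (under $\cdot$) of the unit of $A_1$, which follows from $\chi_{1,\dots,1}$ being the canonical iterated restriction — is either inherited verbatim from Theorem~\ref{main-section-3-pre} or is a direct unwinding of definitions. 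Finally, setting $p=2$ collapses $e$ to $0$, $t$ to $d$, and every sign to $+1$, so $EP(A)=\hdp(X)$ becomes a genuine bigraded component Hopf ring, completing the proof.
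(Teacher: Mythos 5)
Your proposal is correct and follows essentially the same route as the paper: reduce to Theorem~\ref{main-section-3-pre}, observe that the only obstructions are the conjugation maps by $\sigma_{n,m}$ acting by $(-1)^{nm}$ on the sign representation, verify that the sign modifications of Definition~\ref{main-dphr-structure-modified} convert the resulting twisted (co)commutativity and distributivity into the graded versions with respect to total degree, and note that $p=2$ collapses all signs. The paper is, if anything, terser than your plan on the distributivity bookkeeping, which it leaves as ``analyzing the corresponding diagram.''
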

\begin{proof}

Theorem \ref{main-section-3-pre} guarantees that $ EP(A) $ is a non-commutative Hopf algebra with the non-modified coproduct and transfer product $ \tilde{\Delta} $ and $ \tilde{\odot} $.  Clearly the additional signs do not disrupt the (co)associativity of the transfer product $ \odot $ and the coproduct $ \Delta $, that still form a Hopf algebra, and $ \Delta $ and $ \cdot $ still form a component bialgebra.
On the addend corresponding to the trivial representation, the remaining Hopf rig axioms are proved diagrammatically as in \cite[Theorem 2.4]{Giusti-Sinha:17}.
On the addend corresponding to the sign representation, one can adapt their proof by additionally keeping track of coefficients. However, the cocommutativity of $ \tilde{\Delta} $, the commutativity of $ \tilde{\odot} $ and the Hopf ring distributivity axiom fail because, contrary to constant coefficients, inner automorphisms do not induce the identity on cohomology with coefficients in non-trivial representations.
For instance, the diagram that would encode the commutativity of $ \tilde{\odot} $ and the cocommutativity of $ \tilde{\Delta} $ is the following:
\begin{center}
\begin{tikzcd}
\si_n \times \si_m \arrow{r}{\tau} \arrow[hook]{d} &\si_m \times \si_n \arrow[hook]{d} \\
\si_{n+m} \arrow{r}{\gamma} & \si_{n+m},
\end{tikzcd}
\end{center}
where $ \tau $ is the obvious switching isomorphism and $ \gamma $ is the conjugation by a permutation $ \sigma_{n,m} \in \si_{n+m} $ depending on $ n $ and $ m $.
In cohomology with trivial coefficients, such conjugation maps induce the identity, but in cohomology with coefficients in $ \sgn $ they induce the multiplication by $ (-1)^{nm} $. Therefore $ x \tilde{\odot} y = (-1)^{n(x)n(y)e(x)e(y)+d(x)d(y)} y \tilde{\odot} x $.
The additional signs introduced with the modified transfer product guarantee that $ x \odot y = (-1)^{t(x)t(y)} y \odot x $.
The same argument shows that $ \Delta $ is cocommutative. By analyzing the corresponding diagram, one proves similarly that $ (EP(A), \Delta,\odot,\cdot) $ satisfy the graded version of Hopf ring distributivity, with respect to the total degree.
\end{proof}

\begin{remark}
	Additively, $ H^*(\si_n; \sgn \otimes A^{\otimes n}) $ is isomorphic to $ H^*(\si_n, \Sigma(A)^{\otimes n}) $, where $ \Sigma $ is the suspension functor on graded vector spaces. Hence, for $ p $ odd, $ DP(A) \cong \bigoplus_{n \geq 0} H^*(\si_n; A^{\otimes n}) \oplus H^*(\si_n; \Sigma(A)^{\otimes n}) $ as vector spaces. The total degree on $ EP(A) $ is defined in such a way that this isomorphism becomes a degree-preserving linear map. Seen through this lense, the correction signs appearing in the modified coproduct and transfer product become less mysterious and arise from the Koszul sign conventions on $ Hom(W^{\si_n}_*;\Sigma(A)) $.
\end{remark}


For $p=2$, $\hdp(X)$ differs from the cohomology of $DX$ by an innocuous completion.  We see something similar at odd primes if we take only the
$H^*(D_n X; \mathbb{F}_p)$ summands.   Our work ahead is to understand the algebraic extended powers functor as a free functor  to the
category of Hopf rings with additive divided powers, which we next develop.

\begin{remark}\label{geometry2}
As we remarked in the first section, if $X$ is a manfiold then cochains of finite dimensional approximations of $CX$
can be defined by submanifolds, which informally we consider through ``conditions'' on the underlying points of the
configuration or on their labels in $X$.  

Cup product as usual is given by intersecting, or in this case requiring that two sets of conditions hold.  Transfer product
defines a condition on $n+m$ points by asking that  a condition is satisfied on some subcollection of $n$ points and another is satisfied
 on $m$ points.  The transfer product of a class with itself is divisible by two because  the condition is satisfied
 whether some $n$ points or its complement is considered ``first.''  The divided powers 
operation repeats  a condition on $n$ points $k$ times to define a condition on $nk$ points.  

See Theorem 4.9 of \cite{Sinha:12} where we discuss the representatives for Hopf ring generators as subvarieties defined by $2^n$ points sharing a coordinate,
and \cite{Giusti-Sinha:14} which gives a  development through cellular models for one-point compactifications of configuration spaces.
A chain-level model for the transfer product is given which would also work for the divided powers operations.  
(Note that the cup product result claimed but not proven in \cite{Giusti-Sinha:14}   is not correct.)
\end{remark}

\subsection{Divided powers  on cohomology of symmetric groups with coefficients in a product series} \label{dpproductseries}

We now develop a divided powers structure on $ \hdp(X) $, as a special case of divided power structure on cohomology of symmetric
groups with coefficients in a product series of algebras.
In order to define it we need restriction and transfer maps for subgroups of symmetric groups which are defined by partitions. 

\begin{definition}\label{partitionsubgroup}
A {\bf labeled multipartition} $\pi$ of a set $S$ is the labeling of the leaves of a rooted tree by the elements
of $S$, possibly with additional labeling of internal vertices.  The subsets defined by considering
all of the leaf labels over a fixed internal vertex are called the {\bf blocks} of the labeled multipartition.
The {\bf depth} of a block  is the number of edges between its corresponding internal vertex
and the root vertex.

Given a labeled multipartition $\pi$ of $\{1, \dots, n\}$ define its {\bf automorphism group} 
$\si_{\pi}$ to be the automorphisms of the tree which preserve any additional labels of internal
vertices. We identify $\si_\pi$ canonically with subgroup of $\si_n$  through the action on leaves.
\end{definition}

A labeled multipartition is  determined up to additional labels by its blocks.  The additional labels of internal
vertices are sometimes used to make such structures more rigid, ruling out automorphisms between blocks.
Since we only use them in this way, we do not give explicit additional 
labels or name the labeling set; we only
indicate when labels are shared or they differ.

There are some basic constructions which give rise to the multipartitions and thus 
automorphism subgroups we consider.

\begin{definition}

If $\pi_1, \dots, \pi_k$ are labeled multipartitions of $S_1, \dots, S_k$ define the {\bf union} 
$\pi_1 \cup \dots \cup \pi_k$ to be the
multipartition of $\bigsqcup S_i$ 
in which the trees defining the $\pi_i$ are grafted to a (new) root with $k$ edges, 
and the internal vertices corresponding to the original roots are given distinct labels.

If ${\pi}$ is a labeled multipartition of $S$ define the {\bf multiple} $m \cdot \pi$ to be the labeled multipartition 
of $\bigsqcup_{i=1}^m S$ in which $m$ copies of the tree defining $\pi$ are grafted to a 
single root and given the same label.

Let $\ul{n}$ denote the trivial labeled partition of $\{1, \dots, n\}$, defined by a rooted
tree with no internal vertices.

\end{definition}

It is convenient to use the standard finite sets $\{1, \dots, n\}$ for all labels, in which case
 we identify the abstract disjoint union $\{ 1, \dots, n_1\} \sqcup \dots \sqcup \{1, \dots, n_k\}$
with the set $\{ 1, \dots, \sum n_i \}$ through the standard ``ordering on the page.''  We identify
 $\bigsqcup_m \{ 1, \dots, n \}  \cong \{ 1, \dots, mn \}$ similarly.

The fact that the union of labeled multipartitions has distinct labels while those of a multiple multipartition
are repeated is a key distinction, in a sense  giving rise to our divided powers operations.  
In particular, $\si_{\ul{n} \cup \ul{m}}$ is isomorphic to $\si_n \times \si_m$, including when $n =m$,
while $\si_{2 \cdot \ul{n}}$ is isomorphic to $\si_n \wr \si_2$.

We consider standard set partitions as labeled multipartitions through trees whose internal vertices
all have depth one, and identical labels.
Recall that the set of partitions of a set is a poset under refinement and that this poset has meet and join operators,
which we denote  $ \wedge $ and $ \vee $, that provide it with a lattice structure.
The following properties of $ \si_{\pi} $ are  straightforward. 

\begin{lemma}\label{lem:partitions}
Let $ \pi = \{ S_i \}$ and $ \pi' $ be partitions of $ \{ 1, \dots, n \} $. 
\begin{enumerate}
\item   $ \si_{m \cdot{\ul{ k}}} $ is isomorphic to the wreath product $ \si_k \wr \si_m $.  More generally,
  if $ m_\pi(i) $ is the number of parts of $ \pi $ whose cardinality is $ i $, 
  $ \si_{\pi} $ is conjugate to $ \bigtimes_{i=0}^n \si_i \wr \si_{m_\pi(i)} $ in $ \si_n $.
\item For any $ \sigma \in \si_n$, the conjugate $\sigma \si_{\pi} \sigma^{-1}$ coincides with $\si_{\sigma \pi} $.
\item The map $ [\sigma] \mapsto \sigma\pi $ is a bijection between the cosets 
$ \si_n / \si_{\pi} $ and the set of partitions that are permutations of $ \pi $.
\end{enumerate}
\end{lemma}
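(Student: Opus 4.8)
The plan is to recognize $\si_\pi$ as the $\si_n$-stabilizer of $\pi$ for the evident action of $\si_n$ on the set of partitions of $\{1,\dots,n\}$, and then to read off all three statements from elementary orbit--stabilizer considerations, leaving only an explicit wreath-product computation for part (1). The preliminary observation is this: $\si_n$ acts on partitions by relabelling leaves, so that $\tau\cdot\pi$ is the partition with blocks $\{\tau(B): B\in\pi\}$; and since a partition $\pi$ viewed as a depth-one labeled multipartition has all its internal-vertex labels equal, a leaf permutation $\tau$ extends to an automorphism of the defining tree of $\pi$ exactly when it carries every block of $\pi$ onto a block of $\pi$, i.e.\ exactly when $\tau\cdot\pi=\pi$. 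Hence $\si_\pi=\Stab_{\si_n}(\pi)$.

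Granting this, part (2) is the standard fact that stabilizers along an orbit are conjugate: $\Stab_{\si_n}(\sigma\cdot\pi)=\sigma\,\Stab_{\si_n}(\pi)\,\sigma^{-1}$, which reads $\si_{\sigma\pi}=\sigma\si_\pi\sigma^{-1}$. For part (3), I would first note that the $\si_n$-orbit of $\pi$ consists precisely of the partitions with the same multiset of block sizes as $\pi$, that is, the ``partitions that are permutations of $\pi$'', because any bijection of $\{1,\dots,n\}$ matching up the blocks of two such partitions lies in $\si_n$; the orbit--stabilizer bijection $\si_n/\Stab_{\si_n}(\pi)\to\si_n\cdot\pi$, $[\sigma]\mapsto\sigma\cdot\pi$, together with $\Stab_{\si_n}(\pi)=\si_\pi$, is then exactly the asserted bijection.

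For part (1), I would argue directly from the combinatorics of the defining trees. For $m\cdot\ul{k}$ the tree is a root with $m$ children $v_1,\dots,v_m$, all bearing the same label, each $v_j$ carrying $k$ leaves; a tree automorphism fixes the root, permutes $\{v_1,\dots,v_m\}$ arbitrarily (equal labels, equal numbers of descendant leaves), and permutes the $k$ leaves under each $v_j$ arbitrarily, so the automorphism group, identified with a subgroup of $\si_{mk}$ via its action on leaves, is $\si_k^{\times m}\rtimes\si_m=\si_k\wr\si_m$. For a general set partition $\pi$ the same analysis applies one internal vertex at a time, with the single extra constraint that a tree automorphism must send an internal vertex with $i$ descendant leaves to another with $i$ descendant leaves; grouping the internal vertices according to the size $i$ of the corresponding block yields an isomorphism $\si_\pi\cong\prod_{i=0}^{n}\si_i\wr\si_{m_\pi(i)}$. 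To upgrade ``isomorphic'' to ``conjugate in $\si_n$'', take the size-sorted partition $\pi_0$ whose blocks are the consecutive intervals of $\{1,\dots,n\}$ listed in nondecreasing length; $\pi_0$ has the same block-size profile as $\pi$, for $\pi_0$ the isomorphism above is literally an equality $\si_{\pi_0}=\prod_i\si_i\wr\si_{m_\pi(i)}$ with the standard embedding, and parts (2)--(3) give $\si_\pi=\sigma\si_{\pi_0}\sigma^{-1}$ for any $\sigma$ with $\sigma\cdot\pi_0=\pi$.

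I do not anticipate a genuine obstacle. The two points requiring care are keeping the tree-automorphism description of $\si_\pi$ honest --- in particular remembering that automorphisms must preserve descendant-leaf counts, which is exactly what forces the grouping by block size in part (1) --- and the distinction between $\si_\pi$ being \emph{equal} to the standard product of wreath products (valid only for the size-sorted $\pi_0$) and being merely \emph{conjugate} to it (valid in general), which is why it is cleanest to deduce part (1) only after parts (2) and (3).
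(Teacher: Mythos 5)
Your proof is correct. The paper offers no argument for this lemma (it is introduced with ``The following properties of $\si_{\pi}$ are straightforward''), and your identification of $\si_\pi$ with $\Stab_{\si_n}(\pi)$, followed by orbit--stabilizer for parts (2) and (3) and the explicit tree-automorphism computation plus conjugation to a size-sorted representative for part (1), is exactly the standard verification the authors leave to the reader.
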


Partition subgroups play key roles in defining the divided powers. 
\begin{definition} \label{dp-series}
Let $\{A_n\}$ be a product series of algebras. For $x \in H^*(\si_n;A_n)$ we define $x\dip{k} \in H^*(\si_{kn};A_{kn})$ 
to be 
$$x\dip{k} := tr^{\si_{kn}}_{\si_n \wr \si_k} (x^{\otimes k}),$$
where we identify $ A_{kn} $ with $ A_n^{\otimes k} $ via the structural isomorphism.
\end{definition}

\begin{proposition}\label{firststep} 
If $\{A_n\}$ is a product series of algebras then
 $\bigoplus_n H^*(\si_n; A_n)$ with cup and transfer product from Definition \ref{main-dphr-structure}, coproduct from Definition \ref{main-dphr-structure-modified} and divided powers from Definition \ref{dp-series} satisfies the axioms 
 for divided powers structures.
 \end{proposition}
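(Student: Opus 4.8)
The plan is to verify the five divided powers axioms of Definition~\ref{def:divided powers} directly for the operation $x\dip{k} = \tr^{\si_{kn}}_{\si_n \wr \si_k}(x^{\otimes k})$, working component by component, since everything vanishes between distinct components. Throughout I will use the double coset formula for composites of transfer and restriction maps, together with the combinatorial model of partition subgroups from Lemma~\ref{lem:partitions}: the key point is that $\si_n \wr \si_k$ is exactly $\si_{k \cdot \ul{n}}$, and nested wreath products $\si_n \wr \si_k \wr \si_\ell$ correspond to iterated multiple multipartitions. The ideal $I$ is generated by the $x\dip{j}$ with $j \geq 1$; note $H^*(\si_0; A_0) = \mathbb{F}_p$ supplies the unit, and the $0,1$ cases are immediate since $\si_n \wr \si_1 = \si_n$ and $\si_n \wr \si_0 = \si_0$ forces $x\dip{0}$ to be the unit in component $0$.

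First I would handle Distributivity over Multiplication (axiom~\ref{r3}), which is the easiest: scalars $\lambda \in A$ live in component zero, so $\lambda x \in H^*(\si_n; A_n)$ and $(\lambda x)^{\otimes k} = \lambda^{\otimes k} \cdot x^{\otimes k}$; because $\lambda^{\otimes k}$ is restricted from $A_0 = \mathbb{F}_p$ it is $\si_n \wr \si_k$-invariant in the strong sense needed, so the projection formula for transfers gives $\tr(\lambda^{\otimes k} x^{\otimes k}) = \lambda^k \tr(x^{\otimes k})$. Next, for the Law of Exponents (axiom~4) I would compute $x\dip{n} \cdot x\dip{m}$ within $H^*(\si_{(n+m)q}; A_{(n+m)q})$ using the cup product, which is restriction along the diagonal followed by multiplication; the double coset decomposition of $\si_q \wr \si_n \cap \sigma(\si_q \wr \si_m)\sigma^{-1}$ inside $\si_{(n+m)q}$, combined with the fact that the only surviving double coset contributes $\binom{n+m}{n}$ copies of $x\dip{n+m}$, yields the binomial coefficient. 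For Composition (axiom~\ref{r5}), $(x\dip{m})\dip{n}$ is a transfer from $\si_n \wr (\si_q \wr \si_m) = \si_n \ltimes (\si_q \wr \si_m)^n$ composed with the inner transfer, and the transitivity of transfer reduces it to a transfer from a subgroup of $\si_q \wr \si_{nm}$; counting the index, or equivalently counting the orbits of $\si_n \wr \si_m$ acting on the relevant coset space, produces the multinomial factor $\tfrac{(nm)!}{(m!)^n n!}$.

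The main obstacle will be the Binomial relation (axiom~\ref{r2}): $(x+y)\dip{k} = \sum_{i=0}^k x\dip{i} \odot y\dip{k-i}$, where $x, y \in H^*(\si_n; A_n)$ and the addition product $\odot$ is the (modified) transfer product raising components. Expanding $(x+y)^{\otimes k} = \sum_S x^{\otimes S} \otimes y^{\otimes S^c}$ over subsets $S \subseteq \{1,\dots,k\}$ and applying $\tr^{\si_{kn}}_{\si_n \wr \si_k}$, I would group the $2^k$ terms by $|S| = i$; the subgroup $\si_n \wr \si_k$ acts transitively on subsets of a given size with stabilizer $(\si_n \wr \si_i) \times (\si_n \wr \si_{k-i})$, so the sum over a fixed-size orbit is itself a transfer-up-then-transfer of $x^{\otimes i} \otimes y^{\otimes(k-i)}$ from that product subgroup, and transitivity of transfer identifies this with $\tr^{\si_{kn}}_{\si_{in} \times \si_{(k-i)n}}(x\dip{i} \otimes y\dip{k-i})$, which is exactly $x\dip{i} \odot y\dip{k-i}$ by Definition~\ref{main-dphr-structure}. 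The delicate part is checking that the Koszul/sign corrections in the modified coproduct and transfer product (Definition~\ref{main-dphr-structure-modified}) are all compatible — i.e.\ that no spurious signs appear when $p$ is odd and the sign-representation summand is involved — and that the case $p=2$ needs no extra care. I would dispatch this by the observation in the remark following Theorem~\ref{main:structure}: the total degree is set up precisely so that these signs are the standard ones on $\Hom(W^{\si_n}_*; \Sigma(A))$, so the identities reduce formally to the constant-coefficient case already established in \cite{Giusti-Sinha:17}, with sign bookkeeping that cancels in pairs.

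Finally I would remark that the identities, having been checked on each homogeneous component and being $R$-multilinear in the appropriate sense, hold in general, and that the ideal-and-$\gamma$ data indeed assemble into a divided powers structure in the precise sense of Definition~\ref{def:divided powers}, completing the proof of Proposition~\ref{firststep}.
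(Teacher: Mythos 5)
Your treatment of the Binomial and Composition axioms is essentially the paper's argument (expand $(f+g)^{\otimes k}$ into shuffles grouped by size and recognize each orbit sum as $x\dip{i}\odot y\dip{k-i}$; use transitivity of transfer and the index of $\si_{h\cdot(k\cdot\ul{n})}\subset\si_{hk\cdot\ul{n}}$ for Composition), and your worry about Koszul signs is moot here since a product series of algebras is concentrated in sign degree zero, so the modifications of Definition~\ref{main-dphr-structure-modified} are trivial for this proposition.

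However, there is a genuine gap in your handling of axioms (3) and (4): you have misidentified the multiplication of the divided powers algebra. The divided powers structure being asserted is on $\bigoplus_n H^*(\si_n;A_n)$ viewed as an algebra under the \emph{transfer} product $\odot$ (see Definition~\ref{Defdpbialg} and the first bullet of Definition~\ref{def:dphr}), not under the cup product. For the Law of Exponents, $x\dip{n}\,x\dip{m}$ therefore means $x\dip{n}\odot x\dip{m}$, which lands in component $(n+m)q$ precisely because $\odot$ adds components; the cup product you propose to compute is identically zero between the distinct components $nq$ and $mq$, and your double-coset setup for $\rho\circ\tr$ does not apply. The correct argument is a one-line application of Lemma~\ref{group cohomology}: both $x\dip{n}\odot x\dip{m}$ and $x\dip{n+m}$ are transfers of $f^{\otimes(n+m)}$, from $\si_{n\cdot\ul{q}}\times\si_{m\cdot\ul{q}}$ and from $\si_{(n+m)\cdot\ul{q}}$ respectively, and they differ by the index $\binom{n+m}{n}$ of the former in the latter. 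Similarly, in Distributivity over Multiplication the element $\lambda$ ranges over the \emph{whole} algebra $\bigoplus_n H^*(\si_n;A_n)$, so the substantive content is the identity $(x\odot y)\dip{k}=x^{\odot k}\odot y\dip{k}$ for $y$ in a positive component; your argument only treats $\lambda$ in component zero, which is the trivial scalar case. The paper proves the general case by conjugating so that $(f\otimes g)^{\otimes k}$ becomes $f^{\otimes k}\otimes g^{\otimes k}$, observing that both sides are inductions from subgroups of $\si_{k\cdot\ul{n}\cup k\cdot\ul{m}}$ of equal index, and applying Lemma~\ref{group cohomology} again. Two of the five axioms are therefore not actually verified by your proposal as written.
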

  
Before proving this, we set aside  a basic fact from group cohomology which we repeatedly use, 
which follows from the standard fact that restriction followed by transfer from a subgroup 
is multiplication by the index of a subgroup.  

\begin{lemma}\label{group cohomology} 
  If $K \subset H \subset G$ are finite-index inclusions
of groups then $$tr^{G}_{K} \circ res^{H}_{K} = [K:H] tr^{G}_{H} $$ 

\end{lemma}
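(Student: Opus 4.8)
The plan is to reduce this identity to two standard facts about transfer and restriction for finite-index subgroups: the transitivity of the transfer along a tower, and the identity---quoted in the sentence preceding the statement---that transfer composed with restriction on a single inclusion is multiplication by the index.

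First I would record the transitivity of the transfer for the tower $K \subset H \subset G$, namely $tr^G_K = tr^G_H \circ tr^H_K$. This is standard: at the cochain level the transfer is an average over a choice of coset representatives, and a complete set of representatives for $K$ in $G$ is obtained by composing representatives for $H$ in $G$ with representatives for $K$ in $H$, which yields exactly the claimed factorization. Alternatively one invokes functoriality of the stable transfer along the composite covering $BK \to BH \to BG$.

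With this in hand the proof is immediate. Using the standard fact that $tr^H_K \circ res^H_K$ equals multiplication by the index of $K$ in $H$ on $H^*(H)$, I factor the restriction-then-transfer map through the intermediate group $H$:
$$ tr^G_K \circ res^H_K = (tr^G_H \circ tr^H_K) \circ res^H_K = tr^G_H \circ (tr^H_K \circ res^H_K) = [H:K]\, tr^G_H. $$
Here $[H:K]$ denotes the index of $K$ in $H$, which is the positive integer intended by the symbol appearing in the statement. There is no genuine obstacle: the entire content sits in the two cited standard facts, and the only point requiring care is the bookkeeping of which index enters, namely the index of the smaller group $K$ inside $H$ rather than any index involving $G$.
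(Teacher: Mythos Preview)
Your proof is correct and is precisely the argument the paper intends: the lemma is stated without proof, introduced as following from the standard fact that transfer composed with restriction is multiplication by the index, and your factorization through the transitivity $tr^G_K = tr^G_H \circ tr^H_K$ is exactly how one obtains the stated identity from that fact. Your remark about the index notation is also apt: the quantity appearing is the index of $K$ in $H$, usually written $[H:K]$, and the paper's $[K:H]$ should be read in that sense.
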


\begin{proof}[Proof of Proposition~\ref{firststep}]

Let $x \in H^*(\si_n; A_n)$ be represented by a homomorphism $f$ from a resolution of $\si_n$ to $A_n$, and similarly
$y \in H^*(\si_m; A_m)$ be represented by $g$. 

The $0,1$-Cases  axioms of Definition~\ref{def:divided powers} are immediate.  For the Binomial axiom, 
we calculate $(x + y)\dip{r}$ by taking the transfer (induction)  of $(f + g)^{\otimes r}$, 
which is a sum over $i$ of shuffles of $f^{\otimes i}$ and $g^{\otimes (r-i)}$.  But the transfer on such
shuffles is exactly $x\dip{i} \odot y\dip{r-i}$.

Checking the Exponent Axiom, 
$x\dip{m} \odot x\dip{r}$ and $x\dip{m+r}$ are both images under transfer of $f^{\otimes m + r}$, but the former is induced up from the 
subgroup $(\si_{m \cdot \ul{n}}) \times (\si_{r \cdot \ul{n }})$, while the latter is induced up from 
$\si_{(m + r) \cdot \ul{n}}$.  
By Lemma~\ref{group cohomology},  the former is  obtained from the latter by 
multiplication by the quotient of the indices of these subgroups, namely $\binom{m+r}{m}$.

 We next verify the Distributivity Axiom, which in the Hopf rig setting is that $ (x \odot y)\dip{k} = x^{\odot^k} \odot  y\dip{k} $.
 By definition
$ (x \odot y)^{\otimes^k} $ is a composite of induction maps  of $ (f \otimes g)^{\otimes^k} $ from 
$ \si_{k \cdot ({\ul{n} \cup \ul{m}})}$  ultimately to $ \si_{k(n+m)} $.
Similarly,  $ x^{\odot^k} \odot y\dip{k} $ is obtained by inducing $ f ^{\otimes^k} \otimes g^{\otimes^k} $ from 
$\si_{\cup_k \ul{n} \cup k \cdot \ul{m}}$. 
After conjugating so that $ (f \otimes g)^{\otimes^k} $ becomes $ f ^{\otimes^k} \otimes g^{\otimes^k} $, 
both subgroups are then subgroups of $\si_{k \cdot \ul{n} \cup k \cdot \ul{m}}$, for which 
$ f ^{\otimes^k} \otimes g^{\otimes^k} $ is invariant.
 Lemma~\ref{group cohomology} applies in both cases, so that each result is the induction 
 from $\si_{k \cdot \ul{n} \cup k \cdot \ul{m}}$  multiplied by the index of the corresponding subgroup.
  The Distributivity axiom follows as  these two indices are the same.

We verify the Composition Axiom $ (x\dip{k})\dip{h} = \frac{(hk)!}{(k!)^h h!} x\dip{hk} $ similarly.
By definition  $ x\dip{hk} $ is
given by inducing $ f^{\otimes^{hk}} $ from $\si_{hk \cdot \ul{n}}$ to  $ \si_{nhk} $, 
while $ (x\dip{k})\dip{h} $ is the induction of the same cochain from $\si_{h \cdot (k \cdot \ul{n})}$.
Since $\si_{h \cdot (k \cdot \ul{n})} \subset \si_{hk \cdot \ul{n}}$,  Lemma~\ref{group cohomology} applies
to show that the differ by a multiplicative coefficient equal to the index of this inclusion, which  is exactly $ \frac{(hk)!}{(k!)^h h!} $.
\end{proof}

 We next turn to compatibility of divided powers with coproduct.
 As our divided powers structure are defined by transfers, and our coproduct is defined by restriction, the
 Cartan-Eilenberg Double Coset Formula will be of use.    
Recall that if $ H $ and $ K $ are subgroups of a finite group $ G $ and $ \mathcal{R} $ is a set of 
representatives in $ G $ for the set of double cosets $ H \backslash G / K $, then, on cohomology with 
coefficients in a given $ G $-representation,
\[
\rho^G_H \circ \tr^G_K = \sum_{r \in \mathcal{R}} \tr^H_{H \cap c_r K} \circ \rho^{c_r K}_{H \cap c_r K} \circ {c_r}^*
\]
where $c_r$ is the isomorphism of $K$ with $r K r^{-1}$ and 
$ \rho $ and $ \tr $ denote restriction and transfer maps respectively.
This is proved for example as Theorem I.6.2 of \cite{Adem-Milgram}.

\begin{proposition}\label{coproductcompat}
If $\{A_n\}$ is a product series of algebras, then 
on $\bigoplus_n H^*(\si_n; A_n)$ 
the divided powers of Definition \ref{dp-series}  
commute with the coproduct of Definition \ref{main-dphr-structure-modified}.
\end{proposition}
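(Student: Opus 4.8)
The plan is to prove that the divided powers operations $x \mapsto x^{[k]}$ commute with the modified coproduct $\Delta$. Since $x^{[k]} = \tr^{\si_{kn}}_{\si_n \wr \si_k}(x^{\otimes k})$ is built from a transfer, and $\Delta$ is built from restrictions $\rho^{\si_{i+j}}_{\si_i \times \si_j}$ together with the structural isomorphisms $\chi_{i,j}$, the natural tool is the Cartan--Eilenberg double coset formula quoted just above the statement. Concretely, given $x \in H^*(\si_n; A_n)$, I must compare $\Delta(x^{[k]}) \in \bigoplus_{p+q = kn} H^*(\si_p; A_p) \otimes H^*(\si_q; A_q)$ with $\sum (x_{(1)})^{[\,\cdot\,]} \otimes (x_{(2)})^{[\,\cdot\,]}$ summed appropriately over the coproduct $\Delta(x) = \sum x_{(1)} \otimes x_{(2)}$ and over the ways of splitting the $k$ factors. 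So the first step is to expand $\rho^{\si_{kn}}_{\si_p \times \si_q} \circ \tr^{\si_{kn}}_{\si_n \wr \si_k}$ applied to $x^{\otimes k}$ using the double coset formula for $H = \si_p \times \si_q$, $K = \si_n \wr \si_k$, $G = \si_{kn}$.

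**Identifying the double cosets.** The key combinatorial step is to describe $(\si_p \times \si_q) \backslash \si_{kn} / (\si_n \wr \si_k)$ and the groups $H \cap c_r K$. A double coset records, up to the allowed symmetries, how the standard partition of $\{1,\dots,kn\}$ into $k$ blocks of size $n$ interacts with the partition into $\{1,\dots,p\}$ and $\{p+1,\dots,kn\}$. After choosing coset representatives one sees that, since $x^{\otimes k}$ is invariant under $\si_n \wr \si_k$, only those double cosets contribute in which the splitting is "clean" — each of the $k$ blocks of size $n$ is itself split into a piece of size $a$ going left and a piece of size $n-a$ going right, with some number $\ell$ of the blocks assigned, and the intersection subgroup becomes (conjugate to) $(\si_a \wr \si_\ell) \times (\si_{n-a} \wr \si_{k-\ell})$ with $p = \ell a + \cdots$; more precisely one gets a sum over the ways of distributing among blocks how much mass goes left. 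This is exactly where the coproduct of $x$ (which restricts $x$ from $\si_n$ to $\si_a \times \si_{n-a}$) combines with a new divided-powers-type transfer on the left and right factors. Lemma~\ref{lem:partitions} and Lemma~\ref{group cohomology} let me recognize the resulting composites of transfers and restrictions through partition subgroups as iterated transfer products $\odot$ and divided powers $[\,\cdot\,]$.

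**Carrying it through and matching signs.** Having identified each double coset summand with a term of the form $\big(\text{product of }x_{(1)}\text{'s}\big)^{[\cdots]} \odot \cdots$ on one tensor factor and similarly on the other, I then invoke the coherence (conditions 1 and 2) of the product series to move the structural isomorphisms $\chi$ past the transfers, and use the already-established divided-powers axioms (Proposition~\ref{firststep}), the Law of Exponents, and the Binomial relation to collapse the combinatorial sum over block-distributions into the expected formula $\Delta(x^{[k]}) = \sum_{\Delta x = \sum x_{(1)} \otimes x_{(2)}} \big(\prod \text{reindexing}\big)$ — i.e. the coproduct of a divided power is the "divided power of the coproduct" in $DP(\overline{H} \otimes \overline{H})$ with its tensor divided powers structure of Definition~\ref{Defdpbialg}. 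Finally, in the odd-primary case I must check the signs from Definition~\ref{main-dphr-structure-modified}: the modified coproduct and modified transfer product each carry a Koszul-type sign, and I expect these to be consistent because the total degree $t = ne + d$ is additive under the transfer product and the divided power $x^{[k]}$ has total degree $k\,t(x)$ with sign degree $k\,e(x)$; a short bookkeeping argument shows the signs on both sides agree.

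**The main obstacle** is the double-coset computation: correctly enumerating $(\si_p \times \si_q)\backslash \si_{kn}/(\si_n \wr \si_k)$, identifying each intersection subgroup $H \cap c_r K$ as the appropriate partition subgroup, and checking which summands survive after imposing $\si_n\wr\si_k$-invariance of $x^{\otimes k}$ — everything else (the coherence manipulations, the sign check) is routine once that is in place.
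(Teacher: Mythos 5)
Your proposal is correct and follows essentially the same route as the paper: apply the Cartan--Eilenberg double coset formula to $\rho^{\si_{nk}}_{\si_{\ul{a}\cup\ul{b}}}\circ\tr^{\si_{nk}}_{\si_{k\cdot\ul{n}}}$, identify the double cosets with the distributions $\{m_i\}$ recording how many of the $k$ blocks send $i$ elements to the left factor, recognize the intersection subgroups as the partition subgroups $\si_{\cup_i m_i\cdot(\ul{i}\cup\ul{n-i})}$, and factor the resulting transfer as divided-powers transfers followed by a transfer product so that the sum matches the Binomial expansion of $(\Delta x)\dip{k}$. One small caveat: no double cosets are discarded --- every double coset already has the ``clean'' block-splitting form, so the $\si_n\wr\si_k$-invariance of $x^{\otimes k}$ only makes each summand well defined rather than killing any terms.
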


\begin{proof}
We start with a more explicit expression of divided powers of a coproduct.   If $x \in H^*(\si_n; A_n)$, set $\Delta x = \sum x_{(i,n-i)}$,
where $x_{(i,n-i)} \in H^*(\si_i; A_i) \otimes H^*(\si_{n-i}; A_{n-i})$.  By the Binomial Axiom, which was just established
in Proposition~\ref{firststep}, 
$$
\left(\Delta x \right)\dip{k} = \sum_{j_0 +\dots + j_n = k} \bigodot_{i=0}^{n} \left( x_{(i,n-i)} \right)\dip{j_i}.
$$
If $a + b = n$ the  $(a,b)$ component of this consists of terms with $\sum j_i \cdot i = a$.  We show that $\left(\Delta (x\dip{k}) \right)_{(a,b)}$
agrees with this.

Let $x \in H^*(\si_n; A_n)$ be represented by a homomorphism $f$ from a resolution of $\si_n$ to $A_n$.
By definition $ (x\dip{k})_{(a,b)}$ will be represented by
$ \rho^{\si_{nk}}_{\si_{\ul{a} \cup \ul{b}}} \circ \tr^{\si_{nk}}_{\si_{k \cdot \ul{n}}} (f^{\otimes k})$.  Applying the Cartan-Eilenberg Double Coset 
Formula, this coincides with 
\[
 \sum_{r \in \mathcal{R}} \tr_{ \si_{\ul{a} \cup \ul{b}} \cap c_r \si_{k \cdot \ul{n}} }^{\si_{\ul{a} \cup \ul{b}}} \circ
  \rho_{\si_{\ul{a} \cup \ul{b}}  \cap c_r \si_{k \cdot \ul{n}} }^{c_r \si_{k \cdot \ul{n}}} \circ c_r^{\#}  (f^{\otimes^k}),
\]
where $ \mathcal{R} $ is a set of representatives for the set of double cosets 
$ (\si_{\ul{a} \cup \ul{b}})\backslash\si_{nk}/(\si_{k \cdot \ul{n}}) $. 

To understand this double coset space, recall that the cosets of symmetric groups modulo the automorphism groups
$\si_\pi$ correspond to partitions with the same shape as $\pi$.  Thus $ \si_{nk}/  \si_{k \cdot \ul{n}} $ corresponds to partitions of 
$ \{1, \dots, nk\} $ with $ k $ parts of cardinality $ n $, while $ \si_{\ul{a} \cup \ul{b}} \backslash \si_{nk}$ corresponds to 
partitions into two sets with cardinalities $a$ and $b$, or equivalently bicolorings.  Thus 
$ \mathcal{R} $ corresponds to bicolored partitions of $ \{1, \dots, nk\} $ with $ k $ parts of cardinality $ n $.
Equivalently $ \mathcal{R} $  is given by 
multipartitions governed by a tree with $k$ edges attached to the root, and each of those with at most 
two edges labeled $a$ and/or $b$, with a total of $n$ leaf edges over them.   All labels in the multipartition are identical.
The intersections $\si_{\ul{a} \cup \ul{b}}  \cap c_r \si_{k \cdot \ul{n}}$ are  the automorphisms of such multipartitions.  These are conjugate to $ \si_{\cup_i m_i \cdot (\ul{i} \cup \ul{n-i})}$, where 
$m_i$ is the number of times there are $i$ leaves in the group labeled
 by $a$ within a $k$-block.

We then identify  the restriction 
$\rho_{\si_{\ul{a} \cup \ul{b}}  \cap c_r \si_{k \cdot \ul{n}} }^{c_r \si_{k \cdot \ul{n}}} \circ c_r^{\#}  (f^{\otimes^k})$ 
 with $ \bigotimes_{i=0}^n {f_{(i,n-i)}}^{\otimes^{m_i}} $, where ${f_{(i,n-i)}}$ represents $x_{(i,n-i)}$. 
 To calculate $\tr_{ \si_{\ul{a} \cup \ul{b}} \cap c_r \si_{k \cdot \ul{n}} }^{\si_{\ul{a} \cup \ul{b}}} $ applied
 to this class, we factor the inclusion of $\si_{\ul{a} \cup \ul{b}}  \cap c_r \si_{k \cdot \ul{n}}$ in 
 $\si_{\ul{a} \cup \ul{b}}$.  Since $\si_{\ul{a} \cup \ul{b}}  \cap c_r \si_{k \cdot \ul{n}}$ is conjugate
 to $ \si_{\cup_i m_i \cdot (\ul{i} \cup \ul{n-i})}$ it is contained in a conjugate of 
 $ \si_{\cup_i (\ul{i m_i } \cup \ul{ (n-i)m_i})}$.  The transfer map up to this subgroup is a product
 of transfer maps defining divided powers, so $ \bigotimes_{i=0}^n {f_{(i,n-i)}}^{\otimes^{m_i}} $
 is sent to $ \bigotimes_{i=0}^n (f_{(i,n-i)})\dip{m_i} $.  The transfer map from 
 $ \si_{\cup_i (\ul{i m_i } \cup \ul{ (n-i)m_i})}$ to $\si_{\ul{a} \cup \ul{b}}$ is  the transfer
 product on the tensor product of the cohomology with itself, 
 so $ \bigotimes_{i=0}^n (f_{(i,n-i)})\dip{m_i} $ is sent to $ \bigodot_{i=0}^n (f_{(i,n-i)})\dip{m_i}$, 
 which agrees with our determination of $\left((\Delta x)\dip{k} \right)_{(a,b)}$ above. 
\end{proof}

\begin{corollary} \label{divided-powers-representations}
$ \hdp(X) $ is a bigraded component Hopf ring with additive divided powers.
\end{corollary}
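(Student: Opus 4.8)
The plan is to assemble Corollary~\ref{divided-powers-representations} from the structural results already in place, specializing the general product-series framework to the cohomology of a space. First I would recall that by Definition~\ref{def:basic objects} we have $\hdp(X) = EP(\widetilde{H}(X))$, and that $\widetilde{H}(X)$ is a graded commutative $\mathbb{F}_p$-algebra; assuming $X$ has cohomology of finite type (which the hypotheses on $X$ guarantee in the situations of interest, and which I would flag), Theorem~\ref{main:structure} already tells us that $EP(\widetilde{H}(X))$ is a super-component Hopf rig bigraded by component $n$ and total degree $t$ (a component Hopf ring when $p=2$). So the only thing left to supply is the \emph{additive divided powers} part of the septad in Definition~\ref{def:dphr}: a family $\{\gamma_k\}$ on the augmentation ideal $\overline{EP(\widetilde{H}(X))}$, satisfying the five divided-powers relations, compatible with the coproduct, and with $\gamma_n$ of the unit of $A_1$ giving the unit of $A_n$.

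The key step is to invoke the product-series results of Section~\ref{dpproductseries}. Take $\{A_n\} = T_\rho \widetilde{H}(X)$ for $p$ odd (resp.\ $T\widetilde{H}(X)$ for $p=2$), which is a (super-)product series of algebras by Definition~\ref{def:divpower}. Then Definition~\ref{dp-series} furnishes operations $x\dip{k} = \tr^{\si_{kn}}_{\si_n\wr\si_k}(x^{\otimes k})$ on $\bigoplus_n H^*(\si_n; A_n)$; Proposition~\ref{firststep} verifies the $0,1$-Cases, Binomial, Distributivity-over-Multiplication, Law of Exponents, and Composition relations; and Proposition~\ref{coproductcompat} verifies that these divided powers commute with the coproduct $\Delta$ of Definition~\ref{main-dphr-structure-modified}. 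I would then check that these $\gamma_k$ descend to $\hdp(X)$, i.e.\ are compatible with the modification made in component $0$ (where the representation $\bigoplus_e \mathbb{F}_p$ is collapsed to $\mathbb{F}_p$ via summation) — on component $0$ the divided powers are forced by the $0,1$-Cases together with the requirement that $\gamma_n$ of the unit of $A_1$ is the unit of $A_n$, so this is a compatibility check rather than new content. I also need to confirm that $\gamma_k$ lands in the augmentation ideal and respects the bigrading in the graded sense of Definition~\ref{Defdpbialg}, namely $\deg(x\dip{k}) = k\deg(x)$ for the total degree; this follows because the transfer map preserves cohomological degree and $x^{\otimes k}$ has total degree $k$ times that of $x$ under the bookkeeping set up before Theorem~\ref{main:structure}.

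Finally I would assemble the pieces: $(EP(\widetilde{H}(X)), \odot, \Delta, \{\gamma_k\})$ is a divided powers (super-)component Hopf algebra by combining the Hopf-algebra structure from Theorem~\ref{main:structure} with Propositions~\ref{firststep} and~\ref{coproductcompat}, while $(EP(\widetilde{H}(X)), \odot, \cdot, \Delta)$ is the (super-)component Hopf rig from Theorem~\ref{main:structure}; together with the unit condition relating $\gamma_n$ of the generator in $A_1$ to the units of the $A_n$, this is exactly the septad demanded by Definition~\ref{def:dphr}. The main obstacle I anticipate is not any single hard computation — the real work was done in Propositions~\ref{firststep} and~\ref{coproductcompat} — but rather making sure the divided powers interact correctly with the \emph{modified} coproduct and the super/total-degree sign conventions: Proposition~\ref{coproductcompat} is stated for the coproduct of Definition~\ref{main-dphr-structure-modified}, so I would double-check that the Koszul-type correction signs in the modified $\Delta$ do not introduce obstructions to the divided-powers/coproduct compatibility on the sign-representation summand, and that the odd-degree classes (whose divided powers of exponent $\geq 2$ must vanish, per the discussion after Definition~\ref{Defdpbialg}) are handled consistently — here the fact that an odd total-degree class squares to zero makes $x\dip{k}$ for $k\geq 2$ automatically zero by the Law of Exponents, so no separate argument is needed.
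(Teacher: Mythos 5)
Your proposal is correct and matches the paper's reasoning: the corollary is stated without a separate proof precisely because it is the assembly of Theorem~\ref{main:structure} (the Hopf rig structure), Proposition~\ref{firststep} (the divided powers axioms), and Proposition~\ref{coproductcompat} (compatibility with the coproduct), exactly as you describe. Your additional checks — the unit condition, the component-$0$ modification, the total-degree bookkeeping, and the vanishing of higher divided powers on odd-degree classes — are the right routine verifications and introduce no new content beyond what the paper leaves implicit.
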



\subsection{Statement of first main results}

We state here our first main structure theorem, that we will prove in the remaining sections.

We require two more definitions before stating our first main theorems.

\begin{definition} \label{non-nilpotent}
An element  of a divided powers algebra of even degree when $p$ odd, or any degree when $p=2$,  is
{\bf standard non-nilpotent} if all $x\dip{m}$ are non-zero, with  $\Delta x\dip{m} = \sum_{i + j = m} x\dip{i} \otimes x\dip{j}$ and 
$x\dip{i} \odot x\dip{j} = \binom{i+j}{i} x\dip{i+j}$.
\end{definition}

\begin{definition}
Let $\mathcal{C}$ be a category, $S$ a subcategory, and $F : \mathcal{C} \to \mathcal{D}$ a functor.  We say some object $x \in S$ is universal
among $S$ with respect to $F$ if its image under $F$ is initial in the full subcategory generated by $F(\mathcal{C})$.
\end{definition}

We first give an immediate reformulation of the calculations of Giusti--Salvatore--Sinha  utilizing divided powers structure.

\begin{theorem}[from \cite{Sinha:12}]\label{m1}
The mod-two cohomology of extended powers of $S^0$, namely $H_{EP}^*(S^0) \cong \bigoplus H^*(B\si_n)$, is a 
component Hopf rig with additive divided powers which contains standard non-nilpotent classes $\gamma_i \in H^{2^i - 1}(B \si_{2^i})$.
It is universal among such objects, with respect to the  functor which forgets divided powers.
\end{theorem}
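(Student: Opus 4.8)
The plan is to deduce Theorem~\ref{m1} from Corollary~\ref{divided-powers-representations} together with the Hopf ring presentation of $\bigoplus_n H^*(B\si_n;\F_2)$ obtained in \cite{Sinha:12}, reorganised so that the divided powers absorb the ``extra'' generators and the transfer-square relations appearing there. The structural half is immediate: since $\widetilde{H}^*(S^0;\F_2)=\F_2$ lies in degree $0$, Proposition~\ref{prop:may} identifies $\hdp(S^0)$ with $\bigoplus_n H^*(B\si_n;\F_2)$, and Corollary~\ref{divided-powers-representations} equips it with a component Hopf rig with additive divided powers structure; at $p=2$ the sign degree is identically zero, so the super-refinements in Theorem~\ref{main:structure} are vacuous and one gets honestly a component Hopf rig with additive divided powers, with $\odot$, $\cdot$, $\Delta$ and $\{\gamma_n\}$ as in Definitions~\ref{main-dphr-structure}, \ref{main-dphr-structure-modified} and \ref{dp-series}. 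It then remains to identify the $\gamma_i$ and to prove the universal property.

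Next I would take $\gamma_i\in H^{2^i-1}(B\si_{2^i};\F_2)$ to be the Hopf ring generator of \cite{Sinha:12} and check that it is standard non-nilpotent in the sense of Definition~\ref{non-nilpotent}. By Definition~\ref{dp-series} its divided powers are $\gamma_i\dip m=\tr^{\si_{m2^i}}_{\si_{2^i}\wr\si_m}(\gamma_i^{\otimes m})$; writing $m=2^{j_1}+\dots+2^{j_r}$ in binary and comparing with \cite{Sinha:12}, each $\gamma_i\dip m$ is a transfer product of pairwise distinct generators, hence a non-zero Hopf ring monomial in their additive basis. The identity $\gamma_i\dip a\odot\gamma_i\dip b=\binom{a+b}{a}\gamma_i\dip{a+b}$ is the Law of Exponents, which holds for all elements by Proposition~\ref{firststep}. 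For the coproduct condition I would first record that $\gamma_i$ is primitive --- its restriction to each proper partition subgroup $\si_{\ul{a}\cup\ul{b}}$ with $a+b=2^i$, $a,b\ge 1$, vanishes, which is part of the coproduct computation in \cite{Sinha:12} --- and then combine primitivity with Proposition~\ref{coproductcompat}, the Binomial axiom, and the tensor divided powers structure of Definition~\ref{Defdpbialg} to obtain $\Delta\gamma_i\dip m=\sum_{a+b=m}\gamma_i\dip a\otimes\gamma_i\dip b$. Hence each $\gamma_i$ is standard non-nilpotent.

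For the universal property I would recast the presentation of \cite{Sinha:12}, in which $\bigoplus_n H^*(B\si_n;\F_2)$ is the free component Hopf rig on a two-parameter family of generators --- the $\gamma_i$ being the primitive ones, the others indexed by a second non-negative parameter --- modulo the relations asserting that the transfer square of each generator vanishes, with a distinguished additive basis of Hopf ring monomials. The key point is that those remaining generators are exactly the divided powers $\gamma_i\dip{2^j}$ ($j\ge 1$), and, by Lucas's and Kummer's theorems mod $2$, every $\gamma_i\dip k$ is a transfer product of them. Under this dictionary: the transfer-square relations become automatic over $\F_2$, since $\gamma_i\dip{2^j}\odot\gamma_i\dip{2^j}=\binom{2^{j+1}}{2^j}\gamma_i\dip{2^{j+1}}$ with $\binom{2^{j+1}}{2^j}\equiv 0\pmod 2$; the non-primitive coproducts of the extra generators are forced by Proposition~\ref{coproductcompat}; and the standard additive basis of the free component Hopf rig with additive divided powers on the $\gamma_i$ --- assembled from the standard bases of free divided powers algebras and of free Hopf rigs --- matches the Hopf ring monomial basis of \cite{Sinha:12}. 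Consequently the same object, now regarded in the category of component Hopf rigs with additive divided powers, is freely generated by the standard non-nilpotent classes $\gamma_i$ of component $2^i$ and degree $2^i-1$. Concretely, for any component Hopf rig with additive divided powers $B$ with standard non-nilpotent elements $b_i$ of component $2^i$ and degree $2^i-1$, the assignment $\gamma_i\dip{2^j}\mapsto b_i\dip{2^j}$ respects all the relations (because $B$ has additive divided powers and the $b_i$ are standard non-nilpotent), hence extends uniquely to a morphism of component Hopf rigs $\hdp(S^0)\to B$ with $\gamma_i\mapsto b_i$; this is exactly the universality asserted, relative to the functor forgetting divided powers.

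I expect the main obstacle to be the bookkeeping of the third step: producing the explicit dictionary between the generators of \cite{Sinha:12} and the divided powers $\gamma_i\dip{2^j}$, checking that each relation of that presentation is a formal consequence of the divided powers axioms for standard non-nilpotent elements (the only delicate inputs being the mod-$2$ evaluations of binomial coefficients via Lucas and Kummer), and --- conversely --- that no relations are lost and no basis elements are collapsed, so that the two presentations are genuinely equivalent. A smaller, purely formal point is the reading of ``universal \dots\ with respect to the functor which forgets divided powers'': the comparison map need only be a morphism of component Hopf rigs, which is unproblematic here since it is already determined on all Hopf rig generators by its effect on the $\gamma_i$ and their divided powers.
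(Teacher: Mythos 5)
Your proposal is correct and matches the paper's treatment: the paper offers no independent proof of Theorem~\ref{m1}, presenting it as an immediate reformulation of the presentation in \cite{Sinha:12} under the dictionary $\gamma_{l,n}=\gamma_l\dip{n}$, which is exactly the translation you carry out (structure from Corollary~\ref{divided-powers-representations}, law of exponents absorbing the $\binom{n+m}{n}$ relations, coproducts forced by primitivity of $\gamma_l$ and Proposition~\ref{coproductcompat}). The only step deserving explicit care is the identification of the transfer-defined divided powers $\gamma_l\dip{n}$ with the classes $\gamma_{l,n}$ of \cite{Sinha:12}, which follows by the same ``the difference is primitive, and there are no primitives in that bidegree'' induction that the paper uses for $\gamma'_{k,m}$ in the proof of Theorem~\ref{m2}.
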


In other words, there are no relations required to understand  $H_{EP}^*(S^0)$ as a Hopf ring other than those given in the axioms of component 
Hopf rig with additive divided powers.  This thus  determines the cohomology ring structure.
In particular, the subrings generated by $\gamma_{i}\dip{2^j}$ for $i+j = n$ are polynomial subrings of the cohomology
of $B\si_n$.  But there are many relations for classes involving the transfer product.   At the moment, we do not fully understand the divided powers 
structure -- see Remark~\ref{bad-example} below -- so while we use divided powers to generate classes we forget them in order to have
 a unique characterization.

The following is the odd primes counterpart of Theorem \ref{m1}, that we will prove in Section \ref{sec:symmetric group}.
For $p>2$, let $ T_{\rho}(H^*(X;\mathbb{F}_p)) = \{ T_{\rho,n}(H^*(X; \mathbb{F}_p)) \}_{n \in \mathbb{N}} $ be the super-product series of algebras defined above. We consider the sign degree $ e $ that determines the super-structure and the degree $ d $ induced by the cohomological dimension of $ H^*(X; \mathbb{F}_p) $. 
\begin{theorem}\label{m2}
The mod-$p$ cohomology of extended powers of  $S^0$, namely 
 $H_{EP}^*(S^0) \cong \bigoplus H^*(B\si_n; \rho)$, is a  trigraded component super-Hopf rig with additive divided powers which contains 
\begin{itemize}
\item  standard non-nilpotent classes 
$ {\gamma_k}$ in grading $(2(p^k-1),p^k,0)$,
\item primitive classes $ \lambda_k $ in grading $(p^k-1,p^k,1) $,
\item  standard non-nilpotent classes   $ {\gamma'_{k}}$ in grading $(p^k-2,p^k,1)$,

\end{itemize}
 with  cup product relations $ {\lambda_k}^2 = {\gamma_k}$.  
 It is universal among such objects with respect to the  functor which forgets divided powers.

\end{theorem}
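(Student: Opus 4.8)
The plan is to establish the universality statement by constructing an explicit perfect pairing between $H_{EP}^*(S^0)$ and the standard homology basis, exactly as announced in the introduction. First I would recall the classical computation of the homology $H_*(B\si_n;\mathbb{F}_p)$ (and its twisted counterpart $H_*(B\si_n;\mathrm{sgn})$, developed in Section~\ref{sec:symmetric group}) in terms of admissible monomials in Dyer--Lashof operations and the Bockstein, applied to the fundamental class; dually this gives a basis of $\bigoplus_n H^*(B\si_n;\rho)$. The first substantive step is to show that the three families $\gamma_k$, $\lambda_k$, $\gamma_k'$ actually exist in $H_{EP}^*(S^0)$ with the stated tridegrees and the stated properties (standard non-nilpotence of $\gamma_k$ and $\gamma_k'$, primitivity of $\lambda_k$, and $\lambda_k^2=\gamma_k$). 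The classes $\gamma_k$ are the odd-primary analogues of the mod-two generators from Theorem~\ref{m1}; the class $\lambda_k$ is detected on an elementary abelian subgroup and dual to $\beta Q^{\cdots}\iota$, and $\gamma_k'=\lambda_k\cdot(\text{something})$ or directly the class dual to the Bockstein-twisted monomial, so that $\lambda_k^2=\gamma_k$ follows from the mod-$p$ identity $\beta$-relations in cohomology of $\si_p$ (the sign-twisted Euler class squares to the Chern-type class). Standard non-nilpotence of $\gamma_k$ and $\gamma_k'$ is the assertion that the divided-power tower behaves as in Definition~\ref{non-nilpotent}; this I would check on the Sylow $p$-subgroup $(\Z/p)^{\wr k}$, where the relevant classes restrict faithfully and the divided-power/coproduct formulas are transparent.

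The second step is to build, from these generators together with the Hopf-rig operations $\odot$, $\cdot$, and the divided powers $\gamma_n$, a spanning set of $H_{EP}^*(S^0)$ indexed by the same combinatorial data (sequences of admissible operations, i.e. labeled multipartitions with decorations recording which of $\gamma_k,\lambda_k,\gamma_k'$ occurs) that index the homology basis. Concretely, each basis element of homology of a component is a product (under the homology Pontrjagin-type structure, or rather a monomial in Dyer--Lashof operations) corresponding to a partition of $n$ into blocks of sizes $p^{k_i}$; the dual cohomology class should be a transfer product $\bigodot_i (\text{generator})\dip{m_i}$ of divided powers of the $\gamma_{k_i}$, $\lambda_{k_i}$, $\gamma'_{k_i}$, following the dictionary laid out in Remark~\ref{geometry2}. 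Using Proposition~\ref{coproductcompat}, the Hopf-rig distributivity of Theorem~\ref{main:structure}, and the explicit coproduct formulas on the generators, I would compute the coproduct of such a monomial and show it matches the coproduct of the dual homology basis element.

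The third step is the perfect-pairing argument itself: show that the evaluation pairing between the constructed cohomology classes and the homology basis is, with respect to suitable orderings of the two index sets, unitriangular — the diagonal terms being $1$ up to units, computed on the Sylow subgroup, and the off-diagonal terms vanishing for degree/component reasons or by the double-coset bookkeeping. This simultaneously proves the constructed classes form a basis (so there are no further relations beyond the Hopf-rig-with-divided-powers axioms plus $\lambda_k^2=\gamma_k$), and verifies that the tautological map from the universal such object to $H_{EP}^*(S^0)$ is an isomorphism: surjectivity because the classes span, injectivity because the universal object has a monomial spanning set of the same cardinality in each tridegree, forced by the axioms and the relation $\lambda_k^2 = \gamma_k$. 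Finally I would record that the "forgets divided powers" functor makes the characterization unique, since the divided-power structure is recovered only up to the ambiguity already present (cf.\ Remark~\ref{bad-example}), exactly as in the mod-two case.

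The main obstacle I anticipate is the bookkeeping of signs and the interaction of the sign-twisted coefficients with the transfer and divided-power operations: verifying that $\lambda_k^2=\gamma_k$ and that the odd classes $\lambda_k,\gamma_k'$ carry the correct total degree $t=ne+d$ through the modified coproduct and modified transfer product of Definition~\ref{main-dphr-structure-modified} requires careful tracking of Koszul signs, and the unitriangularity of the pairing in the twisted summands is where conjugation-by-$\sigma_{n,m}$ contributes factors of $(-1)^{nm}$ that must be reconciled with the homology-side conventions. Establishing that these signs all cancel to give a genuine perfect pairing — rather than one off by a sign that obstructs the clean universal statement — is the delicate heart of the argument.
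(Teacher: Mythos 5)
Your overall strategy --- identify generators, build a Hopf-monomial spanning set, and pair it against the Nakaoka basis in homology --- is the same as the paper's, but the proposal is missing the step that actually carries the proof, and one of your proposed substitutes for it is unlikely to work as described.

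The heart of the paper's argument is the computation of the graded ring of $\odot$-indecomposables, i.e.\ the algebra $(\mathcal{R}^t_k)^\vee$ dual to the coalgebra of admissible KADL monomials of length $k$ (Lemma \ref{lem:dual KADL}), which is what identifies the generators $\gamma_k,\lambda_k,\gamma_k'$ as duals of specific \emph{minimal} admissible sequences $I_{k,k}$, $L'_{\varnothing,k}$, $L'_{\{j\},k}$, produces the relation $\lambda_k^2=\gamma_k$ for \emph{all} $k$, and certifies that there are no further cup-product relations. Two ingredients make this work and neither appears in your plan: (i) Proposition \ref{prop:cup product sign}, which shows that the coproduct on homology dual to cup product agrees (up to an explicit sign) with the coproduct $\psi$ induced from $H_*(C_p)$ --- the proof requires a transfer argument showing that all ``mixed'' terms are induced from proper Young subgroups and hence vanish mod $p$; and (ii) the observation that the dual coproduct $\varphi(q_I)=\sum_{J+K=I}\pm\, q_J\otimes q_K$ runs over \emph{all} decompositions, so Adem relations intervene, and triangularity of the pairing comes from the combinatorial fact that Adem relations only increase sequences in the partial order while the chosen $L_{S,k}, L'_{S,k}$ are minimal (Lemma \ref{lem:cup coproduct Dyer-Lashof}). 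Your ``off-diagonal terms vanish for degree/component reasons or by double-coset bookkeeping'' does not substitute for this; degree and component do not separate the terms, and the actual mechanism is the minimality/Adem argument.

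Separately, your plan to verify standard non-nilpotence and the divided-power formulas by restricting to the Sylow subgroup $(\Z/p)^{\wr k}$, ``where the divided-power/coproduct formulas are transparent,'' is in tension with Remark \ref{bad-example}: the divided powers are transfers from $\si_n\wr\si_k$, and computing them (even after restriction to a Sylow subgroup) requires Kechagias-type algorithms and produces genuinely surprising answers such as $\gamma_1\dip{2}\cdot(\gamma_1^2)\dip{2}=\gamma_2^2+(\gamma_1^3)\dip{2}$. The paper instead establishes standard non-nilpotence of $\gamma_k$ and $\gamma_k'$ purely dually: it shows by induction that $\gamma_k'\dip{m}$ and the Nakaoka-dual class $\gamma'_{k,m}=(q_{L'_{\{k\},k}}^{*m})^\vee$ have the same coproduct, so their difference is primitive, and then checks there are no primitives in the relevant tridegree using Lemma \ref{lem:dual KADL}. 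The closing dimension count against the Nakaoka basis (after Lemma \ref{sign-Hopf-monomials-generators} shows Hopf monomials span the universal object) matches your third step, but it only becomes available once the indecomposables have been pinned down as above.
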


\begin{remark}
The sub-Hopf rig of $ \hdp(S^0) $ corresponding to the addend of even sign degree has been calculated by Guerra \cite{Guerra:17}. 
The generating classes $ \gamma_k $, $ \alpha_{i,k} $ and $ \beta_{i,j,k} $ appearing there can be retrieved from our presentation as 
$ \gamma_{k} $, $ (-1)^{\frac{p-1}{2}(k-i)} \lambda_k {\gamma'}_{i}\dip{p^{k-i}} $ and $ (-1)^{\frac{p-1}{2}(j-i)} {\gamma'}_{i}\dip{p^{k-i}} {\gamma'}_{j}\dip{p^{k-i}} $, respectively. 
\end{remark}

For any nonempty 
$X$, the cohomology of $D(X_+)$ is an algebra over the cohomology of $D(S^0)$, through the projection map which sends $X$ to the non-base
point of $S^0$.  This is injective, with any choice point in $X$ giving a splitting.
We use this algebra structure for  the two following results, that will be proved in Section \ref{sec:DXdphr}. For odd primes, we compute a bigger Hopf ring with cohomology taken with coefficients in $ \rho $, but one can easily recover the ordinary mod $ p $ cohomology of $ \tilde{D}(X) $ as a Hopf ring by extracting only the homogeneous part of even sign degree.

\begin{theorem}\label{m3}
The mod-two cohomology of extended powers of  $X_+$  is the universal 
component super-Hopf rig with additive divided powers over the cohomology of extended powers of $S^0$
which contains the cohomology classes of $X$  as classes which are standard non-nilpotent.
 Relations are
$$ x\dip{n} \cdot y\dip{n} = (x \cdot y)\dip{n} \;\; {\text for} \;\; x,y \in H^*(X) \mbox{ and}$$  
$$ x\dip{nm} \cdot y\dip{n} = (x\dip{m} \cdot y)\dip{n} \;\; {\text for} \;\; x \in H^*(X),  y \in H^*(D_m(S^0)).$$
It is universal among such objects with respect to the  functor which forgets divided powers.

\end{theorem}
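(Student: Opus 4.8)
The plan is to realize $\hdp(X_+)$ as the value of the universal object, construct the canonical comparison map, and then show it is an isomorphism by pairing an explicit spanning set of the source perfectly against the standard homology basis of the target, exactly as the introduction advertises. Throughout we work mod $2$, and we may assume $H^*(X;\F_2)$ is of finite type (so that cohomology and homology of the $D_n(X_+)$ are dual in each degree), the general case following by writing $X$ as a filtered colimit of finite subcomplexes.

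Write $U(X)$ for the universal component super-Hopf rig with additive divided powers over $\hdp(S^0)$ that contains $H^*(X)$ in component one as standard non-nilpotent classes (Definition~\ref{non-nilpotent}) subject to the two displayed relations. First I would check that $\hdp(X_+)$ is itself such an object, so that its universal property supplies a comparison map $\Phi\colon U(X)\to\hdp(X_+)$. The algebra map $\hdp(S^0)\to\hdp(X_+)$ is induced by the collapse $X_+\to S^0$; the inclusion of $H^*(X)$ as the component-one part is Proposition~\ref{prop:may}; that these classes are standard non-nilpotent amounts to checking that $x\dip{m}$ is nonzero with split coproduct, which is read off from $x\dip{m}$ being (the image of) the external power $x^{\otimes m}\in H^*(\si_m;(\widetilde H^*X)^{\otimes m})$ together with the homology pairing below; and the two relations are verified by the Cartan--Eilenberg double coset formula exactly as in the proofs of Proposition~\ref{firststep} and Proposition~\ref{coproductcompat}. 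Indeed the first relation is immediate, both sides being the image of $(x\cdot y)^{\otimes m}$ under the component-$m$ diagonal, while the second compares two iterated inductions of $f^{\otimes k}$ to $\si_{knm}$ from partition subgroups whose indices coincide.

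Next I would produce an explicit additive spanning set $\mathcal B$ of $U(X)$. Fixing a homogeneous basis $\{x_\alpha\}$ of $\widetilde H^*(X)$ and the Hopf-rig-with-divided-powers basis of $\hdp(S^0)$ from Theorem~\ref{m1}, the Hopf rig axioms, the divided powers axioms, and the two relations let one rewrite every element of $U(X)$ as an $\F_2$-combination of transfer products $\odot$ of component-indecomposable monomials, each of the latter being a cup product of divided powers $x_\alpha\dip{k}$ and of the $\gamma_i\dip{2^j}$; the two relations are precisely what is needed to put cup products of divided powers of the $x_\alpha$ into normal form. This yields $\mathcal B$, indexed by suitable combinatorial data. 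Then I would pair $\Phi(\mathcal B)$ against the classical May--Cohen additive basis $\mathcal D$ of $\bigoplus_n H_*(D_n(X_+);\F_2)$ \cite{May-Cohen}, given by transfer products of admissible Dyer--Lashof monomials of positive excess applied to the $u_\alpha$ dual to the $x_\alpha$. The pairing of the part of $\Phi(\mathcal B)$ supported on $\hdp(S^0)$ is the symmetric-group computation underlying Theorem~\ref{m1}; the new input is the pairing of a divided power $x_\alpha\dip{k}$ in component one, and of its cup and transfer combinations with the $\gamma$-classes, against the matching Dyer--Lashof monomial, which is computed directly from the geometric picture of Remark~\ref{geometry2} (intersection for $\cdot$, union for $\odot$, repetition for divided powers) or equivalently from the double coset formula. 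Ordering $\mathcal B$ and $\mathcal D$ compatibly by length and excess of the Dyer--Lashof words one checks the comparison matrix is unitriangular in each bidegree; by finite type the evaluation pairing $H^*(D_n(X_+))\otimes H_*(D_n(X_+))\to\F_2$ is perfect, so unitriangularity forces $\Phi(\mathcal B)$ to be a basis. Hence $\Phi$ carries the spanning set $\mathcal B$ bijectively onto a basis of $\hdp(X_+)$, so $\mathcal B$ is a basis of $U(X)$ and $\Phi$ is an isomorphism.

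Finally, the statement that the ring is also universal among the corresponding objects with divided powers forgotten follows by the same reduction as in Theorem~\ref{m1}: every divided power $x_\alpha\dip{m}$, resp. $\gamma_i\dip{m}$, with $m$ not a power of two is the transfer product of those with index a power of two (the relevant mod-$2$ binomial coefficients being $1$), so all divided powers needed as generators are already part of the specified data of a standard non-nilpotent class, and the divided powers operation on the remainder of the ring is then determined by the coalgebra and $\odot$-structure; thus the two universality statements coincide. The step I expect to be the main obstacle is the unitriangularity in the middle paragraph: one must simultaneously fix the combinatorial indexing of $\mathcal B$, match it with the May--Cohen basis, and compute enough pairings of mixed monomials to see that, after a suitable ordering, the comparison matrix has invertible diagonal. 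That bookkeeping is the technical heart, and it is what reduces Theorem~\ref{m3} to the already-established symmetric-group calculation of Theorem~\ref{m1} together with a bounded family of new pairing computations.
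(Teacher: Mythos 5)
Your overall architecture (build the comparison map from the universal property, verify the two relations at the cochain level, then use a perfect pairing with the Cohen--Lada--May homology basis to conclude) matches the strategy the paper announces, and your first paragraph is essentially the content of Lemma~\ref{lemma-partial-compatibility}. But the step you yourself flag as the technical heart --- ordering the Hopf-monomial spanning set $\mathcal B$ against the Nakaoka/Dyer--Lashof basis $\mathcal D$ and ``checking the comparison matrix is unitriangular in each bidegree'' --- is a genuine gap, not mere bookkeeping. The paper explicitly remarks (in Section~\ref{sec:sgn}) that even in the trivial-coefficient, $X=S^0$ case the pairing between the Nakaoka basis in homology and the standard cohomology basis is, to the authors' knowledge, not known; so there is no established ordering in which the full matrix is known to be triangular with unit diagonal, and asserting it does not reduce the theorem to Theorem~\ref{m1}. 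You would also need, and do not supply, the count $|\mathcal B|=|\mathcal D|$ in each bidegree to make the matrix square before unitriangularity could force $\Phi(\mathcal B)$ to be a basis.

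The paper avoids this entirely by a structural reduction: since $(H_{alg},\odot,\Delta)$ is a bicommutative divided-powers Hopf algebra, it is free on its primitives, and dually $\hdp(X_+)^{\vee}$ is free commutative on its indecomposables; hence the comparison map is an isomorphism as soon as the induced pairing between $P(H_{alg})$ and the indecomposables $Q$ of homology is perfect. Both $P$ and $Q$ factor as tensor products, $P\cong P_{\{*\}}\otimes H^*(X)$ and $Q\cong Q_{\{*\}}\otimes H_*(X)$, and statement (1) of Lemma~\ref{lemma-partial-compatibility} (the vanishing of $\langle \iota_X(x)\dip{n}, q_I(\alpha)\rangle$ unless $q_I=q_0\cdots q_0$) together with May's coproduct formula shows the pairing is the tensor product of the already-known symmetric-group pairing with the Kronecker pairing on $X$. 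This replaces your unbounded family of mixed-monomial pairing computations by a single small one. If you want to salvage your route, you would need either to prove the triangularity claim outright (which would be a new result about the Nakaoka pairing) or to substitute a dimension count for it, as is done in the proof of Theorem~\ref{m2}; as written, the argument does not close.
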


\begin{theorem}\label{m4}
For $p>2$, the mod-$p$ cohomology of extended powers of $ X_+ $  is a
component super-Hopf rig with additive divided powers over 
the cohomology of extended powers of a point which contains two copies of the cohomology of $X$, which 
we denote by $x_e$ degree $(d,1,e)$ for $e \in \{ 0, 1 \}$.  These classes are standard non-nilpotent if $d + e$ is even and 
primitive if $d + e$ is odd, with 
		$$ x_e\dip{n} \cdot y_{e'}\dip{n} = (x \cdot y)_{e+e'}\dip{n},  $$
		$$ x_e\dip{nm} \cdot z\dip{n} = (x_e \dip{m} \cdot z)\dip{n} \mbox{ and} $$
		$$ x_e \odot y_{e'} = 0 \mbox{ if } e \not= e', $$
		if $ x,y \in H^*(X; \mathbb{F}_p) $, $ z \in H^*(D_m(S^0); \mathbb{F}_p) $, and $ e,e' \in\{0,1\} $,
 with the sum $ e + e' $  understood modulo two.
 It is universal among such objects with respect to the  functor which forgets divided powers.

\end{theorem}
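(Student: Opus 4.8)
The plan is to mirror the strategy announced for Theorems~\ref{m3} and~\ref{m4} in the introduction: produce, for any space $X$, an explicit set of classes in $\hdp(X_+)$ built from the Hopf ring and divided powers operations out of $\hdp(S^0)$ and the two copies $x_0,x_1$ of $H^*(X)$, check that these classes satisfy the asserted relations, and then establish that this set of relations is \emph{all} of them by showing that the resulting classes pair perfectly against the standard homology basis. Concretely, I would first recall from Proposition~\ref{prop:may} that $H^*(D_n X_+;\rho)\cong H^*(\si_n;\widetilde H^*(X_+)^{\otimes n}\otimes\rho)$ and that $\widetilde H^*(X_+)=\mathbb F_p\oplus\widetilde H^*(X)$, so that $\hdp(X_+)=EP(\mathbb F_p\oplus\widetilde H^*(X))$. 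The decomposition of $(\mathbb F_p\oplus\widetilde H^*(X))^{\otimes n}$ into summands indexed by which tensor factors land in $\mathbb F_p$ versus $\widetilde H^*(X)$, combined with the module structure over $\hdp(S^0)=EP(\mathbb F_p)$ coming from the collapse $X_+\to S^0$, gives a candidate spanning set: transfer products of divided powers of the $x_e$'s against divided powers of the generators $\gamma_k,\lambda_k,\gamma'_k$ of $\hdp(S^0)$ from Theorem~\ref{m2}.

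The second step is verifying that the three displayed families of relations hold. The relation $x_e\odot y_{e'}=0$ for $e\neq e'$ is immediate from the definition of $\tilde\odot$ on a super-product series in Definition~\ref{main-dphr-structure}, which declares the transfer product of classes of different sign degrees to vanish. The cup-product relation $x_e\dip n\cdot y_{e'}\dip n=(x\cdot y)_{e+e'}\dip n$ and the mixed relation $x_e\dip{nm}\cdot z\dip n=(x_e\dip m\cdot z)\dip n$ are the analogues of the relations in Theorem~\ref{m3}; I would prove them at the level of resolutions exactly as in the proof of Proposition~\ref{firststep} and Proposition~\ref{coproductcompat}, i.e. by writing both sides as transfers of the appropriate tensor powers of representing cochains from partition subgroups $\si_{n\cdot\ul m}$, $\si_{nm\cdot\ul 1}$, etc., and invoking Lemma~\ref{group cohomology} (restriction-then-transfer is multiplication by the index) together with the multiplicativity of the structure maps $\chi_{i,j}$ of the product series. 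The sign degrees add because $sgn\otimes sgn\cong\mathbb F_p$ as in Definition~\ref{def:rho}, and the Koszul signs introduced in Definition~\ref{main-dphr-structure-modified} are exactly calibrated so that the graded commutativity and distributivity of Theorem~\ref{main:structure} are respected; I would note that the relations are stated in a sign-free form precisely because the total degrees of the terms on the two sides agree.

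The third and main step — the one I expect to be the real obstacle — is universality. One has to show that the component super-Hopf rig with additive divided powers presented by generators $\hdp(S^0)\cup\{x_e\}$ and the above relations, after forgetting divided powers, maps \emph{isomorphically} onto $\hdp(X_+)$; equivalently, that no further relations are forced. The approach, as flagged in the introduction, is to build an additive basis of the presented object out of transfer-product monomials of divided powers of generators, then pair it against the Nakaoka/May--Cohen standard basis of $H_*(DX_+)$ and show the pairing matrix is unitriangular with respect to a suitable ordering (by component, then by a length/partition filtration). This requires: (a) extracting an explicit additive basis of $EP(\mathbb F_p)$ from Theorem~\ref{m2} in terms of the $\gamma_k,\lambda_k,\gamma'_k$ — essentially the work of Section~\ref{sec:symmetric group}; (b) understanding $EP(\mathbb F_p\oplus\widetilde H^*(X))$ as, roughly, a ``twisted tensor product'' indexed by partitions of the $n$ points into those labeled by $S^0$-data and those labeled by $H^*(X)$-data, so that a basis element is a transfer product of an $S^0$-monomial with divided powers $x_{i_1,e_1}\dip{m_1}\odot\cdots$ of basic classes of $X$; and (c) checking the pairing with homology is perfect, using that the homology side is governed by free allowable $R$-algebra generators (Dyer--Lashof words) on $H_*(X)$ together with the $H_*(S^0)$ part, and that transfer product is dual to the coproduct on homology while the divided powers $x\dip m$ are dual to $m$-th powers under Pontryagin-type products. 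The delicate point is bookkeeping the two sign-degree copies and the divided-power multiplicities simultaneously; I would handle it by first doing the $p=2$ case (Theorem~\ref{m3}), where there is only one copy of $H^*(X)$ and no signs, as a warm-up and template, and then importing that argument verbatim on each sign-degree summand for odd $p$, using the additive isomorphism $H^*(\si_n;sgn\otimes A^{\otimes n})\cong H^*(\si_n;\Sigma(A)^{\otimes n})$ noted in the remark after Theorem~\ref{main:structure} to reduce the twisted summand to an untwisted one with shifted degrees.
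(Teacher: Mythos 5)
Your overall strategy is the paper's: verify the displayed relations at the cochain level, obtain a map from the presented universal object $H_{alg}$ to $\hdp(X_+)$, and prove it is an isomorphism by a perfect-pairing argument against the Cohen--Lada--May/Bernard homology basis. Your treatment of the relations is essentially what the paper does in Lemma~\ref{lemma-partial-compatibility} (the vanishing $x_e\odot y_{e'}=0$ for $e\neq e'$ is indeed definitional, and the two cup-product relations are proved by exhibiting a single cocycle representative for both sides, using that $\iota_X(x)\dip{n}$ is represented by a cocycle concentrated in resolution degree zero). The substantive divergence is in the final step. You propose to pair the full Hopf-monomial basis against the full Nakaoka basis and show the pairing matrix is unitriangular for a suitable ordering; but the paper explicitly remarks that this pairing is not known even with trivial coefficients, so as stated this step is a real obstacle rather than bookkeeping. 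The paper instead invokes the Milnor--Moore/Andr\'e structure theorem for bicommutative divided-powers Hopf algebras to reduce the isomorphism question to showing that the pairing between the primitives $P(H_{alg})$ (spanned by $b\cdot x\dip{p^n}$ for $b$ a primitive gathered block) and the indecomposables $Q$ of the homology (spanned by admissible $q_I(\alpha)$) is perfect; it then factors that pairing as the tensor product of the $X=\{*\}$ pairing with the Kronecker pairing on $H^*(X)\otimes H_*(X)$, the key input being $\langle x\dip{n},q_K(\alpha)\rangle=0$ unless $q_K=q_0\cdots q_0$, which again follows from the degree-zero cocycle representative. Your plan becomes the paper's proof once you make this reduction; without it, step~(c) does not go through as written. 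Your proposed reduction of odd $p$ to $p=2$ via the suspension isomorphism also needs care: that isomorphism preserves $\odot$ and $\Delta$ but not the cup product (this is exactly the content of Proposition~\ref{prop:cup product sign}), so the twisted summand cannot be treated ``verbatim'' as an untwisted one.
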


\begin{remark}\label{bad-example}
Theorems \ref{m3} and \ref{m4} explicitly embed the cohomology of the symmetric groups and the cohomology of $ X $  in $ \hdp(X) $. 
The relations provide a compatibility identity between divided powers and cup product. We can interpret this structure as a 
``bigraded component Hopf ring with divided powers over the cohomology of symmetric groups generated by $ H^*(X) $''.
More generally, one could define a ``bigraded component Hopf ring with divided powers'' as a bigraded component Hopf ring with additive 
divided powers $ A $ such that a similar compatibility condition between divided powers and cup product hold for all $ x,y \in A $ divided 
powers of primitive elements. However, this simple identity does not extend to classes $ x $ not arising from the cohomology of $ X $.
For example, with $ p = 2 $ and $ X = \{*\} $, 
$ \hdp(X_+) $ becomes the ordinary mod $ 2 $ cohomology of the symmetric groups and the divided powers operations 
$ \_\dip{k} $ agree with the cohomological transfer maps associated to $ \si_k \wr \si_n \to \si_{nk} $. A procedure to compute these 
transfer maps is described by Kechagias \cite{Kechagias:09}. Combining Kechagias's algorithm with the computations of the restriction to 
elementary abelian subgroups achieved by Giusti--Salvatore--Sinha \cite{Sinha:12} one obtains the following equality
\[
(\gamma_{1})\dip{2} \cdot (\gamma_{1}^2)\dip{2} = \gamma_{2}^2 + (\gamma_{1}^3)\dip{2}.
\]
As the right hand side is not simply $(\gamma_{1}^3)\dip{2}$, the relationship between the divided powers operations and the 
cup product in $ \hdp(X) $ is not as expected and appears likely to be complicated in general.
\end{remark}

\begin{remark}\label{geometry3}
Recall from Remark~\ref{geometry2} that Hopf ring with additive divided powers structures all have simple geometric cochain models when
using configuration space models for extended powers of manifolds.  Many of the basic classes in Theorems~\ref{m1}-\ref{m4} 
have geometric cochain models as well.  The $\gamma_k$ for $\F_2$ are represented by ``$2^k$ points which share a coordinate''.  Formally
one considers the manifold consisting of triples $(x, X, Y)$ where $x \in \R$, $X$ is a configuration of $2^k$ distinct unordered points in
$x \times \R^{N-1} \subset \R^N$, and $Y$ is a configuration of unordered points in the complement of $X$ in $\R^N$.  This manifold maps
properly to the configuration space, generically an embedding but finite-to-one if some point in $Y$ is contained in 
 $x \times \R^{N-1}$ or some $2^k$ points in $Y$ contained in an $x' \times \R^{N-1}$.  It thus defines a geometric cocycle, which represents 
 $\gamma_k$ as shown in Theorem~4.9 of \cite{Sinha:12}.
 Similarly, the $\gamma_k$ for odd primes are represented by the geometric 
 cochain defined by ``$p^k$ points in $\mathbb{C}^N$ which share a complex coordinate.''  The cup products $\gamma_k \cdot x\dip{k}$
 on the $k$th extended power is represented by labeled points which share a coordinate and are labeled by the representative of $x$.
 One must name which extended power because the cup product of manifestations of these classes on higher extended powers, given by transfer products
 with unit classes, can differ from this (as some points in a configuration could share their coordinate while others share their label).
 
 We have not developed geometric representatives for the $\gamma_k'$ and $\lambda_k$, or their even sign degree products
 as considered earlier in \cite{Guerra:17}.  And as mentioned before, one must take care with geometry and cup products since one would
 expect  ${\gamma_1\dip{2}}^3$ to be represented by the geometric cochain defined by 
 four points which consist of two sets of two points each of which share three coordinates.
 But it is represented by the union of this along with a copy of the geometric cochain defined by four points which share two coordinates.
 \end{remark}

\subsection{Explicit construction of (relatively) universal component Hopf rings with divided powers}\label{explicit}

We next better understand the structures elaborated in  the main theorems just stated, 
abstracting as follows.

\begin{definition}\label{def: H-alg}
Let $A$ be a component super-Hopf ring with additive divided powers containing $ \rho $ as a subalgebra of $ A_1 $, 
its $ \cdot $-algebra in component $ 1 $, 
 and let $V$ be a connected graded algebra. 
 Let $ H_{alg} (A, V)$, which we generally shorten to $ H_{alg} $, be the super-Hopf ring with additive divided powers  satisfying 
  the following five properties.
\begin{enumerate}
	\item There is a map $ \pi \colon A \to H_{alg} $ that is both a super-Hopf ring map and a morphism of divided powers structures 
	preserving the $ \cdot $-unit 	 of each component;

	\item There is a $ \rho $-algebra 
	 homomorphism $ \iota $ from $ V \otimes \rho $ to the subspace of elements of component $ 1 $ inside $ H_{alg} $, 
	 (that is, an algebra morphism $ \iota \colon V \to H_{alg} $ in component $ 1 $ if $ p = 2 $);
	\item For all  $ x,x' \in V \otimes \rho $ both with even total degree $ \iota(x)\dip{n} \cdot \iota(x')\dip{n}= \iota(xx')\dip{n} $;
	\item For all  $ x \in V \otimes \rho $ with even total degree and for all $ y \in A $ such that $ n(y) = m $, 
	 $$ \iota(x)\dip{nm} \cdot \pi(y)\dip{n} = (\iota(x)\dip{m} \cdot \pi(y))\dip{n};$$
	\item If $ p > 2 $, $ x\dip{n} \odot {x'}\dip{m} = 0 $ if $ x \in V \otimes \mathbb{F}_p \subseteq V \otimes \rho $ and $ x' \in V \otimes \sgn \subseteq V \otimes \rho $.
\end{enumerate}
\end{definition}

We wish to describe, in terms of additive bases of $ A $ and $ V $, a basis for  $ H_{alg} $. 
In Section~\ref{sec:algebraic basis} will  specialize it to our case of interest, where 
$ A = \mathbb{F}_p \oplus \bigoplus_{n \geq 1} H^*(\si_n; \rho) $ and $ V = H^*(X; \mathbb{F}_p) $.

In the remainder of this section, we will assume that $ p > 2 $, because the treatment for $ p = 2 $ is similar and much  simpler. 
Let $ V_{even} $ (respectively $ V_{odd} $) be the subspaces of $ V \otimes \rho $ of even (respectively odd) total degree.

As $ A $, with the product $ \odot $ and the coproduct $ \Delta $ alone is a bicommutative divided powers Hopf algebra, 
by a classical result \cite{Andre:71} it must be the free divided powers Hopf algebra on its subspace of primitive elements.  That is,
$ A \cong DP(P(A)) $. Since $ A $ is a component super-Hopf ring, $ P(A) $ is a non-unital component super-algebra with $ \cdot $, 
 that inherits a triple grading from $ A $. 
 
 We consider the tensor product of algebras $ V_{even} \otimes P(A) $, that we tri-grade with the following rule: if $ v \in V_{even} $ is homogeneous 
 of degree $ d(v) $ and $ y \in P(A) $ is tri-homogeneous of tri-degree $ (d(y),n(y),e(y)) $, then $ v \otimes y \in V \otimes P(A) $ is 
 tri-homogeneous of tri-degree $(d(v)n(y)+d(y),n(y),e(y)+d(v)) $.
Intuitively, we interpret a pure tensor $ v \otimes y \in V_{even} \otimes P(A) $ as the element $ \iota(v)\dip{n(y)} \cdot \pi(y) $. This justifies our choice of degrees. Recall that the sign degree $ e $ is defined only modulo $ 2 $.

From a graded module $ M $ we can construct an augmented non-unital coalgebra $ M^{pr} = \mathbb{F}_p \oplus M $ 
with the $ \mathbb{F}_p $ addend in tri-degree $ (0,0,0) $ and  a coproduct $ \Delta(x) = 1 \otimes x + x \otimes 1 $ for all $ x \in M $. 
This is the ``primitive coalgebra extension'' of the module $ M $.

We then decompose $ V_{even} \otimes P(A) $ as the direct sum $ (V_{even} \otimes P(A))_+ \oplus (V_{even} \otimes P(A))_- $, 
where $ (V_{even} \otimes P(A))_+ $ is the subspace generated by elements $ v \otimes x \in V_{even} \otimes P(A) $ in which $ e(v) = e(x) $ mod $ 2 $ 
and $ (V_{even} \otimes P(A))_- $ consists of such with  $ e(v) \not= e(x) $ mod $ 2 $).  Then
\[
(H_{alg},\odot,\Delta) \cong {\dpha((V_{even} \otimes P(A))_+^{pr}) \oplus \dpha((V_{even} \otimes P(A))_-^{pr})}/{\sim},
\]
where $ \sim $ is the equivalence relation that identifies the units of the two addends.
Additive bases for $ V $ and $ P(A) $ induce a basis for $ \dpha((V_{even} \otimes P(A))_\pm^{pr}) $, and consequently on 
$ H_{alg} $, consisting of elements of the form $ \bigodot_i (v_i \otimes y_i)\dip{n_i} $ with $ v_i, y_i $ basis elements with 
$ n_i = 1 $ if $ v_i \otimes y_i $ has odd total degree and $ v_i \otimes y_i $ belong all to $ (V \otimes P(A))_+ $ or all to $ (V \otimes P(A))_- $.

For now, $ H_{alg} $ is only a component divided powers super-Hopf algebra. Under this isomorphism we
 next identify maps $ \pi \colon A \to H_{alg} $ and $ \iota \colon V \otimes \rho \to H_{alg} $, as well as a second product 
 $ \cdot \colon H_{alg} \otimes H_{alg} \to H_{alg} $ which provide it with a super-Hopf ring structure.

First, we define $ \pi \colon A \to H_{alg} $ as the unique divided powers super-Hopf algebra morphism extending the linear map on primitives $ \pi \colon y \in P(A) \mapsto 1_V \otimes y \in V_{even} \otimes P(A) \subseteq P(H_{alg}) $.
Second, let $ \iota \colon V \otimes \rho \to H_{alg} $. If $ v \in V_{even} $, then we let $ \iota(v) = v \otimes 1_1 $, 
where $ 1_1 \in A $ is the $ \cdot $-product unit of the $ 1 $-component of $ A $. 
If $ v \in V_{odd} $, then let $ s \in \rho $ be a generator of the $ \sgn $ addend. Since $ v s \in V_{even} $,  we can define $ \iota(v) $ as 
$ \iota(vs) \otimes s $, where $ s \in \rho \subseteq A_1 $ is considered as a primitive element of $ A $.

We construct the second product $ \cdot $ on $ H_{alg} $ step-by-step. As always, $ \cdot $ between elements of different components will be zero. 
As an intermediate step, we define the product $ \iota(v)\dip{n} \cdot x $ for all $ v \in V_{even} $ and $ x \in H_{alg} $ with $ n(x) = n \geq 1 $. 
The case $ n = 1 $ reduces to the product on $ V \otimes \rho $, so we can assume that $ n \geq 2 $.
If $ x = (w \otimes y)\dip{m} $ is a divided power of a primitive element $ w \otimes y \in V_{even} \otimes P(A) $, then we let 
$ \iota(v)\dip{n} \cdot x = (\iota(v)(w \otimes y))\dip{m} $. We can then extend it to general $ x \in H_{alg} $ by using Hopf ring distributivity, 
because $ H_{alg} $ is generated under $ \odot $ by the divided powers of its primitive elements.
Explicitly, every element of $ H_{alg} $ is a linear combination of products $ \bigodot_{i=1}^r x_i\dip{m_i} $ for some $ x_i = (w_i \otimes y_i) \in V_{even} \otimes P(A) $ and $ m_i \geq 1 $. We let
\[
\iota(v)\dip{\sum_i n(x_i)m_i} \cdot \bigodot_i x_i\dip{m_i} = \bigodot_i (\iota(v) (w_i \otimes y_i))\dip{m_i}
\]
and we extend to all $ H_{alg} $ by linearity.
\begin{proposition} \label{prop:algebraic cup product}
	There is a unique component super-Hopf ring structure on $ H_{alg} $ such that the multiplication by $ \iota(v)\dip{n} $ for $ v \in V_{even} $ has the form defined above, and satisfying the five conditions of Definition~\ref{def: H-alg}.
\end{proposition}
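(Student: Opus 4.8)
I would split the statement into \emph{uniqueness} (which is essentially bookkeeping) and \emph{existence} (where the work lies), throughout using the explicit description of $(H_{alg},\odot,\Delta)$ as the unit–gluing $\dpha((V_{even}\otimes P(A))_+^{pr})\oplus\dpha((V_{even}\otimes P(A))_-^{pr})$ recalled just before the statement, together with its resulting $\odot$‑monomial basis.

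\textbf{Uniqueness.} Suppose $\cdot$ is any product making $(H_{alg},\odot,\cdot,\Delta,\{\gamma_n\})$ a component super‑Hopf ring with additive divided powers that satisfies conditions (1)--(5) of Definition~\ref{def: H-alg} and for which multiplication by $\iota(v)\dip n$, $v\in V_{even}$, has the stipulated form. The $m=1$ case of that form gives $\iota(v)\dip{n(y)}\cdot\pi(y)=v\otimes y$ for $y\in P(A)$ (using $\pi(y)=1_V\otimes y$, forced by condition (1)), and then condition (4) in the form $\bigl(\iota(v)\dip{n(y)}\cdot\pi(y)\bigr)\dip{m}=\iota(v)\dip{m\,n(y)}\cdot\pi\bigl(y\dip m\bigr)$ identifies each basis primitive power $(v_i\otimes y_i)\dip{m_i}$ with an explicit $\cdot$‑product of $\iota$'s and $\pi$'s; for $v$ of odd total degree one first uses $\iota(v)=\iota(vs)\otimes s$ with $s$ a generator of the $\sgn$‑summand. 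Hence $(H_{alg})_n$ is generated, under $\odot$ and divided powers, by $\pi(A)$ and $\iota(V_{even})$. Iterated Hopf rig distributivity (expanding one factor as a $\odot$‑product and the other via $\Delta$) then reduces any $a\cdot b$ to products of divided powers of these generators, which are in turn forced: products among $\iota$'s by condition (3), products among $\pi$'s by $\cdot$‑multiplicativity of $\pi$ (part of condition (1)), and mixed products by (super‑)commutativity, associativity and condition (4). Thus $\cdot$ is uniquely determined if it exists.

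\textbf{Existence.} I would construct $\cdot$ by exactly these forced formulas and verify the axioms. First, for $v\in V_{even}$, $n\ge1$, define the linear operator $L_{v,n}$ on $(H_{alg})_n$ by $L_{v,n}\bigl(\bigodot_i (w_i\otimes y_i)\dip{m_i}\bigr)=\bigodot_i \bigl((vw_i)\otimes y_i\bigr)\dip{m_i}$ on $\odot$‑monomials in basis primitives and extend linearly; since $u\mapsto vu$ is an algebra endomorphism of $V_{even}\otimes P(A)$, this respects the defining relations of $\dpha$ of the primitive coalgebra extension (binomial, law of exponents, composition of Definition~\ref{def:divided powers}), so $L_{v,n}$ is well defined, and one reads off directly that $L_{v,n}(a\odot b)=L_{v,n'}(a)\odot L_{v,n''}(b)$ for $n'+n''=n$, that $L_{v,n}\circ L_{v',n}=L_{vv',n}$, and that $L_{v,n}$ intertwines $\Delta$ with $\sum_{i+j=n}L_{v,i}\otimes L_{v,j}$. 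Give $\pi(A)$ the product $\pi(a)\cdot\pi(a')=\pi(a\cdot_A a')$, set $\iota(v)=\iota(vs)\otimes s$ in odd total degree, declare $\cdot$ zero between distinct components, and extend to all of $H_{alg}$ by the distributivity reduction above. It then remains to check: (a) this is independent of the choices in the reduction, i.e.\ well defined; (b) each $(H_{alg})_n$ is a unital graded‑commutative (super‑)algebra with unit $\pi(1_{A_1})\dip n$, and $\cdot$ is associative; (c) $\Delta$ is a $\cdot$‑morphism and the graded Hopf rig distributivity of Definition~\ref{def:dphr} holds; (d) conditions (1)--(5) of Definition~\ref{def: H-alg}. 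Item (d) is then nearly automatic: (1) because $\pi$ is by construction a divided powers $\odot$‑Hopf algebra map that is $\cdot$‑multiplicative and preserves component units; (2) is the component‑$1$ instance of (3) plus the definition of $\iota$ on $V_{odd}$; (3) and (4) are the defining formulas; (5) holds since $\odot$ annihilates a product of nontrivial elements of the two $\dpha$‑summands while $V\otimes\mathbb{F}_p$ and $V\otimes\sgn$ map into opposite summands. Items (b) and (c) are checked on generators, where they reduce to the identities in $A$ (via $\pi$ multiplicative and a $\odot$‑Hopf algebra map), to the commuting/intertwining properties of the $L_{v,n}$, and to graded‑commutativity of $V\otimes\rho$ and of $A$, with the signs exactly those of Definition~\ref{main-dphr-structure-modified}.

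\textbf{Expected main obstacle.} The genuinely delicate point is item (a), the coherence (well‑definedness plus associativity and Hopf rig distributivity together) of $\cdot$: reducing an arbitrary $\cdot$‑product to the $\odot$‑monomial basis involves many choices of how to split off $\odot$‑factors and how to coproduct‑expand, and, as Remark~\ref{bad-example} warns, divided powers and cup product need not interact predictably, so confluence is not a formality. What rescues the argument is structural rather than computational: every ``new'' product is tame --- products among the $\iota(v)$'s obey the polynomial‑type rule (3), so divided powers commute with them; a product of an $\iota$‑power with a $\pi$‑image collapses, by (4), to a single outer divided power of a component‑$1$ product; and every remaining product lives inside $\pi(A)$, where coherence is inherited verbatim from the component super‑Hopf ring with divided powers structure of $A$. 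Making this precise --- fixing a normal form for elements of $(H_{alg})_n$ adapted to the $\dpha$‑decomposition and showing the product formula respects it --- is the technical heart; the verifications against the axioms of Definitions~\ref{def:bigraded component Hopf rig} and \ref{def:dphr} are then routine bookkeeping.
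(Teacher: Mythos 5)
Your proposal is correct and follows essentially the same route as the paper: uniqueness by showing that conditions (3), (4) and the multiplicativity of $\pi$ force the value of $\cdot$ on divided powers of the basis primitives $v\otimes y=\iota(v)\dip{n(y)}\cdot\pi(y)$, with Hopf rig distributivity then determining everything, and existence by defining the product on the induced basis via the forced formula and verifying the axioms. The paper dismisses the existence/coherence check as straightforward where you elaborate it (via the operators $L_{v,n}$), but this is a difference of detail, not of method.
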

\begin{proof}
	Let $ v,w \in V_{even} $ and $ y,z \in P(A) $ be tri-homogeneous elements.  Note that, by the definition of the multiplication by divided powers of elements of $ V_{even} $ above, we must have $ v \otimes y = \iota(v)\dip{n(y)} \cdot \pi(y) $ and $ w \otimes z = \iota(w)\dip{n(z)} \cdot \pi(z) $.
	Hence, if $ H_{alg} $ satisfies the third and fourth conditions we must have
	\begin{align*}
	(v \otimes y)\dip{a} \cdot (w \otimes z)\dip{b} &= (\iota(v)\dip{n(y)} \cdot \pi(y))\dip{a} \cdot (\iota(w)\dip{n(z)} \cdot \pi(z))\dip{b} \\
	&= (-1)^{t(y)d(w)n(z)ab} \iota(v)\dip{n(y)a} \cdot \iota(w)\dip{n(z)b} \pi(y)\dip{a} \pi(z)\dip{b} \\
	&= (-1)^{t(y)d(w)n(z)ab} \iota(vw)\dip{n(y)a} \cdot \pi(y)\dip{a} \cdot \pi(z)\dip{b}
	\end{align*}
	for all $ a,b $ such that $ n(y)a = n(z)b $.
	Moreover, if $ \pi $ is a morphisms of Hopf rings and of divided powers structure, $ \pi(y)\dip{a} \cdot \pi(z)\dip{b} = \pi(y\dip{a} \cdot z\dip{b}) $.
	In conclusion
	\[
	(v \otimes y)\dip{a} \cdot (w \otimes z)\dip{b} = (-1)^{t(y)d(w)n(z)ab} \iota(vw)\dip{n(y)a} \cdot \pi(y\dip{a} \cdot z\dip{b}).
	\]
	Consequently, the conditions above uniquely determine the values of $ \cdot $ on $ \odot $-indecomposables. Therefore, if an extension of $ \cdot $ exists, it is necessarily unique.
	
	To prove existence, we only need to check that the formula above provides a well-defined bilinear product satisfying Hopf ring distributivity. 
	This is straightforward.  For instance, one can fix bases of $ A $ and $ V $, define the $ \cdot $ product as above on the induced basis, 
	extend it to all $ H_{alg} $ by bilinearity and directly check the axioms of Hopf rings using the basis.
\end{proof}

We now prove that $ H_{alg} $ is our desired object.

\begin{proposition}
	$ H_{alg} $ is universal among the super-Hopf rigs with additive divided powers satisfying the five conditions of Definition~\ref{def: H-alg}.
\end{proposition}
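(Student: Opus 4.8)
The plan is to verify the universal property directly. Let $B$ be any component super-Hopf rig with additive divided powers equipped with maps $\pi_B \colon A \to B$ and $\iota_B \colon V \otimes \rho \to B$ satisfying the five conditions of Definition~\ref{def: H-alg}; we must produce a unique morphism $\Phi \colon H_{alg} \to B$ of component super-Hopf rigs with additive divided powers with $\Phi \circ \pi = \pi_B$ and $\Phi \circ \iota = \iota_B$.

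\emph{Uniqueness.} By construction the primitives of $H_{alg}$ are exactly $V_{even} \otimes P(A)$ (with the $V_{odd}$ classes $\iota(v)$ sitting inside via the sign generator $s$), and $H_{alg}$ is generated under $\odot$ by the divided powers $(v \otimes y)\dip{a}$ of these primitives. Moreover every primitive is expressible through the comparison maps: $v \otimes y = \iota(v)\dip{n(y)} \cdot \pi(y)$ in $H_{alg}$. Hence any $\Phi$ as above must send $(v \otimes y)\dip{a}$ to $\bigl(\iota_B(v)\dip{n(y)} \cdot \pi_B(y)\bigr)\dip{a}$, after which $\odot$-multiplicativity forces its value on all of $H_{alg}$.

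\emph{Existence.} First construct $\Phi$ as a morphism for the divided powers Hopf algebra structure $(\odot,\Delta,\{\gamma_n\})$. Recall $H_{alg}$ with this structure is the quotient of $\dpha\bigl((V_{even}\otimes P(A))_+^{pr}\bigr) \oplus \dpha\bigl((V_{even}\otimes P(A))_-^{pr}\bigr)$ identifying units, and $\dpha$ is left adjoint to the forgetful functor to coalgebras. It thus suffices to give, for each sign, a coalgebra map $(V_{even}\otimes P(A))_\pm^{pr} \to B$ carrying the distinguished line to $1_B$ and $v \otimes y$ to $\iota_B(v)\dip{n(y)} \cdot \pi_B(y)$; the two maps agree on units, so by colimit-preservation of $\dpha$ they glue to $\Phi$. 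The one point to check is that $\iota_B(v)\dip{n(y)} \cdot \pi_B(y)$ is primitive in $B$: $\pi_B(y)$ is primitive since $\pi_B$ is a coalgebra map and $y$ is primitive in $A$; $\iota_B(v)$ is primitive because every element of the first component of a connected (super-)component bialgebra is primitive; hence $\iota_B(v)\dip{n(y)}$ is divided-power grouplike; and since $\Delta_B$ is an algebra map for $\cdot$, while $\cdot$ vanishes between distinct components, all cross terms in $\Delta_B\bigl(\iota_B(v)\dip{n(y)} \cdot \pi_B(y)\bigr)$ die and only $1 \otimes \bigl(\iota_B(v)\dip{n(y)} \cdot \pi_B(y)\bigr) + \bigl(\iota_B(v)\dip{n(y)} \cdot \pi_B(y)\bigr) \otimes 1$ remains. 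The identities $\Phi \circ \pi = \pi_B$ and $\Phi \circ \iota = \iota_B$ then hold because both sides are divided powers Hopf algebra maps agreeing on the primitive generators $P(A)$, respectively $V \otimes \rho$, using that $\iota_B(1_V)\dip{n} = 1_n$ in $B$ by conditions $(1)$ and $(2)$.

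\emph{Compatibility with cup product; the main obstacle.} It remains to check $\Phi$ intertwines the cup products, which is where the five conditions are genuinely used. The computation in the proof of Proposition~\ref{prop:algebraic cup product} shows that on $\odot$-indecomposables $(v \otimes y)\dip{a} \cdot (w \otimes z)\dip{b} = (-1)^{t(y)d(w)n(z)ab}\,\iota(vw)\dip{n(y)a} \cdot \pi\bigl(y\dip{a} \cdot z\dip{b}\bigr)$, and that derivation used only the Hopf rig axioms together with conditions $(1)$--$(5)$. The identical derivation therefore holds in $B$ with $\pi_B,\iota_B$ in place of $\pi,\iota$; applying $\Phi$ to the $H_{alg}$ identity and comparing with the $B$ identity (and using that $\Phi$ already respects $\odot$, $\Delta$, $\{\gamma_n\}$, $\pi$ and $\iota$) shows $\Phi$ preserves products of $\odot$-indecomposables, and Hopf rig distributivity propagates this to all cup products; condition $(5)$ is respected for the same combinatorial reason it holds in $H_{alg}$. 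The main obstacle is exactly this step: one must be sure that the relations needed to reduce an arbitrary cup product in $H_{alg}$ to the normal form used to \emph{define} $\cdot$ are precisely consequences of conditions $(1)$--$(5)$ — and hence are automatically satisfied in $B$ — which is guaranteed by the fact that the cup product on $H_{alg}$ was built from those relations and nothing more. With this in place $\Phi$ is the required morphism.
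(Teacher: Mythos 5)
Your proposal is correct and follows essentially the same route as the paper: uniqueness via the adjunctions defining $\dpha$ and $\_^{pr}$ together with the forced formula $f(v \otimes y) = \iota_B(v)\dip{n(y)} \cdot \pi_B(y)$, existence by extending that linear map, and cup-product compatibility checked on $\odot$-indecomposables via the formula from Proposition~\ref{prop:algebraic cup product} and then propagated by Hopf ring distributivity. Your explicit verification that $\iota_B(v)\dip{n(y)} \cdot \pi_B(y)$ is primitive in $B$ (using that cup product vanishes across components) is a detail the paper leaves implicit, and it is a welcome addition.
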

\begin{proof}
	Let $ B $ be a super-Hopf rig with additive divided powers with maps $ \pi_B \colon A \to B $, $ \iota_B \colon V \otimes \rho \to B $ satisfying our desired hypotheses.
	$ \dpha $ and $ \_^{pr} $ are left adjoints of the forgetful functor from divided powers Hopf algebras to coalgbras and the primitives $ P \colon \Rcoalg \to \Rmod $, respectively.
	Therefore, a morphism of divided powers Hopf algebras $ f \colon H_{alg} \to B $ such that $ f \pi = \pi_B $ and $ f \iota = \iota_V $ is uniquely determined by its restriction to $ V_{even} \otimes P(A) $. If, in addition, $ f $ is a super-Hopf rig morphism, then
	\[
		f(v \otimes y) = f(\iota(v)\dip{n(y)} \cdot \pi(y)) = \iota_B(v)\dip{n(y)} \cdot \pi_B(y) 
	\]
	for all $ v \in V_{even} $ and $ y \in P(A) $. Therefore, such a morphism is unique (if it exists).
	
	To prove existence, we let $ f \colon H_{alg} \to B $ be the divided powers super-Hopf algebra morphism adjoint of the linear map $ v \otimes y \in V_{even} \otimes P(A) \mapsto \iota_B(v) \cdot \pi_B(y) \in P(B) $.
	We only need to check that $ f $ preserves the $ \cdot $ product, and by Hopf ring distributivity it is enough to 
prove this on $ \odot $-indecomposables, that is divided powers of elements of $ P(H_{alg}) = V_{even} \otimes P(A) $. 
By our construction of the $ \cdot $ product in $ H_{alg} $, we immediately see that $ f(\iota(v)\dip{a} \cdot x) = \iota_B(v)\dip{a} \cdot f(x) $ 
for all $ v \in V_{even} $ and $ x \in H_{alg} $ and $ a \geq 2 $. Moreover, since $ \pi_B $ is a super-Hopf rig morphism, we have that
	\begin{align*}
		f \left((v \otimes y)\dip{a} \cdot (w \otimes z)\dip{b}\right) &= (-1)^{t(y)d(w)n(z)ab} f \left(\iota(vw)\dip{n(y)a} \cdot \pi(y\dip{a} \cdot z\dip{b}) \right) \\
		&= (-1)^{t(y)d(w)n(z)ab} \iota_B(vw)\dip{n(y)a} \cdot \pi_B(y\dip{a} \cdot z\dip{b}) \\
		&= \left( \iota_B(v)\dip{n(y)} \cdot \pi_B(y) \right)\dip{a} \cdot \left( \iota_B(w) \dip{n(w)} \cdot \pi_B(z) \right)\dip{b} = f(v \otimes y) \cdot f(w \otimes z).
	\end{align*}
\end{proof}

Since we want to specialize to the case $ A = \hdp(S^0) $ and $ V = H^*(X; \mathbb{F}_p) $ and as shown in Remark~\ref{bad-example}
the relation between cup 
product and divided powers in $ \hdp(S^0) $ is complicated, it is preferable to rephrase the construction above in terms of indecomposables.
In a Hopf algebra with divided powers, the indecomposables are elements of the form $ x\dip{p^k} $ with $ x $ primitive and $ k \geq 0 $.
Consequently, as a graded bicommutative Hopf algebra alone, $ \dpha{(V \otimes P(A)^{pr})} $ is generated by
\[
	\{ (v \otimes y)\dip{p^k}\}_{v \in V, y \in P(A), k \geq 0} = V \otimes Q(A).
\]
We obtain an isomorphism of indecomposables $ V_{even} \otimes Q(A) \cong QH_{alg} $ given by 
$ v \otimes y \mapsto v\dip{n(y)} \cdot y $ for all $ v \in V $ and $ y \in A $, and this realizes $ H_{alg} $, under the product $ \odot $ alone, 
as the free graded commutative algebra generated by $ V_{even} \otimes Q(A) $, quotiented by the image of the 
Frobenius and the ideal generated by the relation $ (5) $.
As a result, any pair of additive bases for $ V $ and $ Q(A) $ induce an additive basis for $ H_{alg} $. 

In summary, we have the following.

\begin{lemma} \label{lem:algebraic basis}
	Let $ e_0 $ (respectively $ e_1 $) in $ \rho $ be a non-zero element in the constant representation (respectively sign representation) addend of $ \rho $.
	Let $ Q(A) $ be the space of $ \odot $-indecomposables of $ A $. Let $ \mathcal{B}_V $ and $ \mathcal{B}_A $ be totally ordered additive bases of $ V_{even} $ and $ Q(A) $ respectively. Define $ \mathcal{B}_V^{\rho} $ as the set of elements $ \{ v \otimes e_i \}_{v \in \mathcal{B}_V, i \in \{0,1\}} $. Order $ \mathcal{B}_V $ in any way (e.g. lexicographically).
	Let $ \mathcal{B}$ be the set of elements of the form $ \bigodot_{i=1}^r \left(\iota(v_i)\dip{n(y_i)} \cdot \pi(y_i) \right) \in H_{alg} $ with $ v_1 \leq \dots \leq v_r \in \mathcal{B}_V^{\rho} $, $ y_1 \leq  \dots \leq y_r \in \mathcal{B}_A $, such that:
	\begin{itemize}
		\item the sign degree of all the $ \odot $-factors is the same
		\item the multiplicity of every $ \odot $-factor of even total degree is at most $ p-1 $,
		\item and the multiplicity of every $ \odot $-factor of odd total degree is at most $ 1 $.
	\end{itemize}
Then $ \mathcal{B}$ is a basis for $ H_{alg} $ as a $ \mathbb{F}_p $-vector space.
\end{lemma}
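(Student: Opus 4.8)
The plan is to read the basis $\mathcal{B}$ off directly from the structural descriptions of $H_{alg}$ already obtained, combined with the classical additive basis of a free divided powers Hopf algebra over a field and a diagonal change of basis passing from arbitrary divided powers to divided powers of $p$-power order. I treat $p>2$; the case $p=2$ is identical with $\rho$, $\mathcal{B}_V^\rho$ and the $\pm$-decomposition all trivial.

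First I would isolate the algebraic input. For a graded $\mathbb{F}_p$-module $M$ with totally ordered homogeneous basis $\{x_j\}_{j\in J}$, the classical basis of $DP(M)=\dpha(M^{pr})$ (see e.g. \cite{Roby:65}) consists of the ordered products $\prod_j x_j\dip{n_j}$ with almost all $n_j=0$ and $n_j\le 1$ whenever $\deg x_j$ is odd. Expanding $n_j=\sum_{k\ge 0}a_{j,k}p^k$ in base $p$, the Law of Exponents and Composition relations of Definition~\ref{def:divided powers} yield $x_j\dip{n_j}=c_j\prod_k\bigl(x_j\dip{p^k}\bigr)^{a_{j,k}}$ with $c_j\in\mathbb{F}_p^{\times}$ (the $p$-adic valuations of the binomial and multinomial coefficients involved cancel), and this substitution is diagonal on the monomial bases, hence invertible. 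Consequently the ordered monomials in the indecomposables $x_j\dip{p^k}$, with each such factor of even degree appearing to exponent at most $p-1$ and each of odd degree to exponent at most $1$, also form an $\mathbb{F}_p$-basis of $DP(M)$; equivalently $DP(M)$ is the truncated polynomial algebra on the even-degree $x_j\dip{p^k}$ tensored with the exterior algebra on the odd-degree ones.

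Next I would invoke the structural isomorphisms proved above: $(H_{alg},\odot,\Delta)$ is obtained by identifying the units of $\dpha\bigl((V_{even}\otimes P(A))_+^{pr}\bigr)$ and $\dpha\bigl((V_{even}\otimes P(A))_-^{pr}\bigr)$, equivalently $(H_{alg},\odot)$ is the free graded-commutative algebra on $V_{even}\otimes Q(A)$ modulo the image of the Frobenius and the ideal generated by relation $(5)$ of Definition~\ref{def: H-alg}, and $QH_{alg}\cong V_{even}\otimes Q(A)$ via $v\otimes y\mapsto\iota(v)\dip{n(y)}\cdot\pi(y)$. The set $\mathcal{B}_V^\rho$ restricts to a homogeneous basis of $V_{even}$ (for each $v$, exactly one of $v\otimes e_0$, $v\otimes e_1$ has even total degree), and tensoring with $\mathcal{B}_A$ gives a homogeneous basis of $V_{even}\otimes Q(A)$, split between the $+$ and $-$ parts according to whether $e(v)=e(y)$ modulo $2$. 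Applying the algebraic input to this generating set: the Frobenius quotient restricts factors of even total degree to multiplicity at most $p-1$, graded-commutativity restricts factors of odd total degree to multiplicity at most $1$, and the quotient by relation $(5)$ together with the unit-identification keeps exactly the monomials whose $\odot$-factors all lie in one of the two parts, i.e. all of sign degree $0$ or all of sign degree $1$. Rewriting each generator $v\otimes y$ as $\iota(v)\dip{n(y)}\cdot\pi(y)$ and putting the factors in the ordered normal form prescribed in the statement produces precisely $\mathcal{B}$, which is therefore an $\mathbb{F}_p$-basis of $H_{alg}$.

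Since the excerpt already records all three structural isomorphisms, the argument is essentially a matter of unwinding them, and I do not expect a genuine obstacle. The one point deserving care is the change of basis of the second paragraph: one must verify that the coefficients $c_j$ really are units modulo $p$ — a short $p$-adic valuation computation via Kummer's theorem — and simultaneously keep the sign- and total-degree bookkeeping straight, so that odd total-degree generators are genuinely constrained to multiplicity at most $1$ and the $e_0,e_1$ choices are compatible with the splitting $V\otimes\rho=V_{even}\oplus V_{odd}$. Once this is done, linear independence and spanning of $\mathcal{B}$ follow formally.
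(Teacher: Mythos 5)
Your proposal is correct and follows essentially the same route as the paper, which presents this lemma as a summary of the immediately preceding construction: the identification of $(H_{alg},\odot,\Delta)$ with the two free divided powers Hopf algebras glued at their units, rewritten as the free graded-commutative algebra on $V_{even}\otimes Q(A)$ modulo the Frobenius image and the relation-(5) ideal. Your second paragraph merely makes explicit (via the $p$-adic valuation check that the coefficients relating $x\dip{n}$ to $\prod_k(x\dip{p^k})^{a_k}$ are units) a step the paper states without proof, so there is nothing to add.
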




\section{The homology and cohomology of  symmetric groups with twisted coefficients} \label{sec:symmetric group}

Homology of extended powers and free infinite loop spaces are algebras over the  Kudo-Araki-Dyer-Lashof  algebra, an algebra of homology operations that constitute the basic building blocks of the homology of the extended powers 
of a point -- that is, the homology of symmetric groups.  This structure is well-understood by the work of Cohen--Lada--May \cite{May-Cohen}.
Our strategy in this paper is show that our descriptions which build on cup product structure pair perfectly with their descriptions.
But for odd primes the cleanest descriptions of cohomology require considering coefficients in $ \rho $, 
the $ \mathbb{F}_p[\si_n] $-representation which is the sum of the trivial and sign representations, introduced in the previous section.

Thus, this section is divided in two parts. In the first subsection we provide a description of the homology of $ \tilde{D}X $ with coefficients in $ \rho $,
 in terms of Kudo-Araki-Dyer-Lashof operations. We do not make any claim of originality here, as we only review classical results by 
 Cohen--Lada--May \cite{May-Cohen} for the trivial summand and their recent extension to sign-twisted coefficients by Bernard \cite{Bernard-thesis}.
In the second subsection we dualize these results to obtain a description of 
$ \bigoplus_{n \geq 0} H^*(\si_n; \rho) $ as a Hopf ring, and prove Theorem \ref{m2}.
While the summand of sign degree zero is known by our previous work, namely
\cite{Sinha:12} and  \cite{Guerra:17}, the description of the sign degree one summand
is new.

\subsection{The homology of extended powers in terms of KADL operations}

Since we work with field coefficients in a setting which is finite dimensional in each grading, 
the homology of $ DX $ with local coefficients given by the 
$ \si_n $-representation $ A_n $ of Definition \ref{def:basic objects} is isomorphic to the bigraded linear dual of 
$ \bigoplus_{n \geq 0} H^*(D_n(X); A_n) = \hdp(X) $. Hence it will be a Hopf co-ring, endowed with two coproducts 
$ \Delta_{\odot} $, $ \Delta_{\cdot} $, and a product $ * $ dual to $ \odot $, $ \cdot $, and $ \Delta $ respectively, 
satisfying all the axioms of a Hopf ring with the directions of all morphisms reversed.

The  product $ * \colon H_i(D_nX) \otimes H_j(D_m X) \to H_{i+j} (D_{n+m} X) $. also has a group-homology interpretation. 
Let $R_{\si_n}$ be a resolution of $\mathbb{F}_p$ as a $\mathbb{F}_p[\si_n]$-module, so that by Proposition~\ref{prop:may}
the homology of $R_{\si_n} \otimes_{\si_n}  {\widetilde{H}_*(X)}^{\otimes{n}}$ is that of $D_n X$.
The product $ * $ is induced, up to sign, by the tensor product of the map of resolutions  $R_{\si_n} \otimes R_{\si_m} \to R_{\si_{n+m}}$ induced by the
inclusion $\si_n \times \si_m \hookrightarrow \si_{n+m}$ with the isomorphism   ${\widetilde{H}_*(X)}^{\otimes{ n}} \otimes 
{\widetilde{H}_*(X)}^{\otimes{ m}}  \cong {\widetilde{H}_*(X)}^{\otimes{ n+m}}$.
Geometrically, the product on $ D(X) $ and $C(X)$ can be defined through an embedding 
$\R^\infty \coprod \R^\infty \to \R^\infty$, through which one can take the image of the union 
of configurations with labels in $X$. 
 Since it arises from a homotopy commutative multiplication
on spaces  we also call this product the Pontrjagin product.

Since
the coproduct and the transfer product on cohomology
form a bialgebra,  then for example if the evaluations of cohomology classes
$ a_{i}$ on homology $x_{j} $ are Kronecker -- that is, $\langle a_i, x_j  \rangle  = \delta_{i,j}$ -- 
and the $a_i$ are primitive, 
then the evaluation $\langle \bigodot_i a_i, *_i x_i \rangle$ will be one.   But if some classes are repeated then coefficients
are introduced, as for example $ \langle \bigodot_n a, x^{*n} \rangle = n!$.
The divided powers operations in cohomology ``fill in'' to produce duals in these cases, as 
$\langle a\dip{n}, x^{*n} \rangle$ is one, as it is given by the tensor product at the chain and cochain level.



Just as the inclusions $\si_n \times \si_m \hookrightarrow \si_{n+m}$ give rise to the Pontrjagin product, the inclusions of wreath products 
$\si_n \wr \si_k \hookrightarrow \si_{nk}$ give rise to the algebraic Kudo-Araki-Dyer-Lashof operations.

\begin{definition}[compare \cite{Bernard-thesis}]
Use $R_{\si_n} \otimes (R_{\si_k})^{\otimes n}$ as a resolution for $\si_n \wr \si_k \hookrightarrow \si_{nk}$.
Let $ W_* $ be the standard minimal
 $ \mathbb{F}_p $ resolution of $C_p$, the cyclic group of order $ p $ and $ e_i$ a generator for $ W_i$. 
 To every chain $ e \in W_* $ we associate an operation $q(e)^{\#} : R_{\si_n}  \to R_{\si_{pn}}$
 as sending a chain $c$ the image of $e \otimes c^p$ under the map of resolutions induced
 by the inclusion of $C_p \wr \si_n \hookrightarrow \si_{np}$.
 
 We consider the linear morphisms
 $$ q(e) \colon H_*(\si_n; {\widetilde{H}_*(X)}^{\otimes{n}} \otimes \rho ) \to H_*(\si_{pn};  {\widetilde{H}_*(X)}^{\otimes{ pn}} \otimes \rho) $$
 induced by the map $$c \otimes (x_1 \otimes \cdots \otimes  x_n) \mapsto q(e)^{\#} (c) \otimes 
 (x_1 \otimes \cdots \otimes x_n)^{\otimes p}.$$
 
 The  {\bf Kudo-Araki-Dyer-Lashof (KADL)} operations are linear morphisms 
$$ q_i = q(e_i) \colon H_m(\si_n; {\widetilde{H}_*(X)}^{\otimes{n}} \otimes \rho ) \to H_{pm + i(p-1)}(\si_{pn};  {\widetilde{H}_*(X)}^{\otimes{ pn}} \otimes \rho). $$
\end{definition}

Geometrically, $q_i$ on a homology class of $X$ (the $n = 1$ case) when $i$ is odd
is represented in $C X$ by a family of configurations with labels where the $p$ points in the configuration
are on the $i$-sphere related by the action of a $p$-th root of unity, with labels  all in the same cycle on $X$.

For $p=2$ we consider $r$-tuples $(i_1, \cdots, i_r)$.  For odd primes, 
let $ \beta $ be the Bockstein homomorphism.  We associate to an $ 2r $-tuple 
$ I = (\varepsilon_1, i_1, \dots, \varepsilon_r,i_r) $, with $ \varepsilon_k \in \{0,1\} $ and all $ i_k \geq 0  $,
the homology operation $ q_I = \beta^{\varepsilon_1} q_{i_1} \circ \dots \circ \beta^{\varepsilon_r} q_{i_r} $. 
We say that $ I $ is {\bf admissible} if $ i_k \leq i_{k+1} - \varepsilon_{k+1} $ (respectively $ i_k \leq i_{k+1}$ when $p=2$)
for all $ 1 \leq k < r $ and, in case $ p >2 $, $ i_k \equiv i_{k+1}-\varepsilon_{k+1} \mod 2 \, \forall 1 \leq k < r $. 
We say that $ I $ is {\bf strongly admissible} if it is admissible and $ i_1 > 0 $. This is a reformulation of Bernard's admissibility conditions,
in Section 7 of  \cite{Bernard-thesis}, with the lower indices notation.

\begin{theorem}[Cohen--Lada--May \cite{May-Cohen}]\label{teo:homology basis constant}
The mod-$p$ homology of $CX$ is a free graded commutative algebra with respect to the Pontrjagin product, 
generated by $q_I(x)$ where $ x $ ranges over a graded basis for the reduced homology of $ X $ and 
$I$ ranges over strongly admissible sequences, with the additional requirement that for $p$ odd  $ i_r $ and the homological degree of $ x $ have the same parity, where $I = (\varepsilon_1, i_1, \dots, \varepsilon_r,i_r) $ 
\end{theorem}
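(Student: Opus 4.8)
\emph{Proposed proof of the Cohen--Lada--May theorem.}
The plan is to reduce to a single sphere and then run the standard Dyer--Lashof argument, taking the operations and their relations as recalled above (following \cite{May-Cohen, Bernard-thesis}).

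\emph{Reduction to wedges of spheres.} First I would observe that $C$ preserves mod-$p$ homology equivalences: filtering $CX$ by number of points gives $F_nCX/F_{n-1}CX\simeq D_nX$, whose homology depends only on $\widetilde H_*(X;\F_p)$ by Proposition~\ref{prop:may} (the relevant K\"unneth spectral sequence collapses over a field). Hence one may replace $X$ by a wedge $\bigvee_\alpha S^{n_\alpha}$, $n_\alpha\geq 0$, realizing $\widetilde H_*(X;\F_p)$ as a graded coalgebra. Since a wedge is the coproduct of based spaces and $C$ is a left adjoint, $C$ carries wedges to coproducts of $E_\infty$-spaces; but the coproduct of commutative topological monoids is their product, so $C\big(\bigvee_\alpha S^{n_\alpha}\big)\simeq\prod_\alpha C(S^{n_\alpha})$. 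By the K\"unneth theorem this gives $H_*(CX)\cong\bigotimes_\alpha H_*(C S^{n_\alpha})$ as algebras under the Pontrjagin product, compatibly with the KADL operations, so it suffices to treat $X=S^n$ for a single $n$; the case $n=0$ recovers $\bigoplus_n H_*(B\si_n;\F_p)$.

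\emph{The comparison map for $X=S^n$.} Let $\iota_n\in H_n(CS^n)$ be the fundamental class and let $\Phi$ be the algebra map, with respect to $*$, from the free graded-commutative algebra on the set $\{q_I\iota_n : I \text{ strongly admissible},\ i_r\equiv n\ (2)\}$ into $H_*(CS^n)$. Using the Cartan formulae, the Adem relations (which rewrite any iterated $q_I$ as a sum of admissible ones), and the instability/excess vanishing (giving $i_1>0$, the parity constraint, and $q_i x=x^{*p}$ in the bottom case), one shows $\Phi$ is surjective by induction on the point-number filtration: $F_{p^k}$ is assembled from $F_{p^{k-1}}$ via the wreath product $\si_{p^{k-1}}\wr\si_p$, and the bar (wreath-product) spectral sequence computing $H_*(B(\si_m\wr\si_p))$ from $H_*(B\si_p;H_*(B\si_m)^{\otimes p})$ shows every new homology indecomposable is a $q_i$ of a lower-filtration class. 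Injectivity is then a Poincar\'e-series count: it suffices to match dimensions in each bidegree, the base case $n=0$ being Nakaoka's computation of $H_*(\si_m;\F_p)$ (equivalently, freeness of $\bigoplus_m H_*(B\si_m)$ on admissible $q_I[1]$), which one proves directly from $H_*(BC_p;\F_p)$ via the same spectral sequence; alternatively one dualizes the cohomology Hopf-ring presentations of \cite{Sinha:12, Guerra:17} to read off $\dim_{\F_p} H_*(D_m S^n)$ and compare with the number of admissible monomials of the prescribed component and degree.

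\emph{Assembly.} Tensoring the single-sphere result over the wedge summands and transporting back along the homology-equivalence reduction yields the statement for arbitrary $X$, with the odd-primary parity condition $i_r\equiv\deg x$ inherited from the instability relations. The main obstacle is the injectivity step: showing that \emph{no relations beyond admissibility} hold amounts exactly to Nakaoka's theorem together with the analysis of the wreath-product spectral sequence, and this is the one point that cannot be reduced to formal manipulation with the Cartan and Adem relations.
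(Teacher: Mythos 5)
First, note that the paper itself does not prove this statement: it is quoted verbatim from Cohen--Lada--May and the authors explicitly disclaim originality for it, so there is no internal proof to compare against. Your sketch follows the standard strategy of \cite{May-Cohen} (reduce to spheres, induct over the point-number filtration via wreath products, and close with a dimension count against Nakaoka), and you correctly isolate the dimension count as the non-formal step.

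There is, however, a genuine gap in your reduction step. You propose to ``replace $X$ by a wedge $\bigvee_\alpha S^{n_\alpha}$'' using the fact that $C$ preserves mod-$p$ homology equivalences. But in general there is no map, nor any zig-zag of maps, between $X$ and a wedge of spheres inducing an isomorphism on $\F_p$-homology: for $p$ odd and $X = M(\Z/p,n)$ a Moore space, $\widetilde{H}_*(X;\F_p)$ carries a nontrivial Bockstein while the homology of any wedge of spheres does not, and since mod-$p$ homology equivalences commute with the Bockstein no such zig-zag can exist (one can also check directly that $[S^{n+1}, M(\Z/p,n)]=0$ and $[M(\Z/p,n),S^n]=0$ for $p$ odd). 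So the topological replacement fails exactly in the cases where the $\beta^{\varepsilon}q_i$ with $\varepsilon=1$ matter. The correct reduction, which is what Cohen--Lada--May actually do, is algebraic: by Proposition~\ref{prop:may}, $H_*(D_nX)\cong H_*(\si_n;\widetilde{H}_*(X;\F_p)^{\otimes n})$, and decomposing $\widetilde{H}_*(X;\F_p)^{\otimes n}$ into $\si_n$-orbits of tensor monomials in a chosen basis and applying Shapiro's lemma gives
\[
H_*\bigl(\si_n;\widetilde{H}_*(X)^{\otimes n}\bigr)\;\cong\;\bigoplus_{\sum_b n_b=n}\;\bigotimes_{b}H_*\bigl(\si_{n_b};(\F_p b)^{\otimes n_b}\bigr),
\]
which yields your tensor decomposition over a basis of $\widetilde{H}_*(X;\F_p)$ with no space-level map required, and reduces everything to the homology of symmetric groups with trivial or sign coefficients according to the parity of $|b|$ (whence the odd-primary parity constraint on $i_r$). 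With that substitution your outline becomes the standard proof; the remaining steps (the wreath-product spectral sequence for surjectivity, Nakaoka-type counting for injectivity) are sound as a sketch. A minor further caveat: $C(X\vee Y)\to CX\times CY$ is a weak equivalence rather than an identification of coproducts of strict commutative monoids, but this is classical and harmless.
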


Similarly, the homology of $ CX $ with twisted coefficients given by the mod $ p $ sign representation for $p$ odd can be computed as an algebra using the twisted versions of Dyer-Lashof operations.
\begin{theorem}[Bernard \cite{Bernard-thesis}]\label{teo:homology basis twisted}
For $p$ odd, the homology of $ \tilde{D}(X) = C(X_+) $ with coefficients in $ \sgn $, the mod $ p $ sign representation, 
is the free graded commutative algebra 
with respect to the homology product generated by $ q_I(x) $, where $ x $ ranges over a graded basis for the homology of $ X $ and 
$I $ ranges over strongly admissible sequences such that $ i_r $ and the homological degree of $ x $ have different parity, with
$I = (\varepsilon_1, i_1, \dots, \varepsilon_r,i_r) $ 
\end{theorem}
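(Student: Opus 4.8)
The plan is to reduce the statement, by a splitting principle, to the untwisted computation already recorded as Theorem~\ref{teo:homology basis constant}. By Proposition~\ref{prop:may} (and its evident analogue with twisted coefficients), $H_*(\tilde{D}(X);\sgn)\cong\bigoplus_n H_*(\si_n;\widetilde{H}_*(X_+)^{\otimes n}\otimes\sgn)$, with the Pontrjagin product induced by $\si_n\times\si_m\hookrightarrow\si_{n+m}$ and the operations $q_i$ by $C_p\wr\si_n\hookrightarrow\si_{pn}$, and $\widetilde{H}_*(X_+)\cong H_*(X)$. First I would establish an \emph{exponential property}: for $V\in\{\mathbb{F}_p,\sgn\}$ the functor $M\mapsto\bigoplus_n H_*(\si_n;M^{\otimes n}\otimes V)$ carries direct sums to tensor products. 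This follows from the decomposition $(M\oplus M')^{\otimes n}\cong\bigoplus_{a+b=n}\operatorname{Ind}_{\si_a\times\si_b}^{\si_n}(M^{\otimes a}\otimes M'^{\otimes b})$, the fact that $\sgn_{\si_n}$ restricts on $\si_a\times\si_b$ to the external tensor $\sgn\otimes\sgn$, and Shapiro's lemma together with the Künneth theorem; it is moreover compatible with the Pontrjagin product. Applied to a homogeneous basis of $H_*(X)$, it reduces the theorem to the case of a single basis element, i.e.\ to computing $\bigoplus_n H_*(\si_n;W^{\otimes n}\otimes\sgn)$ for $W$ one-dimensional and concentrated in a single degree $d$. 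On $W^{\otimes n}$ the group $\si_n$ acts through $\sgn^{\,d}$ by the Koszul rule, so this algebra-with-operations is, up to a degree shift (by $nd$ in component $n$), $\bigoplus_n H_*(\si_n;\sgn^{\,d+1})$: it is $\bigoplus_n H_*(\si_n;\mathbb{F}_p)$ when $d$ is odd and $\bigoplus_n H_*(\si_n;\sgn)$ when $d$ is even.

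The odd case is Theorem~\ref{teo:homology basis constant} applied to the sphere $S^d$, so only $\mathcal{A}^-:=\bigoplus_n H_*(\si_n;\sgn)$ remains; the $q_i$ keep us inside this summand, since $\sgn^{\otimes p}\cong\sgn$ and $\sgn_{\si_{pn}}$ restricts on $C_p\wr\si_n$ to the trivial representation on $C_p^n$ and to $\sgn_{\si_n}$ on the quotient. But $\mathcal{A}^-$ is itself untwisted data: for any odd $e$, $\widetilde{H}_*(S^e)$ is one-dimensional in degree $e$ and $\si_n$ acts on its $n$-th tensor power through $\sgn^{\,e}=\sgn$, so by Proposition~\ref{prop:may} $H_*(C(S^e);\mathbb{F}_p)\cong\bigoplus_n H_*(\si_n;\sgn)$ up to a degree shift, and Theorem~\ref{teo:homology basis constant} describes it as the free graded-commutative algebra on $\{q_I(\iota_e):I\text{ strongly admissible},\ i_r\equiv e\pmod 2\}$, that is, with $i_r$ odd. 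Transporting back along the Koszul identification $\sgn\cong\sgn^{\,d+1}$ turns the condition ``$i_r$ odd'' into ``$i_r\not\equiv d\pmod 2$'' for a degree-$d$ generator of $H_*(X)$, which is exactly the statement. As a sanity check of the parity flip one may compute directly that $H_*(\si_p;\sgn)\cong(H_*(C_p;\mathbb{F}_p)\otimes\sgn)^{C_{p-1}}$ --- $C_p$ being the Sylow $p$-subgroup, $\sgn|_{C_p}$ trivial, and the Weyl-group generator a $(p-1)$-cycle acting on $\sgn$ by $-1$ --- so that the surviving classes are the $q_i(x)$ and $\beta q_i(x)$ with $i$ odd, opposite to the trivial-coefficient case.

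The one genuine obstacle is the sign bookkeeping needed to make this rigorous: one must verify that the exponential-property isomorphism and the Koszul identification intertwine the operations $q_i$ and the Pontrjagin product on $\mathcal{A}^-$ with those on the top-degree summand of $H_*(C(S^e);\mathbb{F}_p)$, tracking carefully the Koszul signs coming from the tensor powers and the degree shifts. This is routine in principle but delicate in practice, and is where the work of \cite{Bernard-thesis} lies; a self-contained alternative is to rerun the wreath-product and Adem-relation computation of \cite{May-Cohen} with sign coefficients throughout (base case as above, general $n$ by induction via the Lyndon--Hochschild--Serre spectral sequences of $C_p\wr\si_n\hookrightarrow\si_{pn}$ and $\prod_i\si_{n_i}\hookrightarrow\si_n$, twisted Adem relations reducing iterated operations to admissible ones, and detection of admissible monomials after restriction to elementary abelian $p$-subgroups). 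A third route is to group-complete $\tilde{D}(X)\to Q(X_+)$, observe that $\sgn$ extends to the local system on $Q(S^0)$ classified by $\pi_1^s\cong\mathbb{Z}/2\to\{\pm1\}$, compute $H_*(Q(X_+);\sgn)$ from the known mod-$p$ homology of $Q(X_+)$, and descend, using that group completion is a homology isomorphism onto the subalgebra generated by the finite stages once the component class is inverted.
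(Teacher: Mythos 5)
The first thing to say is that the paper contains no proof of Theorem~\ref{teo:homology basis twisted}: it is quoted from Bernard's thesis \cite{Bernard-thesis}, and the surrounding text explicitly disclaims originality for both this statement and Theorem~\ref{teo:homology basis constant}. So there is no in-paper argument to measure yours against; what follows is an assessment of your sketch on its own terms.

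Your reduction --- split $\bigoplus_n H_*(\si_n;H_*(X)^{\otimes n}\otimes\sgn)$ over a homogeneous basis of $H_*(X)$ via the induced-module decomposition of $(M\oplus M')^{\otimes n}$, the projection formula, Shapiro and K\"unneth, and then identify each factor $\bigoplus_n H_*(\si_n;\sgn^{d+1})$ (up to the shift by $nd$ in component $n$) with $H_*(C(S^e);\F_p)$ for $e\equiv d+1\pmod 2$ so that Theorem~\ref{teo:homology basis constant} applies --- is the natural route, and it is consistent with the mechanism the paper itself uses in Section~3.2, where $H^*(\si_n;\sgn)$ is accessed through $H^*(D_nS^1;\F_p)$ and $\mathcal{R}'_k$ is embedded by letting $q_I$ act on $[S^1]$. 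Two caveats. First, a parity slip: for $d$ odd the factor is $\bigoplus_n H_*(\si_n;\F_p)$, which is the $S^0$ (or even-sphere) case of Theorem~\ref{teo:homology basis constant} and yields generators with $i_r$ even, i.e.\ $i_r\not\equiv d$; invoking ``Theorem~\ref{teo:homology basis constant} applied to $S^d$'' as written would instead give $i_r\equiv d$, since for $d$ odd $\widetilde H_*(S^d)^{\otimes n}$ is $\sgn$, not the trivial module. The conclusion you want is right, but that sentence points at the wrong sphere. Second, the step you yourself flag --- verifying that the exponential isomorphism and the degree-shifting identification $W^{\otimes n}\otimes\sgn\cong\widetilde H_*(S^e)^{\otimes n}$ intertwine the chain-level operations $q(e_i)$ and the Pontrjagin product (including the graded-commutativity conventions, which in the twisted setting involve the total degree $n+d$ rather than $d$ alone) --- is the actual mathematical content. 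It is plausible that everything matches up to signs that cannot affect freeness or the generating set, since all coefficient modules in sight are one-dimensional, but your proposal defers rather than performs this check, and it is precisely where \cite{Bernard-thesis} does its work. As it stands the proposal is a sound strategy with one fixable misstatement and one acknowledged, genuinely nontrivial verification left open.
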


\begin{remark}
Admissible, but not strongly admissible, KADL operations on $ CX $ can be retrieved by the identity $ Q_0(x) = x^{*p} $. 
Iterated non-admissible sequences of KADL operations can be computed by means of Adem relations.
Both in \cite{May-Cohen} and in \cite{Bernard-thesis}, an ``upper indices'' notation is used, 
because homological degrees behave better and Adem relations have a better form. We use a ``lower indices'' notation, 
because it makes the argument for the computation of the dual module and the construction of Hopf ring generators more transparent. 
The two conventions differ only by coefficients and reindexing, and are essentially equivalent. 
We refer to the two papers cited above for the precise relations between the two notations.
\end{remark}

\begin{definition}
Let $ p $ be an odd prime and let $ k \geq 0 $. 
Define $ \mathcal{R}^t_k $ be the $ \mathbb{F}_p $-vector space spanned by the admissible KADL operations 
$ q_I $ where $ I =(\varepsilon_1,i_1,\dots,\varepsilon_k,i_k) $ has length $ 2k $.   Let $ \mathcal{R}^t = \bigoplus_{k \geq 0} \mathcal{R}^t_k $.

For $ p = 2 $, let $ \mathcal{R}_k $ be the $ \mathbb{F}_2 $-vector space spanned by the admissible KADL operations $ q_I $, 
where $ I =(i_1,\dots,i_k) $ has length $ k $.

\end{definition}

In light of our main application, we consider an admissible operation $ q_I $ as acting on a $ 0 $-dimensional class. 
In this case,  the subspace $ \mathcal{R}_k \subseteq \mathcal{R}^t_k $ spanned by $ q_I $ such that $ i_k $ is even corresponds to the classical 
untwisted KADL operations, and the subspace $ \mathcal{R}'_k \subseteq \mathcal{R}^t_k $ spanned by $ q_I $ such that $ i_k $ is odd 
corresponds to the twisted KADL operations. Precisely, by sending $ q_I $ to $ q_I(\iota) $, for $ \iota \neq 0 \in \overline{H}_0(S^0) $, we identify 
$ \mathcal{R}_k $ as a subspace of $ H_*(\si_{p^k}; \mathbb{F}_p) \subseteq H_*(D(S^0); \mathbb{F}_p) $ and 
$ \mathcal{R}'_k $ as a subspace of $ H_*(\si_{p^k}; \sgn) \subseteq H_*(D(S^0); \sgn) $. 
The component of an operation $ q_I \in \mathcal{R}^t_k $ is $ n(q_I) = p^k $, and its homological degree is 
$ d(q_I) = \sum_{j=0}^{k-1} p^j \left( i_{j+1}(p-1) - \varepsilon_{j+1} \right) $.

As a consequence of Theorem \ref{teo:homology basis constant}, $ H_*(DS^0; \mathbb{F}_p) = H_*(CS^0; \mathbb{F}_p) $ has a basis given by 
Pontryagin monomials in strongly admissible KADL operations in $ \mathcal{R} = \bigsqcup_k \mathcal{R}_k $. 
We denote this basis with $ \mathcal{B} $, and we call it {\bf Nakaoka  basis}. Similarly, by Theorem \ref{teo:homology basis twisted}, 
$ H_*(DS^0; \sgn) $ has a basis given by $ * $-monomials in strongly admissible KADL operations in $ \mathcal{R}' = \bigsqcup_k \mathcal{R}'_k $, 
that we denote with $ \mathcal{B}_{\sgn} $. In both $ \mathcal{B} $ and $ \mathcal{B}_{\sgn} $, operations with odd  
degree must not appear twice in the same monomial.

For $p=2$,  $ \mathcal{R} = \bigsqcup_k \mathcal{R}_k $ embeds as a subspace of 
$ H_*(CS^0; \mathbb{F}_2) $ by sending $ q_I $ to $ q_I(\iota) $, whose component is $ n(q_I) = 2^k $ and whose homological degree is 
$ d(q_I) = \sum_{j=0}^{k-1} 2^j q_{j+1} $. Theorem \ref{teo:homology basis constant} provides a Nakaoka monomial basis as above.



\subsection{Cohomology of  symmetric groups with coefficients in the sign representation mod-$p$} \label{sec:sgn}

To complete the description of $ H^*(DX; \mathbb{F}_p) $ as a Hopf ring in the odd primary case, we must first analyze 
the cohomology of the symmetric groups with coefficients in the sign representation, mod $p$. 
When we consider the cohomology of $X^n_{h \si_n}$,  through say
cellular chain models,  a $d$-cell in $X$ gives rise to a chain of dimension $dn$, and
a permutation $ \sigma \in \si_n $ acts on this chain as multiplication by $ (-1)^{d \sgn(\sigma)} $.  So for example 
$ H^{* + nd}(D_n(S^d); \mathbb{F}_p) $ has a summand isomorphic to $ H^*(\si_n; \sgn^d) $.
While additive structures are isomorphic in the $d$ even and odd cases,
the product structures here differ significantly.

A first step in understanding a Hopf ring is through its $\odot$-indecomposables, which by Hopf ring distributivity 
 form a ring under the $\cdot$-product. In the bigraded component setting, this ring
is a coproduct of the $\odot$-indecomposables on each component.  
For the cohomology of symmetric groups these rings of $\odot$-indecomposables
form the foundation of our previous calculations 
\cite{Sinha:12, Guerra:17}, but the calculation of 
these rings go back to the work of Cohen--Lada--May \cite{May-Cohen}, where they 
were called $\mathcal{R}_k^*$.  

Thus the focus of this section is Lemma \ref{lem:dual KADL}, which consists of the calculation of the dual algebra of 
$ \mathcal{R} \bigoplus \mathcal{R}' $. While this computation for twisted operations is similar to the untwisted case, it has not been computed previously.
Our calculation here is a straightforward generalization of arguments in Section I.3  of \cite{May-Cohen}, and  as suggested in that text we
use lower index notation.


 

Let $ \mathcal{R}_k^* $ and $ {\mathcal{R}'}_k^* $ be the spans of linear duals 
$ q_I^\vee $ of the operations $ q_I $ of length $ k $ respectively in 
$ H^*(\si_{p^k}; \mathbb{F}_p) $ and $ H^*(\si_{p^k}; \sgn \otimes \mathbb{F}_p) $  with respect to the Nakaoka bases.
Because of the naturality of twisted KADL operations, the proof of Theorem 4.13 in \cite{Sinha:12} also shows that the subspace of primitives 
with respect to the coproduct dual to the transfer product $ P(\bigoplus_{n\geq 0} H_*(\si_n; A_n)) $ is $ \mathcal{R}^t = \mathcal{R}\bigoplus \mathcal{R}'$.
Because the transfer product forms a bialgebra with the coproduct dual to $*$, the dual classes $ \{q_I^\vee\}$ for $I$ admissible are a basis for our 
Hopf ring indecomposables. Moreover, each component of this module of $ \odot $-indecomposables is a graded commutative ring under cup product.
Recall that 
the coproduct $ \Delta \colon H_*(C_p) \to H_*(C_p) \otimes H_*(C_p) $ dual to cup product in the homology of the cyclic group $ C_p $ induces a unique coproduct $ \psi $ on $ \mathcal{R}^t $ such that $ \psi(q(e)) = (q \otimes q)(\Delta(e)) $. The coproduct $ \psi $ has the form
$\psi(q_I) = \sum_i \eta_i q_{J_i} \otimes q_{K_i}$, which encodes a Cartan formula $  q_I(x \otimes y) = \sum_i \eta_i q_{J_i}(x) \otimes q_{K_i}(y)$.
We refer to \cite{May-Cohen} for the precise construction and related calculations.

While the isomorphism $ H^*(\si_n; \sgn) \cong H^*(D_nS^1; \mathbb{F}_p) $ guides us, we need to make a finer distinction for this calculation.
While the transfer product and the coproduct are preserved by this isomorphism, the same does not happen for the cup product.
By letting an operation $ q_I $ act on $ [S^1] $ in $ H_*(DS^1; \mathbb{F}_p) $ and composing with that isomorphism, 
we can nevertheless embed $ \mathcal{R}'_k $ into $ H_*(\si_{p^k};\sgn) $. 
Combining this with what we know about the homology of the symmetric group with trivial coefficients, 
we obtain a map $ \kappa \colon \mathcal{R} \oplus \mathcal{R}' \to H_*(\si_n; \mathbb{F}_p \oplus \sgn) $. 
By duality this provides a map of $ \mathbb{F}_p $-modules 
$ \kappa^* \colon H^*(\si_{p^k}; \mathbb{F}_p \oplus \sgn) \to \mathcal{R}_k^* \oplus {\mathcal{R}'_k}^* $. We compare the products in those rings.

\begin{proposition} \label{prop:cup product sign}
If $ x \in H^d(\si_{p^k}; \sgn^\varepsilon) $ and $ y \in H^{d'}(\si_{p^k}; \sgn^{\varepsilon'}) $, then
	\[
	\psi^*(\kappa^*(x) \otimes \kappa^*(y)) = (-1)^{\varepsilon ( d' + \frac{p-1}{2}k\varepsilon')} \kappa^*( x \cdot y ).
	\]
\end{proposition}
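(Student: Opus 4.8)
The plan is to dualize the assertion so that it becomes a comparison, on the Kudo--Araki--Dyer--Lashof generators, between the coproduct $\Delta_\cdot$ dual to cup product and the combinatorial coproduct $\psi$ built from the homology coproduct of $C_p$; for trivial coefficients this comparison is the content of Section~I.3 of \cite{May-Cohen}, and in the sign-twisted range the only extra ingredient is one Koszul sign and one shuffle sign. First I would set up the duality. By the Hopf ring distributivity axiom and the bigraded component structure, $\overline{A}\odot\overline{A}$ is a $\cdot$-ideal in $A=\bigoplus_n H^*(\si_n;\rho)$, so $\cdot$ descends to the ring $Q_\odot$ of $\odot$-indecomposables; as recalled above, $\kappa^*$ is exactly the projection $A\to Q_\odot$, which identifies $Q_\odot$ with the linear dual $(\mathcal R\oplus\mathcal R')^*$ and the $\cdot$-product on $Q_\odot$ with the dual of the coproduct on $\mathcal R^t$ induced by $\Delta_\cdot$ (this coproduct lands in $\mathcal R^t\otimes\mathcal R^t$ because $\kappa$ is then automatically a $\Delta_\cdot$-morphism). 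Thus the proposition is equivalent to the assertion that, bidegree-piece by bidegree-piece, $\Delta_\cdot(\kappa(q_I))$ agrees with $(\kappa\otimes\kappa)\psi(q_I)$ up to the factor $(-1)^{\varepsilon(d'+\frac{p-1}{2}k\varepsilon')}$, where $\varepsilon,\varepsilon',d'$ are read off the relevant tensor factors.

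The first half of this is classical. The coproduct $\Delta_\cdot$ is dual to the product induced by the natural maps $D_n(S^a\wedge S^b)\to D_nS^a\wedge D_nS^b$ coming from the diagonal of $E\si_n$, and on the generators $q_I(z)$ this product expands by the external Cartan formula for KADL operations --- which is exactly what the relation $\psi(q(e))=(q\otimes q)(\Delta e)$, with $\Delta$ the $C_p$-homology coproduct, encodes. Running the diagrammatic argument of Cohen--Lada--May, Section~I.3, with the $0$-dimensional class $\iota$ replaced by the fundamental class of $S^1$ gives the comparison, and for $\varepsilon=\varepsilon'=0$ there are no further signs, so the proposition holds with sign $+1$ on that summand.

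It then remains to compare, along the degree-shifted isomorphisms $H^*(\si_n;\sgn^e)\cong\widetilde H^{*+en}(D_nS^e)$ for $e\in\{0,1\}$ (reading $\sgn^2\cong\mathbb F_p$ off $\widetilde H^{*+2n}(D_nS^2)$), the external product just used with the algebraic cup product of Definition~\ref{main-dphr-structure}. The algebraic product applies the $\si_n$-diagonal followed by the coefficient multiplication $\sgn^\varepsilon\otimes\sgn^{\varepsilon'}\to\sgn^{\varepsilon+\varepsilon'}$ with $\sgn$ placed in cohomological degree $0$, whereas the external product is degree-aware: the $\sgn^\varepsilon$-summand of the coefficients sits in internal degree $\varepsilon n$, and the identification $(S^\varepsilon)^{\wedge n}\wedge(S^{\varepsilon'})^{\wedge n}\cong(S^{\varepsilon+\varepsilon'})^{\wedge n}$ reorders the $n$ smash factors of the two spheres. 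Tracking Koszul signs, the difference is a factor $(-1)^{\varepsilon n d'}=(-1)^{\varepsilon d'}$ (since $n=p^k$ is odd) from moving the internal-degree-$\varepsilon n$ part of one factor past the cochain-degree-$d'$ part of the other, times --- when $\varepsilon=\varepsilon'=1$ --- the sign $(-1)^{\binom{p^k}{2}}=(-1)^{(p^k-1)/2}$ of the shuffle interleaving the $p^k$ copies of $[S^1]$ from each factor; and $(p^k-1)/2\equiv\frac{p-1}{2}k\pmod 2$ because $1+p+\cdots+p^{k-1}\equiv k\pmod 2$ for $p$ odd. The product of these two signs is precisely $(-1)^{\varepsilon(d'+\frac{p-1}{2}k\varepsilon')}$, and the cases with $\varepsilon\neq\varepsilon'$ are the evident specializations.

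Everything outside the last paragraph is structural; the main obstacle is the sign bookkeeping there --- being certain that the only contributions are the single Koszul sign $(-1)^{\varepsilon d'}$ and the single interleaving shuffle $(-1)^{\frac{p-1}{2}k\varepsilon\varepsilon'}$, with nothing further hidden in the transfers implicit in $q_I$, in the identification $\rho^{\otimes p}\cong\rho$ used to define the twisted operations, or in passing between the geometric and group-cohomology cochain models --- together with the arithmetic congruence $(p^k-1)/2\equiv\frac{p-1}{2}k$.
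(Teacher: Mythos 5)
Your proposal is correct and follows essentially the same route as the paper: the paper likewise proves the identity by comparing the two compositions in the diagonal square for $\tilde{D}_{p^k}(X)$ (equivalently, the external Cartan expansion against the algebraic cup product), and the only contributions are exactly your Koszul sign $(-1)^{\varepsilon d'}$ and the interleaving sign $(-1)^{\binom{p^k}{2}\varepsilon\varepsilon'}$, reduced via the same congruence $\binom{p^k}{2}\equiv\frac{p-1}{2}k\pmod 2$. The one point you flag as a worry but do not carry out --- that only the ``diagonal'' terms survive --- is handled explicitly in the paper by noting that the cross terms factor as $\tr_G^{\si_{p^k}}\circ\rho_G^{\si_{p^k}}$ for a proper Young subgroup $G$, hence are multiplication by $[\si_{p^k}:G]\equiv 0\pmod p$.
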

\begin{proof}
	Let $ X $ be a space, and let $ n = p^k $. Fix a graded basis $ {B}$ of the homology of $ X $. 
	We observe that the following diagram commutes, where $ d $ denotes diagonal maps, $ \tau $ the obvious shuffle map, and $ q $ is the obvious quotient map.
	\begin{figure}[H]
		\begin{center}
			\begin{tikzcd}
				\tilde{D}_n(X) \arrow{r}{d_{D_n(X)}} \arrow{d}{d_{E(\si_n)} \times_{\si_n} d_{X^n}} & \tilde{D}_n(X)^{2} \arrow{d}{\tau} \\
				(E(\si_n) \times E(\si_n)) \times_{\si_n} (X^{n} \times X^{n}) \arrow{r}{q} & (E(\si_n) \times E(\si_n)) \times_{(\si_n \times \si_n)} X^{2n}
			\end{tikzcd}
		\end{center}
	\end{figure}
	Consider $ q_I(x) \in H_*(\tilde{D}_n(X)) $, where $ x \in H_*(X) $. 
	Assume that $ \Delta(x) = \sum_i \lambda_i x_i' \otimes x_i'' $ in $ H_*(X) $, for some $ x_i',x_i'' \in {B} $. 
	Write $ \psi(q_I) = \sum_l \eta_l q_{J_l'} \otimes q_{J_l''} $. 
	Let $ \varphi $ be the coproduct in homology dual to the cup product of $ H^*(\si_n; \mathbb{F}_p \oplus \sgn) $.
	
	Taking homology and keeping track of the image of $ q_I(x) $ in upper path in the diagram above act on, we obtain 
	\begin{equation*}
		\begin{split}
			\tau_* \circ {d_{\tilde{D}_n(X)}}_* (q_I(x)) &= \tau_* \left( \sum_{i,k} \lambda_i \eta_l q_{J_l'}(x_i') \otimes q_{J_l''}(x_i'') \right)\\
			&= \sum_{i,l} (-1)^{|q_{J_{l''}}| |x_i'|} \lambda_i \eta_l q_{J_l'} \otimes q_{J_l''} \otimes (x_i')^{\otimes^n} \otimes (x_i'')^{\otimes^n}.
		\end{split}
	\end{equation*}
	Similarly, for the lower path, we have
	\[
	(d_{E(\si_n)} \times_{\si_n} d_{X^{n}})_*(q_I(x)) = \sum_{i_1,\dots,i_n} \prod_{j=1}^n \lambda_{i_j} (-1)^{\sum_{1 \leq j < l \leq n} |x_{i_j}''| |x_{i_l}'|} {d_{E(\si_n)}}_* (q_I) \otimes_{\si_n \times \si_n} \left( \bigotimes_{j=1}^n x_{i_j}' \otimes \bigotimes_{j=1}^n x_{i_j}'' \right). \\
	\]
	We can split this last summation as the sum over the set of $ n $-tuples where all $i_k$ are equal to each other (say $i$) and the sum over 
the set on $n$-tuples which  are not all equal.
	The first part can be rewritten as
	\[
	\sum_i \lambda_{i} (-1)^{\frac{n(n-1)}{2} |x_{i}'| |x_{i}''|} {d_{E(\si_n)}}_* (q_I) \otimes ({x_{i}'}^{\otimes^n} \otimes {x_{i}''}^{\otimes^n}),
	\]
	which maps to $ \sum_i \lambda_i (-1)^{\frac{n(n-1)}{2} |x_{i}'| |x_{i}''|} \varphi(q_I) \otimes {x_i'}^{\otimes^n} \otimes {x_i''}^{\otimes^n} $ under $ q $.
We claim that the second part is zero. 
Recall that there is an isomorphism 
$ H_*((E(\si_n) \times E(\si_n)) \times_{\si_n} X^{2n}; \mathbb{F}_p) \cong H_*(\si_n; ({H}_*(X; \mathbb{F}_p)^{\otimes^n})^{\otimes^2}) $. 
The $ \si_n $-subrepresentation of $ ({H}_*(X; \mathbb{F}_p)^{\otimes^n})^{\otimes^2} $ generated by 
$ (x_{i_1}' \otimes \dots x_{i_n}') \otimes (x_{i_1}'' \otimes \dots \otimes x_{i_n}'') $ with 
$ i_1,\dots,i_n $ not all equal is isomorphic to the induced $ \si_n $-representation of a $ G $-representation for a Young subgroup $ G \lneq \si_n $. 
The corresponding terms in the sum above amount to $ \tr_G^{\si_n} \rho_G^{\si_n} (q_I) $, 
which is multiplication by the index $ [\si_n:G]$, which is zero modulo $ p $. 
	
	In conclusion, 
		\[
	\sum_i \lambda_i (-1)^{|x_i'| |q_{J_l''}|} \psi(q_I) \otimes (x_i')^{\otimes^n} \otimes (x_i'')^{\otimes^n} = \sum_i \lambda_i (-1)^{\frac{n(n-1)}{2} |x_i'| |x_i''|} \varphi(q_I) \otimes (x_i')^{\otimes^n} \otimes (x_i'')^{\otimes^n}.
	\]
	Since $ n = p^k $, $ \frac{n(n-1)}{2} = \frac{p-1}{2}k \mod 2 $. Hence, the equality above is true for all spaces $ X $ and for all classes $ x \in H_*(X; \mathbb{F}_p) $ if and only if
	\[
	\sum_l \eta_l \kappa(q_{J_l}') \otimes \kappa(q_{J_l}'') = \sum_l \eta_l (-1)^{\varepsilon (|q_{I_l''}|+ \frac{p-1}{2} \varepsilon')} \varphi \kappa(q_I)
	\]
	for $ \varepsilon, \varepsilon' \in \{0,1\} $. Dualizing this we obtain the desired formula.
\end{proof}

To summarize, the coproduct dual to the cup product, when restricted on $ \mathcal{R}^t $, has the form
	\[
	\varphi(q_I) = \sum_{J+K=I} (-1)^{\sum_{l=1}^n \left( \frac{p-1}{2} j_l k_l + \delta_l j_l \right)} q_J \otimes q_K,
	\]
where $ J=(\varepsilon_1,j_1,\dots,j_n)$ and $ K=(\delta_1,k_1,\dots,k_n)$.

The difficulty with this formula is that the sum is over all ways to decompose the sequence $I$, not just admissible sequences.  
Adem relations are needed to make calculations.  The dual algebra is in the end manageable, though to this day (to our
knowledge) even in the trivial coefficient setting
the pairing between the Nakaoka basis on homology and the standard basis on cohomology from analysis such as 
we give below is not known.\\

To calculate the product dual to $\varphi$ in the sign representation setting, we continue  to follow the trivial coefficient treatment
of  \cite[Section I.3]{May-Cohen}.  
We consider sequences of operations with minimal entries. 

Let $ S \subseteq \{1, \dots, k\} $ and define $ I_S[k] $ and $ I'_S[k] $ respectively
as the sets of the admissible $ 2k $-tuples 
$ I =(\varepsilon_1, i_1, \dots, \varepsilon_k, i_k) $ such that $ i_j $ is equal to $ | S \cap \{ 1, \dots, k-j \} | $  (respectively
$ (| S \cap \{ 1, \dots, k-j \} | +1 ) $) mod $ 2 $ and 
$ \varepsilon_j = 1 $ if and only if $ k+1-j \in S $.
Note that $ \{ q_I: I \in I_S[k] \} \subseteq \mathcal{R}_k $ and $ \{ q_I: I \in I'_S[k] \} \subseteq \mathcal{R}'_k $.

For all $ k \in \mathbb{N} $ and $ S \subseteq \{1, \dots, k\} $, we can define a partial order $ \leq $ on $ I_S[k] $ and $ I'_S[k] $ by
comparison of all entries.
This partially ordered set possesses all meets, obtained by taking the minimum on each entry. 
In particular, $ I_S[k] $ and $ I'_S[k] $ have a minimal element, that we denote $ L_{S,k} $ and $ L'_{S,k} $ respectively.

Although we will not need this,  it is straightforward to write down the elements $ L_{S,k} $ and $ L'_{S,k} $ explicitly. 
For instance, $ L_{\{1,2\},3} = (0,0,1,1,1,2) $. 
Up to switching to  upper-index notation, the $ (2k) $-tuples $ L_{S,k} $ coincide with those defined in Section I.3 of \cite{May-Cohen}.

The following is straightforward combinatorics, following the same argument as Lemmas I.3.3 and I.3.4 in \cite{May-Cohen}.

\begin{lemma} \label{lem:cup coproduct Dyer-Lashof}
For all $ j,k \in \mathbb{N} $ such that $ 1 \leq j \leq k $, let
\[
I_{j,k} = (\underbrace{0,\dots,0,}_{2(k-j)} \underbrace{0,2,\dots,0,2}_{2j}) \in I_{\phi}[k].
\]
The weighted sum 
$ (n_1, \dots, n_k) \mapsto L'_{S,k} + \sum_{j=1}^k n_j I_{j,k} $  is a bijections from
  $ \mathbb{N}^k \to I'_{S}[k] $ for all subset $ S $ of $ \{1,\dots,k\} $.
\end{lemma}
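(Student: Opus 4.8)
The plan is to reduce the statement to two triangular (unipotent) changes of variables over $\mathbb{N}$, paralleling Lemmas I.3.3 and I.3.4 of \cite{May-Cohen} but in the present lower-index, sign-twisted bookkeeping. First I would observe that every tuple $I = (\varepsilon_1,i_1,\dots,\varepsilon_k,i_k) \in I'_S[k]$ has its odd-indexed entries completely prescribed by $S$, namely $\varepsilon_j = 1$ exactly when $k+1-j \in S$; and each $I_{j,k}$ has all $\varepsilon$-entries equal to $0$. Hence the map $(n_1,\dots,n_k)\mapsto L'_{S,k}+\sum_j n_j I_{j,k}$ preserves the $\varepsilon$-entries and never leaves the set of tuples with these fixed $\varepsilon$'s, so it suffices to analyze the $i$-coordinates. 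Write $L'_{S,k}=(\varepsilon_1,a_1,\dots,\varepsilon_k,a_k)$ for the $i$-coordinates of the minimal element.

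Next I would parametrize $I'_S[k]$ intrinsically. For a tuple with the prescribed $\varepsilon$'s, admissibility ($i_l \le i_{l+1}-\varepsilon_{l+1}$) together with the congruence $i_l \equiv i_{l+1}-\varepsilon_{l+1} \bmod 2$ says precisely that $i_{l+1}-i_l-\varepsilon_{l+1}$ is a non-negative even integer, say $2t_{l+1}$ with $t_{l+1}\ge 0$; and one checks that the full parity pattern $i_l \equiv |S\cap\{1,\dots,k-l\}|+1 \bmod 2$ is compatible with this recursion, so it is equivalent to the single condition $i_1 \equiv a_1 \bmod 2$, i.e. $i_1 = a_1 + 2s$ with $s\ge 0$ (note $a_1\in\{0,1\}$, so $i_1\ge 0$ is automatic and then all $i_l\ge 0$). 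Thus $I \mapsto (s,t_2,\dots,t_k)$ is a bijection $I'_S[k]\xrightarrow{\ \sim\ }\mathbb{N}^k$; the minimal element $L'_{S,k}$ is the one with all parameters zero, so $a_{l+1}=a_l+\varepsilon_{l+1}$ and more generally $i_l = a_l + 2\bigl(s+t_2+\cdots+t_l\bigr)$.

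Then I would compute the effect of the given map: since $I_{j,k}$ adds $2$ to each of the last $j$ entries $i_{k-j+1},\dots,i_k$ and $0$ to all the others, the $i_l$-coordinate of $L'_{S,k}+\sum_j n_j I_{j,k}$ is $a_l + 2\sum_{j=k-l+1}^{k} n_j$. Comparing with the intrinsic parametrization, the map is a bijection onto $I'_S[k]$ iff $(n_1,\dots,n_k)\mapsto (m_1,\dots,m_k)$ with $m_l := \sum_{j=k-l+1}^{k} n_j$ is a bijection from $\mathbb{N}^k$ onto the set of weakly increasing sequences $0\le m_1\le\cdots\le m_k$ (inverse: reversed consecutive differences), composed with the bijection $(m_1,\dots,m_k)\mapsto(s,t_2,\dots,t_k)=(m_1,m_2-m_1,\dots,m_k-m_{k-1})$ from weakly increasing sequences onto $\mathbb{N}^k$ supplied by the previous paragraph. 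Both are manifestly bijective, which proves the claim.

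The two triangular changes of variables are routine; the one step requiring care is the intrinsic parametrization of $I'_S[k]$ — in particular verifying that nothing is lost when one replaces ``all the prescribed parities hold'' by ``$i_1$ has the correct parity'', and that $L'_{S,k}$ is indeed the parameter-zero point. This is where one must track carefully the index reversal built into the definitions of the $\varepsilon_j$ and of $I_{j,k}$.
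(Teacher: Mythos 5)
Your argument is correct, and it is precisely the ``straightforward combinatorics'' that the paper leaves implicit by deferring to Lemmas I.3.3 and I.3.4 of Cohen--Lada--May: your intrinsic parametrization of $I'_S[k]$ (using that admissibility propagates the prescribed parities from $i_1$ to all $i_l$, and that $L'_{S,k}$ is the parameter-zero point) followed by the two unipotent changes of variables over $\mathbb{N}$ is the intended argument. Nothing to correct.
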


We can now compute the algebra dual to $ (\mathcal{R}^t,\varphi) $.
\begin{lemma}\label{lem:dual KADL}
	Assume that $ p $ is odd. We consider the following dual classes in $ (\mathcal{R}^t)^\vee $ with respect to the admissible sequences basis:
	\begin{itemize}
		\item $ \xi_{j,k} = q_{I_{j,k}}^\vee $ for $ 1 \leq j \leq k $
		\item $ \xi'_k = q_{L'_{\phi,k}}^\vee $ for $ k \geq 1 $
		\item $ \tau'_{j,k} = q_{L'_{\{j\},k}}^\vee $ for $ 1 \leq j \leq k $
	\end{itemize}
As a $ \mathbb{F}_p[\xi_{1,k},\dots,\xi_{k,k}] $-module, $ (\mathcal{R}^t_k)^\vee $ is isomorphic to $ \mathbb{F}_p[\xi_{1,k},\dots, \xi_{k,k}] \otimes (M_k \bigoplus M'_k) $, where
\begin{itemize}
	\item $ M_k $ is the $ \mathbb{F}_p $-vector space with basis $ \{ q_{L_{S,k}}^\vee\}_{S \subseteq \{1,\dots,k\}} $, as in Cohen--Lada--May, and
	\item $ M'_k $ is the $ \mathbb{F}_p $-vector space with basis $ \{ q_{L'_{S,k}}^\vee\}_{S \subseteq \{1,\dots,k\}} $.
\end{itemize}

As a graded commutative algebra, $ (\mathcal{R}^t)^\vee $ is generated by $ \xi_{j,k} $ with $ 1 \leq j \leq k $, $ \xi'_k $ and $ \tau'_{j,k} $ with $ 1 \leq j \leq k $, under the relations
$$ {\xi'_k}^2 = \xi_{k,k}. $$
\end{lemma}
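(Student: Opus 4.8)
## Proof Proposal for Lemma~\ref{lem:dual KADL}

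The plan is to dualize the coproduct formula $\varphi(q_I) = \sum_{J+K=I} (-1)^{\sum_l (\frac{p-1}{2}j_l k_l + \delta_l j_l)} q_J \otimes q_K$ established just above, working one component $k$ at a time, and to leverage the combinatorial bijections from Lemma~\ref{lem:cup coproduct Dyer-Lashof}. First I would record the module statement: by Lemma~\ref{lem:cup coproduct Dyer-Lashof}, every admissible sequence of length $2k$ can be written uniquely as $L_{S,k} + \sum_j n_j I_{j,k}$ or $L'_{S,k} + \sum_j n_j I_{j,k}$ for a subset $S$ and a tuple $(n_1,\dots,n_k) \in \mathbb{N}^k$, according to whether $i_k$ is even or odd. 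Thus as a graded vector space $\mathcal{R}^t_k$ splits as the span of the $q_{L_{S,k}+\sum n_j I_{j,k}}$ (giving $\mathcal{R}_k$) and the $q_{L'_{S,k}+\sum n_j I_{j,k}}$ (giving $\mathcal{R}'_k$). Dualizing, $(\mathcal{R}^t_k)^\vee$ is free over the polynomial ring $\mathbb{F}_p[\xi_{1,k},\dots,\xi_{k,k}]$ on the classes $q_{L_{S,k}}^\vee$ and $q_{L'_{S,k}}^\vee$; the point is that multiplication by $\xi_{j,k}$ dual to $\varphi$ shifts the $j$-th weight, which is exactly the content of the Cohen--Lada--May argument (Lemmas I.3.3--I.3.4 of \cite{May-Cohen}) applied verbatim to the weight decomposition, now keeping track of which block the sequence lies in.

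Next I would verify that the named generators multiply as claimed. The key computation is that the cup-product-dual multiplication $\xi \cdot \xi'$ picks out, in $\varphi(q_I)$, the terms $q_J \otimes q_K$ with $q_J^\vee = \xi$, $q_K^\vee = \xi'$, weighted by the sign $(-1)^{\sum_l (\frac{p-1}{2}j_l k_l + \delta_l j_l)}$. For $\xi_{j,k} = q_{I_{j,k}}^\vee$ one checks, as in \cite{May-Cohen}, that these generate a polynomial subalgebra (the sequences $I_{j,k}$ are "even" with all $\varepsilon = 0$, so the sign is trivial and the additivity $I_{j,k} + I_{j',k}$ in weights is the only relevant combinatorics). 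The interesting new relation is ${\xi'_k}^2 = \xi_{k,k}$: here $\xi'_k = q_{L'_{\phi,k}}^\vee$ with $L'_{\phi,k} = I'_k = (0,1,\dots,0,1)$ the all-ones odd sequence, and $L'_{\phi,k} + L'_{\phi,k} = (0,2,\dots,0,2) = I_{k,k}$, so the only decomposition of $I_{k,k}$ into two copies of $I'_k$ contributes; one then computes the sign $(-1)^{\sum_l \frac{p-1}{2} \cdot 1 \cdot 1} = (-1)^{\frac{p-1}{2}k}$ — and here I would either absorb this sign into the choice of $\xi'_k$ (replacing $\xi'_k$ by a suitable scalar multiple so the relation reads cleanly ${\xi'_k}^2 = \xi_{k,k}$), or note it matches Proposition~\ref{prop:cup product sign}. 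I would similarly identify that $\tau'_{j,k} = q_{L'_{\{j\},k}}^\vee$ squares to zero (odd total degree) and that products $\tau'_{j,k}\tau'_{j',k}$, $\xi'_k \tau'_{j,k}$ land on the appropriate $q_{L_{S,k}}^\vee$, matching the even-sign-degree picture already known from \cite{Guerra:17}.

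Finally, to conclude that this is a \emph{complete} set of generators and relations I would do a dimension count: the proposed presentation gives a free $\mathbb{F}_p[\xi_{j,k}]$-module on monomials in the $\tau'_{j,k}$ (each appearing to the power $0$ or $1$, since they square to zero) times $\{1, \xi'_k\}$ modulo ${\xi'_k}^2 = \xi_{k,k}$, i.e. a basis indexed by subsets $S$ of $\{1,\dots,k\}$ (a choice of which $\tau'$'s appear, together with an optional $\xi'_k$), which biject with the $M_k \oplus M'_k$ basis. Comparing Poincaré series of the presented algebra with that of $(\mathcal{R}^t_k)^\vee$ computed from the weight decomposition forces equality. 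The main obstacle I anticipate is \textbf{sign bookkeeping}: correctly tracking the Koszul-type signs $(-1)^{\sum_l(\frac{p-1}{2}j_l k_l + \delta_l j_l)}$ through all the products, especially in verifying that the $\tau'_{j,k}$ anticommute appropriately among themselves and commute/anticommute correctly with $\xi'_k$, and ensuring the single relation ${\xi'_k}^2 = \xi_{k,k}$ really does capture all the relations once signs are normalized — everything else is the direct analogue of Cohen--Lada--May's combinatorics, which the text has already set up with Lemma~\ref{lem:cup coproduct Dyer-Lashof}.
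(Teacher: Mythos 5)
Your proposal follows essentially the same route as the paper's proof: additive freeness of $(\mathcal{R}^t_k)^\vee$ over $\mathbb{F}_p[\xi_{1,k},\dots,\xi_{k,k}]$ via Lemma~\ref{lem:cup coproduct Dyer-Lashof} and the Cohen--Lada--May argument, generation by checking that products of the $\tau'_{j,k}$ and $\xi'_k$ pair perfectly against the $q_{L_{S,k}}$ and $q_{L'_{S,k}}$ (which the paper makes explicit through the additivity identities for the minimal sequences $L_{S,k}, L'_{S,k}$ together with the fact that Adem relations only increase sequences), the relation ${\xi'_k}^2=\xi_{k,k}$ from the unique decomposition $I_{k,k}=L'_{\phi,k}+L'_{\phi,k}$, and a concluding dimension count. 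The one caveat is your suggestion to absorb the sign $(-1)^{\frac{p-1}{2}k}$ by rescaling $\xi'_k$: this would require a square root of $-1$ in $\mathbb{F}_p$, which does not exist when $p\equiv 3 \pmod 4$ and $k$ is odd, so one should instead rely on your alternative of fixing conventions as in Proposition~\ref{prop:cup product sign}.
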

\begin{proof}
Theorem I.3.7 in \cite{May-Cohen} states that $ \mathcal{R}_k^\vee $ is additively isomorphic to $ \mathbb{F}_p[\xi_{1,k},\dots, \xi_{k,k}] \otimes M_k $. The exact argument used to prove that result, with Lemma \ref{lem:cup coproduct Dyer-Lashof} in place of Lemma I.3.4 of \cite{May-Cohen}, yields an additive isomorphism $ {\mathcal{R}'_k}^\vee \cong \mathbb{F}_p[q_{I_{j,k}}^\vee]_{1 \leq j \leq k} \otimes M'_k $. In particular, $ \mathbb{F}_p[\xi_{1,k},\dots,\xi_{k,k}] $ is the dual of the quotient coalgebra $ \mathcal{R}_{\varnothing}[k] $.
Let $ + $ denote component-wise addition of $ 2k $-tuples. One can check that
\begin{align*}
	L_{S,k} &= \left\{ \begin{array}{ll}
		\sum_{s \in S} L'_{\{s\},k} & \mbox{if } |S| \mbox{ is even}\\
		\sum_{s \in S} L'_{\{s\},k} + L'_{\phi,k} & \mbox{if } |S| \mbox{ is odd}
\end{array} \right. \\
L'_{S,k} &= \left\{ \begin{array}{ll}
	\sum_{s \in S} L'_{\{s\},k} + L'_{\phi,k} & \mbox{if } |S| \mbox{ is even} \\
	\sum_{s \in S} L'_{\{s\},k} & \mbox{if } |S| \mbox{ is odd}
	\end{array} \right. .
\end{align*}
This is true because the function $ \_ + L'_{\{\ell\},k} \colon I_{S}[k] \to I'_{S \cup \{\ell\}}[k] $ 
(respectively $ \_ + L'_{\{\ell\},k} \colon I'_{S}[k] \to I_{S \cup \{\ell\}}[k] $) is an order-preserving bijection for all 
$ S \subseteq \{1,\dots,\ell-1\} $ of even (respectively odd) cardinality, thus it must preserve minimal elements. 
Since application of Adem relations to a sequence $ q_I $ produce elements $ q_J $ with $ J > I $ and $ L_{S,k}, L'_{S,k} $ are minimal, 
the argument in the proof of \cite[I.3.7]{May-Cohen} can be used to prove that the pairing between products of the form 
$ \prod_{s \in S} \tau'_{s,k} $ (if $ |S| $ is even) or $ \xi'_k \prod_{s \in S} \tau'_{s,k} $ (if $ |S| $ is odd) and the $ q_{L'_{S,k}} $ is perfect.
This implies by Lemma \ref{lem:cup coproduct Dyer-Lashof} that $ \xi'_k $ and $ \tau'_{j,k} $ with 
$ 1 \leq j \leq k $ generate $ (\mathcal{R}^t_k)^\vee $ as an $ \mathbb{F}_p[\xi_{1,k},\dots,\xi_{k,k}] $-algebra.
We can check the relations $ {\xi'_k}^2 = \xi_{k,k} $ between the generators also following  \cite{May-Cohen}. 
Explicitly, since $ I_{k,k} = L'_{\phi,k} + L'_{\phi,k} $,  $ \langle {\xi_k'}^2, q_{I_{k,k}} \rangle = 1 $. Tracking degrees, 
the only possible summand of $ {\xi'_{k}}^2 $ is $ q_{I_{k,k}}^\vee $, hence the relation holds. 
By comparing dimensions, we see that these provide a presentation of $ (\mathcal{R}^t_k)^\vee $ as a graded commutative algebra.
\end{proof}

\begin{remark}
	We recover the elements $ \tau_{j,k} $ and $ \sigma_{i,j,k} $ defined at page 28 in Cohen, Lada and May's book by the identities $ \tau_{j,k} = \pm \xi'_k \tau'_{j,k} $ and $ \sigma_{i,j,k} = \pm \tau'_{i,k} \tau'_{j,k} $. These are indecomposables in $ \mathcal{R}_k^\vee $, but not in $ (\mathcal{R}_k^t)^\vee $.
\end{remark}


We now provide an additive basis for the twisted cohomology of the symmetric groups. 
In the following definition, gathered blocks are elements of the universal Hopf ring of the statement of Theorem \ref{m2}. 
As mentioned above,  the product of a collection of $\gamma_{S_i}\dip{p^{n_i}} $ only depends on the union of the $S_i$ as well as the list, 
with multiplicity, of the $n_i$.

\begin{definition} \label{def:gathered blocks signum}
Let $S = \{ s_1 < s_2 \dots < s_r \} \subset \{1, \ldots, k\}$ and let $D = \{n_i\}_{i=1}^k$ be a finite sequence of non-negative integers of length $ k $.
If $ |S| $ is odd let $ m = 1 $, but if it is even let $ m \geq 1 $. 

Set $$\Gamma_{S,D,m} = \prod_{i=1}^k \left( \gamma_{\varnothing,i}^{[p^{k-i}m]} \right)^{n_i} \prod_{j=1}^{r} (\gamma'_{s_{j}})\dip{m p^{k-s_j}} {\lambda_k}^{\varepsilon(r)},$$  where $ \varepsilon(r) $ is $ 0 $ if $ r $ is even and $ 1 $ if $ r $ is odd

Similarly, we  define 
$$ \Gamma'_{S,D,m} = \prod_{i=1}^k \gamma_{\varnothing,i}^{[p^{k-i}m]} \prod_{j=1}^r (\gamma'_{s_j})\dip{m p^{j-s_j}} {\lambda_k}^{1-\varepsilon(r)},$$ for $ S \subseteq \{1,...,k\} $, with $S$ and $n_i$ as above but now  $ |S| = 1 $ when $ m = 1 $.

We call $\Gamma_{S,D,m}$ and $\Gamma'_{S,D,m}$  {\bf gathered blocks} with {\bf profile} $(S,D)$ in $ \bigoplus_n H^*(\si_n; \mathbb{F}_p) $
 or $ \bigoplus_n H^*(\si_n; \sgn) $ respectively.

We define our preferred 
{\bf Hopf monomials} in $ \hdp(S^0) $ as a $ \odot $ product of divided powers of primitive gathered blocks with pairwise different profiles, 
all belonging to $ \bigoplus_n H^*(\si_n; \mathbb{F}_p) $ or all belonging to $ \bigoplus_n H^*(\si_n; \sgn) $
\end{definition}

We observe that $ \Gamma_{S,D} $ is primitive if and only if $ k \in S $, $ n_k > 0 $ or $ r $ is odd, while $ \Gamma'_{S,D} $ is primitive if and only if $ k \in S $, $ n_k > 0 $ or $ r $ is even.


These gathered blocks will be the building blocks of  Hopf monomial bases for the cohomology we compute, and can be assembled into ``skyline diagrams.''
The reader is encouraged to compare this with the notion of gathered block in the classical mod $ p $ cohomology of the symmetric groups, 
as defined in Guerra's paper \cite{Guerra:17} at page 964.

\begin{lemma} \label{sign-Hopf-monomials-generators}
	The universal Hopf ring described by the statement of Theorem \ref{m2} is spanned as an $ \mathbb{F}_p $-vector space by Hopf monomials.
\end{lemma}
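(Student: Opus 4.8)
The plan is to prove that the $\mathbb{F}_p$-linear span $W$ of the set of Hopf monomials contains the unit, contains the classes $\gamma_k$, $\lambda_k$, $\gamma'_k$, and is closed under the transfer product $\odot$, the cup product $\cdot$, and all the divided powers operations $\gamma_n$. Since by Theorem~\ref{m2} the universal object in question is generated by $\gamma_k$, $\lambda_k$, $\gamma'_k$ under precisely these operations, this will force $W$ to be the whole Hopf ring. That $W$ contains the three families of generators is immediate from Definition~\ref{def:gathered blocks signum}: each of $\gamma_k$, $\lambda_k$, $\gamma'_k$ is a gathered block with empty or nearly-empty profile and multiplicity one, and in each case the primitivity criterion recorded just after that definition is met, so it is itself a (one-factor, trivial-divided-power) Hopf monomial.

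Closure under $\odot$ and under $\gamma_n$ is routine. The transfer product of two Hopf monomials is obtained by concatenating their lists of $\odot$-factors, then collapsing any two divided powers of the same primitive gathered block by the Law of Exponents $x\dip{i}\odot x\dip{j}=\binom{i+j}{i}x\dip{i+j}$; when the two monomials lie in opposite sign-degree summands the product vanishes by the relation that transfer products of classes of distinct sign degree are zero. For divided powers: using the Binomial axiom to expand $\gamma_n$ of a $\odot$-product into a $\odot$-linear combination, the Composition axiom to reduce iterated divided powers $\gamma_n\gamma_m$ to single ones, and the Law of Exponents to re-merge coincident profiles, one sees that $\gamma_n$ of a Hopf monomial is again a linear combination of Hopf monomials; since $\gamma_n$ distributes (via Binomial) over sums into products of $\gamma$-values, this together with closure under $\cdot$ gives $\gamma_n(W)\subseteq W$.

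The heart of the argument is closure under cup product. Given Hopf monomials $M=\bigodot_\alpha w_\alpha\dip{a_\alpha}$ and $M'=\bigodot_\beta w'_\beta\dip{b_\beta}$, repeated application of the Hopf-rig distributivity axiom, using the coproduct formula $\Delta(w\dip{a})=\sum_c w\dip{c}\otimes w\dip{a-c}$ for primitive $w$, rewrites $M\cdot M'$ as a $\odot$-linear combination of $\odot$-products whose individual factors are cup products $u\dip{c}\cdot v\dip{c'}$ of divided powers of primitive gathered blocks; since $\cdot$ vanishes between distinct components, only terms with $c\,n(u)=c'\,n(v)$ contribute. It then suffices to show each such $u\dip{c}\cdot v\dip{c'}$ lies in $W$, for reassembling the $\odot$-product and invoking the closure under $\odot$ already established finishes the proof. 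To compute $u\dip{c}\cdot v\dip{c'}$ one works ``entry by entry'' along the towers of divided powers of $\gamma_{\varnothing,i}$'s, $\gamma'_j$'s and $\lambda_k$'s constituting $u$ and $v$, using only the relations available in the universal object --- principally $\lambda_k^2=\gamma_k$, graded commutativity of $\cdot$, and Hopf-rig distributivity --- so that, in the language of the skyline-diagram picture, blocks with disjoint support combine by union of supports, a shared support collapses a product to zero unless absorbed into a $\lambda_k$-factor via $\lambda_k^2=\gamma_k$, and the trivial parts multiply polynomially; sign degrees add, so a product of a block from the $\mathbb{F}_p$-summand with one from the $\sgn$-summand again lands in the $\sgn$-summand. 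Collecting terms, and rewriting any resulting non-primitive gathered block as a divided power of a primitive one before re-sorting the profiles, exhibits $u\dip{c}\cdot v\dip{c'}$ as a linear combination of Hopf monomials.

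The main obstacle is exactly this last reduction. One must check that the component constraint $c\,n(u)=c'\,n(v)$ together with the axioms and $\lambda_k^2=\gamma_k$ genuinely force the cup product of two divided powers of primitive gathered blocks into the span of divided powers of gathered blocks --- and one must do so while respecting the warning of Remark~\ref{bad-example}: divided powers and cup product do \emph{not} obey the naive rule $(xy)\dip{n}=x\dip{n}y\dip{n}$ in general, and only interact in the restricted pattern built into the definition of a gathered block. Keeping track of the signs $\varepsilon(r)$ carried by the $\lambda_k$-factors, and of the passage from non-primitive to primitive gathered blocks when a cup product produces a non-primitive one, is the only genuinely delicate bookkeeping; the remaining steps are mechanical manipulations with the Hopf-rig-with-divided-powers axioms, patterned on the untwisted computation of Guerra~\cite{Guerra:17}.
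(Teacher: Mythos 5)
Your overall skeleton — reduce everything to $\odot$-products of single-component cup-monomials via Hopf-rig distributivity, then recognize those as gathered blocks — is the same as the paper's, but your execution has a genuine gap at exactly the step you flag as ``the main obstacle,'' and the way you set that step up makes it harder than it needs to be. You reduce the cup product of two Hopf monomials to terms of the form $u\dip{c}\cdot v\dip{c'}$ with $u,v$ primitive gathered blocks and then propose to compute these ``entry by entry.'' But a primitive gathered block is itself a nontrivial cup product of generators, and the axioms of a Hopf ring with \emph{additive} divided powers give you no way to rewrite $u\dip{c}$ in terms of divided powers of the individual $\cdot$-factors of $u$ — that is precisely the content of Remark~\ref{bad-example}, which you cite as a warning but do not actually circumvent. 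So the ``entry by entry'' computation cannot get started from the axioms alone. You also invoke relations that are not available in the universal object: the statement that two blocks with ``shared support'' multiply to zero (e.g.\ $\gamma'_{s}\dip{n}\cdot\gamma'_{s}\dip{n}=0$) is a fact about $\hdp(S^0)$ proved later via the dual KADL algebra, not one of the relations imposed on the universal Hopf rig of Theorem~\ref{m2}; it cannot be used to prove a spanning statement about that universal object. Finally, the paragraph on closure under divided powers is not needed: the named generators already include \emph{all} divided powers $\gamma_k\dip{n}$, $\gamma'_i\dip{n}$, and the universality is taken with respect to the functor that forgets divided powers, so the underlying Hopf rig is generated by those classes under $\cdot$ and $\odot$ alone.

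The paper's proof avoids all of this by never forming divided powers of cup products. It starts from arbitrary $(\cdot,\odot)$-monomials in the generators $\gamma_k\dip{n}$, $\gamma'_i\dip{n}$, $\lambda_k$, applies Hopf-rig distributivity to reach $\odot$-products of $\cdot$-monomials of \emph{generators}, restricts to $\cdot$-monomials lying in a single component, and uses $\lambda_k^2=\gamma_k$ to cap the multiplicity of $\lambda_k$ at one. At that point the identification with gathered blocks is essentially definitional: if all factors live in component $N=mp^k$, then each $\gamma_{\varnothing,i}$-factor is forced to carry the divided-power exponent $mp^{k-i}$ and each $\gamma'_{s_j}$-factor the exponent $mp^{k-s_j}$, which is exactly the shape of $\Gamma_{S,D,m}$ or $\Gamma'_{S,D,m}$ in Definition~\ref{def:gathered blocks signum}; the listed relations preserve this profile, so each such monomial is a scalar multiple of a gathered block, and the surrounding $\odot$-product is a Hopf monomial. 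No collapsing of shared supports, no sign-degree case analysis beyond the vanishing of $\odot$ across sign degrees, and no rewriting of non-primitive blocks as divided powers of primitive ones is required. If you want to salvage your closure-style argument, you should replace ``divided powers of primitive gathered blocks'' by ``$\cdot$-monomials in the generators'' as the atoms of your induction, at which point your argument collapses into the paper's.
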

\begin{proof}
	By unraveling the definition of the universal object, we directly see that a complete set of relations for the considered component super-Hopf rig between the generators $ \gamma_k\dip{n} $, $ {\gamma'}_{i}\dip{n} $, $ \lambda_k $ is the following:
	\begin{itemize}
		\item $ (\lambda_k)^2 = \gamma_{k} $
		\item the $ \odot $ product of a class in $ H^*(\si_n;\mathbb{F}_p) $ and a class in $ H^*(\si_n; \sgn) $ is $ 0 $.
		\item $ \Delta( \lambda_k) = 1 \otimes \lambda_k + \lambda_k \otimes 1 $
		\item $ \Delta (\gamma_k\dip{n}) = \sum_i \gamma_k\dip{i} \otimes \gamma_k\dip{n-i} $
		\item $ \Delta({\gamma'}_k\dip{n}) = \sum_i {\gamma'}_k\dip{i} \otimes {\gamma'}_k\dip{n-i} $
		\item $ \gamma_{k}\dip{n} \odot \gamma_{k}\dip{m} = \left( \begin{array}{c}
			n+m \\
			n
		\end{array}\right) \gamma_{k}\dip{m+n} \\ $
		\item $ {\gamma'}_k\dip{n} \odot {\gamma'}_k\dip{m} = \left( \begin{array}{c}
			n+m \\
			n
		\end{array}\right) {\gamma'}_{k}\dip{m+n} \\ $
		\item the $ \odot $ product of elements with different sign degree is $ 0 $
	\end{itemize}
	Since our Hopf rig is generated by the classes above, monomials in both products ($ \cdot $ and $ \odot $) generate it as a vector space. By Hopf ring distributivity we can reduce to $ \odot $-products of $ \cdot $-monomials of generators. Moreover, we can further restrict to considering $ \cdot $-monomials of generators all belonging to the same component and we can discard monomial in which $ \lambda_k $ appears at least twice thanks to the relation $ \lambda_k^2 = \gamma_k $.
	All the relations above preserve profile of such $ \cdot $-monomials and can be used to equate any product which has the same profile as a gathered block. Hence, Hopf monomials generate our Hopf ring as a vector space.
\end{proof}

We can now prove the main result of this section.

\begin{proof}[Proof of Theorem \ref{m2}]

Given a subset $ S \subset \{1,\dots,k\} $, define $ \gamma'_{S,k} = q_{L'_{S,k}}^\vee $, where linear duals are taken with respect to the Nakaoka monomial basis. Similarly, define $ \gamma_{S,k} = q_{L_{S,k}}^\vee $. 
We also define:
\begin{itemize}
\item $ \lambda_k = \gamma'_{\varnothing,k} $
\item $ \gamma_{j,m} = (q_{I_{j,j}}^{*m})^\vee $
\item $ \gamma_k = \gamma_{k,1} $
\item $ \gamma'_{i,m} = (q_{L_{\{i\},i}}^{*m})^\vee $
\item $ \gamma'_k = \gamma'_{k,1} $
\item $ \gamma_k = \gamma_{\varnothing,k} $
\end{itemize}
Since $ q_0 $ is the $ p^{th} $ power in the mod $ p $ homology of $ D(S^0) $ with coefficients in $ \rho $, then $ \gamma_{j,p^{k-j}} = (q_{I_{j,k}})^{\vee} $ and $ \gamma'_{i,p^{k-i}} = \gamma'_{\{i\},k} $.

We use Proposition \ref{prop:cup product sign} and Lemma \ref{lem:dual KADL} to deduce our cup product relations.
The structure of $ \mathcal{R}_k^\vee $ is described by Theorem I.3.7 in \cite{May-Cohen}.  In this reference, 
the product relations involve dividing by $ \gamma_{\varnothing,k} $, but they are equivalent to the identities stated in \cite{Guerra:17}, of 
whose our relations (as far as $ \mathcal{R}_k^\vee $ is concerned) are straightforward reformulations that use lower indexes. 
For the cohomology with coefficients in the sign representation, a similar argument using Lemma \ref{lem:dual KADL} yields our description.

We first observe that $ \gamma_{j,k} $, $ \gamma'_{i,k} $ and $ \lambda_k $ are liftings of the generators $ \xi_{j,k} $, $ \tau'_{i,k} $ and $ \xi'_k $ of $ (\mathcal{R}^t_k)^\vee $.

We first prove by induction on $ m \geq 1 $ that $ {\gamma'}_{k}\dip{m} = \gamma'_{k,m} $. The base of the induction ($ m = 1 $) is obvious, so we assume $ m > 1 $.
Since the product $ * $ in homology is linear dual to the coproduct $ \Delta $ in $ \hdp(S^0) $, a straightforward calculation by induction on $ i $ yields the coproduct formula:
\[
\Delta ( \gamma'_{k,m} ) = \sum_{i+j=m} \gamma'_{k,i} \otimes \gamma'_{k,j}
\]
Similarly, using the compatibility between coproduct and divided powers, we deduce by induction that
\[
\Delta ( {\gamma'}_{k}^{[m]} ) = \sum_{j=0}^{m} {\gamma'}_{k}^{[j]} \otimes {\gamma'}_{k}^{[m-j]}.
\]
By combining these two coproduct identities with the induction hypothesis we deduce that the difference between $ {\gamma'_k}\dip{m} $ and $ \gamma'_{k,m} $ must be primitive. If $ m $ is not a power of $ p $, then there are no $ \odot $-indecomposables in the right component. If $ m = p^l $, the primitives are determined by Lemma \ref{lem:dual KADL} and there is none in the correct dimension. In all cases, this difference must be zero.
This shows that $ \gamma'_k $ is standard non-nilpotent. 
A similar argument shows that $ \gamma_k\dip{m} = \gamma_{k,m} $ and $ \gamma_k $ is standard non-nilpotent.

It is obvious from its definition that $ \lambda_k $ is primitive. The cup-product relation $ (\lambda_k)^2 = \gamma_{k} $ holds because both sides are primitive, hence $ \odot $-indecomposables, and because this identity holds in $ (\mathcal{R}^t)^\vee $. Then our cup-product relations are a reformulation of those in $ (\mathcal{R}^t_k)^\vee $.

Let $ H $ be the universal Hopf rig described in the statement of the Theorem  \ref{m2}. Since we have checked that all the required 
relations hold in $ \hdp(S^0) $, there must be a unique Hopf rig map $ \pi \colon H \to \hdp(S^0) $ compatible with our choice of generators. 
As the ring of $ \odot $-indecomposables in $ \bigoplus_{n \geq 0} H^*(\si_n; \rho) $ is the linear dual of $ \mathcal{R}^t $, 
which is generated by the images of $ \gamma_i\dip{p^k} $, $ {\gamma'}_i\dip{p^k} $ and $ \lambda_k $ by Lemma \ref{lem:dual KADL},
$ \hdp(S^0) $ is generated by the classes defined at the beginning of this proof and, consequently, $ \pi $ is surjective. 
Hence, by Lemma \ref{sign-Hopf-monomials-generators}, the images of Hopf monomials generate $ \hdp(S^0) $ as a $ \mathbb{F}_p $-vector space.
To prove that $ \pi $ is an isomorphism, it is enough to check that they form a basis, which can be done by comparing dimension degree-wise with the
 Nakaoka basis in homology, as done in \cite[Theorem 2.7]{Guerra:17}.
\end{proof}

As a byproduct of this proof we directly obtain the following.

\begin{corollary} \label{sign-gathered-block-basis}
The set of Hopf monomials in $ \hdp(S^0) $, defined in Definition~\ref{def:gathered blocks signum}, is a tri-graded additive basis for $ \hdp(S^0) $ as a $ \mathbb{F}_p $-vector space.
\end{corollary}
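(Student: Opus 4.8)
The plan is to read the statement off the proof of Theorem~\ref{m2}, which already produces all the needed ingredients; what remains is to assemble them and make the grading bookkeeping explicit.

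\textbf{Step 1 (spanning).} I would first recall that, by Lemma~\ref{sign-Hopf-monomials-generators}, the universal Hopf rig $H$ appearing in Theorem~\ref{m2} is spanned over $\mathbb{F}_p$ by Hopf monomials, and that the comparison map $\pi\colon H\to\hdp(S^0)$ constructed in that proof is a surjective morphism of Hopf rigs carrying the generators $\gamma_k,\gamma'_k,\lambda_k$ to the dual KADL classes. Consequently the images under $\pi$ of Hopf monomials span $\hdp(S^0)$; and by construction these images are precisely the Hopf monomials in $\hdp(S^0)$ of Definition~\ref{def:gathered blocks signum}, each of which is tri-homogeneous in component $n$, cohomological degree $d$ and sign degree $e$. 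Thus the corollary reduces to showing that this tri-homogeneous spanning set is, tri-degree by tri-degree, linearly independent (and in particular consists of distinct elements), equivalently that $\pi$ is injective.

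\textbf{Step 2 (dimension count against homology).} Since we work over a field and $\hdp(S^0)$ has finite type in each tri-degree, $\dim_{\mathbb{F}_p}\hdp(S^0)^{d,n,e}=\dim_{\mathbb{F}_p}H_d(D_n(S^0);\rho_e)$ with $\rho_0=\mathbb{F}_p$ and $\rho_1=\sgn$. By Theorems~\ref{teo:homology basis constant} and~\ref{teo:homology basis twisted} the right-hand side has the Nakaoka monomial basis: $*$-monomials in strongly admissible KADL operations from $\mathcal{R}$ (when $e=0$) or $\mathcal{R}'$ (when $e=1$) applied to $\iota\neq 0\in\overline{H}_0(S^0)$, with operations of odd total degree occurring at most once. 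The heart of the argument is a tri-degree-by-tri-degree bijection between these Nakaoka monomials and the Hopf monomials in $\hdp(S^0)$: a primitive gathered block of profile $(S,D)$ corresponds to the minimal strongly admissible sequence $L_{S,k}$ or $L'_{S,k}$ of Lemma~\ref{lem:dual KADL}, with the sequence $D$ recording the $q_0$-powers (equivalently the divided powers applied to the primitive gathered blocks), and $\odot$-products of gathered blocks with pairwise distinct profiles correspond to $*$-products of the associated operations. Once the counts agree in every tri-degree, a spanning set of that cardinality in a finite-dimensional vector space is automatically a basis, so the Hopf monomials are linearly independent, $\pi$ is an isomorphism, and the corollary follows.

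\textbf{Main obstacle.} The work concentrates in Step~2: matching, grading by grading and separately in sign degree $0$ and $1$, the admissibility and ``at most once in odd degree'' conditions on the Dyer--Lashof side with the conditions defining Hopf monomials (pairwise distinct profiles; multiplicity at most $p-1$ for $\odot$-factors of even total degree and at most $1$ for those of odd total degree). This is exactly the sign-twisted, odd-primary analogue of the count carried out for trivial coefficients in \cite[Theorem 2.7]{Guerra:17}; Lemma~\ref{lem:dual KADL} supplies the structural input and no new idea is needed, but the three-fold indexing must be tracked carefully.
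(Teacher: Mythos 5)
Your proposal is correct and follows essentially the same route as the paper: the paper also obtains this corollary as a byproduct of the proof of Theorem~\ref{m2}, where spanning comes from Lemma~\ref{sign-Hopf-monomials-generators} together with the surjectivity of the comparison map $\pi$ (using that the $\odot$-indecomposables are generated by the images of $\gamma_i\dip{p^k}$, ${\gamma'}_i\dip{p^k}$ and $\lambda_k$ via Lemma~\ref{lem:dual KADL}), and linear independence follows from a tri-degree-wise dimension count against the Nakaoka basis in homology, carried out as in \cite[Theorem 2.7]{Guerra:17}. Your Step~2 spells out the combinatorial bijection underlying that count in somewhat more detail than the paper, which simply cites \cite{Guerra:17}, but the argument is the same.
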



\section{Cohomology rings of extended powers with basepoints} \label{sec:DXdphr} 

We are now ready to establish our main calculations.  In Section~\ref{sec:defs} we developed the algebraic framework which governs
the cohomology rings of extended powers, when considered together.  Such a framework demands twisted coefficients, so
in Section~\ref{sec:symmetric group} we extended well-known results about homology to that setting.  We now show that the cohomology
generated within the framework of component Hopf rings with divided powers pairs perfectly with homology.  We  develop
an additive basis for the former before proving the pairing result.

\subsection{Additive basis for the cohomology of $ DX_+ $} \label{sec:algebraic basis}

We describe  explicit additive bases of the super-Hopf ring with additive divided powers
given  Theorems \ref{m3} and \ref{m4}. We use these to pair with homology.  
Our basis is useful to perform concrete calculations, because we describe an algorithm to compute the two products and the coproduct in terms of it. 

We primarily combine results from Section~\ref{explicit} with Lemma~\ref{sign-Hopf-monomials-generators}.
We apply Lemma \ref{lem:algebraic basis} when $ A = \hdp(S^0) $ and $ V = H^*(X; \mathbb{F}_p) $, so that 
a basis for $ Q(A) $ is given by gathered blocks of component equal to a power of $ p $ by Corollary~\ref{sign-gathered-block-basis}. 
The basis provided by Lemma \ref{lem:algebraic basis} consists of such gathered blocks, with the extra datum of a class in the cohomology of $ X $.
Informally, by the Hopf ring relations in $ A $, the transfer product of two such gathered blocks with the same profile is again another gathered block. 
Therefore, we can merge $ \odot $-factors with the same profile and the same extra cohomology class of $ X $ and remove the constraint on the component.
We formalize this as follows.

\begin{definition} \label{decorated-hopf-monomials}
	Let $ X $ be a topological space and let $ \mathcal{B}$ be a graded additive basis for $ H^*(X; \mathbb{F}_p) $. A  {\bf decorated gathered block} 
on $ \mathcal{B} $ is a pair $ (x,b) $, where $ b $ is a gathered block in $ \bigoplus_{n \geq 0} H^*(\si_n; A_n) $ and $ x \in \mathcal{B} $. 
The {\bf profile} of the decorated gathered block $ (x,b) $ is the profile of $ b $, and its {\bf decoration} is $ x $. 
Its \emph{sign degree} is $ 0 $ if the degree of $ x $ and the sign degree of $ b $ have the same parity, is $ 1 $ if they have different parity.
	A {\bf decorated Hopf monomial} on $ \mathcal{B}$ is a formal expression of the form $ b_1 \odot \dots \odot b_r $, where $ b_1,\dots, b_r $ are decorated gathered blocks on $ \mathcal{B}$, up to permutation of the $ b_i $, where no two $ b_i $s have the same profile and decoration,
 and where all the $ b_i $s have the same sign degree.
\end{definition}

Our previous results assemble as follows.

\begin{proposition} \label{prop:Hopf monomial basis}
	Let $ X $ be a topological space and let $ \mathcal{B}$ be a graded additive basis for $ H^*(X; \mathbb{F}_p) $. Let $ A = \bigoplus_{n \geq 0} H^*(\si_n; A_n) $ and let $ V = H^*(X; \mathbb{F}_p) $. Let $ H_{alg} $ be the universal object among Hopf ring with additive divided powers satisfying the conditions of the statement of Theorem \ref{m4}.
	Let $ \mathcal{M}$ be the set of decorated Hopf monomials on $ \mathcal{B} $.
	We map $ \mathcal{M} $ to $ H_{alg} $ by realizing a decorated gathered block $ (x,b) $ as $ \iota(x)\dip{n(b)} \cdot \pi(b) $, and a decorated Hopf monomial as the $ \odot $-product in $ H_{alg} $ of the constituent gathered blocks.
	Then $ \mathcal{M} $ is an additive basis for $ H_{alg} $ as a $ \mathbb{F}_p $-vector space.
\end{proposition}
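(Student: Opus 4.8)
The plan is to combine the explicit description of the universal object $H_{alg}$ obtained in Section~\ref{explicit} with the gathered-block basis for $\hdp(S^0)$ from Corollary~\ref{sign-gathered-block-basis}. Concretely, I would first invoke Lemma~\ref{lem:algebraic basis}: it already produces a $\mathbb{F}_p$-basis of $H_{alg}$ consisting of $\odot$-products $\bigodot_{i=1}^r \bigl(\iota(v_i)\dip{n(y_i)} \cdot \pi(y_i)\bigr)$, where $v_i$ ranges over a basis $\mathcal{B}_V^\rho$ of $V_{even} \otimes \rho$, $y_i$ ranges over a basis $\mathcal{B}_A$ of the $\odot$-indecomposables $Q(A)$, all $\odot$-factors share a common sign degree, and the multiplicities of factors are bounded (at most $p-1$ for even total degree, at most $1$ for odd). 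So the statement reduces to translating that basis into the language of decorated Hopf monomials.

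Next I would specialize $A = \hdp(S^0)$ and $V = H^*(X;\mathbb{F}_p)$. By Corollary~\ref{sign-gathered-block-basis}, Hopf monomials in $\hdp(S^0)$ form a basis, and a basis for $Q(A)$ is given by the \emph{primitive} gathered blocks, which by Lemma~\ref{lem:dual KADL} and the discussion following Definition~\ref{def:gathered blocks signum} are exactly those of component a power of $p$, i.e.\ the $\gamma_{S,k}\dip{p^j}$-type blocks with the appropriate primitivity condition. Pairing each such indecomposable gathered block $y$ of $A$ with a decoration $x \in \mathcal{B}$ gives a decorated gathered block $(x,b)$, realized in $H_{alg}$ as $\iota(x)\dip{n(b)}\cdot\pi(b)$ — precisely the element $v\otimes y$ under the isomorphism $V_{even}\otimes Q(A)\cong QH_{alg}$ recorded in Section~\ref{explicit}. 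The key combinatorial point is that two decorated gathered blocks with the same profile $(S,D)$ and the same decoration $x$, when $\odot$-multiplied, merge into a single gathered block of larger component: this is exactly the content of the Hopf ring relations in $A$ (the $\odot$-product formulas $\gamma\dip{n}\odot\gamma\dip{m} = \binom{n+m}{n}\gamma\dip{m+n}$ and Hopf ring distributivity, as in Lemma~\ref{sign-Hopf-monomials-generators}) together with relation $(4)$ of Definition~\ref{def: H-alg}, $\iota(x)\dip{nm}\cdot\pi(y)\dip{n} = (\iota(x)\dip{m}\cdot\pi(y))\dip{n}$. Hence the "no two $b_i$ with the same profile and decoration" condition in Definition~\ref{decorated-hopf-monomials} corresponds exactly to the multiplicity-bounding and distinctness constraints in Lemma~\ref{lem:algebraic basis} after rewriting repeated $\odot$-factors of equal data as a single block, and the "all $b_i$ have the same sign degree" condition matches the sign-degree homogeneity requirement there (using relation $(5)$ of Definition~\ref{def: H-alg}, which forces mixed-sign-degree $\odot$-products to vanish). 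I would spell out that this rewriting is a bijection between the two indexing sets: given a decorated Hopf monomial, factor each decorated gathered block into its primitive-gathered-block divided-power constituents to land in the Lemma~\ref{lem:algebraic basis} basis; conversely, group the factors of a Lemma~\ref{lem:algebraic basis} basis element by common profile and decoration and merge.

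Finally I would verify that this bijection of indexing sets actually sends basis to basis — i.e.\ that the merging operation is realized by an invertible (unitriangular, up to nonzero scalars) change of basis in $H_{alg}$, not merely a combinatorial relabeling. Since both sets have the same cardinality in each tridegree and one is a basis, it suffices to check that each decorated Hopf monomial is a nonzero scalar multiple of a distinct Lemma~\ref{lem:algebraic basis} basis element plus possibly terms that can be controlled; in fact the merging relation $\gamma\dip{n}\odot\gamma\dip{m} = \binom{n+m}{n}\gamma\dip{m+n}$ shows it is \emph{exactly} a nonzero scalar multiple (the relevant binomial coefficients being nonzero mod $p$ by the constraint that component exponents in a single gathered block have disjoint $p$-adic digits, which is built into Definition~\ref{def:gathered blocks signum}). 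The main obstacle I anticipate is bookkeeping: one must be careful that the decoration-plus-profile data of a decorated gathered block genuinely determines the constituent primitive blocks and their divided-power exponents uniquely (so that the map $\mathcal{M}\to$ Lemma~\ref{lem:algebraic basis} basis is well-defined and injective), and that the grading conventions — total degree $t = ne+d$, sign degree, component — are preserved throughout; the sign bookkeeping in relation $(4)$ and in the modified products of Definition~\ref{main-dphr-structure-modified} needs to be tracked, but introduces only nonzero unit scalars and so does not affect the basis conclusion. Modulo that careful bookkeeping, the proof is essentially the assembly statement: "apply Lemma~\ref{lem:algebraic basis}, then Corollary~\ref{sign-gathered-block-basis}, then merge $\odot$-factors of equal profile and decoration."
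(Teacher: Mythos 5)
Your proposal is correct and follows essentially the same route as the paper, which likewise obtains the result by specializing Lemma~\ref{lem:algebraic basis} to $A = \hdp(S^0)$ and $V = H^*(X;\mathbb{F}_p)$, identifying the basis of $Q(A)$ with gathered blocks of component a power of $p$ via Corollary~\ref{sign-gathered-block-basis}, and then merging $\odot$-factors with equal profile and decoration using the Hopf ring relations (the nonzero scalars coming from the multiplicity bound $\leq p-1$ in Lemma~\ref{lem:algebraic basis}). Your write-up is in fact more detailed than the paper's, which presents the proposition as a direct assembly of these prior results without a separate proof.
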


We can give a graphical description of our additive basis using a mild generalization of Giusti, Salvatore, and Sinha's skyline diagrams. Moreover, all our structural morphisms can be understood graphically.
We first consider undecorated skyline diagrams, which correspond to classes in $ \hdp(S^0) $.

First, we recall some definitions from \cite{Sinha:12} and \cite{Guerra:17}.

\begin{definition} \label{skyline}
A {\bf skyline diagram} is a display of 
columns placed one next to the other horizontally. Each column is comprised of  rectangular {\bf boxes} with the same width stacked one on top of the other. 
The possible dimensions of these boxes depend on a prime $ p $:
\begin{itemize}
	\item If $ p = 2 $, a column can be made of any number of boxes of width $ n2^k $ and height $ 1-2^{-k} $, for some $ k,n \in \mathbb{N} $, $ n \geq 1 $.
	\item If $ p > 2 $, a column can be made of any number of hollow rectangles of width $ np^k $ and height $ 2(1-p^{-k}) $ and at most one solid box of width $ np^k $ and height dependent on a subset $ S \subseteq \{1, \dots, k\} $. To simplify notation, we let $ |S| = 2a+\varepsilon $, with $ a \in \mathbb{N}$ and $ \varepsilon \in \{0,1\} $. For any choice of $ k $ and $ S $, we have two possible heights: $ n[2( a + \varepsilon- \sum_{s \in S} p^{-s} ) - \varepsilon p^{-k}] $ and $ n[2( a - \sum_{s \in S} p^{-s} ) - (1-\varepsilon)p^{-k} + 1] $.
	
	 If a box has height given by the first formula, we say that it is of {\bf even type}, while we say that it is of {\bf odd type} otherwise. 
Boxes of even type are always defined if the subset has even cardinality $ |S| = 2a $, while they are allowed for $ |S| =2a+1 $ only if $ n = 1 $. Boxes of odd type are always defined if the subset has odd cardinality $ |S| = 2a+1 $, while they are allowed for $ |S| =2a $ only if $ n = 1 $.
	
	We say that a column is of even type if its solid box is of even type or is absent, and we say that it is of odd type if its solid box is of odd type.
\end{itemize}
\end{definition}

We require that a skyline diagram does not contain two columns made of fundamental rectangles with matching heights, 
and that its constituent columns are all of the even type or all of odd type.
We do not keep track of the order in which columns are placed or in which boxes are stacked inside each column, so
 two skyline diagrams that differ by a permutation of their columns are considered the same.

Each fundamental box should be interpreted as a class in $ \bigoplus_{n \geq 0} H^*(\si_n; \mathbb{F}_p) $ or $ \bigoplus_{n \geq 0} H^*(\si_n; \sgn) $, in the component corresponding to its width. Precisely:
\begin{itemize}
	\item if $ p = 2 $, the rectangle of dimensions $ n2^k $ and $ 1-2^{-k} $ corresponds to $ {\gamma_{k}}\dip{n} $
	\item if $ p > 2 $, we regard the hollow rectangle whose width is $ np^k $ and whose height is $ 2(1-p^{-k}) $ as $ {\gamma_k}\dip{n} $
	\item if $ p > 2 $, the solid box of even (respectively odd) type with width $ np^k $ associated to the subset $ S $ corresponds to the gathered block $ \Gamma_{S,\underline{0},n} $ (respectively $ \Gamma'_{S,\underline{0},n} $), as defined in Definition \ref{def:gathered blocks signum}.
\end{itemize}

A column is understood as the cup product of the constituent fundamental boxes, and a general skyline diagram is interpreted as the transfer product of its columns.
With these correspondences, the elements of the additive bases for $ \bigoplus_{n \geq 0} H^*(\mathcal{S}_n; \mathbb{F}_2) $ and 
$ \bigoplus_{n \geq 0} H^*(\mathcal{S}_n; \mathbb{F}_p) $ ($ p > 2 $) obtained in the papers mentioned above are described as those skyline diagrams 
(with columns of even type if $ p>2 $).
For example, in Figure \ref{fig:example skyline} we depict the skyline diagrams corresponding to $ \gamma_{1}^3 \odot \gamma_{2} \in H^*(\si_6; \mathbb{F}_2) $ and $ {\gamma_{1}}\dip{3} \gamma_{\{1\},2} \odot \gamma_{1}^2 \in H^*( \si_{12}; \mathbb{F}_3) $.

It is sometimes  useful  to consider vertical dashed lines dividing some fundamental rectangles into equal parts. 
Explicitly, boxes corresponding to $ {\gamma_{k}}\dip{n} $ are divided into $ n $ parts of width $ p^k $ and, 
if $ p > 2 $, even-dimensional solid boxes of even type and odd-dimensional solid boxes of odd type corresponding to a subset 
$ S $ are divided into parts of width $ p^{\max(S)} $.

\begin{figure}[htbp]
	\begin{center}
		\begin{tikzpicture}[line cap=round,line join=round,>=triangle 45,x=1cm,y=1cm]
			\clip(-3,-4) rectangle (7,2);
			\fill[line width=2pt,fill=black,fill opacity=0.5] (0,-2) -- (0,-0.7777777777777778) -- (4.5,-0.7777777777777778) -- (4.5,-2) -- cycle;
			\draw [line width=1pt] (0,0)-- (1,0);
			\draw [line width=1pt] (0,0.5)-- (0,0);
			\draw [line width=1pt] (0,0)-- (1,0);
			\draw [line width=1pt] (1,0)-- (1,0.5);
			\draw [line width=1pt] (1,0.5)-- (0,0.5);
			\draw [line width=1pt] (0,1)-- (1,1);
			\draw [line width=1pt] (1,1)-- (1,0.5);
			\draw [line width=1pt] (1,0.5)-- (0,0.5);
			\draw [line width=1pt] (0,1.5)-- (0,1);
			\draw [line width=1pt] (0,1)-- (1,1);
			\draw [line width=1pt] (1,1)-- (1,1.5);
			\draw [line width=1pt] (1,1.5)-- (0,1.5);
			\draw [line width=1pt] (1,0.75)-- (1,0);
			\draw [line width=1pt] (1,0)-- (3,0);
			\draw [line width=1pt] (3,0)-- (3,0.75);
			\draw [line width=1pt] (3,0.75)-- (1,0.75);
			\draw [line width=1pt] (0,-2)-- (4.5,-2);
			\draw [line width=1pt] (4.5,-2)-- (4.48,-3.3533333333333335);
			\draw [line width=1pt] (4.48,-3.3533333333333335)-- (-0.02,-3.3533333333333335);
			\draw [line width=1pt] (-0.02,-3.3533333333333335)-- (0,-2);
			\draw [line width=1pt] (0,1)-- (0,0.5);
			\draw [line width=1pt] (0,-2)-- (0,-0.7777777777777778);
			\draw [line width=1pt] (0,-0.7777777777777778)-- (4.5,-0.7777777777777778);
			\draw [line width=1pt] (4.5,-0.7777777777777778)-- (4.5,-2);
			\draw [line width=1pt] (4.5,-2)-- (0,-2);
			\draw [line width=1pt] (4.5,-3.3333333333333335)-- (6,-3.3333333333333335);
			\draw [line width=1pt] (6,-3.3333333333333335)-- (6,-2);
			\draw [line width=1pt] (6,-2)-- (4.5,-2);
			\draw [line width=1pt] (4.5,-2)-- (4.5,-3.3333333333333335);
			\draw [line width=1pt] (4.5,-2)-- (6,-2);
			\draw [line width=1pt] (6,-2)-- (6,-0.6666666666666666);
			\draw [line width=1pt] (6,-0.6666666666666666)-- (4.5,-0.6666666666666666);
			\draw [line width=1pt] (4.5,-0.6666666666666666)-- (4.5,-2);
			\draw (-2.26,1) node[anchor=north west] {$ \gamma_{1}^3 \odot \gamma_{2} $};
			\draw (-2.76,-2) node[anchor=north west] {$ {\gamma_{1}}\dip{3} {\gamma'_{1}}\dip{3}\lambda_2 \odot \gamma_{1}^2 $};
		\end{tikzpicture}
		\caption{Examples of skyline diagrams in $ H^*( \si_6; \mathbb{F}_2) $ (top) and $ H^*( \si_{12}; \mathbb{F}_3) $ (bottom)}
		\label{fig:example skyline}
	\end{center}
\end{figure}
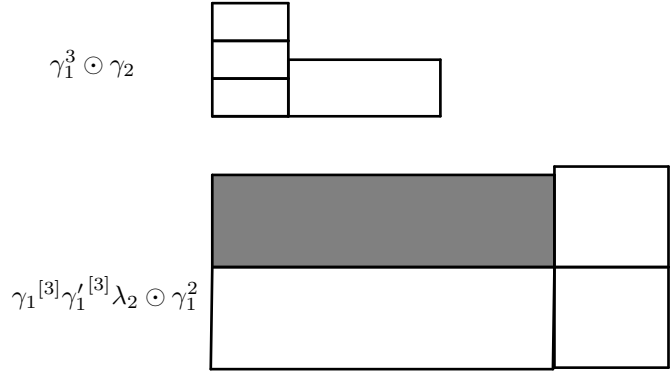

\begin{proposition}\label{prop:skyline-undecorated}
Hopf monomials in $ \hdp(S^0) $ are in bijective correspondence with skyline diagrams as defined above
\end{proposition}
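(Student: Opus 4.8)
The plan is to establish the correspondence by exhibiting it as an explicit dictionary, translating the combinatorial data of a skyline diagram into the algebraic data of a Hopf monomial and conversely. For $p=2$, and for the even-type diagrams when $p$ is odd, this dictionary is essentially the one of \cite{Sinha:12} and \cite{Guerra:17}; so the real work is to extend it to odd-type columns, that is, to gathered blocks of nonzero sign degree, using the relations collected in the proof of Lemma~\ref{sign-Hopf-monomials-generators} together with the explicit shape of gathered blocks from Definition~\ref{def:gathered blocks signum}. First I would check the dictionary on a single box: using the degree formula $d(q_I)=\sum_{j=0}^{k-1} p^{j}\bigl(i_{j+1}(p-1)-\varepsilon_{j+1}\bigr)$, the component $n(q_I)=p^{k}$, and the identifications of Proposition~\ref{prop:cup product sign} and Lemma~\ref{lem:dual KADL}, one verifies that the admissible widths and heights in Definition~\ref{skyline} are exactly the components and suitably normalized total degrees of the generators $\gamma_k\dip{n}$ and of the gathered blocks $\Gamma_{S,\underline{0},n}$ and $\Gamma'_{S,\underline{0},n}$, with width equal to component and ``even type'' versus ``odd type'' matching sign degree $0$ versus $1$. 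This step is pure bookkeeping; the box dimensions in Definition~\ref{skyline} were chosen to make it come out.

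Next I would interpret a whole column. Since all boxes of a column have the same width, a column is a cup product of generators inside one fixed component; by the cup-product relations of Theorem~\ref{m2} (equivalently those of \cite{Guerra:17}, together with $\lambda_k^{2}=\gamma_k$ and the vanishing of cup products whose underlying subsets overlap), such a product is either zero or a gathered block, and two such products coincide precisely when they have the same profile. Cup-multiplying the solid box $\Gamma_{S,\underline{0},n}$ by the classes coming from the hollow rectangles in that column produces exactly $\Gamma_{S,D,n}$, where the tuple $D$ is read off the multiplicities and heights of those hollow rectangles; conversely Definition~\ref{def:gathered blocks signum} shows that every gathered block factors, inside one component, as a product of hollow rectangles and at most one solid box, hence is realized by a column. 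Moreover a gathered block which is not primitive is a divided power of a primitive one, so each column corresponds canonically to a divided power $b\dip{m}$ of a primitive gathered block $b$, with both $b$ and $m$ determined by the column.

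Finally I would assemble the two sides. A skyline diagram is a multiset of columns read as the $\odot$-product of its columns, which by the previous step is a product $\bigodot_c b_c\dip{m_c}$ of divided powers of primitive gathered blocks. The requirement that no two columns made of fundamental rectangles have matching heights translates, through the dictionary, into the $b_c$ having pairwise distinct profiles, using that two gathered blocks of equal profile are forced to merge under the law of exponents $x\dip{i}x\dip{j}=\binom{i+j}{i}x\dip{i+j}$; and the requirement that all columns have the same type translates into all the factors having the same sign degree. Hence the image is exactly a Hopf monomial in the sense of Definition~\ref{def:gathered blocks signum}, and the assignment is visibly reversible, so the set of skyline diagrams is in bijection with the set of Hopf monomials. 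That distinct skyline diagrams then name distinct classes follows from Corollary~\ref{sign-gathered-block-basis}, where Hopf monomials are shown to form an additive basis of $\hdp(S^0)$.

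I expect the main obstacle to be the faithful translation of the ``matching heights'' condition on columns into the ``distinct profiles'' condition on gathered blocks: one must check that the multiset of heights of a column, together with its width, determines and is determined by the profile $(S,D)$ and the divided-power exponent of the associated gathered block. This amounts to tracking, via the explicit minimal sequences $L_{S,k}$ and $L'_{S,k}$ and the bijection of Lemma~\ref{lem:cup coproduct Dyer-Lashof}, how $\max(S)$ and the divided-power multiplier interact with the widths of the hollow rectangles, and doing so uniformly for $p=2$ and $p>2$ and for both the even- and odd-type cases; everything else is routine.
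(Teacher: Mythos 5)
Your proposal is correct and follows essentially the same route as the paper's own proof: boxes correspond to the generators, a column is read as the cup product of its boxes and hence as a gathered block, juxtaposition of columns is the transfer product, and the allowability conditions (no two columns with matching heights, uniform type) translate into the distinct-profile and uniform-sign-degree conditions on Hopf monomials. The only point the paper flags that you omit is that, since $\cdot$ and $\odot$ are graded-commutative only up to sign, one must fix an ordering of the boxes and of the columns for the correspondence to be well defined as stated; this is a cosmetic addition rather than a gap.
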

\begin{proof}
We already noted that solid blocks correspond to gathered blocks $ \Gamma_{S,D,n} $ or $ \Gamma'_{S,D,n} $ and hollow 
rectangles correspond to $ \gamma_{k}\dip{n} $. Hence, the set of all possible columns is in bijective correspondence with the set of gathered blocks, 
with taking the cup product of the classes corresponding to the stacking of constituent boxes. 
Similarly, a skyline diagram give rise to a Hopf monomial in $ \hdp(S^0) $ by taking the transfer product of the gathered blocks 
corresponding to its constituent columns.

To define this  precisely, one needs to fix an ordering of the boxes and the columns for each skyline diagram, because cup product and transfer product are only commutative up to sign. With that subtlety, it is straightforward to check directly from the definitions that this is a bijection.
\end{proof}

We next describe our generalization of skyline diagrams for $ \hdp(X_+) $. 

\begin{definition}
	Let $ p $ be a prime number and $ V $ be a graded algebra over $ \mathbb{F}_p $. A {\bf column decorated in $ V $} is a column diagram as 
in Definition~\ref{skyline}, together with an element $ x \in V $.
	Graphically, we depict the decoration of the column as an additional rectangle labeled $ x $ with width equal to its width and height equal to the degree of $ x $, placed beneath that column.
	When $ p > 2 $, we say that a decorated column is of even type if the type of underlying un-decorated column and the dimension of the decoration are both even or both odd. We say that it is of odd type otherwise.
	
	A {\bf skyline diagram decorated in $ V $} is a diagram constituted by columns decorated in $ V $ placed one next to each other horizontally, in which there are not two columns made of blocks of corresponding heights and having the same decoration and in which the constituent columns are all of even type or all of odd type.
	
	If $ \mathcal{B} $ is an additive basis for $ V $ as a graded $ \mathbb{F}_p $-vector space, then we define a 
{\bf skyline diagram decorated in $ \mathcal{B} $} as a skyline diagram decorated in $ V $ whose decorations are elements of that basis.
\end{definition}

We now assume $ V = H^*(X; \mathbb{F}_p) $. 

\begin{proposition}
	Let $ p $ be a prime number and let $ X $ be a topological space and $ \mathcal{B} $ be a basis for $ H^*(X; \mathbb{F}_p) $. 
There is a bijective correspondence between the set of skyline diagrams decorated in $ \mathcal{B} $ and Hopf monomials 
with respect to the given basis in $ \hdp(X_+) $ as in Definition~\ref{decorated-hopf-monomials}. 
Stacking boxes on top of each other corresponds to taking the cup product of the associated cohomology classes (restricted
to the relevant component) and placing columns next to each other horizontally corresponds to taking their transfer product.
\end{proposition}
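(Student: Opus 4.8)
The plan is to deduce this proposition as a combination of Proposition~\ref{prop:Hopf monomial basis} (which identifies decorated Hopf monomials with an additive basis of $\hdp(X_+)$) and Proposition~\ref{prop:skyline-undecorated} (the undecorated case), following the same bookkeeping strategy used there. The content of the statement is purely a matter of organizing notation, so the proof will be a careful matching of combinatorial data on both sides.

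First I would recall, from Proposition~\ref{prop:Hopf monomial basis}, that a decorated Hopf monomial is a $\odot$-product $b_1 \odot \cdots \odot b_r$ of decorated gathered blocks $b_i = (x_i, c_i)$ with $x_i \in \mathcal{B}$ and $c_i$ a gathered block in $\bigoplus_n H^*(\si_n; A_n)$, subject to the constraints that no two $b_i$ share both profile and decoration and that all the $b_i$ have the same sign degree. The data of a decorated gathered block $(x,c)$ is then precisely the data of the underlying gathered block $c$ together with a basis element $x \in \mathcal{B}$, and by Proposition~\ref{prop:skyline-undecorated} the gathered block $c$ corresponds bijectively to an (undecorated) column diagram: its hollow rectangles record the powers of $\gamma_k$, its solid box records the data $(S,n,\text{type})$. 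Adjoining the decoration $x$ amounts precisely to placing beneath that column an extra rectangle of width equal to the column's width and of height $\deg(x)$, labeled by $x$ --- that is, forming a column decorated in $\mathcal{B}$. The sign-degree bookkeeping matches by definition: in Definition~\ref{decorated-hopf-monomials} the sign degree of $(x,c)$ is $0$ if $\deg(x)$ and the sign degree of $c$ have the same parity and $1$ otherwise, which is exactly the even-type/odd-type condition in the definition of a decorated column.

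Next I would assemble columns: a skyline diagram decorated in $\mathcal{B}$ is a horizontal juxtaposition of such decorated columns, with no two columns sharing both their sequence of box-heights and their decoration, and with all columns of the same type. Under the correspondence of the previous paragraph, ``same box-heights and same decoration'' translates to ``same profile and same decoration,'' and ``all columns the same type'' translates to ``all $b_i$ the same sign degree'' --- exactly the defining conditions of a decorated Hopf monomial. So the passage from a list of decorated columns to a decorated Hopf monomial $b_1 \odot \cdots \odot b_r$ is a bijection once we fix an (immaterial) ordering convention on the columns and on the boxes within each column; this is the same caveat as in the proof of Proposition~\ref{prop:skyline-undecorated}, arising because $\cdot$ and $\odot$ are only graded-commutative. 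Finally, I would note that the interpretation of the pictorial operations is forced: stacking boxes in a column corresponds to the cup product of the corresponding classes in the appropriate component (as in the undecorated case, the decoration rectangle contributing the factor $\iota(x)\dip{n(c)}$ via property (2)--(4) of Definition~\ref{def: H-alg}), and juxtaposing columns corresponds to $\odot$, again exactly as in Proposition~\ref{prop:skyline-undecorated}.

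The only genuine obstacle --- really a matter of care rather than difficulty --- is verifying that the correspondence is well-defined at the level of \emph{unordered} diagrams: one must check that the class $\iota(x)\dip{n(c)} \cdot \pi(c)$ attached to a decorated column does not depend on the chosen order in which boxes are stacked, and that the $\odot$-product attached to a skyline diagram does not depend on the order of columns, up to the identifications already built into Definitions~\ref{skyline} and~\ref{decorated-hopf-monomials}. This follows from graded-commutativity of $\cdot$ and $\odot$ with respect to total degree together with the relations in $\hdp(S^0)$ from Theorem~\ref{m2} exactly as in the undecorated argument, so I would simply remark that the proof of Proposition~\ref{prop:skyline-undecorated} applies verbatim after adjoining decorations, and that bijectivity then follows from Proposition~\ref{prop:Hopf monomial basis}.

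\begin{proof}
By Proposition~\ref{prop:Hopf monomial basis}, a decorated Hopf monomial on $\mathcal{B}$ is a $\odot$-product $b_1 \odot \cdots \odot b_r$ of decorated gathered blocks $b_i = (x_i, c_i)$, where each $c_i$ is a gathered block in $\bigoplus_{n \geq 0} H^*(\si_n; A_n)$ and $x_i \in \mathcal{B}$, subject to the requirement that no two $b_i$ share both profile and decoration and that all the $b_i$ have the same sign degree, and its image in $\hdp(X_+)$ is the $\odot$-product of the classes $\iota(x_i)\dip{n(c_i)} \cdot \pi(c_i)$.

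By Proposition~\ref{prop:skyline-undecorated}, the gathered block $c_i$ corresponds bijectively to an undecorated column diagram: its hollow rectangles of width $np^k$ record the factors $\gamma_k\dip{n}$, and its unique solid box records the remaining data $(S, n)$ together with its type. Adjoining the decoration $x_i$ amounts to placing beneath this column a rectangle of the same width and of height $\deg(x_i)$, labeled $x_i$, which is exactly a column decorated in $\mathcal{B}$; the resulting class is $\iota(x_i)\dip{n(c_i)} \cdot \pi(c_i)$ by the definitions in Section~\ref{explicit}. Moreover the sign degree of $(x_i, c_i)$ as in Definition~\ref{decorated-hopf-monomials} is $0$ precisely when $\deg(x_i)$ and the sign degree of $c_i$ have the same parity, which is exactly the condition that the decorated column be of even type.

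A skyline diagram decorated in $\mathcal{B}$ is, by definition, a horizontal juxtaposition of such decorated columns in which no two columns share both their sequence of box heights and their decoration and in which all columns are of the same type. Under the correspondence above these conditions translate, respectively, into ``no two $b_i$ share both profile and decoration'' and ``all the $b_i$ have the same sign degree.'' Hence assigning to a decorated skyline diagram the $\odot$-product of the classes attached to its columns gives a well-defined map to the set of decorated Hopf monomials, and this map is a bijection: it is constructed column-by-column from the bijection of Proposition~\ref{prop:skyline-undecorated} by adjoining decorations, the only subtlety being the choice of an ordering of boxes within each column and of columns within the diagram, which is immaterial by graded commutativity of $\cdot$ and $\odot$ with respect to the total degree together with the relations of Theorem~\ref{m2}, exactly as in the proof of Proposition~\ref{prop:skyline-undecorated}. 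Finally, stacking boxes in a column realizes the cup product of the corresponding classes in the relevant component --- the decoration rectangle contributing the factor $\iota(x_i)\dip{n(c_i)}$ --- and juxtaposing columns realizes their transfer product, again as in the undecorated case.
\end{proof}
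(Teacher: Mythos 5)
Your proposal is correct and follows essentially the same route as the paper: the paper's proof likewise associates to each decorated column the class $\Gamma(c)\cdot x\dip{w}$ (the undecorated gathered block cup-multiplied by the divided power of its decoration), takes the transfer product over columns, and deduces bijectivity from Proposition~\ref{prop:skyline-undecorated}. Your version simply spells out the matching of the constraints (profile/decoration uniqueness and sign degree versus column type) and the ordering caveat in more detail than the paper does.
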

\begin{proof}
If $ c $ is a column with decoration $ x $, we associate with it the decorated gathered block $ \Gamma(c) \cdot x\dip{w} $, where $ w $ is the width of the column and $ \Gamma(c) $ is the gathered block in $ \hdp(S^0) $ corresponding to $ c $.
We then associate to a decorated skyline diagram the transfer product of the decorated gathered blocks corresponding to its constituent decorated columns. It is easy to deduce from Proposition \ref{prop:skyline-undecorated} that this is a bijection.
\end{proof}

By taking only decorated skyline diagrams whose columns are all of even type, we obtain
 a basis for $ \bigoplus_{n \geq 0} H^*(\tilde{D}_n(X); \mathbb{F}_p) $.

Our next aim is to describe a way to compute the structural morphisms via skyline diagrams.  The rough graphical interpretation of the cup product of
 gathered blocks is ``splitting and piling the corresponding columns one on top of the other.'' However, if $ p > 2 $, this procedure does not always 
 yield an ``allowable'' column. Hence, we must use our cup product relations in $ H^*(\mathcal{S}_n; \mathbb{F}_p) $ or $ H^*( \mathcal{S}_n; \mathbb{F}_p \otimes \sgn ) $ to write that stack of rectangles as a multiple of an allowable column as follows.

\begin{proposition} \label{rem:cup product columns}
The cup product of classes in $ \hdp(X_+) $ represented by single columns of width $ np^k, p \nmid n $,  proceeds as follows:
	\begin{itemize}
		\item We stack the hollow boxes of the two columns, and we substitute the two solid parts (if any) with a single solid rectangle whose height is the sum of the height of the two original solid boxes
		\item Assume that the two solid boxes correspond to two increasing subsets $ S = \{ s_1 < \dots < s_k \} $ and $ S' = \{ s'_1,\dots, s_l'\} $. We multiply the result by a scalar coefficient $ \lambda $. If the two sequences have a common entry, $ \lambda = 0 $. Otherwise, it is equal to $ \pm 1 $, the sign being that of the permutation that puts the sequence $ (s_1,\dots,s_k,s'_1,\dots,s'_l) $ in increasing order.\
		\item When $ n = 1 $ and the given columns are odd-dimensional and of even type or even-dimensional and of odd type, we add a hollow rectangle of height $ 2(1-p^k) $ and lower the solid part accordingly to preserve the total height of the column.
	\end{itemize}
	The cup product of columns decorated in $H^*(X)$ is obtained by cup-multiplying the underlying non-decorated columns and 
	choosing as decoration the cup product of the original decorations.
\end{proposition}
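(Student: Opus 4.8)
The plan is to reduce the statement, via the Hopf-ring relations already available, to the cup-product relations on $\bigoplus_n H^*(\si_n;\rho)$ established in Theorem~\ref{m2}, and then to read those relations off graphically. Since cup product vanishes between distinct components, the two columns may be taken to have the same width $W=np^k$. A column $c$ of width $W$ decorated by $x\in H^*(X)$ represents $\iota(x)\dip{W}\cdot\pi(\Gamma(c))$ inside the component-$W$ summand of $\hdp(X_+)$, where $\Gamma(c)\in\hdp(S^0)$ is the underlying un-decorated column. Because the cup product on a fixed component is graded commutative with respect to cohomological dimension, because $\iota(x)\dip{W}\cdot\iota(x')\dip{W}=\iota(xx')\dip{W}$ (the relation $x\dip{n}\cdot y\dip{n}=(xy)\dip{n}$ of Theorem~\ref{m3}, respectively $x_e\dip{n}\cdot y_{e'}\dip{n}=(xy)_{e+e'}\dip{n}$ of Theorem~\ref{m4}), and because $\pi$ is a morphism of Hopf rings, one gets, modulo the ordering conventions built into skyline diagrams,
\[
\bigl(\iota(x)\dip{W}\cdot\pi(\Gamma(c_1))\bigr)\cdot\bigl(\iota(x')\dip{W}\cdot\pi(\Gamma(c_2))\bigr)=\iota(xx')\dip{W}\cdot\pi\bigl(\Gamma(c_1)\cdot\Gamma(c_2)\bigr).
\]
This yields the last sentence of the statement and reduces everything to computing the cup product $\Gamma(c_1)\cdot\Gamma(c_2)$ of two un-decorated columns.

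Next I would expand each un-decorated column as the cup product of its constituent elementary classes: a stack of hollow rectangles is a power of $\gamma_k\dip{n}$, and the unique solid box, when present, is a gathered block $\Gamma_{S,\underline{0},n}$ of even type, or $\Gamma'_{S,\underline{0},n}$ of odd type, which by Definition~\ref{def:gathered blocks signum} is the cup product of the classes $(\gamma'_s)\dip{np^{k-s}}$ for $s\in S$ with the factor $\lambda_k^{\varepsilon(|S|)}$. Then I would invoke the cup-product relations on $H^*(\si_W;\rho)$ coming out of the proof of Theorem~\ref{m2} (derived there from Lemma~\ref{lem:dual KADL} and Proposition~\ref{prop:cup product sign}, and reformulating the relations of \cite{Guerra:17} on the even sign-degree part): the classes $\gamma_k\dip{n}$ are even-dimensional, hence commute with everything, and their cup-powers are linearly independent, so the hollow rectangles of the two columns simply stack; the product of the two solid boxes vanishes whenever $S\cap T\ne\varnothing$, and otherwise reassembles into the solid box indexed by $S\cup T$, the scalar being $\pm1$ with sign the signature of the permutation sorting the concatenation $(s_1,\dots,s_{|S|},t_1,\dots,t_{|T|})$ into increasing order; the factors $\lambda_k^{\varepsilon(|S|)}$ and $\lambda_k^{\varepsilon(|T|)}$ combine via $\lambda_k^2=\gamma_k$, producing either nothing or one additional $\gamma_k$, i.e.\ one extra hollow rectangle. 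Additivity of cohomological dimension forces the merged solid box to have height the sum of the two heights. Finally, when $n=1$, a column with a solid box of even type but odd dimension, or of odd type but even dimension, is not allowable and must be rewritten, again by these same relations, as an allowable column; this rewriting is precisely what inserts the extra hollow rectangle of height $2(1-p^{-k})$ and lowers the solid part, as in the third bullet. The decorated case then follows from the first paragraph.

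No new structural ingredient beyond Theorems~\ref{m2}, \ref{m3} and \ref{m4} enters, so the real work --- and the anticipated main obstacle --- is the sign and edge-case bookkeeping. One must reconcile the Koszul signs carried by the modified coproduct and transfer product of Definition~\ref{main-dphr-structure-modified}, together with the factor $(-1)^{\varepsilon(d'+\frac{p-1}{2}k\varepsilon')}$ of Proposition~\ref{prop:cup product sign}, with the clean permutation-signature description in the statement; and one must determine exactly when the $n=1$ correction is triggered, by comparing the parities of $|S|$, $|T|$ and $|S\cup T|$ with the even/odd types of the two columns. One should also verify well-definedness: the rule does not depend on the order in which the boxes were listed inside a column, since any reordering changes the answer only by a sign absorbed into the overall scalar, and skyline diagrams are in any case considered up to permutation of their boxes and columns. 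For $p=2$ all of this is vacuous and the statement specializes to the corresponding rule in \cite{Sinha:12}.
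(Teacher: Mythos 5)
Your proposal is correct and follows exactly the route the paper intends: the paper in fact prints no proof of Proposition~\ref{rem:cup product columns} beyond the paragraph preceding it (``splitting and piling'' plus the cup-product relations), and your argument --- reduce the decorated case to the undecorated one via $\iota(x)\dip{n}\cdot\iota(x')\dip{n}=\iota(xx')\dip{n}$, then expand each column into $\gamma$-, $\gamma'$- and $\lambda$-factors and apply the relations of Theorem~\ref{m2} (equivalently the presentation of $(\mathcal{R}^t_k)^\vee$ in Lemma~\ref{lem:dual KADL}, with $\lambda_k^2=\gamma_k$ accounting for the $n=1$ correction) --- is precisely that intended justification. The only items you defer, the Koszul-sign bookkeeping and the use of Hopf-ring distributivity to reduce the $n>1$ divided-power case to the indecomposable one, are indeed all that remains, and the paper leaves them implicit as well.
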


We are now ready to describe the two products $ \cdot $ and $ \odot $, the coproduct $ \Delta $, and the divided power operations $ \__{[n]} $ graphically:
\begin{itemize}
	\item We obtain the transfer product of two skyline diagrams by juxtaposing them horizontally. If there are two columns of width 
$ n $ and $ m $ having the same decoration and boxes with the same heights (i.e., gathered blocks with the same profile), we merge 
them into a single column whose width is $ n+m $, with a coefficient  $\binom{n+m}{n}$, or $ 0 $ if $ p>2 $ and the two decorated 
columns are odd-dimensional.
	\item The coproduct of a Hopf monomial is computed via the fact that the generators are standard non-nilpotent of primitive and the
 bialgebra property of $ \Delta $ and $ \odot $. Graphically, this corresponds to drawing vertical dashed lines inside the rectangles as explained above, 
 separating each column into two along vertical lines (dashed or not) of full height in all the possible ways, and arranging the new columns into two diagrams. Each new column is assigned the decoration of the old column from which it arises. 
 Equivalently, one can divide rectangles corresponding to $ x\dip{n} $, $ x \in \hdp(X_+) $ evenly in parts of width $ 1 $ using internal dashed lines.
	\item The cup product of two skyline diagram is obtained by first drawing all the dashed lines internal to the fundamental boxes. Then, divide the two diagrams into columns, either using the original boundaries or along the dashed lines that run entirely through the height of an existing column. 
Finally, if possible match the columns arising from the first diagram bijective with those coming from the second, respecting width, 
and cup-multiply the matched columns as in Proposition~\ref{rem:cup product columns}
and use them to create a new skyline diagram (with a suitable coefficient if $ p > 2 $).
	\item If $ p = 2 $, the divided power operation $ \dip{n} $ acts on single column diagrams by scaling by $ n $ the width of all its constituent rectangles, 
while retaining their heights and the decoration. If $ p > 2 $, this action is computed in the same way if the column is even-dimensional, 
while every non-trivial divided power operation acts trivially on odd-dimensional columns. The action of $ \dip{n} $ on a general skyline 
diagram with $ m $ columns is understood by applying it to each column and multiplying the resulting diagram by $ (n!)^{m-1} $.
\end{itemize}

\subsection{Analyzing the pairing}

To prove our main theorem we need a preliminary technical lemma.
\begin{lemma}\label{lemma-partial-compatibility}
Let $ \iota_X \colon H^*(X;\mathbb{F}_p) \otimes \rho \to \hdp(X_+) $ be the homomorphism that identifies the domain with the component-$ 1 $ algebra of the codomain. Let $ \pi \colon \hdp(S^0) \to \hdp(X_+) $ the morphism naturally induced by the terminal map $ X \to \{ * \} $. 
For all $ m \in \mathbb{N} $, for all $ x \in H^*(X; \mathbb{F}_p) $ the following statements are true.
\begin{enumerate}
\item For all $ \alpha_i \in H_*(X; \mathbb{F}_p) $, the Kronecker pairing between homology and cohomology satisfies
$$ \langle \iota_X(x)\dip{n}, \alpha_1 * \dots * \alpha_n \rangle = \pm \prod_{i=1}^n \langle \iota_X(x), \alpha_i \rangle,$$
while 
$$
 \langle \iota_X(x)\dip{n}, q_{I_1}(\alpha_1) * \dots * q_{I_r}(\alpha_r) \rangle = 0
 $$
if $ I_1, \dots, I_r $ are admissible sequences of KADL operations with at least one $ q_{I_i} $ different from $ q_0 \dots q_0 $.
\item For all $ x,x' \in H^*(X; \mathbb{F}_p) \otimes \rho $ (both of even total degree if $ p > 2$), $ \iota_X(x)\dip{n} \cdot \iota_X(x')\dip{n} = \iota_X(xx')\dip{n} $
\item For all $ y \in H^*(\si_n; \rho) $, $ (\iota_X(x)\dip{n} \cdot \pi(y))\dip{m} = \iota_X(x)\dip{nm} \cdot \pi(y)\dip{m} $.
\end{enumerate}
\end{lemma}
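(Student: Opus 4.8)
The three statements are all ``perfect pairing / compatibility'' facts about the classes $\iota_X(x)\dip{n}$ and $\pi(y)$, and the natural strategy is to reduce everything to the chain-level / resolution-level descriptions already set up in Section~\ref{sec:defs}. Recall that $\iota_X(x)$ lives in component $1$, so $\iota_X(x)\dip{n} = tr^{\si_n}_{\si_1 \wr \si_n}(\iota_X(x)^{\otimes n})$ by Definition~\ref{dp-series}, where $\si_1 \wr \si_n = \si_n$ acts trivially on the $n$ tensor factors; that is, $\iota_X(x)\dip{n}$ is simply represented by the $\si_n$-invariant cochain $f^{\otimes n}$ on $R_{\si_n} \otimes_{\si_n} \widetilde H^*(X)^{\otimes n}$, where $f$ represents $x$. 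Meanwhile the Pontrjagin product $*$ on homology is, by the discussion preceding the KADL definitions, induced by the map of resolutions $R_{\si_{n_1}} \otimes \cdots \otimes R_{\si_{n_r}} \to R_{\si_n}$ coming from $\si_{n_1}\times\cdots\times\si_{n_r} \hookrightarrow \si_n$, together with the tensor-product identification on the $\widetilde H_*(X)$ factors. This makes part (1) a direct chain-level computation: pairing $f^{\otimes n}$ against $\alpha_1 * \cdots * \alpha_n$ restricts the invariant cochain to the subgroup $\si_1^{\times n}$, where it becomes $f \otimes \cdots \otimes f$, and the pairing factors as the product $\prod_i \langle x, \alpha_i\rangle$ up to a Koszul sign (which I would extract from Definition~\ref{main-dphr-structure-modified}).

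For the vanishing half of part (1), the key point is that a KADL operation $q_I(\alpha)$ with $q_I \neq q_0\cdots q_0$ is, by construction, built from a resolution of a proper wreath product $C_p \wr (\cdots)$ and its nontrivial homology classes $e_i \in W_i$ with $i>0$; dually, any cochain pulled back from the component-$1$ tensor power (i.e.\ from $\iota_X(x)\dip{n}$, which is concentrated on the ``trivial'' part $W_0^{\otimes}$-piece of each resolution factor) must annihilate the image of $q(e_i)^{\#}$ for $i>0$. More precisely I would argue that $\iota_X(x)\dip{n}$ lies in the image of the transfer from $\si_1\wr\si_n$, hence its restriction to any subgroup involving a nontrivial $C_p$-wreath factor, after applying the double coset formula, hits only terms that are multiples of indices divisible by $p$ — exactly the argument used in the proof of Proposition~\ref{prop:cup product sign} to kill the ``not all equal'' terms. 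So the pairing is forced to be zero.

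Part (2) follows from the fact, already established as Theorem~\ref{m3}/\ref{m4} and encoded in Definition~\ref{def: H-alg}(3), that $\iota_X(x)\dip{n}\cdot\iota_X(x')\dip{n} = \iota_X(xx')\dip{n}$; but since this lemma is used \emph{in the proof} of those theorems, I would instead prove it directly at the level of cochains on $R_{\si_n}\otimes_{\si_n}\widetilde H^*(X)^{\otimes n}$: the cup product on the component-$n$ algebra is induced by the diagonal on $R_{\si_n}$ and the product on $\widetilde H^*(X)^{\otimes n}$, and applying this to the two invariant cochains $f^{\otimes n}$ and $g^{\otimes n}$ (representing $\iota_X(x)$ and $\iota_X(x')$) yields $(fg)^{\otimes n}$, which represents $\iota_X(xx')\dip{n}$, up to the Koszul sign dictated by the degrees of $x,x'$ — which is precisely the sign appearing if we restrict $p$ odd to even total degree, making it trivial. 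Part (3) is the Composition/Exponent bookkeeping for transfers: $\iota_X(x)\dip{nm}\cdot\pi(y)\dip{m}$ and $(\iota_X(x)\dip{n}\cdot\pi(y))\dip{m}$ are both obtained by inducing the same underlying invariant cochain $f^{\otimes nm}\otimes(\text{cochain for }y^{\otimes m})$ from two nested partition subgroups of $\si_{nm}$ — one of shape $m\cdot(\ul{n}\text{-wreath for }y)$ and one of shape $\si_n$-tensor-$m$ merged — so by Lemma~\ref{group cohomology} they differ by the quotient of the two subgroup indices, and a combinatorial count (identical in spirit to the verifications of the Distributivity and Composition axioms in the proof of Proposition~\ref{firststep}) shows these indices agree, giving the identity.

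\textbf{Main obstacle.} The genuine difficulty is not any single one of the three but the \emph{sign and coefficient bookkeeping}, particularly the interaction between the modified coproduct/transfer signs of Definition~\ref{main-dphr-structure-modified}, the Koszul signs on $\Sigma(A)$ in the sign-degree-one summand, and the multiplicities $\binom{nm}{\cdots}$ coming from the double coset formula. In part (1) I must make sure the ``$\pm$'' is correctly identified (and constant across the $\alpha_i$), and in part (3) I must verify the $p$-adic valuation of the two indices matches \emph{on the nose}, not just modulo $p$, since we want an equality of integers (equivalently of mod-$p$ coefficients) — this is where I expect to spend the most care, tracking the factors $(n!)^{m-1}$-type terms that appear in the graphical description in Proposition~\ref{rem:cup product columns} and making sure nothing is off by a unit.
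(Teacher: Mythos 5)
Your overall strategy --- explicit resolution-level representatives for $\iota_X(x)\dip{n}$ and direct chain-level evaluation --- is the paper's strategy, and your treatments of the product formula in (1) and of part (2) match the paper's proof. But two of your key steps rest on mechanisms that do not apply. First, for the vanishing statement in (1): your ``more precise'' argument is vacuous, because $\si_1\wr\si_n$ is all of $\si_n$, so $\iota_X(x)\dip{n}$ is not a proper transfer, and the double coset formula for its restriction produces a single term of index $1$; no factor divisible by $p$ ever appears. The argument that actually works (and which you gesture at before abandoning it) is purely a resolution-degree observation: after reducing to $r=1$ via the coproduct formula for divided powers, one notes that $\iota_X(x)\dip{m}$ is represented by the cocycle $W_0^{\si_m}\xrightarrow{\varepsilon}\mathbb{F}_p\xrightarrow{x^{\otimes m}} H^*(X;\mathbb{F}_p)^{\otimes m}\otimes\sgn^e$, supported in resolution degree $0$, whereas $q_I(\alpha)$ with $q_I\neq q_0\cdots q_0$ is represented by a cycle $y\otimes_{\si_{p^k}}\alpha^{\otimes p^k}$ with $y\in W_l^{\si_{p^k}}$ and $l>0$; the pairing vanishes for degree reasons, with no double cosets or indices involved.

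Second, part (3) is not an instance of Lemma~\ref{group cohomology}: the two sides are not transfers of one cochain from two nested subgroups differing by an index. Both $\pi(y)\dip{m}$ and $(\iota_X(x)\dip{n}\cdot\pi(y))\dip{m}$ are transfers from the \emph{same} subgroup $\si_n\wr\si_m$, while $\iota_X(x)\dip{nm}$ is not a proper transfer at all; the identity follows from the projection formula $a\cdot\tr(b)=\tr(\rho(a)\cdot b)$ together with the fact that the degree-zero representative of $\iota_X(x)\dip{nm}$ restricts on $\si_n\wr\si_m$ to $(\iota_X(x)\dip{n})^{\otimes m}$. The paper implements this by exhibiting a single explicit cocycle (a chain-level transfer composed with $\varepsilon\otimes f^{\otimes k}$ and the diagonal) representing both sides simultaneously; there is no ``quotient of indices'' to compute, so the $p$-adic bookkeeping you flag as the main obstacle is solving a problem that does not arise. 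You also omit the case split needed when $p>2$: if $y$ has odd total degree both sides vanish, and one reduces to $t(y)$ even before writing down the cocycle.
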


\begin{proof}
We provide the proof for $ p > 2 $. The case $ p = 2 $ can be proved in the same way, without the complication of the sign representation.
The statement for $ n = 1 $ is trivial, thus we assume that $ n \geq 2 $.

 The first statement follows from the formula for the coproduct of divided powers and the fact that in component $ n = 1 $ 
the Kronecker pairing coincides with the Kronecher pairing between $ H^*(X; \mathbb{F}_p) $ and $ H_*(X; \mathbb{F}_p) $.
For the last statement, exploiting again the coproduct formula of divided powers, we reduce to the  case  $ r = 1 $.

Thus we only need to prove that $ \langle \iota_X(x)\dip{p^k}, q_I(\alpha) \rangle = 0 $ if $ I $ is an admissible sequence of KADL operations of length $ k $ different from $ q_0 \dots q_0 $. To prove this, we pick a suitable cochain representative of $ \iota_X(x)\dip{m} $.
Assume that $ x \in H^d(X; \mathbb{F}_p) \otimes \rho $. Then for all $ m \geq 2 $, the class
$ \iota_X(x)\dip{m}$ is in $ \hdp(X)^{dm,m,e} $, where $ e = 0 $ if $ d $ is even, $ e = 1 $ if $ d $ is odd.
Almost by construction, $ \iota_X(x)\dip{m} $ is represented by the cochain
\[
W^{\si_m}_0 \stackrel{\varepsilon}{\to} \mathbb{F}_p \stackrel{x^{\otimes m}}{\to} H^*(X; \mathbb{F}_p)^{\otimes m} \otimes \sgn^e,
\]
where $ W^{\si_m}_* \stackrel{\varepsilon}{\to} \mathbb{F}_p \to 0 $ is a free resolution of $ \mathbb{F}_p $ as a $ \mathbb{F}_p[\si_m] $-module. This is a well-defined cocycle because $ x^{\otimes m} $ is invariant under the action of $ \si_m $ in $ H^*(X;\mathbb{F}_p)^{\otimes m} \otimes \sgn^e $.
If $ q_I $ is different from $ q_0 \dots q_0 $, then $ q_I(\alpha) \in H_*(\si_{p^k}; H_*(X; \mathbb{F}_p)^{\otimes p^k} \otimes_{\si_{p^k}} \rho) $ is represented by a cycle of the form $ y \otimes_{\si_{p^k}} \alpha^{\otimes p^k} $, with $ y \in W_l^{\si_{p^k}} $ and $ l > 0 $. Such a cycle must pair trivially with the cochain representative for $ \iota_X(x)\dip{p^k} $ above.

Similarly, $ \iota_X(x')\dip{n} $ can be represented at the cochain level by 
$ W_0^{\si_n} \stackrel{\varepsilon}{\to} \mathbb{F}_p \stackrel{{x'}^{\otimes n}}{\to} H^*(X; \mathbb{F}_p)^{\otimes n} \otimes \sgn^{e(x')} $.
Since the diagonal of the resolution $ W_* $ must preserve the augmentation $ \varepsilon $, composing the tensor product of the cochains 
representing $ \iota_X(x)\dip{n} $ and $ \iota_X(x')\dip{n} $ with the diagonal we obtain
$ W_0^{\si_n} \stackrel{\varepsilon}{\to} \mathbb{F}_p \stackrel{(xx')^{\otimes n}}{\to} H^*(X; \mathbb{F}_p)^{\otimes n} \otimes \sgn^{e(x) + e(x')} $, which represents both $ \iota_X(x)\dip{n} \cdot \iota_X(x')\dip{n} $ and $ \iota_X(xx')\dip{n} $.

 We can assume  that $ y $ is tri-homogeneous and that $ k \geq 2 $. 
 If $ t(y) $ is odd, then the statement is trivial because both $ (x\dip{n} \cdot y)\dip{k} $ and 
 $ x\dip{nk} \cdot y\dip{k} $ are zero, so we assume that $ t(y) $ is even. Represent $ y $ by a 
 $ \si_{n(y)} $-equivariant cocycle $ f \colon W^{\si_{n(y)}}_{d(y)} \to \sgn^{e(y)} $. 
 Let $ \tr \colon W^{\si_{n(y)k}}_* \to W^{\si_k} \otimes (W^{\si_{n(y)}}_*)^{\otimes k} $ be a chain-level representative of the transfer map. 
 Using our  representative of $ \iota_X(x)\dip{m} $  and 
 combining with the diagonal map, we immediately have that both $ \iota_X(x)\dip{n(y)k} \cdot \pi(y)\dip{k} $ and $ (\iota_X(x)\dip{n(y)} \cdot \pi(y))\dip{k} $ are represented by the same cocycle
\begin{center}
\begin{tikzcd}
W^{\si_{nk}}_* \arrow{r}{\tr} & W^{\si_k}_* \otimes (W^{\si_n(y)}_*)^{\otimes k} \arrow{r}{\varepsilon \otimes f^{\otimes k}} & \sgn^{e(y)} \\
\arrow{r}{x^{\otimes kn(y)}} & H^*(X; \mathbb{F}_p)^{\otimes kn(y)} \otimes_{\si_{kn(y)}} \sgn^e \otimes \sgn^{e(y)} \arrow{r}{\id \otimes \mu_{kn(y)}} & H^*(X; \mathbb{F}_p)^{\otimes kn(y)} \otimes_{\si_{kn(y)}} \sgn^{e + e(y)}.
\end{tikzcd}
\end{center}

\end{proof}

We can now prove the main result of this paper.

\begin{proof}[Proof of Theorems \ref{m3} and \ref{m4}]
We only prove the theorem for $ p > 2 $, because the assertion for $ p = 2 $ is similar  and significantly simpler.

Let $ H_{alg} $ be the component super-Hopf rig with additive divided powers having the algebraic presentation given. 
Since we know from Lemma \ref{lemma-partial-compatibility} (statements 2 and 3) that there are classes in $ \hdp(X) $ satisfying the 
relations stated by the theorem, there is a morphism of component super-Hopf rigs preserving the divided powers structure $ H_{alg} \to \hdp(X_+) $. 
We show that this map is an isomorphism.

First, as above we invoke  well-known results about structure of Hopf algebras \cite{Milnor-Moore,Andre:71} to see that the graded linear dual of a
 bicommutative divided powers super-Hopf algebra $ H $ is the free graded commutative algebra primitively generated by $ P(H)^{\vee} $,
  the dual space of the subspace of primitives $ P(H) \subseteq H $.
To prove that the map $ H_{alg} \to \hdp(X_+) $ is an isomorphism, it is enough to check that the induced pairing between $ P(H_{alg}) $ and the space $ Q $ of indecomposables of $ (\hdp(X_+))^{\vee} $ is perfect.  We achieve this by performing an ad-hoc analysis, which is is the most delicate part of the proof.

We need to have explicit descriptions of the space $ Q $ of indecomposables of $ \hdp(X_+)^\vee $ and the space $ P $ of primitives of $ H_{alg} $.
The graded dual of $ \hdp(X_+) $ is  described in terms of KADL operations, extended to the sign representation case as described in Section \ref{sec:symmetric group}. Precisely, $ Q $ has a basis $ \mathcal{B}_* $ given by classes of the form
\[
\beta^{\varepsilon_1} q_{i_1} \beta^{\varepsilon_2} q_{i_2} \dots \beta^{\varepsilon_r} q_{i_r}(\alpha),
\]
where $ (\varepsilon_1,i_1,\dots,\varepsilon_r,i_r) $ is a strongly admissible sequence of KADL operations, $ i_{j+1} - \varepsilon_{j+1} $ 
have the same parity for all $ j $, and where $ \alpha $ varies on a given additive basis of $ H_*(X; \mathbb{F}_p) $.
Since $ \hdp(X_+) $ is tri-graded and its structure maps preserve  degrees, $ (\hdp(X_+))^\vee $ and $ Q $ are also tri-graded. 
The homological dimension of a basis element $ \beta^{\varepsilon_1} q_{i_1} \beta^{\varepsilon_2} q_{i_2} \dots \beta^{\varepsilon_r} q_{i_r}(\alpha) $, 
when $ \alpha $ is tri-homogeneous, is computed from $ d(\alpha) $ via the usual dimension formulas for KADL operations. 
Its component is $ p^r n(\alpha) $. Its sign degree is $ 0 $ if $ d(\alpha) $ and $ i_r $ have the same parity, $ 1 $ otherwise.

$ P = P(H_{alg}) $ can be described directly in terms of the Hopf monomial basis of $ H_{alg} $. Indeed, by the construction of Section \ref{sec:algebraic basis}, a basis $ \mathcal{B}^* $ of $ P $ is given by the set of classes of the form $ b \cdot x\dip{p^n} $, where $ b \in H^*(\si_{p^n}; \rho) $ is a primitive gathered block in the twisted or untwisted cohomology of a symmetric group $ \si_{p^n} $ and $ x $ belongs to a given additive basis of $ H^*(X; \mathbb{F}_p) $.
The cohomological degree, the component and the sign degree of $ b \cdot x\dip{p^n} $ in $ H_{alg} $ are $ d(b) + d(x)p^n $, $ p^n $, $ e(b)+d(x) $ (modulo $ 2 $) respectively.

By the description above there are isomorphism $ P_{\{*\}} \otimes H^*(X; \mathbb{F}_p) \cong P $ and $ Q_{\{*\}} \otimes H_*(X; \mathbb{F}_p) \cong Q $ given by $ b \otimes x \mapsto b \cdot x\dip{n} $ and $ q_I \otimes \alpha \mapsto q_I(\alpha) $, respectively.
We claim that, under these isomorphisms, the pairing between $ P $ and $ Q $ is equivalent to the tensor product of the Kronecker pairings $ P_{\{*\}} \otimes Q_{\{*\}} \to \mathbb{F}_p $ and $ H^*(X; \mathbb{F}_p) \otimes H_*(X; \mathbb{F}_p) \to \mathbb{F}_p $. This claim
reduces the calculation to the special case $ X = \{*\} $ already discussed in Section \ref{sec:symmetric group} and would thus complete the proof.

To prove this claim, we first use May's formulas for the coproduct $ \Delta_\cdot $ dual to cup product of KADL operations and we obtain that, for all $ b \in P_{\{*\}} $, $ x \in H^*(X; \mathbb{F}_p) $, $ q_I \in Q_{\{*\}} $, $ \alpha \in H_*(X; \mathbb{F}_p) $,
\[
\langle b \cdot x\dip{n}, q_I(\alpha) \rangle = \sum_{J+K=I} \sum_{\Delta_{\cdot}(\alpha) = \alpha_{(1)} \otimes \alpha_{(2)}} \pm \langle b \otimes x\dip{n}, q_J(\alpha_{(1)}) \otimes q_K(\alpha_{(2)}) \rangle.
\]
By Statement 1 of Lemma \ref{lemma-partial-compatibility}, $ \langle x\dip{n}, q_K(\alpha_{(2)}) \rangle $ is zero unless $ q_K = q_0\dots q_0 $ and the homological degrees of $ \alpha_{(2)} $ is the same as the cohomological degree of $ x $. Moreover, being $ b $ the pullback of a class from $ \hdp(S^0) $, $ \langle b, q_J(\alpha_{(1)}) \rangle $ is $ 0 $ unless the homological degree of $ \alpha_{(1)}$ is  $0 $. These conditions can only be satisfied for a single addend: $ \alpha = \alpha_{(2)} $, $ \alpha_{(1)} $ is a zero-dimensional class, $ q_J = q_I $ and $ q_K = q_0 \dots q_0 $, for which we obtain $ \pm \langle b, q_I \rangle \langle x, \alpha \rangle $.
\end{proof}



\section{Further cohomology ring calculations} \label{CXQX}
We use our description of $ H^*(\tilde{D}X; \mathbb{F}_p) $ to obtain a presentation of  the related algebras
$ H^*(DX; \mathbb{F}_p) $, $ H^*(CX; \mathbb{F}_p) $ and $ H^*(QX; \mathbb{F}_p) $. 
These results are all new except for $ H^*(QX; \mathbb{F}_2) $
for which an equivalent result has been obtained by Dung \cite{Dung}. 
The techniques used in that paper are different from ours, though, and we believe that our approach leads to a simpler proof.

\subsection{General extended powers}
First, we describe $ \hdp(X) $ when $ X $ is not necessarily obtained from a topological space by adjoining a disjoint basepoint.
The following result is a consequence of our main theorems.
\begin{corollary}
Let $ X $ be a pointed topological space. Let $ \mathcal{B}' $ be a graded basis for $ \tilde{H}^*(X; \mathbb{F}_p) $ and let $ \mathcal{B} = \mathcal{B}' \cup \{1_X\} $ be the basis of the unreduced cohomology of $ X $ obtained by adding the unit of $ H^*(X_{[*]}; \mathbb{F}_p) $, where $ X_{[*]} $ is the connected component of $ X $ containing the basepoint.
$ \hdp(X) $ is isomoprhic to the sub-Hopf rig with additive divided powers of $ \hdp(X_+) $ consisting of the unit and the linear span of decorated Hopf monomials (with respect to the basis $ \mathcal{B} $) with decorations different from $ 1_X $.
\end{corollary}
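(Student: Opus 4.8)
The plan is to identify $\hdp(X)$, for a general pointed space $X$, inside $\hdp(X_+)$ by a splitting-of-the-basepoint argument. The key observation is that the projection $p\co X_+ \to X$ collapsing the disjoint basepoint to the basepoint of $X$ induces a map of extended powers, hence a Hopf-rig-with-divided-powers map $p^*\co \hdp(X) \to \hdp(X_+)$, and a choice of retraction $X \to X_+$ (sending the basepoint of $X$ to the new basepoint) gives a splitting on cohomology. So $p^*$ is a split injection of component super-Hopf rigs with additive divided powers, and the task reduces to identifying its image with the claimed span of decorated Hopf monomials.

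First I would make precise the relation between the coefficient modules: $\tilde H^*(X_+;\F_p) \cong \tilde H^*(X;\F_p) \oplus \F_p\langle 1_X\rangle$, where the extra summand is the unit coming from the basepoint component $X_{[*]}$, so a graded basis $\mathcal{B}$ of $H^*(X_{[*]};\F_p) \cup \tilde H^*(X;\F_p)$ extends a basis $\mathcal{B}'$ of $\tilde H^*(X;\F_p)$ by the single class $1_X$. Then I would invoke Proposition~\ref{prop:Hopf monomial basis}: decorated Hopf monomials on $\mathcal{B}$ form an additive basis of $\hdp(X_+)$, while decorated Hopf monomials on $\mathcal{B}'$ — equivalently, those on $\mathcal{B}$ whose decorations all lie in $\mathcal{B}'$, i.e. are never $1_X$ — form an additive basis of $\hdp(X)$ by the same proposition applied to $X$ itself (recalling from Definition~\ref{def:basic objects} that $\hdp$ is built from reduced cohomology). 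The content is then that $p^*$ sends the $\mathcal{B}'$-decorated Hopf monomial basis of $\hdp(X)$ bijectively onto the set of $\mathcal{B}$-decorated Hopf monomials of $\hdp(X_+)$ with no decoration equal to $1_X$.

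To see this I would trace through the construction in Section~\ref{explicit} and Section~\ref{sec:algebraic basis}: the map $\iota$ identifying cohomology of $X$ with the component-one algebra is natural in $X$, so $p^*$ carries $\iota_X(x)$ to $\iota_{X_+}(x)$ for $x \in \tilde H^*(X)$, and carries $\pi$ (pulled back from $\hdp(S^0)$ along the terminal map) to $\pi$ for $X_+$; since decorated gathered blocks are the elements $\iota(x)\dip{n}\cdot\pi(b)$ and decorated Hopf monomials are $\odot$-products of these, $p^*$ sends a $\mathcal{B}'$-decorated Hopf monomial to the corresponding $\mathcal{B}$-decorated Hopf monomial verbatim, and the decoration $1_X$ never appears in the image precisely because $1_X \notin \mathcal{B}'$. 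Linear independence of the images is inherited from linear independence of the full basis of $\hdp(X_+)$, and the span being exactly the $1_X$-free decorated Hopf monomials is then immediate. Finally, since $p^*$ is a morphism of component super-Hopf rigs with additive divided powers and its image is this explicitly described subspace, the image is automatically a sub-Hopf-rig-with-divided-powers, and $\hdp(X) \to \mathrm{im}(p^*)$ is the claimed isomorphism.

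The main obstacle is bookkeeping rather than conceptual: one must be careful that a decorated Hopf monomial on $\mathcal{B}$ in which \emph{some but not all} decorations equal $1_X$ does \emph{not} lie in the image — it is genuinely a new basis element of $\hdp(X_+)$ not coming from $\hdp(X)$ — so the statement really is about monomials with \emph{no} $1_X$-decoration, and I would want to double-check that the Hopf-rig structure maps (transfer product juxtaposing columns, cup product stacking columns, divided powers scaling widths, coproduct cutting columns) genuinely preserve this subspace, which is clear since none of these operations can manufacture a $1_X$-decoration from $1_X$-free inputs: transfer product and divided powers only copy existing decorations, cup product multiplies decorations within a fixed component (and a product of classes in $\mathcal B'$ decomposes into $\mathcal B'$-terms since $\tilde H^*(X)$ is an ideal), and the coproduct likewise only redistributes existing columns. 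This last point — that $\tilde H^*(X) \subseteq H^*(X_+)$ is closed under cup product and thus decorations stay $1_X$-free under $\cdot$ — is the one place a short argument is needed, but it is routine.
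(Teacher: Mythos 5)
Your overall strategy---pull back along $X_+ \to X$, show the induced map $p^*\colon \hdp(X)\to\hdp(X_+)$ is injective, and identify its image with the span of the $1_X$-free decorated Hopf monomials---is the same as the paper's, and your closing observation that this span is closed under $\odot$, $\cdot$, $\Delta$ and divided powers (using that $\tilde H^*(X)$ is an ideal) is correct and worth having. But there are two genuine gaps. First, the ``retraction $X\to X_+$ sending the basepoint of $X$ to the new basepoint'' does not exist as a continuous pointed map unless the basepoint of $X$ is isolated: any section of $p$ must be the identity away from $x_0$, so a pointed section would have to send $x_0$ alone to the isolated point $*$, and the preimage of the open set $\{*\}$ would be $\{x_0\}$. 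So your splitting argument for injectivity fails. Injectivity is still true and cheap---either as in the paper, by observing that $p_*$ is surjective on homology (via the Cohen--Lada--May description of $H_*(C(-))$), or directly because $\tilde H^*(X)^{\otimes n}\otimes\rho$ is a $\si_n$-equivariant direct summand of $H^*(X)^{\otimes n}\otimes\rho$, so the map on group cohomology splits---but the argument you gave is not the one.

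Second, and more seriously, you identify the image by asserting that decorated Hopf monomials on $\mathcal{B}'$ form an additive basis of $\hdp(X)$ ``by the same proposition applied to $X$ itself.'' Proposition~\ref{prop:Hopf monomial basis} rests on Theorem~\ref{m4}, which is proved only for spaces of the form $X_+$ (equivalently, via the universal object $H_{alg}(A,V)$ of Definition~\ref{def: H-alg}, which requires $V$ to be a \emph{unital} connected graded algebra; $\tilde H^*(X)$ is not one, and $\pi$ is defined there using $1_V$). A Hopf-monomial basis for $\hdp(X)$ with general basepoint is precisely what this corollary establishes, so invoking it is circular. The paper closes this gap differently: it uses the perfect-pairing analysis from the proof of Theorem~\ref{m4} to show that the Kronecker pairing restricts to a perfect pairing between the gathered blocks not decorated by $1_X$ and the indecomposables of $H_*(D(X_+);\rho)$ not lying in $\ker(p_*)$ (whose generators as an ideal are known), and deduces that $\operatorname{im}(p^*)$, which is the annihilator of $\ker(p_*)$, is exactly the claimed span. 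To repair your version you would need either that duality argument or a direct cochain-level verification that the $1_X$-free monomials land in, and span, the summand $H^*(\si_n;\tilde H^*(X)^{\otimes n}\otimes\rho)$, e.g.\ by a dimension count against the $\si_n$-module decomposition of $H^*(X)^{\otimes n}$.
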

\begin{proof}
The map $ \pi \colon X_+ \to X $ that sends $ * $ to the basepoint of $ X $ induces a map $ \pi^* \colon \hdp(X) \to \hdp(X_+) $ that is both a 
morphism of Hopf rings and of divided powers structures. The dual map $ \pi_*$ in homology is an epimorphism of algebras whose kernel is the ideal
 generated by $ Q_I(*) $, with $ I $ admissible and non-empty. Consequently, $ \pi^* $ must be injective. From the proof of Theorem \ref{m4} 
 given in the previous section, we see that the Kronecker pairing between $ \hdp(X_+) $ and the homology of $ D(X_+) $ with coefficient 
 in $ \rho $ restricts to a perfect pairing between the primitive gathered blocks not decorated with $ 1_X $ and indecomposables of the homology 
 of $ D(X_+) $ not in $ \ker(\pi_*) $.  We deduce that the image of $ \pi^* $ must be the sub-Hopf ring identified in the statement of the corollary. 
\end{proof}

\subsection{Infinite extended powers}

Given a pointed topological space $ X $ with basepoint $ * $, we take into consideration the space $ D_\infty X $ defined as
\[
\tilde{D}_\infty X = \left\{ p \times_{\mathcal{S}_\infty} (x_1,\dots,x_n,\dots): | \{n: x_n \not= * \}| < \infty \right\} \subseteq E(\mathcal{S}_{\infty}) \times_{S_{\infty}} X^{\mathbb{N}},
\]
where $ \mathcal{S}_\infty = \varinjlim_n \mathcal{S}_n $ is the infinite symmetric group. 
(In Dung's paper, this space is denoted by $ \mathcal{S}_\infty \wr X $.) 
The calculation of the cohomology of $ \tilde{D}_\infty X $ is a straightforward consequence of our main theorems.

\begin{corollary}
Let $ X $ be a topological space and let $ \mathcal{B} $ be a basis of $ H^*(X; \mathbb{F}_p) $ as an $ \mathbb{F}_p $-algebra that contains 
$ 1_{H^*(X)} $ and extends an additive basis of the reduced cohomology of $ X $.
For all $ n \leq m$, let $ \rho_{n,m} \colon H^*(\tilde{D}_mX; \mathbb{F}_p) \to H^*(\tilde{D}_nX; \mathbb{F}_p) $ be the vector space homomorphism 
that maps a Hopf monomial $ x $ into $ 0 $ if $ x $ does not have a constituent block of the form $ 1_{H^*(X)}^{[r]} $ with $ r \geq m-n $, or into 
$ y \odot 1_{H^*X}^{[r-m+n]} $ if $ x = y \odot 1_{H^*X}^{[r]} $, where $ y $ is another Hopf monomial not containing any divided power of $ 1_{H^*X} $ 
as an $ \odot $-factor.

Then, all the maps $ \rho_{n,m} $ are ring homomorphisms and form an inverse system of $ \mathbb{F}_p $-algebras. Moreover, the mod $ p $ cohomology ring of $ \tilde{D}_\infty X $ is naturally isomorphic to its inverse limit
\[
\varprojlim_{n} H^*(\tilde{D}_nX; \mathbb{F}_p).
\]
\end{corollary}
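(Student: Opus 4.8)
The plan is to realize $\tilde{D}_\infty X$ as the sequential colimit of the finite extended powers and then run the Milnor $\varprojlim^1$ sequence. Concretely, let $s_n\colon \tilde{D}_n X \to \tilde{D}_{n+1} X$ be the stabilization map that appends a coordinate equal to the basepoint (and uses $E\si_n \into E\si_{n+1}$ for the Borel construction); unwinding the definition of $\tilde{D}_\infty X$ identifies it with $\varinjlim_n \tilde{D}_n X$. Because the basepoint of $X$ is nondegenerate, each $s_n$ may be taken to be a closed cofibration, so this is a mapping telescope and Milnor's exact sequence reads
\[
0 \to {\varprojlim_n}^{1} H^{d-1}(\tilde{D}_n X;\mathbb{F}_p) \to H^d(\tilde{D}_\infty X;\mathbb{F}_p) \to \varprojlim_n H^d(\tilde{D}_n X;\mathbb{F}_p) \to 0,
\]
with bonding maps $\rho_{n,m}=\iota_{n,m}^{*}$ the restrictions along the inclusions $\iota_{n,m}\colon \tilde{D}_n X \to \tilde{D}_m X$. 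These are ring maps and compose correctly because the $\iota_{n,m}$ do, so it remains to (i) match $\iota_{n,m}^{*}$ with the combinatorial map $\rho_{n,m}$ of the statement and (ii) show the $\varprojlim^1$ term vanishes.

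For (i), dualize. Up to homotopy $\iota_{n,m}$ is the ``union with $m-n$ basepoint-labeled points'' map, so on homology $\iota_{n,m*}(z) = z * [*]^{*(m-n)}$, where $[*]\in H_0(X)=H_0(\tilde{D}_1 X)$ is the class of the basepoint of the basepoint component. Since the Pontryagin product $*$ is dual to the coproduct $\Delta$ of our Hopf ring, this gives $\iota_{n,m}^{*} = (\mathrm{id}\otimes \varepsilon_{m-n})\circ\Delta$, where $\varepsilon_{m-n}$ is evaluation against $[*]^{*(m-n)}$, i.e.\ extraction of the coefficient of $1_{H^*X}\dip{m-n}$, the class in cohomological degree $0$ and component $m-n$ dual to $[*]^{*(m-n)}$ (this uses the ``divided powers fill in'' pairing $\langle 1_{H^*X}\dip{k},[*]^{*k}\rangle = 1$). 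Apply this to a Hopf monomial $w = y\odot 1_{H^*X}\dip{r}$ in which $y$ has no $1_{H^*X}$-divided-power $\odot$-factor. Since $\Delta$ is a $\odot$-algebra map and $1_{H^*X}$ is standard non-nilpotent, $\Delta(w) = \Delta(y)\odot\sum_{i+j=r}1_{H^*X}\dip{i}\otimes 1_{H^*X}\dip{j}$, and by the coproduct formulas of Theorems \ref{m3} and \ref{m4} every second tensor factor of $\Delta(y)$ is a Hopf monomial in the same generators as $y$, hence again with no $1_{H^*X}$-divided-power factor. Such a factor becomes a nonzero multiple of $1_{H^*X}\dip{m-n}$ after $\odot\,1_{H^*X}\dip{j}$ only if it is the $\odot$-unit and $j=m-n$, and by counitality of $\Delta$ the matching first factor is then $y$ itself. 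Hence $\iota_{n,m}^{*}(y\odot 1_{H^*X}\dip{r}) = y\odot 1_{H^*X}\dip{r-m+n}$ for $r\ge m-n$ and $0$ otherwise --- exactly $\rho_{n,m}$.

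For (ii), read off from this formula that each $\rho_{n,m}$ is surjective: a basis element $y\odot 1_{H^*X}\dip{s}$ of $H^*(\tilde{D}_n X)$ is the image of $y\odot 1_{H^*X}\dip{s+m-n}$. Thus $\{H^d(\tilde{D}_n X)\}_n$ is a Mittag--Leffler system for every $d$, so ${\varprojlim_n}^{1} = 0$, and the Milnor sequence yields the asserted isomorphism, which is induced by the canonical maps $\tilde{D}_n X \to \tilde{D}_\infty X$ and is therefore natural in $X$. The one step that genuinely uses the preceding machinery --- and where I expect the bookkeeping to be most delicate --- is the identification in (i): checking that $\iota_{n,m}$ is on the nose union-with-basepoint-points, and carrying the coproduct computation through on the Hopf monomial basis; the colimit description of $\tilde{D}_\infty X$ and the vanishing of $\varprojlim^1$ are then routine.
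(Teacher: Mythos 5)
Your proposal is correct and follows essentially the same route as the paper: realize $\tilde{D}_\infty X$ as the colimit of the $\tilde{D}_n X$ along the stabilization maps, identify the induced map on cohomology with the (iterated) coproduct followed by the augmentation dual to the basepoint class, evaluate on the Hopf monomial basis to see it equals $\rho_{n,m}$, and use surjectivity of the $\rho_{n,m}$ to kill the $\varprojlim^1$ term. The only cosmetic difference is that you phrase the identification of the bonding maps dually, via Pontryagin multiplication by $[*]^{*(m-n)}$ in homology, whereas the paper states it directly as $(\id\otimes\varepsilon^{\otimes(m-n)})\circ\Delta$; these are the same computation.
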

\begin{proof}
By construction $ \tilde{D}_\infty X $ is the direct limit of the topological spaces $ \tilde{D}_n X $. 
The embedding of $ \tilde{D}_n X $ into $ \tilde{D}_m X $ for $ n \leq m $ is given by the functions
\[
f_{n,m} \colon \tilde{D}_n X = E(\mathcal{S}_n) \times_{\mathcal{S}_n} X^n \to E(\mathcal{S}_m) \times_{\mathcal{S}_m} X^m = \tilde{D}_m X, \quad (n \leq m)
\]
corresponding to the standard monomorphism $ \mathcal{S}_n \hookrightarrow \mathcal{S}_m $, and the inclusion $ X^n \hookrightarrow X^m $ that maps $ X^n $ identically onto the first $ n $ coordinates of $ X^m $ and makes the last $ m-n $ coordinates equal to the basepoint.

For all $ n \leq m $, the induced morphism $ f_{n,m}^* \colon H^*( \tilde{D}_mX; \mathbb{F}_p) \to H^*( \tilde{D}_nX; \mathbb{F}_p) $ is equal to 
$ \rho_{n,m} $. This fact follows directly from Theorem \ref{m3} and Theorem \ref{m4} because we can identify $ f_{n,m}^* $ with the composition 
of the iterated coproduct
\[
H^*(\tilde{D}_m X; \mathbb{F}_p) \to H^*( \tilde{D}_nX; \mathbb{F}_p) \otimes H^*( \tilde{D}_1X; \mathbb{F}_p)^{\otimes^{m-n}} \cong H^*( \tilde{D}_n X; \mathbb{F}_p) \otimes H^*(X; \mathbb{F}_p)^{\otimes^{m-n}}
\]
with $ \id_{H^*( \tilde{D}_nX; \mathbb{F}_p)} \otimes \varepsilon^{\otimes^{m-n}} $, where $ \varepsilon \colon H^*( \tilde{D}_1X; \mathbb{F}_p) \cong H^*(X; \mathbb{F}_p) \to \mathbb{F}_p $ is the augmentation given by evaluation against the class of the basepoint of $ X $.

Since the maps $ \rho_{n,m} $ are epimorphisms, taking the cohomology of the given limit of spaces yields the desired isomorphism between $ H^*( \tilde{D}_\infty X; \mathbb{F}_p ) $ and $ \varprojlim_{n} H^*(\tilde{D}_nX; \mathbb{F}_p) $.
\end{proof}

The previous corollary provides a practical way to calculate cup products in the cohomology of $ \tilde{D}_\infty X $ by restriction
 to $ \tilde{D}_n X $ with $ n $ finite. Suppose $ X $ is connected, and $ \alpha_1, \alpha_2 \in H^*(\tilde{D}_\infty X; \mathbb{F}_p) $, 
 are cohomology classes of dimension $ d_1 $ and $ d_2 $ respectively. 
 The map $ \rho_{n,m} $ is an isomorphism in dimension less than or equal to $ d_1 + d_2 $, provided that $ n,m \geq d_1+d_2 $. 
 This fact is easily seen directly, but we can also view it as a consequence of homological stability. 
 Therefore the natural restriction morphism $ \rho_n \colon H^{d_1+d_2}( \tilde{D}_\infty X; \mathbb{F}_p) \to H^{d_1+d_2}( \tilde{D}_n X; \mathbb{F}_p) $ is an isomorphism for all $ n \geq d_1 + d_2 $.
  Hence, to calculate $ \alpha_1 \cdot \alpha_2 $ is sufficient to apply $ \rho_{d_1+d_2} $ and 
  perform the computation in $ H^*( \tilde{D}_{d_1+d_2}; \mathbb{F}_p) $.

 We use this method to extract a complete set of generators and relations for $ H^*(\tilde{D}_\infty X; \mathbb{F}_p) $.
In what follows, until the end of this section, we will always assume that $ \mathcal{B}' $ is an additive basis of $ \tilde{H}^*(X; \mathbb{F}_p) $ as an $ \mathbb{F}_p $-vector space, and that $ \mathcal{B} = \mathcal{B}' \cup \{ 1 \} $ is the basis of the unreduced cohomology of $ X $ obtained by adding the unit $ 1 \in H^*(X_{[*]}; \mathbb{F}_p) $, where $ X_{[*]} $ is the connected component of $ X $ containing the basepoint. In this section, we will often consider only connected spaces $ X $. In those cases, $ 1 $ will be the unit of the cohomology ring of $ X $.

\begin{definition}
Let $ \mathcal{M} $ be the skyline diagrams basis of $ H^*(\tilde{D}X; \mathbb{F}_p) $ constructed from $ \mathcal{B} $. The
 {\bf effective width} of an element $ x \in \mathcal{M} $ is the width of the skyline diagram obtained from $ x $ by removing any column with decoration 
 $ 1 $ and height $ 0 $. In terms of gathered monomials, the effective width of $ x $ is the component of the unique gathered monomial 
 $ y $ such that $ x = y \odot 1^{[r]} $ for some $ r \geq 0 $.
We say that $ x \in \mathcal{M} $ is {\bf pure} is its width is equal to its effective width or, equivalently, if 
$ x $ does not have any constituent block of the form $ 1^{[r]} $.
\end{definition}

\begin{lemma}
Let $ x \in H^*(\tilde{D}_nX; \mathbb{F}_p) $ be a pure Hopf monomial. There is a class $ x \odot 1^{[*]}$  in 
$ \varprojlim_{n} H^*(\tilde{D}_nX; \mathbb{F}_p) \cong H^*( \tilde{D}_\infty X; \mathbb{F}_p ) $ which maps to $x \odot 1^{[r]}$ in 
$H^*(D_m X)$ when $m = n+r$.
\end{lemma}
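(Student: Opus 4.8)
The statement asserts that a pure Hopf monomial $x \in H^*(\tilde{D}_n X; \mathbb{F}_p)$ determines a well-defined element $x \odot 1^{[*]}$ of the inverse limit $\varprojlim_m H^*(\tilde{D}_m X; \mathbb{F}_p)$, whose projection to $H^*(\tilde{D}_m X; \mathbb{F}_p)$ for $m = n+r$ is the Hopf monomial $x \odot 1^{[r]}$. The plan is to verify directly that the sequence $(x \odot 1^{[r]})_{r \geq 0}$, suitably indexed, is compatible under the restriction maps $\rho_{m',m}$ of the previous corollary, and hence defines an element of the inverse limit.

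\textbf{Setup of the candidate element.} First I would note that for $m = n + r$ with $r \geq 0$, the class $x \odot 1^{[r]} \in H^*(\tilde{D}_m X; \mathbb{F}_p)$ is a genuine Hopf monomial on the basis $\mathcal{B}$: since $x$ is pure, it contains no constituent block of the form $1^{[s]}$, so adjoining a single block $1^{[r]}$ (of height $0$ and width $r$, decorated by $1$) produces a decorated Hopf monomial with all $\odot$-factors of distinct profile-decoration pairs, as required by Definition~\ref{decorated-hopf-monomials}. For $m < n$ we simply declare the $m$-th component to be $0$; the interesting compatibility is for $m \geq n$. Define, for each $m \geq n$, the element $\xi_m := x \odot 1^{[m-n]} \in H^*(\tilde{D}_m X; \mathbb{F}_p)$, and $\xi_m = 0$ for $m < n$.

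\textbf{Compatibility under $\rho_{m',m}$.} The core of the argument is to check that $\rho_{m',m}(\xi_m) = \xi_{m'}$ whenever $m' \leq m$. Recall from the previous corollary that $\rho_{m',m}$ sends a Hopf monomial $z$ to $0$ unless $z$ has a constituent block $1^{[s]}$ with $s \geq m - m'$, in which case $z = y \odot 1^{[s]}$ (with $y$ containing no divided power of $1$ as a factor) is sent to $y \odot 1^{[s - (m - m')]}$. Apply this with $z = \xi_m = x \odot 1^{[m-n]}$: the decomposition of $\xi_m$ into a part with no $1$-divided-power factor and a single $1^{[s]}$ factor is exactly $y = x$, $s = m - n$ (using purity of $x$ crucially here — this is the only way $\xi_m$ splits in the form required by the definition of $\rho_{m',m}$). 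If $m \geq n$ and $m' \geq n$, then $s = m - n \geq m - m'$, so $\rho_{m',m}(\xi_m) = x \odot 1^{[(m-n) - (m - m')]} = x \odot 1^{[m' - n]} = \xi_{m'}$, as desired. If instead $m' < n$, then $s = m - n < m - m'$, so $\rho_{m',m}(\xi_m) = 0 = \xi_{m'}$, again matching. One should also note transitivity $\rho_{m'',m'} \circ \rho_{m',m} = \rho_{m'',m}$, which holds because this is an inverse system, so there is no further coherence to check.

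\textbf{Conclusion.} Since $(\xi_m)_{m}$ is a compatible family, it defines an element of $\varprojlim_m H^*(\tilde{D}_m X; \mathbb{F}_p)$, which the previous corollary identifies with $H^*(\tilde{D}_\infty X; \mathbb{F}_p)$; we name this element $x \odot 1^{[*]}$. Its image under the projection to $H^*(\tilde{D}_m X; \mathbb{F}_p)$ is by construction $\xi_m = x \odot 1^{[m-n]} = x \odot 1^{[r]}$ when $m = n + r$, which is precisely the asserted property. The only subtle point — and thus the "main obstacle," though it is really just a matter of being careful — is making sure that the decomposition of $x \odot 1^{[r]}$ used to evaluate $\rho_{m',m}$ is the one the definition of $\rho_{m',m}$ refers to, i.e.\ that $x$ itself contributes no $1^{[\bullet]}$ block; this is exactly the hypothesis that $x$ is pure, so the argument goes through cleanly.
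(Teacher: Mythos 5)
Your proof is correct, and it reaches the same goal — exhibiting the compatible family $(x\odot 1^{[r]})_r$ in the inverse system — but by a genuinely shorter route than the paper. You compute $\rho_{m',m}(x\odot 1^{[m-n]})$ by applying verbatim the combinatorial ``peel off a $1^{[s]}$-factor'' formula by which $\rho_{m',m}$ is defined in the preceding corollary, so the whole argument reduces to the arithmetic $m-n\geq m-m' \iff m'\geq n$. The paper instead goes back to the description of $\rho_{m',m}$ as the iterated coproduct followed by the augmentation $\varepsilon$ on the split-off tensor factors: it shows $\rho_{n-1,n}(x)=0$ because $\Delta$ preserves the span of pure Hopf monomials and pure monomials in component $1$ lie in $\ker\varepsilon$, and it establishes $\rho_{n+m-1,n+m}(x\odot 1^{[m]})=x\odot 1^{[m-1]}$ by using the $(\Delta,\odot)$-bialgebra axiom to split the coproduct of $x\odot 1^{[m]}$ into the term where the width-one piece is peeled from $1^{[m]}$ (which survives, since $\varepsilon(1)=1$) and the terms where it is peeled from $x$ (which die, again by purity). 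What your version buys is brevity; what it costs is that all the weight now rests on the corollary's one-line assertion that $f_{n,m}^*$ agrees with the combinatorial $\rho_{n,m}$ on every Hopf monomial, whereas the paper's computation re-derives exactly the instances it needs from the coproduct description and is therefore self-contained (and, in effect, supplies the justification that the corollary only sketched). If you keep your version, you should at least note explicitly that the identification of the topological restriction map with the combinatorial formula is being used as a black box.
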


\begin{proof}
It is sufficient to check that the following equalities hold to see there is such a class in the inverse limit
\begin{itemize}
\item $ \rho_{n-1,n}(x) = 0 $
\item $ \forall m > 0: \rho_{n+m-1,n+m}(x \odot 1^{[m]}) = x \odot 1^{[m-1]} $.
\end{itemize}

To prove the first identity, remember that $ \rho_{n-1,n} $ is equal to the iterated coproduct composed with augmentation $ \varepsilon$ on one tensor factor.
Let $ V $ be the $ \mathbb{F}_p $-subspace of $ H^*(\tilde{D}X; \mathbb{F}_p) $ generated by pure Hopf monomials.
Since the coproduct preserves the decorations of columns, $ \Delta(V) \subseteq V \otimes V $. Thus $ \rho_{n-1,n} $ is equal to a sum of terms of the form $ \varepsilon (y) z $, where $ y \in H^*(X; \mathbb{F}_p) $ and $ z \in H^*(\tilde{D}_{n-1}X; \mathbb{F}_p) $ are pure Hopf monomials. By definition, pure Hopf monomials in the cohomology of $ X \simeq \tilde{D}_1X $ are the elements of $ \mathcal{B}' $, and, as a consequence, they belong to the kernel of $ \varepsilon $. Hence $ \rho_{n-1,n} (x) = 0 $.

In order to prove the latter identity, we observe that $ \rho_{n+m-1,n+m}( x \odot 1^{[m]} ) $ is the image of $ x $ under the morphism
\begin{center}
\begin{tikzpicture}[baseline= (a).base]
\node[scale=0.94] (a) at (0,0){
\begin{tikzcd}
H^*(\tilde{D}_nX; \mathbb{F}_p) \arrow{r}{\_ \otimes 1^{[m]}} & H^*(\tilde{D}_nX; \mathbb{F}_p) \otimes H^*(\tilde{D}_mX; \mathbb{F}_p) \arrow{r}{\odot} & H^*(\tilde{D}_{n+m}X; \mathbb{F}_p) \\ \arrow{r}{\Delta_{n+m-1,1}} & H^*(\tilde{D}_{n+m-1}X; \mathbb{F}_p) \otimes H^*(X; \mathbb{F}_p) \arrow{r}{\id \otimes \varepsilon} & H^*(\tilde{D}_{n+m-1}X; \mathbb{F}_p).
\end{tikzcd}
};
\end{tikzpicture}
\end{center}
Since $ \Delta $ and $ \odot $ form a bialgebra, this is equal to the sum of the following two composition of maps, where $ \tau $ exchanges two factors:
\begin{center}
\begin{tikzpicture}[baseline= (a).base]
\node[scale=0.71] (a) at (0,0){
\begin{tikzcd}
H^*(\tilde{D}_nX; \mathbb{F}_p) \arrow{r}{\_ \otimes 1^{[m]}} & H^*(\tilde{D}_nX; \mathbb{F}_p) \otimes H^*(\tilde{D}_mX; \mathbb{F}_p) \arrow{r}{\id \otimes \Delta_{m-1,1}} & H^*(\tilde{D}_nX; \mathbb{F}_p) \otimes H^*(\tilde{D}_{m-1}X; \mathbb{F}_p) \otimes H^*(X; \mathbb{F}_p) \\
\arrow{r}{\odot \otimes \id} & H^*(\tilde{D}_{n+m-1}X; \mathbb{F}_p) \otimes H^*(X; \mathbb{F}_p) \arrow{r}{\id \otimes \varepsilon} & H^*(\tilde{D}_{n+m}X; \mathbb{F}_p)\\
H^*(\tilde{D}_nX; \mathbb{F}_p) \arrow{r}{\_ \otimes 1^{[m]}} & H^*(\tilde{D}_nX; \mathbb{F}_p) \otimes H^*(\tilde{D}_mX; \mathbb{F}_p) \arrow{r}{\Delta_{n-1,1} \otimes \id} & H^*(\tilde{D}_{n-1}X; \mathbb{F}_p) \otimes H^*(X; \mathbb{F}_p) \otimes H^*(\tilde{D}_mX; \mathbb{F}_p) \\
\arrow{r}{(\odot \otimes \id) \circ \tau} & H^*(\tilde{D}_{n+m-1}X; \mathbb{F}_p) \otimes H^*(X; \mathbb{F}_p) \arrow{r}{\id \otimes \varepsilon} & H^*(\tilde{D}_{n+m}X; \mathbb{F}_p)
\end{tikzcd}
};
\end{tikzpicture}
\end{center}
The first composition is equal to $ \_ \otimes 1^{[m-1]} $, because $ \Delta_{m-1,1}(1^{[m]}) = 1^{[m-1]} \otimes 1 $ and $ \varepsilon(1) = 1 $. Pure Hopf monomials belong to the kernel of the second one, because, as we have already seen, the coproduct preserves the subspace generated by them and every pure monomial in $ H^*(X; \mathbb{F}_p) $ lies in the kernel of $ \varepsilon $.
Thus, $ \rho_{n+m-1,n+m} (x \odot 1^{[m]}) = x \odot 1^{[m-1]} $.
\end{proof}

\begin{lemma} \label{lem:basis infinite wreath}
The set $ \{ x \odot 1^{[*]}: x \in \mathcal{M} \mbox{ pure} \} $ is linearly independent in $ H^*(\tilde{D}_\infty X; \mathbb{F}_p) $. Moreover, if $ X $ is connected, it is a basis for that vector space.
\end{lemma}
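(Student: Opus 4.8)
\textbf{Proof plan for Lemma~\ref{lem:basis infinite wreath}.}
The plan is to exploit the inverse-limit description $H^*(\tilde{D}_\infty X;\mathbb{F}_p)\cong \varprojlim_n H^*(\tilde{D}_nX;\mathbb{F}_p)$ together with the compatibility identities for the maps $\rho_{n,m}$ established in the two preceding lemmas. First I would recall that, by the lemma just proved, each pure Hopf monomial $x\in H^*(\tilde{D}_nX;\mathbb{F}_p)$ gives rise to a well-defined element $x\odot 1^{[*]}$ of the inverse limit, whose image under the projection $\rho_m\colon H^*(\tilde{D}_\infty X;\mathbb{F}_p)\to H^*(\tilde{D}_mX;\mathbb{F}_p)$ is $x\odot 1^{[m-n]}$ for $m\geq n$. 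So the elements $\{x\odot 1^{[*]}\}$ are indexed by the pure monomials, which sit inside a fixed finite-component cohomology once we fix the component $n$ of $x$.

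For linear independence: suppose a finite linear combination $\sum_i c_i\,(x_i\odot 1^{[*]})$ vanishes in $H^*(\tilde{D}_\infty X;\mathbb{F}_p)$. Group the $x_i$ by their effective width $n_i$, let $N=\max_i n_i$, and apply the projection $\rho_N$. Using $\rho_N(x_i\odot 1^{[*]})=x_i\odot 1^{[N-n_i]}$, and the fact that distinct pure Hopf monomials decorated with possibly a single $1^{[r]}$-column remain distinct Hopf monomials in $\mathcal{M}$ for $H^*(\tilde{D}_NX;\mathbb{F}_p)$ (two pure monomials with the same profile-and-decoration data are equal, and appending $1^{[r]}$ preserves the injectivity of this labeling since the $1$-decorated column of height $0$ is recorded separately), we conclude that the images $x_i\odot 1^{[N-n_i]}$ are distinct basis elements of $H^*(\tilde{D}_NX;\mathbb{F}_p)$. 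Hence all $c_i=0$. The only subtlety here is checking that $x_i\odot 1^{[N-n_i]}$ is genuinely a basis element of $\mathcal{M}$ and that the assignment $x_i\mapsto x_i\odot 1^{[N-n_i]}$ is injective on the index set; this follows because the underlying pure part of $x_i\odot 1^{[N-n_i]}$ is exactly $x_i$ and the $1^{[\,\cdot\,]}$-column is uniquely determined by the total component $N$.

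For spanning, assuming $X$ connected: fix a cohomology class $\alpha\in H^d(\tilde{D}_\infty X;\mathbb{F}_p)$ of degree $d$. By homological stability (equivalently, the observation already made that $\rho_{n,m}$ is an isomorphism in degrees $\leq d$ once $n,m\geq d$ — in fact once $n,m$ are large relative to $d$), the projection $\rho_n\colon H^d(\tilde{D}_\infty X;\mathbb{F}_p)\to H^d(\tilde{D}_nX;\mathbb{F}_p)$ is an isomorphism for all sufficiently large $n$. Pick such an $n\geq d$, and write $\rho_n(\alpha)$ in the skyline-diagram basis $\mathcal{M}$ of $H^*(\tilde{D}_nX;\mathbb{F}_p)$. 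Each basis element appearing is of the form $x\odot 1^{[r]}$ for a unique pure Hopf monomial $x$ of effective width $n-r$ and a unique $r\geq 0$, and hence equals $\rho_n(x\odot 1^{[*]})$. Therefore $\rho_n(\alpha)=\rho_n\big(\sum c_x\,(x\odot 1^{[*]})\big)$ for the corresponding coefficients, and since $\rho_n$ is injective in degree $d$ we get $\alpha=\sum c_x\,(x\odot 1^{[*]})$, a finite combination of our proposed basis elements. This proves spanning.

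The main obstacle I anticipate is not in the algebra of the inverse limit but in pinning down precisely the stable range: one needs that for a class of degree $d$ the restriction maps $\rho_{n,m}$ are isomorphisms in degree $d$ once $n,m$ are large enough, and that every degree-$d$ class actually lies in such a finite stage. This is handled by homological stability for symmetric groups (Nakaoka), or directly from the skyline-diagram basis by noting that a Hopf monomial of bounded cohomological degree has boundedly many non-trivial columns and bounded effective width; its image under $\rho_{n,m}$ for $n,m$ above that bound simply adjusts the height-zero $1$-column, which is a bijection on the relevant graded pieces. I would state this stable-range fact explicitly as the one nontrivial input and then the rest of the argument is the bookkeeping above.
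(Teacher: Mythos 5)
Your proposal is correct and follows essentially the same route as the paper's (very terse) proof: restrict a finite linear combination to $H^*(\tilde{D}_NX;\mathbb{F}_p)$ for $N$ large to get independence from the Hopf monomial basis there, and use the stable-range isomorphism $\rho_N\colon H^k(\tilde{D}_\infty X;\mathbb{F}_p)\to H^k(\tilde{D}_NX;\mathbb{F}_p)$ for spanning when $X$ is connected. The stability input you flag is exactly the one the paper invokes (it is recorded in the discussion immediately preceding the lemma), so no new ingredient is needed.
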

\begin{proof}
Given a finite set of elements of the form $ x \odot 1^{[*]} $, for $ N \in \mathbb{N} $ large enough, their restriction to 
$ H^*(\tilde{D}_NX; \mathbb{F}_p) $ are linearly independent, which implies the linear independence of the original elements.
If $ X $ is connected, given $ k \in \mathbb{N} $, for $ N $ large enough dependent on $ k $ the map $ \rho_N \colon H^k(\tilde{D}_\infty X; \mathbb{F}_p) \to H^k(\tilde{D}_N X; \mathbb{F}_p) $ is an isomorphism, thus the given set is a basis for $ H^k( \tilde{D}_\infty X; \mathbb{F}_p) $, 
because it restricts to a basis for $ H^k( \tilde{D}_N X; \mathbb{F}_p) $.
\end{proof}

If $ X $ is not connected, we can reduce the computation of $ H^*(\tilde{D}_\infty X; \mathbb{F}_p) $ to the connected case as follows. 
Let $ \{ X_\alpha \}_{\alpha \in \pi_0(X)} $ be the set of the connected components of $ X $. Let $ [*] $ denote the element of $ \pi_0(X) $ corresponding to the component containing the basepoint. Then there is a natural homotopy equivalence
\[
\tilde{D}_\infty X \simeq \prod_{\alpha \in \pi_0(X) \setminus \{ [*] \}} \tilde{D} X_{\alpha} \times \tilde{D}_\infty X_{[*]},
\]
which induces an isomorphism of the cohomology algebras
\[
H^*(\tilde{D}_\infty X; \mathbb{F}_p) \cong \prod_{\{n_\alpha\}_{\alpha} \in \mathbb{N}^{\pi_0(X) \setminus \{[*]\}}} \bigotimes_{\alpha} H^*(\tilde{D}_{n_\alpha} X_\alpha; \mathbb{F}_p) \otimes H^*(\tilde{D}_\infty X_{[*]}; \mathbb{F}_p).
\]

We next analyze cup product computations for the classes $ x \odot 1^{[*]} $. 
As a technical tool, we will exploit the existence of an increasing filtration $ \{ F_n H^*(\tilde{D}_\infty X; \mathbb{F}_p) \}_{n \geq 0} $ of $ \mathbb{F}_p $-vector spaces, where $ F_n H^*(\tilde{D}_\infty X; \mathbb{F}_p) $ is the linear span of the elements $ x \odot 1^{[*]} $ arising from pure Hopf monomials $ x $ with a width lower than or equal to $ n $.
Lemma \ref{lem:basis infinite wreath} guarantees that $ \bigcup_{n \geq 0} F_n H^*(\tilde{D}_\infty X; \mathbb{F}_p) = H^*(\tilde{D}_\infty X; \mathbb{F}_p) $ when $ X $ is connected.
\begin{lemma} \label{lem:cup product blocks limit}
Let $ b_1, \dots, b_r $ be primitive gathered block in $ H^*(\tilde{D}X; \mathbb{F}_p) $ and let $ n_1, \dots, n_r $ be strictly positive integers. Then
\[
\prod_{i=1}^r (b_i^{[n_i]} \odot 1^{[*]}) - b_1^{[n_1]} \odot \dots \odot b_r^{[n_r]} \odot 1^{[*]} \in F_{w-1}H^*(\tilde{D}_\infty X; \mathbb{F}_p),
\]
where $ w $ is the sum of the widths of $ b_1^{[n_1]}, \dots, b_r^{[n_r]} $.
\end{lemma}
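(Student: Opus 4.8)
The plan is to reduce to a finite stage, then combine two ingredients: the multiplicativity of the filtration $F_\bullet$, and the Hopf rig distributivity law applied to a cup product of two ``$z\odot 1^{[*]}$''-classes. First I would fix $N$ larger than the (finite) total cohomological degree of $\prod_{i=1}^r(b_i^{[n_i]}\odot 1^{[*]})$, so that the restriction $\rho_N\colon H^*(\tilde{D}_\infty X;\mathbb{F}_p)\to H^*(\tilde{D}_N X;\mathbb{F}_p)$ is injective in that degree, as recorded in the discussion following the computation of $H^*(\tilde{D}_\infty X;\mathbb{F}_p)\cong\varprojlim_n H^*(\tilde{D}_n X;\mathbb{F}_p)$. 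Since each structure map is a ring homomorphism, $\rho_N$ carries $b_i^{[n_i]}\odot 1^{[*]}$ to $b_i^{[n_i]}\odot 1^{[N-w_i]}$, carries $b_1^{[n_1]}\odot\cdots\odot b_r^{[n_r]}\odot 1^{[*]}$ to $b_1^{[n_1]}\odot\cdots\odot b_r^{[n_r]}\odot 1^{[N-w]}$, and sends $F_{w-1}$ onto the span of the classes $z\odot 1^{[N-\mathrm{width}(z)]}$ with $z$ a pure Hopf monomial of width $\le w-1$; so it suffices to prove the congruence inside $H^*(\tilde{D}_N X;\mathbb{F}_p)$, where the skyline-diagram descriptions of cup product, transfer product, coproduct and divided powers from Section~\ref{sec:algebraic basis} apply concretely. (That $b_1^{[n_1]}\odot\cdots\odot b_r^{[n_r]}\odot 1^{[*]}$ is a legitimate element of the inverse limit is the content of the preceding lemma.)

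Next I would isolate two elementary facts. A width-$u$ sub-block cut out of a column $1^{[m]}$ is exactly $1^{[u]}$, the $\cdot$-unit of the $u$-th component, so cup product with it is the identity; and, by Proposition~\ref{rem:cup product columns}, the cup product of two nontrivial columns of equal width is either zero or again a nontrivial column of that width (the correction hollow rectangle only adds height, and in width one it has height zero). These give immediately that $F_\bullet$ is a \emph{multiplicative} filtration, $F_a\cdot F_b\subseteq F_{a+b}$: in the skyline cup product of a pure diagram of effective width $\le a$ with one of effective width $\le b$, every nontrivial column of the result is either a nontrivial (sub-)column of the first diagram cupped with a unit piece of the second, or the mirror situation, or a nontrivial column of the first cupped with a nontrivial column of the second; in the last case it consumes width from both diagrams yet contributes its width to the result only once, so the total width of nontrivial columns of the product is at most $a+b$.

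The heart of the argument is the following ``leading term'' computation, which I would state as: if $\alpha$ is a $\odot$-product of finitely many nontrivial decorated columns of total width $w_\alpha$ and $b^{[n]}$ is a single nontrivial decorated column of width $w_b$, then
\[
(\alpha\odot 1^{[*]})\cdot(b^{[n]}\odot 1^{[*]})\;=\;\alpha\odot b^{[n]}\odot 1^{[*]}\;+\;(\text{an element of }F_{w_\alpha+w_b-1}).
\]
To prove this I would apply Hopf rig distributivity twice to $(\alpha\odot 1^{[N-w_\alpha]})\cdot(b^{[n]}\odot 1^{[N-w_b]})$, using the coproduct formula $\Delta(b^{[n]})=\sum_j b^{[j]}\otimes b^{[n-j]}$, which holds for primitive gathered blocks since they are standard non-nilpotent in the sense of Definition~\ref{non-nilpotent} (for odd-degree ones only $j\in\{0,n\}$ occur, where it reduces to primitivity). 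Because $1^{[N-w_b]}$ is a unit, only the part of $\Delta(\alpha\odot 1^{[\cdot]})$ landing in component $w_b$ on the left survives; a second distributivity through $\Delta(b^{[n]})$, together with the unit property of the $1^{[i]}$'s, forces each surviving summand to be of the form $\pm(\alpha'\cdot b^{[l]})\odot b^{[n-l]}\odot\alpha''\odot 1^{[\cdot]}$ with $\mathrm{width}(\alpha')=l\,w_b/n$. For $l\ge 1$ one has $\alpha'\neq 1$, and the width estimate of the previous paragraph bounds the effective width of such a summand strictly below $w_\alpha+w_b$; for $l=0$ one is forced to take $\alpha'=1$ (as $\alpha$ has no trivial columns), yielding the single term $\pm\,\alpha\odot b^{[n]}\odot 1^{[\cdot]}$, whose coefficient is $+1$ once the $\odot$-factors are commuted into the stated order, the Koszul sign produced by distributivity cancelling against the one produced by graded-commutativity of $\odot$. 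The lemma then follows by induction on $r$: writing $\prod_{i=1}^r=(\prod_{i=1}^{r-1})\cdot(b_r^{[n_r]}\odot 1^{[*]})$, the inductive hypothesis replaces $\prod_{i=1}^{r-1}$ by $b_1^{[n_1]}\odot\cdots\odot b_{r-1}^{[n_{r-1}]}\odot 1^{[*]}$ modulo $F_{w'-1}$ with $w'=w_1+\cdots+w_{r-1}$; the error term times $b_r^{[n_r]}\odot 1^{[*]}$ lies in $F_{w'-1}\cdot F_{w_r}\subseteq F_{w-1}$ by multiplicativity; and the leading term computation with $\alpha=b_1^{[n_1]}\odot\cdots\odot b_{r-1}^{[n_{r-1}]}$, $b^{[n]}=b_r^{[n_r]}$ supplies the main term.

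The step I expect to be the main obstacle is pinning down that the top-effective-width contribution has coefficient exactly $1$, rather than merely being a scalar multiple of $\alpha\odot b^{[n]}\odot 1^{[*]}$: this requires checking that no combinatorial multiplicities (multinomial coefficients from subdividing $1^{[m]}$-columns or from the law of exponents when $\odot$-factors of equal profile merge) and no residual Koszul signs survive. The distributivity bookkeeping above is the cleanest route, since there the only top-order summand arises from the degenerate split $\alpha'=1,\ l=0$ and the sign is dictated by graded commutativity of $\odot$; should a further check be desired, one can alternatively verify the coefficient by pairing against the homology of $\tilde{D}_\infty X$ via the perfect pairing established in the proof of Theorems~\ref{m3} and~\ref{m4}.
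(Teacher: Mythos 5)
Your proposal is correct and follows essentially the same route as the paper's proof: restrict to a large finite component, observe that $\Delta$ preserves effective width and that cupping with a nontrivial gathered block produces pure Hopf monomials, extract the leading term of $(\alpha\odot 1^{[*]})\cdot(b^{[n]}\odot 1^{[*]})$ via Hopf ring distributivity (the unit part of the coproduct giving the sole top-effective-width contribution), and induct on $r$. Your explicit multiplicativity statement $F_a\cdot F_b\subseteq F_{a+b}$ and the sign/coefficient check are elaborations of steps the paper leaves implicit, not a different argument.
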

\begin{proof}
The argument boils down to restricting to $ H^*(\tilde{D}_N X; \mathbb{F}_p) $ with $ N \in \mathbb{N} $ large enough and perform the calculations there, where the claim is a straightforward consequence of our Hopf ring presentation.

Explicitly, we observe that $ \Delta $ preserves the effective width. Formally,  if the effective width of a Hopf monomial $ x $ is $ w $, 
then its coproduct can be written as $ \Delta(x) = \sum_i x_i' \otimes x_i'' $ where the sum of the effective widths of $ x_i' $ and $ x_i'' $ is $ w $.
Moreover, our cup product formulas imply that given a non-trivial gathered block $ b \in H^*(\tilde{D}_nX; \mathbb{F}_p) $ and a Hopf monomial 
(not necessarily pure) $ x \in H^*(\tilde{D}_nX; \mathbb{F}_p) $ belonging to the same component, the cup product 
$ x \cdot b $ is always the sum of pure Hopf monomials (when it is not zero).
These two facts together with the Hopf ring distributivity formula guarantee that, given $ x \in H^*(\tilde{D}_nX; \mathbb{F}_p) $ with effective width $ w $, 
and a non-trivial gathered block $ b \in H^*(\tilde{D}_mX; \mathbb{F}_p) $ whose width is less than or equal to $ n - w $, the following equality holds:
\[
x \cdot (b \odot 1^{[n-m]}) = \sum_i (x_i' \cdot b) \odot x_i'',
\]
where the effective width of $ x_i'' $ is less than or equal to $ w $, and we have equality in a single case in which $ x_i' = 1^{[m]} $ and $ x_i'' = \rho_{n-m,n}(x) $. Therefore the difference $ x \cdot ( b \odot 1^{[n-m]}) - b \odot \rho_{n-m,n}(x) $ is a linear combination of Hopf monomials whose effective widths are strictly less than $ n $.

We can now prove our claim by induction on $ r $.
For $ r = 1 $ the statement is trivial. For $ r > 1 $, the previous argument with $ x = b_1^{[n_1]} \odot \dots \odot b_{r-1}^{[n_{r-1}]} \odot 1\dip{N}$ and $ b = b_r^{[n_r]} \odot 1\dip{M} $, with $ N,M \in \mathbb{N}$ big enough, completes the induction.
\end{proof}

\begin{lemma} \label{lem:cup product relations limit}
Let $ b \in H^*(\tilde{D}X; \mathbb{F}_p) $ be a gathered block. If $ p > 2 $ we also assume that $ b $ is even-dimensional. Then $ (b \odot 1^{[*]})^p = b^p \odot 1^{[*]} $.
\end{lemma}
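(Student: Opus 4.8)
The plan is to reduce the identity to a computation inside a single cohomology ring $H^*(\tilde{D}_N X;\mathbb{F}_p)$ and to expand there using Hopf ring distributivity. Write $n_0$ for the component of $b$. Since $\tilde{D}_\infty X=\varinjlim_n \tilde{D}_n X$ and the structure maps of the resulting inverse system of cohomology rings are ring homomorphisms, the isomorphism $H^*(\tilde{D}_\infty X;\mathbb{F}_p)\cong\varprojlim_n H^*(\tilde{D}_n X;\mathbb{F}_p)$ is one of rings, so a cup $p$-th power is computed levelwise; moreover $b^p$, being a cup power, again has component $n_0$, and by Proposition~\ref{rem:cup product columns} it is again a single column, hence a gathered block, or is zero. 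It therefore suffices to prove that
\[
(b\odot 1^{[m]})^p = b^p\odot 1^{[m]}\qquad\text{in } H^*(\tilde{D}_{n_0+m}X;\mathbb{F}_p)
\]
for all $m$ with $n_0+m$ sufficiently large. If $p>2$ and $b$ is odd-dimensional both sides vanish by graded commutativity, so we may assume $b$ even-dimensional when $p$ is odd, as in the statement.

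To compute $(b\odot 1^{[m]})^p$ I would expand the $p$-fold cup product by iterated Hopf ring distributivity against the $\odot$-factorizations supplied by the coproducts $\Delta b$ and $\Delta 1^{[m]}$, using $\Delta(b\odot 1^{[m]})=\Delta b\odot\Delta 1^{[m]}$. Two structural facts drive the expansion. First, $1^{[m]}=\iota_X(1_X)^{[m]}$ is the cup-unit of component $m$ and, $\iota_X(1_X)$ being standard non-nilpotent, $\Delta 1^{[m]}=\sum_{r+s=m}1^{[r]}\otimes 1^{[s]}$. Second, cup products of classes supported in different components vanish, so all splittings that are not ``diagonal'' collapse. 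To make $\Delta b$ explicit one uses Proposition~\ref{rem:cup product columns} and the definition of gathered blocks to present $b$ as a cup product of boxes of the form $\gamma_k^{[n]}$, with explicit coproduct $\Delta\gamma_k^{[n]}=\sum_{i+j=n}\gamma_k^{[i]}\otimes\gamma_k^{[j]}$, together with, when $p>2$, at most one solid box, and together with the decoration of the column (whose $p$-th cup power is governed by the relations $\iota_X(x)^{[w]}\cdot\iota_X(x')^{[w]}=\iota_X(xx')^{[w]}$ of Theorems~\ref{m3}--\ref{m4}); a solid box with nonempty subset $S$ forces $b^2=0$, hence $b^p=0$, so there the claim reads $(b\odot 1^{[m]})^p=0$. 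Carrying out the distributivity, $(b\odot 1^{[m]})^p$ becomes $b^p\odot 1^{[m]}$, with coefficient $1$, plus a sum of ``non-diagonal'' blocks $\beta_1\odot\cdots\odot\beta_s\odot 1^{[\,\cdot\,]}$ with $s\ge 2$, each assembled from cup powers of the coproduct pieces of $b$ and weighted by products of multinomial coefficients.

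The main obstacle is to show that all of these non-diagonal terms vanish in characteristic $p$. The mechanism is that, after collecting like blocks and applying the $\odot$-divided-power law of exponents $x^{[i]}\odot x^{[j]}=\binom{i+j}{i}x^{[i+j]}$ (Definition~\ref{def:divided powers}), the combined coefficient of every non-diagonal block of $s\ge 2$ factors is a multinomial coefficient $\binom{p}{i_1,\dots,i_s}$ with all $i_l<p$, hence divisible by $p$ since $p\mid p!$ while $p\nmid i_l!$; and any contribution containing a $\odot$-factor $c^{\odot p}=p!\,c^{[p]}$, with $c$ of positive component and thus in the augmentation ideal $\overline{\hdp(X)}$ of the divided powers Hopf algebra $(\hdp(X),\odot,\Delta)$ (Corollary~\ref{divided-powers-representations}), vanishes outright. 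The delicate point is bookkeeping: $b$ and the pieces of its coproduct are not primitive in general, so the distributivity expansion genuinely involves those coproduct pieces and not merely cup powers of $b$. I would organize this by downward induction on the effective-width filtration $F_\bullet$ of Lemma~\ref{lem:basis infinite wreath}, using Lemma~\ref{lem:cup product blocks limit} to pin down the top-filtration term of each subproduct and the explicit coproduct of the $\gamma_k^{[n]}$ to dispatch the lower-filtration corrections. The cases $p=2$ and $p=3$, where a direct calculation gives $(b\odot 1^{[m]})^2=b^2\odot 1^{[m]}$ and $(b\odot 1^{[m]})^3=b^3\odot 1^{[m]}$ with the off-diagonal terms cancelling modulo $p$, already exhibit this mechanism.
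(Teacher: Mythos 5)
Your overall strategy -- pass to a finite level $H^*(\tilde{D}_{n_0+m}X;\mathbb{F}_p)$, expand the $p$-fold cup power by iterated Hopf ring distributivity against $\Delta b$ and $\Delta 1^{[m]}$, and argue that every non-diagonal term dies mod $p$ -- is the same as the paper's. But the step that carries all the weight, namely that \emph{every} non-diagonal term vanishes, is where your proposal has a genuine gap. Your stated mechanism is that after collecting like $\odot$-factors each non-diagonal block carries a coefficient $\binom{p}{i_1,\dots,i_s}$ with all $i_l<p$, killed because $p\mid p!$ while $p\nmid i_l!$. That is not what actually appears once $b$ is a nontrivial divided power $c^{[n]}$ of a primitive block: already for $p=2$ the expansion of $(c^{[n]}\odot 1^{[m]})^2$ produces, besides $b^2\odot 1^{[m]}$, terms of the form $(c^{[i]})^2\odot c^{[n-i]}\odot c^{[n-i]}\odot 1^{[\cdot]}=\binom{2(n-i)}{n-i}\,(c^{[i]})^2\odot c^{[2(n-i)]}\odot 1^{[\cdot]}$ for $i<n$, whose coefficient $\binom{2(n-i)}{n-i}$ is not of your claimed shape and whose divisibility by $p$ requires a separate argument (Kummer/Lucas, or better, a symmetry argument); for general $p$ and general $n$ the collected coefficients are mixed multinomials arising from the law of exponents and there is no one-line reason they are all divisible by $p$. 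Your fallback -- organizing the bookkeeping by downward induction on the effective-width filtration using Lemma~\ref{lem:cup product blocks limit} -- cannot close this gap either, because that lemma only controls products modulo $F_{w-1}$, whereas the statement to be proved (and the way it is used later, e.g.\ to get $(b\odot 1^{[*]})^{p^{h(b)}}=0$ in Theorem~\ref{thm:Dung}) is an exact identity, not an identity modulo lower filtration.

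The paper supplies exactly the missing content. It first proves, by induction on $k$ using distributivity, a closed formula
\begin{equation*}
\bigl(b^{[n]}\odot 1_X^{[Nw(b)]}\bigr)^k=\sum_{\underline{a}\in\mathcal{A}_{n,k,N}}\ \bigodot_{i=1}^{2^k}\bigl(b^{[a_i]}\bigr)^{\sum_{j=0}^{k-1}\nu_j(2^k-i)}
\end{equation*}
for $b$ a primitive gathered block, with the index set $\mathcal{A}_{n,k,N}$ recording which of the $k$ cup factors contribute to each $\odot$-slot. It then constructs, for $k=p$, an order-$p$ permutation $\sigma$ of the $2^p$ slots preserving $\mathcal{A}_{n,p,N}$: tuples in free $\sigma$-orbits contribute $p$ identical summands, which cancel regardless of their coefficients, and $\sigma$-fixed tuples force some factor to appear $\odot$-repeated at least $p$ times, so they die by $x^{\odot p}=p!\,x^{[p]}=0$ -- except for the single diagonal tuple, which yields $(b^{[n]})^p\odot 1^{[Nw]}$. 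This orbit argument is the idea your proposal is missing; without it (or an equivalent uniform divisibility statement, correctly formulated and proved), the verification for $p=2,3$ in special cases does not constitute a proof.
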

\begin{proof}
First, we prove an auxiliary formula. For any $ a \in \mathbb{N} $, define $ \nu_j(a) \in \{0,1\} $ as the coefficient of $ 2^j $ in the dyadic expansion of $ a $. Given $ n,k,N \in \mathbb{N} $, let $ \mathcal{A}_{n,k,N} $ be the set of $ 2^k $-tuples $ (a_1, \dots, a_{2^k}) \in \mathbb{N}^{2^k} $ that satisfy the following three conditions:
\begin{itemize}
\item $ \forall 0 \leq j < k: \sum_{i=1}^{2^k} a_i \nu_j(2^k-i) = n $
\item $ \sum_{i=1}^{2^k} a_i = n+N $.
\end{itemize}
For all $ n,k \in \mathbb{N} $, for all $ N \in \mathbb{N} $ large enough, and for all $ b \in H^*(\tilde{D}X; \mathbb{F}_p) $ primitive gathered block of width $ w $, the following equality holds:
\[
(b^{[n]} \odot 1_X^{[Nw(b)]})^k = \sum_{\underline{a} \in \mathcal{A}_{n,k,N}} \bigodot_{i=1}^{2^k} (b^{[a_i]})^{\sum_{j=0}^{k-1} \nu_j(2^k-i)}.
\]
We prove this formula by induction on $ k $. For $ k = 1 $ it is trivial, and the induction step is a straightforward application of Hopf ring distributivity.

We now construct a permutation $ \sigma \in \mathcal{S}_{2^k} $ that maps $ \mathcal{A}_{n,k,N} $ into itself, when acting on 
$ \mathbb{N}^{2^k} $ by permutation of the $ 2^k $ coordinates. 
Given $ \tau \in \mathcal{S}_k $, we define $ \overline{\tau} \in \mathcal{S}_{2^k} $ by the formula
\[
\forall 1 \leq i \leq 2^k: \overline{\tau}(i) = 2^k - \sum_{j=0}^{k-1} 2^j \nu_{\tau(j+1)-1}(2^k-i)
\]
We let $ \sigma = \overline{\tau} $ where $ \tau \in \mathcal{S}_k $ is a $ k $-cycle.
Observe that:
\begin{itemize}
\item $ \sigma $ has order $ k $ and its fixed points in $ \{ 1, \dots, 2^k \} $ are $ 1 $ and $ 2^k$;
\item $ \sigma( \mathcal{A}_{n,k,N} ) = \mathcal{A}_{n,k,N}$;
\item Two $ 2^k $-tuples in $ \mathcal{A}_{n,k,N} $ that belong to the same $ \sigma $-orbit give rise to addends in the formula above that differ only by a permutation of the transfer product factors.
\end{itemize}

These remarks imply that the fixed point of $ \sigma $ in $ \mathcal{A}_{n,k,N} $ are the $ 2^k $-tuples that are constant on the $ \sigma $-orbits of $ \{ 1, \dots, 2^k \} $.

We specialize to the case $ k = p $ to complete the proof. Since, under this condition, $ \sigma $ has order $ p $, the sum of all the addends in the 
previous equality that do not correspond to fixed points of $ \sigma $ in $ \mathcal{A}_{n,p,N} $ is zero, because of the commutativity of $ \odot $.
Moreover, the function $ \sum_{j=0}^{k-1} \nu_j (2^k - \_ ) $ is constant on the orbits of $ \sigma $ in $ \{ 1, \dots, 2^k \} $. 
Therefore, in the addends corresponding to a $ 2^k $-tuple $ \underline{a} \in \mathcal{A}_{n,k,N} $ such that $ \sigma(\underline{a}) = \underline{a} $, 
the factor $ (b^{[a_i]})^{\sum_{j=0}^{k-1} \nu_j(2^k-i)} $ appears at least $ p $ times if $ i \notin \{1, 2^k \} $.
Due to the properties of divided power algebras, all these addends are zero, except when $ a_i = 0 $ for all $ i \notin \{1, 2^k \} $.
 This leaves only one addend,  easily seen to be equal to $ (b^{[n]})^p \odot 1^{[Nw]} $. Passing to the limit completes the proof.
\end{proof}

We are finally ready to produce our presentation for $ H^*(\tilde{D}_\infty X; \mathbb{F}_p) $. In order to do so, we need to impose some restrictions on the basis $ \mathcal{B} $ for $ H^*(X; \mathbb{F}_p) $.
Recall that the Frobenius homomorphism makes $ H^*(X; \mathbb{F}_p) $ an abelian restricted Lie algebra over $ \mathbb{F}_p $. If it is of finite type, then it can be decomposed as a direct sum of monogenerated abelian restricted Lie algebras, of the form $ \mathbb{F}_p \{x,x^p, \dots,x^{p^k}\} $ with $ \deg(x) = d \in \mathbb{N} $, $ k \in \mathbb{N} $, and $ x^{p^{k+1}} = 0 $. The bases $ \{ x, x^p, \dots, x^{p^k} \} $ on those subspaces and the existence of that decomposition determine a basis $ \mathcal{B}_F $ of $ H^*(X; \mathbb{F}_p) $.
\begin{theorem}[after Dung \cite{Dung} for $ p=2 $] \label{thm:Dung}
Let $ X $ be a connected space of finite type. Let $ \mathcal{B}_F$ be the basis constructed above. Then $ H^*(\tilde{D}_\infty X; \mathbb{F}_p) $ is the graded commutative $ \mathbb{F}_p $-algebra generated by the set of classes of the form $ b \odot 1^{[*]} $, where $ b $ is a gathered block (or decorated column, in the language of skyline diagrams) whose width is a power of $ p $ satisfying at least one of the following conditions:
\begin{itemize}
\item the decoration of the column is not a $ p^{th} $ power in $ H^*(X; \mathbb{F}_p) $
\item at least one of the constituent rectangles of the column appears a number of times that is not divisible by $ p $
\item the column has a non-trivial solid part (if $ p > 2 $)
\end{itemize}
Moreover, a complete set of relations for these generators is given by equalities of the form $ (b \odot 1^{[*]})^{p^{h(b)}} = 0 $, with $ b $ even-dimensional. If $ x $ denotes the decoration of $ b $, we define $ h(b) $ through the following formula:
\[
h(b) = \left\{ \begin{array}{ll}
\min \{ n \in \mathbb{N}: x^{p^n} = 0 \} & \mbox{if } p = 2 \mbox{ or } b \mbox{ has no solid part} \\
1 & \mbox{otherwise}.
\end{array} \right.
\]
\end{theorem}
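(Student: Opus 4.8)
The plan is to deduce the presentation from the additive basis of Lemma~\ref{lem:basis infinite wreath}, namely $\{x \odot 1^{[*]} : x \in \mathcal{M} \text{ pure}\}$, together with the two multiplicative facts established in Lemmas~\ref{lem:cup product blocks limit} and \ref{lem:cup product relations limit}. First I would set up the filtration $\{F_n H^*(\tilde{D}_\infty X; \mathbb{F}_p)\}$ by effective width already introduced before Lemma~\ref{lem:cup product blocks limit}, and record that the associated graded is spanned by the images of the pure Hopf monomials of fixed effective width. The candidate generating set $G$ consists of the classes $b \odot 1^{[*]}$ where $b$ ranges over primitive gathered blocks (decorated columns) of width a power of $p$ subject to the three listed conditions; the content of those conditions is precisely that such $b$ are the $\odot$-indecomposable pure Hopf monomials which additionally survive the Frobenius, i.e.\ the decoration is not a $p$th power, no constituent rectangle occurs with multiplicity divisible by $p$, and (for $p>2$) the column is not purely hollow. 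I would first verify that every pure Hopf monomial $x = b_1^{[n_1]} \odot \cdots \odot b_r^{[n_r]}$ is, modulo $F_{w-1}$, the cup product $\prod_i (b_i^{[n_i]} \odot 1^{[*]})$ by Lemma~\ref{lem:cup product blocks limit}, and then that each $b_i^{[n_i]} \odot 1^{[*]}$ is a product of the generators in $G$: a divided power $b_i^{[p^k]}$ with $b_i$ of width $p^m$ and even-dimensional is, by Lemma~\ref{lem:cup product relations limit} and the width-$p^{k+m}$ reindexing of skyline diagrams (which scales widths), expressible as a $p$th power iterated, eventually reaching a block satisfying one of the three conditions; a $b_i^{[n_i]}$ with $n_i$ not a power of $p$ is a product of divided powers at the $p$-adic digits via the Law of Exponents. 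This shows $G$ generates, by upward induction on effective width.

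Next I would establish the relations. The relation $(b \odot 1^{[*]})^{p^{h(b)}} = 0$ for even-dimensional $b$ is obtained by iterating Lemma~\ref{lem:cup product relations limit}: $(b\odot 1^{[*]})^{p^j} = b^{p^j}\odot 1^{[*]}$, and $b^{p^j}$ as a cup power of a column is computed by Proposition~\ref{rem:cup product columns} (stacking the decoration's $p^j$th power, which vanishes in $H^*(X;\mathbb{F}_p)$ once $j \geq h(b)$ when $b$ has no solid part, and vanishes after one step when $b$ has a solid part since squaring a solid box repeats a subset-entry and the coefficient $\lambda$ is zero). For $p>2$ odd-dimensional $b$, the class $b \odot 1^{[*]}$ already squares to zero by graded-commutativity, so it contributes an exterior generator and needs no extra relation beyond those of a graded-commutative algebra. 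To see these are \emph{all} the relations, I would count: the free graded-commutative $\mathbb{F}_p$-algebra on $G$ modulo the ideal generated by the $(b\odot 1^{[*]})^{p^{h(b)}}$ has a monomial basis indexed exactly by finite products $\prod (b_i \odot 1^{[*]})^{m_i}$ with $b_i$ distinct in $G$ and $0 \le m_i < p^{h(b_i)}$ for $b_i$ even, $m_i \le 1$ for $b_i$ odd; I claim this set maps bijectively onto the basis $\{x \odot 1^{[*]}\}$ of Lemma~\ref{lem:basis infinite wreath}. The bijection sends such a monomial to the pure Hopf monomial obtained by expanding each power $(b_i\odot 1^{[*]})^{m_i}$ via the Law of Exponents into a $\odot$-product of divided powers at the $p$-adic digits of $m_i$, and one checks this is well-defined and invertible because the Hopf monomial basis of $H^*(\tilde{D}X;\mathbb{F}_p)$ records precisely a multiset of primitive gathered blocks with multiplicities, and the constraint $m_i < p^{h(b_i)}$ matches the vanishing $x^{p^{h(b)}}=0$ of the decoration.

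The main obstacle I anticipate is the last bijective count: carefully matching the standard monomial basis of the presented algebra with the pure-Hopf-monomial basis requires tracking how the $p$-adic digit expansion of an exponent $m_i$ interacts with the Frobenius-quotient structure on the decoration, and making sure that the "leftover" lower-filtration terms appearing in Lemma~\ref{lem:cup product blocks limit} do not disturb linear independence. This is handled by passing to the associated graded $\bigoplus_n F_n/F_{n-1}$, where cup product of the classes $b\odot 1^{[*]}$ becomes \emph{exactly} $\odot$-multiplication of pure Hopf monomials with the power relations, so the count reduces to the purely combinatorial statement that pure Hopf monomials of a given effective width are in bijection with admissible exponent vectors on the generating blocks; the disconnected case follows from the product decomposition of $\tilde{D}_\infty X$ recorded after Lemma~\ref{lem:basis infinite wreath}, reducing to the connected statement just proved. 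Filtration-completeness is not an issue here since, for $X$ connected of finite type, each cohomological degree is finite-dimensional and $\bigcup_n F_n = H^*(\tilde{D}_\infty X;\mathbb{F}_p)$ by Lemma~\ref{lem:basis infinite wreath}.
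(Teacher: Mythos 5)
Your proposal is correct and follows essentially the same route as the paper: the relations come from iterating Lemma~\ref{lem:cup product relations limit}, generation is proved by induction on the effective-width filtration using Lemma~\ref{lem:cup product blocks limit} together with the observation that every gathered block of width a power of $p$ is an iterated Frobenius power of one satisfying the three listed conditions, and completeness of the relations is a dimension count matching monomials in the presented algebra against the pure-Hopf-monomial basis of Lemma~\ref{lem:basis infinite wreath}. The paper phrases this last step as an equality of Poincar\'e series between two explicit tensor-product decompositions rather than an explicit bijection of monomial bases, but the content is the same.
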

\begin{proof}
Lemma \ref{lem:cup product relations limit} guarantees that the given relations hold in $ H^*(\tilde{D}_\infty X; \mathbb{F}_p) $. 
Thus we need to prove two facts, namely that the given classes generate that cohomology algebra, and that there are no other independent relations.

In order to prove the first claim, we use the filtration $ \{ F_n H^*(\tilde{D}_\infty X; \mathbb{F}_p) \}_{n \geq 0} $ of the cohomology of $ \tilde{D}_\infty X $ by effective width defined previously.
We prove by induction on $ n $ that $ F_n H^*(\tilde{D}_\infty X; \mathbb{F}_p) $ is contained in the subspace $ V $ generated by our selected elements $ b \odot 1^{[*]} $.
For $ n = 0 $, the claim is obvious. So assume that $ n > 0 $ and that $ F_{n-1} H^*(\tilde{D}_\infty X; \mathbb{F}_p) \subseteq V $.

First, for any gathered block $ b $ whose width is a power of $ p $, $ b \odot 1^{[*]} \in V $, even if $ b $ does not satisfy any of the three required conditions. Indeed, because of Lemma \ref{lem:cup product relations limit}, any such element is the $ (p^{k})^{th} $ power, for some $ k \in \mathbb{N} $, of some other class of the form $ b' \odot 1^{[*]} $  arising from a gathered block $ b' $ that meets one of those conditions.

Second, for any pure Hopf monomial $ x $ of width $ n $, $ x $ can be written as a multiple of the transfer product of some gathered blocks $ b_1, \dots, b_r $ whose widths are powers of $ p $. By Lemma \ref{lem:cup product blocks limit} we have that
\[
x \in \prod_{i=1}^r ( b_i \odot 1^{[*]} ) + F_n H^*(\tilde{D}_\infty X; \mathbb{F}_p) \subseteq V
\]
and this proves our first claim.

In order to prove that the given relations suffice to describe $ H^*(\tilde{D}_\infty X; \mathbb{F}_p) $ completely, it is sufficient to check that the Poincar\'e series of this cohomology algebra and the graded commutative $ \mathbb{F}_p $-algebra $ A_\infty(X) $ defined by the given presentation are equal.

On the one hand, we observe that
\[
A_\infty(X) = \bigotimes_{b, \deg(b) \text{ even}} \frac{ \mathbb{F}_p [ b \odot 1^{[*]} ] }{ (b \odot 1^{[*]})^{h(b)} } \otimes \Lambda \{b, \deg(b) \text{ odd} \},
\]
where $ \Lambda $ is the exterior algebra functor and the tensor product is over gathered blocks $ b $ satisfying the conditions in the statement of this theorem.

On the other hand, the subspace (isomorphic to $ H^*(\tilde{D}_\infty X; \mathbb{F}_p) $ by Lemma \ref{lem:basis infinite wreath}) generated by pure Hopf monomials in $ H^*(\tilde{D}X; \mathbb{F}_p) $ is a subalgebra under the transfer product, and is isomorphic to
\[
\bigotimes_{b, \deg(b) \text{ even}} \frac{ \mathbb{F}_p [ b ] }{ (b^{\odot^p}) } \otimes \Lambda \{ b, \deg(b) \text{ odd} \}.
\]
Here the tensor product is performed over all the gathered blocks whose width is a power of $ p $ that are different from $ 1^{[p^k]} $, 
regardless of the stated conditions.

As we already observed, for every generator $ b \odot 1^{[*]} $ and every $ 1 \leq k < h(b) $, the power $ (b \odot 1^{[*]})^{p^k} $ is again a class of the form $ b' \odot 1^{[*]} $, where $ b' $ is another non-trivial gathered block whose width is a power of $ p $. 
Conversely, any such element can be obtained by iteratively applying the Frobenius homomorphism to one of the generators of $ A_{\infty}(X) $. 
This implies that in any degree $ d $, $ (A_\infty(X))_d $ and $ H^d(\tilde{D}X; \mathbb{F}_p) $ have the same dimension.
\end{proof}

\subsection{Free $E_\infty$ spaces}
We move to the cohomology of $ CX $, the free $ E_\infty $-space generated by $ X $, as defined in Cohen--May--Lada \cite{May-Cohen}.

Let $ X_+ $ be the pointed topological space obtained from $ X $ by adding a disjoint basepoint $ * $. Observe that $ C(X_+) \cong D(X_+) \cong \tilde{D}X $.
 The map $ p \colon X_+ \to X $ that sends $ * $ to the basepoint of $ X $ induces a surjective function $ Cp \colon \tilde{D}X \cong C(X_+) \to CX $. 
 This map factors through the projection $ \tilde{D}X \to \tilde{D}_\infty X $.
 
The description of the functor $ H_*(C\_; \mathbb{F}_p) $ given in \cite[part I, Theorem 4.1]{May-Cohen} and the surjectivity of the 
map $ p_* \colon H(X_+; \mathbb{F}_p) \to H_*(X; \mathbb{F}_p) $ imply that the induced morphism in homology 
$ (Cp)_* \colon H_*(\tilde{D}X; \mathbb{F}_p) \to H_*(CX; \mathbb{F}_p) $ and, as a direct consequence, 
$ H_*(\tilde{D}_\infty X; \mathbb{F}_p) \to H_*(CX; \mathbb{F}_p) $, are epimorphisms.
Dually, this means that $ H^*(CX; \mathbb{F}_p) \to H^*(\tilde{D}_\infty X; \mathbb{F}_p) $ is a monomorphism. 
Therefore, we identify $ H^*(CX; \mathbb{F}_p) $ with a subring of $ H^*(\tilde{D}_\infty X; \mathbb{F}_p) $.

At the level of homology, $ (Cp)_* \colon H_*(\tilde{D}X; \mathbb{F}_p) \to H_*(CX; \mathbb{F}_p) $ is the unique morphism of algebras over the
 Dyer--Lashof algebra that extends the map $ p_* \colon H_*(X_+; \mathbb{F}_p) \to H_*(X; \mathbb{F}_p) $. Therefore, using homology operations, its kernel can be described as the ideal generated by $ Q_I([x_0] ) $ with $ I \not= () $ and $ 1 - [x_0] $, $ [x_0] \in H_*(X; \mathbb{F}_p) $ being the class in $ H_0(X; \mathbb{F}_p) $ corresponding to the basepoint $ x_0 \in X $. Dualizing yields the following.
 
\begin{corollary}
Let $ X $ be a connected space of finite type. Then $ H^*(CX; \mathbb{F}_p) $ is naturally isomorphic to the subring of $ H^*(\tilde{D}_\infty X; \mathbb{F}_p) $ generated by classes of the form $ ( b \odot 1^{[*]} ) $ satisfying the criteria stated in Theorem \ref{thm:Dung} and the  additional condition
that 
the decoration of $ b $ is different from $ 1 $.

Moreover, the relations $ (b \odot 1^{[*]})^{p^{h(b)}} = 0 $ among those generators are sufficient to give a complete description of $ H^*(CX; \mathbb{F}_p) $ 
as an algebra.
\end{corollary}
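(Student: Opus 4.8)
The plan is to derive this corollary directly from Theorem~\ref{thm:Dung} together with the homological description of the kernel of $(Cp)_*$. First I would set up the dual picture: since $X$ is connected of finite type, we have the monomorphism $H^*(CX;\mathbb{F}_p)\hookrightarrow H^*(\tilde{D}_\infty X;\mathbb{F}_p)$ identifying $H^*(CX)$ with the annihilator of $\ker((Cp)_*)$. By \cite[part I, Theorem 4.1]{May-Cohen}, $(Cp)_*\colon H_*(\tilde{D}X;\mathbb{F}_p)\to H_*(CX;\mathbb{F}_p)$ is the unique morphism of algebras over the Dyer--Lashof algebra extending $p_*$, and its kernel is the ideal $J$ generated by $1-[x_0]$ together with the classes $Q_I([x_0])$ for nonempty admissible $I$. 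The task is then to compute $J^{\perp}\subseteq H^*(\tilde{D}_\infty X;\mathbb{F}_p)$ in terms of the skyline/gathered-block basis.

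The key step is to match the pairing between the pure-Hopf-monomial basis of $H^*(\tilde{D}_\infty X;\mathbb{F}_p)$ (Lemma~\ref{lem:basis infinite wreath}, Theorem~\ref{thm:Dung}) and the dual basis of the homology. Using the same perfect-pairing analysis as in the proof of Theorems~\ref{m3} and \ref{m4} (the Kronecker pairing splits as a tensor product of the symmetric-group pairing with the $H^*(X)\otimes H_*(X)$ pairing), one sees that a class $b\odot 1^{[*]}$ pairs nontrivially against the image of $(Cp)_*$ precisely when the decoration of $b$ survives under $p_*$ in the quotient $H_*(X_+)/(1-[x_0])\cong \widetilde H_*(X)\oplus\mathbb{F}_p\cdot[x_0]$ and does not involve any nontrivial Dyer--Lashof operation on $[x_0]$. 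Concretely, decorations equal to $1$ (the unit, which corresponds to $[x_0]$) are killed, since after quotienting by the ideal generated by $1-[x_0]$ and the $Q_I([x_0])$ the only surviving ``basepoint'' contribution is absorbed into the $1^{[*]}$ tail. So $H^*(CX)$ is spanned by exactly those $b\odot 1^{[*]}$ of Theorem~\ref{thm:Dung} whose decoration differs from $1$; I would phrase this as: the generators of Theorem~\ref{thm:Dung} with decoration $1$ generate a subalgebra which is exactly $H^*(\tilde D_\infty(S^0);\mathbb{F}_p)$-like, and restriction to $H^*(CX)$ quotients it out.

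For the ring structure, I would observe that the subset of generators with decoration $\neq 1$ is closed under the operations needed: the cup product of two such classes, computed via Lemmas~\ref{lem:cup product blocks limit} and \ref{lem:cup product relations limit}, never introduces a decoration-$1$ block that was not already there as a $1^{[*]}$ tail, because cup product multiplies decorations and $1\cdot x=x$. Hence the subring they generate is well-defined, and the relations $(b\odot 1^{[*]})^{p^{h(b)}}=0$ are inherited from Theorem~\ref{thm:Dung}. To conclude that these relations are complete, I would again compare Poincar\'e series: the subalgebra of $H^*(\tilde D_\infty X;\mathbb{F}_p)$ cut out by the decoration-$\neq 1$ generators, presented by the stated relations, has Hilbert series matching that of $H_*(CX;\mathbb{F}_p)$ computed from the Cohen--Lada--May free-graded-commutative-algebra description on the reduced homology of $X$ (no basepoint class), which is the same count.

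The main obstacle I expect is making precise the claim that ``decoration $1$ corresponds exactly to the subalgebra killed by $(Cp)^*$'' --- that is, verifying that the ideal $J=\ker((Cp)_*)$ is exactly the annihilator of the span of decoration-$\neq 1$ pure Hopf monomials, rather than something smaller or larger. This requires a careful orthogonality computation in the inverse limit: one must check that every $b\odot 1^{[*]}$ with decoration involving $1$ nontrivially (i.e.\ $1$ appearing as a decoration of a block of positive height, or a nontrivial symmetric-group block decorated by $1$) is annihilated by $(Cp)_*$-images and conversely, working degree by degree where $\rho_n$ is an isomorphism so that the finite-$n$ pairing analysis of the proof of Theorem~\ref{m4} applies verbatim. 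Once that orthogonality is established the generation and relations statements follow formally from Theorem~\ref{thm:Dung} and the Poincar\'e series bookkeeping.
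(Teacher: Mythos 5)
Your proposal is correct and follows essentially the same route as the paper, which identifies $H^*(CX;\mathbb{F}_p)$ as a subring of $H^*(\tilde{D}_\infty X;\mathbb{F}_p)$ via the surjectivity of $(Cp)_*$, describes $\ker((Cp)_*)$ as the ideal generated by $1-[x_0]$ and the $Q_I([x_0])$, and then dualizes to discard exactly the generators decorated by $1$. The paper leaves the dualization step terse (``Dualizing yields the following''), so your more explicit orthogonality and Poincar\'e-series bookkeeping is a faithful elaboration rather than a different argument.
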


Although the previous corollary holds only for connected spaces $ X $, we can reduce the general calculation of $ H^*(CX; \mathbb{F}_p) $ 
to this particular case. It is enough to note that, if $ \{ X_\alpha \}_{\alpha \in \pi_0(X)} $ is the set of the connected components of $ X $, 
then there is a natural homeomorphism $ CX \cong \prod_{\alpha \in \pi_0(X) \setminus \{[*]\}} \tilde{D}(X_\alpha) \times C(X_{[*]}) $, and 
to apply the K\"unneth isomorphism.

\subsection{Free infinite loop spaces}

We conclude this section with an analogous description of the cohomology of $ QX $. When $ X $ is connected, $ CX $ and $ QX $ are also connected and the natural map $ CX \to QX $ induces an isomorphism in cohomology. 
When $ X $ is not connected, $ CX $ is not connected, and its commutative H-space structure makes $ \pi_0(CX) $ a commutative monoid.
Using the description above, we immediately see that $ \pi_0(CX) \cong \mathbb{N}^{\pi_0(X) \setminus \{[*]\}} $ and that the component corresponding to 
$ \underline{k} = \{ k_{\alpha} \}_{\alpha \in \pi_0(X) \setminus \{[*]\}} $ is homeomorphic to 
$ \prod_{\alpha} \tilde{D}_{k_\alpha} (X_\alpha) \times C(X_{[*]}) $. We name this component $ (CX)_{\underline{k}} $.

We recall the existence of a natural isomorphisms of Hopf algebras 
$ H_*(QX; \mathbb{F}_p) \cong H_*(CX; \mathbb{F}_p) \otimes_{\pi_0(CX)} \mathbb{F}_p[G] $, where 
$ G = \mathbb{Z}^{\pi_0(X) \setminus \{[*]\}} $ is the group completion of $ \pi_0(CX) $, and $ \mathbb{F}_p[G] $ is its group algebra over $ \mathbb{F}_p $. This isomorphism is classically well known and is discussed in \cite[part I, Theorem 4.2]{May-Cohen}.

Now we can observe that $ H_*(CX; \mathbb{F}_p) \otimes_{\pi_0(CX)} \mathbb{F}_p[G] $ is a direct sum of coalgebras 
$ \bigoplus_{g \in G} H_g $ indexed by $ G $. 
Explicitly, for all $ g \in G $, $$ H_g = \sum_{h \in \pi_0(CX)} H_*((CX)_{h}; \mathbb{F}_p) \otimes_{\pi_0(CX)} (gh^{-1}).$$ 
Let $ \Pi $ be the poset $ \mathbb{N}^{\pi_0(X) \setminus \{[*]\}} = \pi_0(CX) $ with order
\[
\{k_{\alpha}\}_{\alpha} \leq \{n_{\alpha}\}_{\alpha} \Leftrightarrow \forall \alpha \in \pi_0(X) \setminus \{[*]\}: k_{\alpha} \leq n_{\alpha}.
\]
For all $ \underline{k}, \underline{n} \in \Pi $ with $ \underline{k} \leq \underline{n} $, let $ f_{\underline{k},\underline{n}} \colon H_*((CX)_{\underline{k}}; \mathbb{F}_p) \to H_*((CX)_{\underline{n}}; \mathbb{F}_p) $ be defined by multiplication with $ [x_{\underline{n} - \underline{k}}] $, the class of a point $ x_{\underline{n} - \underline{k}} \in (CX)_{\underline{n} - \underline{k}} $. These maps define a direct system, whose limit is isomorphic to $ H_g $ for all $ g \in G $.
The topological counterpart of this is the homotopy equivalence $ QX \simeq Q_0X \times G $,
 where $ Q_0X $ is the connected component of $ QX $ containing the basepoint. Indeed, the direct limit above is isomorphic to the homology of $ Q_0X $.

Therefore describing the cohomology of $ QX $ is equivalent to calculate the dual of that direct limit, i.e. the inverse limit of the inverse system 
made by the dual spaces. Note that the morphisms 
$ f_{\underline{k},\underline{n}}^* \colon H^*((CX)_{\underline{n}}; \mathbb{F}_p) \to H^*((CX)_{\underline{k}}; \mathbb{F}_p) $ 
are analogous to the maps $ \rho_{n,m} $ used to compute 
$ H^*( \tilde{D}_\infty X; \mathbb{F}_p) $ in Lemma \ref{lem:basis infinite wreath}, but with blocks of the form $ 1^{[r]} $ replaced with any gathered block of dimension zero.  Explicitly, they are of the form $ 1_{H^*(X_\alpha; \mathbb{F}_p)}^{[r]} $ for some $ \alpha \in \pi_0(X) $ if the basis $ \mathcal{B} $ 
contains all the units $ 1_{H^*(X_\alpha; \mathbb{F}_p)} $). Thus, we can replicate the argument used to compute the cohomology of $ \tilde{D}_\infty X $
 to determine $ H^*(QX; \mathbb{F}_p) $, by replacing pure Hopf monomials with Hopf monomials that do not contain columns of 
 height zero or with decoration $ 1 $. We give below the precise statements.

\begin{definition}
Let $ x $ be a Hopf monomial in $ H^*(\tilde{D}X; \mathbb{F}_p) $. We say that $ x $ is {\bf full-width} if it does not have any gathered block of 
dimension $ 0 $.
\end{definition}

\begin{lemma}
Assume that the chosen basis $ \mathcal{B} $ of $ H^*(X; \mathbb{F}_p) $ contains $ 1_\alpha = 1_{H^*(X_\alpha;\mathbb{F}_p)} $ for all $ \alpha \in \pi_0(X) $. Then, to every full-width Hopf monomial without constituent blocks with $ 1_{[*]} $ as decoration, we can associate an element $ 
x_\infty \in \varprojlim_{\underline{k} \in \Pi} H^*((CX)_{\underline{k}}; \mathbb{F}_p) \cong H^*(Q_0X; \mathbb{F}_p) $ defined by the formula
\[
(x_\infty)|_{H^*((CX)_{\underline{k}}; \mathbb{F}_p)} = \left\{ \begin{array}{ll}
(x \odot \bigodot_{\alpha \in \pi_0(X) \setminus \{[*]\}} 1_{\alpha}^{[k_{\alpha}-n_{\alpha}]}) \odot 1^{[*]} & \mbox{if } \underline{k} \geq \underline{n}_x \\
0 & \mbox{otherwise}
\end{array} \right.
\]
where $ \underline{n}_x $ is the unique element of $ \Pi = \pi_0(CX) $ such that $ x \odot 1^{[*]} \in (CX)_{\underline{n}_x} $.
These elements $ x_\infty $ form a basis of $ H^*(Q_0X; \mathbb{F}_p) $ as an $ \mathbb{F}_p $-vector space.
\end{lemma}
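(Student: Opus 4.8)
The plan is to transpose the proof of Lemma~\ref{lem:basis infinite wreath} to the $\Pi$-indexed setting: the tower $\{H^*(\tilde D_nX;\mathbb{F}_p)\}_n$ is replaced by the inverse system $\{H^*((CX)_{\underline k};\mathbb{F}_p)\}_{\underline k\in\Pi}$ with transition maps $f_{\underline k,\underline n}^*$, the stabilizing block $1^{[r]}$ is replaced by the family of dimension-zero gathered blocks $1_\alpha^{[r]}$ for $\alpha\in\pi_0(X)\setminus\{[*]\}$, and the internal tail $1^{[*]}$ now lives in the $CX_{[*]}$-factor, where by the corollary on $H^*(CX;\mathbb{F}_p)$ for connected spaces every basis class has the shape $b\odot1^{[*]}$ with $b$ not decorated by $1_{[*]}$. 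There are three points to settle: (i) the displayed formula defines a genuine element of $\varprojlim_{\underline k}H^*((CX)_{\underline k};\mathbb{F}_p)$; (ii) the resulting classes $x_\infty$ are linearly independent; (iii) they span. The main obstacle is (i), the compatibility of the formula with the transition maps; (ii) and (iii) are bookkeeping with bases already constructed.

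For (i), I would check that $f_{\underline k,\underline n}^*\bigl((x_\infty)|_{(CX)_{\underline n}}\bigr)=(x_\infty)|_{(CX)_{\underline k}}$ for all $\underline k\le\underline n$ in $\Pi$, exactly as one verifies that a full-width Hopf monomial $x$ defines the class $x\odot1^{[*]}$ in $\varprojlim_nH^*(\tilde D_nX;\mathbb{F}_p)$. Here $f_{\underline k,\underline n}^*$ factors as an iterated coproduct followed by evaluating the extra tensor factors against the point classes $1_\alpha$, $\alpha\ne[*]$; since $\Delta$ preserves the span of full-width Hopf monomials with no block decorated by $1_{[*]}$, and since every reduced cohomology class of $X$ together with every positive-dimensional gathered block is killed by "evaluate against a point", the essential part $x$ is carried through unchanged while each padding factor telescopes $1_\alpha^{[r]}\mapsto1_\alpha^{[r-1]}$, the remaining terms — which would delete an essential block — all cancelling, just as $\rho_{n-1,n}$ annihilated pure monomials in the $\tilde D_\infty$ argument. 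This reproduces the stated formula and gives $(x_\infty)|_{(CX)_{\underline k}}=0$ unless $\underline k\ge\underline n_x$; the delicate part is keeping the asymmetric roles of the components $\alpha\ne[*]$, where padding by $1_\alpha$ occurs, and $[*]$, where it does not, straight through all the evaluations.

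For (ii), suppose $\sum_ic_i(x_i)_\infty=0$ with the $x_i$ distinct full-width Hopf monomials, none decorated by $1_{[*]}$. Choose $\underline k\in\Pi$ componentwise above all the $\underline n_{x_i}$; restricting to $H^*((CX)_{\underline k};\mathbb{F}_p)$ then sends $(x_i)_\infty$ to $\bigl(x_i\odot\bigodot_{\alpha\ne[*]}1_\alpha^{[k_\alpha-(n_{x_i})_\alpha]}\bigr)\odot1^{[*]}$. Under the K\"unneth isomorphism $H^*((CX)_{\underline k};\mathbb{F}_p)\cong\bigotimes_{\alpha\ne[*]}H^*(\tilde D_{k_\alpha}X_\alpha;\mathbb{F}_p)\otimes H^*(CX_{[*]};\mathbb{F}_p)$ and the Hopf-monomial bases of Theorems~\ref{m3} and \ref{m4} (and, for the $[*]$-factor, the corollary on connected $CX$), these are pairwise distinct basis elements — padding an essential monomial by unit blocks is injective, undone by deleting the maximal $1_\alpha^{[\cdot]}$ blocks — so every $c_i=0$.

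For (iii), I would argue degreewise. Fix $d$; as for $\tilde D_\infty X$, the homology transition maps $(f_{\underline k,\underline n})_*$ are eventually isomorphisms in degree $d$ (by homological stability, or directly from the $H_*$-description in \cite{May-Cohen}, multiplication by a point class being split injective on $H_*$ in a range), so the colimit $H_*(Q_0X;\mathbb{F}_p)=\varinjlim_{\underline k}H_*((CX)_{\underline k};\mathbb{F}_p)$ stabilizes and dualizes to $H^d(Q_0X;\mathbb{F}_p)\cong H^d((CX)_{\underline k};\mathbb{F}_p)$ for $\underline k$ large relative to $d$. For such $\underline k$, the K\"unneth decomposition identifies the Hopf-monomial basis of $H^d((CX)_{\underline k};\mathbb{F}_p)$ precisely with the padded monomials $\bigl(x\odot\bigodot_{\alpha\ne[*]}1_\alpha^{[\cdot]}\bigr)\odot1^{[*]}$, $x$ a degree-$d$ full-width monomial not decorated by $1_{[*]}$ — in each factor $\tilde D_{k_\alpha}X_\alpha$ the leftover width is forced into a single $1_\alpha^{[\cdot]}$ block, while $CX_{[*]}$ contributes no $1_{[*]}$-decorated block — and these are exactly the restrictions of the $x_\infty$ in degree $d$. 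Hence the $x_\infty$ span $H^*(Q_0X;\mathbb{F}_p)$, which completes the proof.
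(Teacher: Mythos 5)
Your proposal is correct and follows exactly the route the paper intends: the paper states this lemma without a written proof, saying only that one should replicate the $\tilde{D}_\infty X$ argument with $1^{[r]}$ replaced by the dimension-zero blocks $1_\alpha^{[r]}$, and your three steps (compatibility with the transition maps $f_{\underline{k},\underline{n}}^*$ via the coproduct-plus-augmentation description, linear independence by restricting to a large $\underline{k}$, and spanning by degreewise stabilization) are precisely that transposition, including the correct asymmetric treatment of the $[*]$-component versus the components $\alpha\neq[*]$.
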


\begin{theorem}
Chose the basis $ \mathcal{B} $ of $ X $ as we did for Theorem \ref{thm:Dung}. Moreover, assume that $ 1_{H^*(X_\alpha; \mathbb{F}_p)} \in \mathcal{B} $ for all $ \alpha \in \pi_0(X) $.
Consider the set of elements $ b_\infty $, where $ b $ is a full-width gathered block of width equal to a power of $ p $, whose decoration is different from $ 1_{[*]} $, and that satisfies at least one of the following conditions:
\begin{itemize}
\item the decoration of the column is not a $ p^{th} $ power in $ H^*(X; \mathbb{F}_p) $
\item at least one of the constituent rectangles of the column appears a number of times that is not divisible by $ p $
\item the column has a non-trivial solid part (if $ p > 2 $)
\end{itemize}
This set generates $ H^*(Q_0X; \mathbb{F}_p) $ as a graded commutative algebra, with relations given by $ (b_\infty)^{p^{h(b)}} = 0 $, where we define $ h(b) $ as in Theorem \ref{thm:Dung}.
\end{theorem}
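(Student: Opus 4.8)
The plan is to run the argument of Theorem~\ref{thm:Dung} once more, now for the inverse system $\{H^*((CX)_{\underline k};\mathbb{F}_p)\}_{\underline k\in\Pi}$ whose limit is $H^*(Q_0X;\mathbb{F}_p)$, replacing the filtration by effective width with a filtration by \emph{full width} and the single stabilizing family $1^{[r]}$ with the dimension-zero gathered blocks $1_\alpha^{[r]}$, $\alpha\in\pi_0(X)$. By the preceding lemma the classes $x_\infty$ attached to full-width Hopf monomials $x$ with no constituent block decorated by $1_{[*]}$ form an $\mathbb{F}_p$-basis of $H^*(Q_0X;\mathbb{F}_p)$; so, exactly as in Theorem~\ref{thm:Dung}, it suffices to show that the selected classes $b_\infty$ generate the algebra, that the listed relations hold, and that they are complete, the last point being settled by a Poincar\'e series count.

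First I would record the two ``stable cup product'' facts that make the bookkeeping work, both obtained by restricting to a finite component $(CX)_{\underline k}$ with $\underline k$ large and applying the Hopf ring presentation of Theorems~\ref{m3} and~\ref{m4} together with the cup product recipe of Proposition~\ref{rem:cup product columns}. The first is the analogue of Lemma~\ref{lem:cup product blocks limit}: for primitive gathered blocks $b_1,\dots,b_r$ of $p$-power width and exponents $n_i\ge 1$,
\[
\prod_{i=1}^r \bigl((b_i^{[n_i]})_\infty\bigr)\ -\ \bigl(b_1^{[n_1]}\odot\cdots\odot b_r^{[n_r]}\bigr)_\infty\ \in\ F_{w-1}H^*(Q_0X;\mathbb{F}_p),
\]
where $w$ is the sum of the widths of the $b_i^{[n_i]}$ and $F_{w-1}$ is spanned by the $y_\infty$ coming from full-width Hopf monomials of width $<w$; the proof copies that of Lemma~\ref{lem:cup product blocks limit}, using that $\Delta$ preserves full width and that cup-multiplying a gathered block into a Hopf monomial of the same component yields a sum of full-width Hopf monomials. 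The second is the analogue of Lemma~\ref{lem:cup product relations limit}: if $b$ is an even-dimensional gathered block, then $(b_\infty)^p=(b^p)_\infty$, where $b^p$ is the column whose decoration is the $p$-th power of that of $b$ and whose underlying undecorated column is the $p$-th cup power of that of $b$ (which vanishes when $p>2$ and $b$ has a nontrivial solid part). Iterating gives $(b_\infty)^{p^n}=(b^{p^n})_\infty$; for $n=h(b)$ the right side is $0$, because either the decoration $x$ satisfies $x^{p^{h(b)}}=0$ (when $p=2$ or $b$ has no solid part) or $b$ has a solid part and $h(b)=1$ with the $p$-th cup power of the undecorated column already zero. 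This establishes all the relations $(b_\infty)^{p^{h(b)}}=0$.

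For generation I would induct on the full-width filtration $\{F_nH^*(Q_0X;\mathbb{F}_p)\}_{n\ge 0}$, as in Theorem~\ref{thm:Dung}. The case $n=0$ is trivial. If $b$ has $p$-power width but fails all three listed conditions, then, by the restricted Lie algebra decomposition of $H^*(X;\mathbb{F}_p)$ underlying the choice of $\mathcal{B}_F$ together with the second fact above, $b_\infty$ is a $p^k$-th power of some $b'_\infty$ with $b'$ meeting one of the conditions, hence lies in the generated subalgebra; and any full-width Hopf monomial $x$ of full width $n$, written up to a scalar as a transfer product of $p$-power-width gathered blocks $b_1,\dots,b_r$, satisfies $x_\infty\in\prod_i(b_i)_\infty+F_{n-1}H^*(Q_0X;\mathbb{F}_p)$ by the first fact, so lies in the generated subalgebra by the inductive hypothesis. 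Thus the selected $b_\infty$ generate $H^*(Q_0X;\mathbb{F}_p)$, giving a surjection onto it from the algebra $A_{Q_0}(X)=\bigotimes_{b\text{ even}}\mathbb{F}_p[b_\infty]/\bigl((b_\infty)^{p^{h(b)}}\bigr)\otimes\Lambda\{b_\infty:b\text{ odd}\}$ determined by the presentation. It remains to see this surjection is an isomorphism, which I would do by matching Hilbert series degree by degree: by the basis of the preceding lemma, $H^*(Q_0X;\mathbb{F}_p)$ has the same Hilbert series as the subalgebra of $(H^*(\tilde{D}X;\mathbb{F}_p),\odot)$ spanned by full-width Hopf monomials with no $1_{[*]}$ decoration, which by the Hopf ring structure is $\bigotimes_b\mathbb{F}_p[b]/(b^{\odot p})\otimes\Lambda\{b:b\text{ odd}\}$ over all $p$-power-width gathered blocks $b\ne 1_\alpha^{[p^k]}$ with decoration $\ne 1_{[*]}$; and, as in the proof of Theorem~\ref{thm:Dung}, each such $b$ is, via iterated Frobenius, $(b')_\infty^{p^k}$ for a unique generator $b'$ and a unique $0\le k<h(b')$, so the two algebras have equal Hilbert series. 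The step I expect to be the main obstacle is the clean proof of the first ``stable cup product'' fact: one must control, through Hopf ring distributivity and the coproduct, that cup-multiplying a stabilized gathered block into a stabilized full-width monomial again produces full-width monomials of non-increasing width, with equality in a single term --- the same delicate width bookkeeping that was the technical heart of Lemma~\ref{lem:cup product blocks limit}, now complicated by stabilizing against several dimension-zero blocks $1_\alpha^{[r]}$ rather than a single $1^{[r]}$.
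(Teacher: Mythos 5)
Your proposal is correct and follows essentially the same route as the paper: the paper's own proof simply states that the obvious analogues of Lemma~\ref{lem:cup product blocks limit} and Lemma~\ref{lem:cup product relations limit} hold and then reruns the argument of Theorem~\ref{thm:Dung} with $CX$ replaced by $Q_0X$ and $b\odot 1^{[*]}$ replaced by $b_\infty$, which is exactly the plan you lay out (full-width filtration, stable cup product facts, induction for generation, Poincar\'e series count for completeness). Your version is just a more explicit write-up of the same argument.
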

\begin{proof}
Since the obvious analogs of Lemma \ref{lem:cup product blocks limit} and Lemma \ref{lem:cup product relations limit} also hold in this case, the statement can be proved with the same argument used for Theorem \ref{thm:Dung}, by replacing $ CX $ with $ Q_0X $ and $ b \odot 1^{[*]} $ with $ b_\infty $.
\end{proof}

\bibliographystyle{alpha}
\bibliography{bibliografia}

\end{document}

\bibliographystyle{amsplain}
\bibliography{altgroups}

\end{document}